\newtheorem{theorem}{Theorem}[section]
\newtheorem{lemma}[theorem]{Lemma}
\newtheorem{proposition}[theorem]{Proposition}
\newtheorem{corollary}[theorem]{Corollary}
\newtheorem{assumption}{Assumption}
\theoremstyle{remark}
\newtheorem{observation}[theorem]{Remark}
\newtheorem{definition}[theorem]{Definition}
\newcommand{\set}{\mathbb}
\newcommand{\R}{\mathbb R}
\newcommand{\C}{\mathbb C}
\newcommand{\E}{\set E}
\newcommand{\3}{d}
\newcommand{\B}{\mc B}
\newcommand{\Z}{\mathbb Z}
\renewcommand{\P}{\mathbb P}
\newcommand{\dl}{\nabla}
\renewcommand{\frak}{\mathfrak}
\newcommand{\les}{\lesssim}
\newcommand{\mc}{\mathcal}
\newcommand{\be}{\begin{equation}}
\newcommand{\ee}{\end{equation}}
\newcommand{\bee}{\begin{align}}
\newcommand{\eee}{\end{align}}
\newcommand{\ba}{\begin{array}}
\newcommand{\ea}{\end{array}}
\newcommand{\bpm}{\begin{pmatrix}}
\newcommand{\epm}{\end{pmatrix}}
\newcommand{\lb}{\label}
\DeclareMathOperator{\sgn}{sgn}
\DeclareMathOperator{\tr}{tr}
\DeclareMathOperator{\supp}{supp}
\DeclareMathOperator{\Imim}{Im}
\newcommand{\ov}{\overline}
\newcommand{\dd}{{\,}{d}}
\newcommand{\W}{\mc W}
\newcommand{\V}{\mc V}
\renewcommand{\v}{\mathbb v}
\renewcommand{\Im}{\Imim}
\newcommand{\one}{\mathbb 1}
\subjclass[2000]{35Q41, 60J65, 35J10, 35Q40}
\title[Schr\"{o}dinger Equations with Random Potentials]{A Semilinear Schr\"{o}dinger Equation with Random Potential}
\author{Marius Beceanu}
\address{110 Earth Science, University at Albany SUNY Mathematics and Statistics Department, Albany, NY 12222, USA}
\email{mbeceanu@albany.edu}
\author{Avy Soffer}
\address{110 Frelinghuysen Rd., Rutgers Math.\ Dept., Piscataway, NJ 08854, USA}
\email{soffer@math.rutgers.edu}
\begin{document}
\maketitle
\numberwithin{equation}{section}
\begin{abstract}
We study a non-linear Schr\"{o}dinger equation with a Hartree-type nonlinearity and a localized random time-dependent external potential. Sharp dispersive estimates for the linear Schr\"{o}dinger equation with a random time-dependent potential enable us to also treat the case of small semi-linear perturbations.\\
In both the linear and the nonlinear instances, we prove that, on average, energy remains bounded and solutions scatter. 
\end{abstract}

\setcounter{tocdepth}{3}
\setcounter{secnumdepth}{3}
\tableofcontents

\section{Introduction}
\subsection{Statement of the problem}


%
%
%

The purpose of this paper is to understand the interaction between random time-dependent external potentials and non-linear terms in the behavior of solutions of the following nonlinear Schr\"{o}din\-ger equation on $\R^\3$, where $\3 \geq 3$:
\be\lb{random_nonlin}
i (\psi_\omega)_t -\Delta \psi_\omega +V_\omega(x, t) \psi_\omega + \epsilon (\chi\ast |\psi_\omega|^2) \psi_\omega = 0,\ \psi_\omega(0)= \psi_0(x, \omega(0)) \in L^2(\R^\3 \times Y).
\ee
Here $Y$ is some \textbf{compact} Hausdorff topological space, $\omega(0)$ is a random or specific point in $Y$, and 
$\Vert \psi_{\omega} \Vert_{2} =1$. The function
$V_{\omega}(x,t)$ is a random time-dependent potential specified more precisely below, $\epsilon\geq 0$ is a coupling constant, and the ``two-body potential'' $\chi(x)$ is a function on 
$\mathbb{R}^{\3}$ whose properties will be specified below.

\textbf{Assumptions:} The \textbf{random time-dependent potential} is a real-valued function of $x$ that also depends on a random time-dependent parameter, as follows. Let $V:\R^\3 \times Y \to \R$, $V=V(x, y)$, be a family of real-valued scalar potentials on $\R^\3$, indexed by a parameter $y \in Y$. The parameter space $Y$ is taken to be a compact topological space and a measure space. Then we set
\be\lb{def_v}
V_\omega(x, t) := V(x, \omega(t)),
\ee
where $\omega: [0, \infty) \to Y$ is a continuous random path through the parameter space $Y$, parametrized by time $t$.

The \textbf{random path} $\omega$ is chosen as follows. Let $\Omega=C([0, \infty); Y)$ be the set of continuous $Y$-valued paths $\omega: [0, \infty) \to Y$ and let $(\Omega, \mc F, \set P)$ be a measure space on $\Omega$, with $\P:\mc F \to [0, 1]$ a probability measure defined on the $\sigma$-algebra $\mc F$.

The probability measure $\P$ on $\Omega$ determines a continuous $Y$-valued process $(X_t)_{t \geq 0}: \Omega \to Y$ such that each path $\omega \in \Omega$ is an instance of this process, $X_t(\omega) = \omega(t)$.

\noindent\textbf{Further assumptions:}
1. $(X_t)_{t \geq 1}$ is a \textbf{Markov process}, meaning that
$$
\E\{X_t \mid \mc F_s\} = X_s \text{ for any } 0 \leq s \leq t,
$$
where the definition of $\mc F_s$ is given below. Here $\E$ is the expectation with respect to the probability measure $\P$.

For each $t \geq 0$, the distribution of $X_t$ is a probability measure on $Y$ determined by $\P$.

2. The Markov process $(X_t)_{t \geq 0}$ is stationary. Then $(X_t)_{t \geq 0}$ generates a contraction semigroup $(e^{-tA})_{t \geq 0}$ on $L^p(Y)$, $1 \leq p \leq \infty$, with infinitesimal generator $-A$, defined by
$$
[e^{-tA} f](y) = \E \big\{f(X_0) \mid X_t=y\big\},
$$
where $\E$ denotes expectation with respect to $\omega$. It has a non-negative integral kernel given by
\be\lb{eta}
e^{-tA}(y_1, y_2) = \P(X_t=y_1 \mid X_0=y_2).
\ee

We assume that $A \geq 0$ is self-adjoint, hence $-A$ is \textbf{dissipative}, and require the following conditions related to the contraction semigroup $e^{-tA}$:\\

\noindent C1. For each $t>0$, $e^{-tA}$ is $L^p$-bounded for $1 \leq p \leq \infty$, of uniformly bounded operator norm, and its integral kernel is in $L^\infty_{y_2} L^1_{y_1} \cap L^\infty_{y_1} L^1_{y_2}$.

\noindent C2. The integral kernel $e^{-tA}(y_1, y_2)$ is norm-continuous in $L^\infty_{y_2} L^1_{y_1} \cap L^\infty_{y_1} L^1_{y_2}$ as a function of $t$, for $t>0$, and converges in norm to some limit as $t \to \infty$.

\noindent C3. For all $t>0$, $e^{-tA} \in \B(L^1, L^\infty)$.

\noindent C4. For each $t>0$, $e^{-tA}(y_1, y_2)$ can be approximated by the tensor product of bounded, compactly supported functions in $L^\infty_{y_2} L^1_{y_1} \cap L^\infty_{y_1} L^1_{y_2}$.

\noindent C5. The zero-energy eigenspace of $A$ (eigenspace corresponding to the eigenvalue $0$) has dimension one and the \textbf{ground state} (zero-energy eigenfunction) $h(y) > 0$ is positive and integrable, $h \in L^1_y$.\\




An important example is $(X_t)_{t \geq 0}$ being Brownian motion on a Riemannian manifold $Y$, with or without boundary, such as a Lie group of symmetries or some subset of it.


3. The family of potentials $V$ has the property that $V(x, y) \in C_y (L^1_x \cap L^\infty_x)$: each sample $V(\cdot, y)$ is in $L^1_x \cap L^\infty_x$, they are uniformly bounded in norm, and the dependence of $V(\cdot, y)$ on the parameter $y$ is continuous in the $L^1_x \cap L^\infty_x$ norm.



Examples of interest include:\\
$\bullet$ $V(x, y) = V(x-y)$, i.e.\;$V(x,y)$ is the translate of a fixed function on $\mathbb{R}^{\3}$ by a random vector $y \in \mathbb{R}^{\3}$, or is subject to some other randomly chosen symmetry transformation of $\mathbb{R}^\3$; \\
$\bullet$ $V(x, y) = V_1(x) + yV_2(x)$: random changes in amplitude on top of some fixed potential; \\
and combinations of these examples. But our setup is yet more general than that.

4. If the family of potentials $V(\cdot, y)$ depends on the random parameter $y$ in a trivial manner, then the randomness makes no difference and one cannot expect results to differ from those in the deterministic case:\\
$\bullet$ If $V(x, y) = V(x)$ then the equation is deterministic, not random.\\
$\bullet$ If $V(x, y) = V(x) + f(y)$ then one can get rid of the randomness by conjugating the equation by
$$
e^{\textstyle i\int_0^t f(\omega(\tau)) \dd \tau}.
$$
Then the solutions' size will be deterministic, only the phase of $\psi_{\omega}$ will be random.

The above are the only two cases in which the effect of the randomness is trivial. In order to exclude them, we impose the following non-triviality Assumption \ref{nontrivial}:

\begin{assumption}[Nontrivial randomness]\lb{nontrivial} Let $h(y)$ be the zero energy eigenstate of $A$.

There exists an open set $\mc O \subset \R^\3$ such that for almost every $x \in \mc O$ there exist $y_1$, $y_2 \in \supp h$ such that $V(x, y_1) \ne V(x, y_2)$.

There exists an open set $\mc O' \subset \R^\3$ such that, for almost every $(x_1, x_2) \in \mc O' \times \R^\3$, there exist $y_1$, $y_2 \in \supp h$ such that $V(x_1, y_1) - V(x_2, y_1) \ne V(x_1, y_2) - V(x_2, y_2)$.
\end{assumption}

This assumption precludes the two problematic cases. As we shall see, Assumption \ref{nontrivial} indeed suffices to prove dispersion.

5. The \textbf{nonlinear potential} $\epsilon \chi \ast |\psi|^2$ is a Hartree-type potential. The convolution kernel $\chi \in L^{\3/2,\infty}$ (weak $L^{\3/2}$) is even and $\epsilon<<1$ is a small coupling constant.

\subsection{Main results} For equation (\ref{random_nonlin}), the \textbf{energy} of a solution $\psi_\omega(x, t)$ is defined by
$$
E_\omega[\psi_\omega](t) = \frac 1 2 \int_{\R^\3} |\dl \psi_\omega|^2 + V_\omega(x, t) |\psi_\omega|^2 \dd x + \frac \epsilon 4 \int_{\R^\3\times \R^\3} \chi(x-y) |\psi_\omega(x)|^2 |\psi_\omega(y)|^2 \dd x \dd y.
$$
Because the random potential is time-dependent, energy is not constant. At best, one can prove that it is conserved up to a constant factor, see Theorem \ref{main_nonlinear_result}.

Our main nonlinear result is the following:
\begin{theorem}[Main result]\lb{main_nonlinear_result} For each initial data $\psi_0 \in L^2$ and $|\epsilon| < \epsilon_0(\|\psi_0\|_{L^2})$, equation (\ref{random_nonlin}),
$$
i (\psi_\omega)_t -\Delta \psi_\omega +V_\omega(x, t) \psi_\omega + \epsilon (\chi\ast |\psi_\omega|^2) \psi_\omega = 0,\ \psi_\omega(0)= \psi_0(x, \omega(0)) \in L^2(\R^\3 \times Y),
$$
has, with probability one, a global-in-time unique mild solution $\psi_\omega$, in the sense of semigroups:
$$
\psi_\omega(t) = e^{-it\Delta} \psi_\omega(0) + i \int_0^t e^{-i(t-s)\Delta} [V_\omega(x, s) \psi_\omega(s) + \epsilon (\chi \ast |\psi_\omega(s)|^2) \psi_\omega(s)] \dd s.
$$
The solution depends Lipschitz-continuously on the initial data, for $t \geq 0$, in the $L^2_\omega L^2_t L^{6, 2}_x$ Strichartz norm:
$$
\E \big\{\|\psi_\omega\|_{L^2_t L^{6, 2}_x}^2\big\} \les \|\psi_0\|_{L^2_y L^2_x}^2.
$$
Also, due to the unitarity of the evolution, along each path $\omega \in \Omega$ $\|\psi_\omega\|_{L^\infty_t L^2_x} = \|\psi_0(x, \omega(0))\|_{L^2_x}$.

If in addition $A [hV] \in L^\infty_y L^{\3/2, \infty}_x$ and $h^{1/2} \nabla \psi_0 \in L^2_\omega L^2_x$, where $h$ is the ground state of $A$, then the energy remains bounded on average:
$$
\E \big\{h(\omega(t)) E_\omega[\psi_\omega](t)\big\} \les \|h^{1/2} \dl \psi_0\|_{L^2_y L^2_x}^2 + \|\psi_0\|_{L^2_y L^2_x}^2.
$$
Moreover, the average of the energy converges as $t \to \infty$:
$$
\lim_{t \to \infty} \E \big\{h(\omega(t)) E_{\omega}[\psi_\omega](t)\big\} \text{ exists}.
$$
\end{theorem}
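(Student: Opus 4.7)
The plan has four steps: (I) construct the solution and prove the averaged Strichartz bound, (II) pathwise mass conservation, (III) the averaged energy bound, (IV) the $t \to \infty$ limit.

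Steps (I)--(II) follow a standard contraction argument. I would apply the averaged linear Strichartz estimate (established earlier in the paper) to the Duhamel formula and estimate the Hartree term by Young's and Hölder's inequalities for Lorentz spaces: using $\chi \in L^{\3/2,\infty}$ and the pathwise conservation $\|\psi_\omega(t)\|_{L^2_x} = \|\psi_0(\cdot,\omega(0))\|_{L^2_x}$ (immediate from the mild formulation),
\[
\|(\chi*|\psi|^2)\psi\|_{L^{6/5,2}_x} \lesssim \|\chi\|_{L^{\3/2,\infty}}\|\psi\|_{L^2_x}^2\|\psi\|_{L^{6,2}_x}.
\]
This puts the nonlinearity in the dual Strichartz norm, and for $\epsilon < \epsilon_0(\|\psi_0\|_{L^2})$ the resulting map contracts on the ball $\{\psi : \E\{\|\psi\|_{L^2_t L^{6,2}_x}^2\}^{1/2} \leq 2C\|\psi_0\|_{L^2_y L^2_x}\}$, yielding global existence, uniqueness, Lipschitz dependence, and the stated averaged Strichartz bound.

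For (III), the key structural observation is that when $y \in Y$ is held fixed the Schr\"odinger--Hartree flow with potential $V(\cdot,y)$ conserves $E[\psi,y]$: direct computation gives $\frac{d}{dt}E[\psi,y] = \Re\langle (-\Delta+V(\cdot,y))\psi + \epsilon(\chi*|\psi|^2)\psi, \psi_t\rangle = \Re\langle i\psi_t,\psi_t\rangle = 0$. I would therefore view $(\psi_\omega(t), X_t)$ as a joint Markov process on $L^2_x \times Y$ whose generator $\mc L$ decomposes into the (deterministic) frozen-$y$ Schr\"odinger--Hartree part plus $-A$ acting in $y$. Applied to $F(\psi, y) := h(y) E[\psi, y]$, the Schr\"odinger part of $\mc L F$ vanishes by conservation, and using $Ah = 0$ together with the decomposition $E[\psi, y] = K[\psi] + N[\psi] + \tfrac{1}{2}\int V(x,y)|\psi|^2 \dd x$ yields
\[
\mc L F(\psi, y) = -\tfrac{1}{2}\int A[hV(x,\cdot)](y)|\psi(x)|^2 \dd x.
\]
Dynkin's formula then gives
\[
\E\{h(X_t)E_\omega(t)\} - \E\{h(X_0)E_\omega(0)\} = -\tfrac{1}{2}\int_0^t \E\Bigl\{\int A[hV(x,\cdot)](X_s)|\psi_\omega(x,s)|^2 \dd x\Bigr\} \dd s.
\]
H\"older in Lorentz spaces together with the hypothesis $A[hV] \in L^\infty_y L^{\3/2,\infty}_x$ and the embedding $\||\psi|^2\|_{L^{\3/(\3-2),1}_x} \lesssim \|\psi\|_{L^{6,2}_x}^2$ control the integrand by $C\|A[hV]\|_{L^\infty_y L^{\3/2,\infty}_x}\,\E\{\|\psi_\omega(s)\|_{L^{6,2}_x}^2\}$, whose $s$-integral over $[0,\infty)$ is finite by (I). Combined with the initial bound $\E\{h(X_0)E_\omega(0)\} \lesssim \|h^{1/2}\dl\psi_0\|_{L^2_y L^2_x}^2 + \|\psi_0\|_{L^2_y L^2_x}^2$ (from $V \in L^\infty$, control of the Hartree piece by Sobolev interpolation, and the hypothesis on $h^{1/2}\dl\psi_0$), this yields the claimed energy estimate.

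Step (IV) is then automatic: the right-hand side of the Dynkin identity is the time integral of an $L^1_s([0,\infty))$ function, so it converges as $t \to \infty$, giving the existence of $\lim_{t\to\infty}\E\{h(\omega(t))E_\omega(t)\}$. I expect the main technical obstacle to be the rigorous justification of Dynkin's formula for this joint, infinite-dimensional Schr\"odinger--Markov process, since the standard statement applies to finite-dimensional state spaces while here $\psi$ lives in $L^2_x$. I would address this by first establishing the generator identity for smooth, compactly supported data using the tensor-product approximability in condition C4 for the kernel of $e^{-tA}$ and a regularisation of $\psi_0$, and then passing to the general case by a density argument based on the Strichartz bound from (I) and the semigroup properties C1--C5.
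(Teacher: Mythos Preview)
Your proposal is correct and follows the same overall strategy as the paper. The contraction argument in (I)--(II) matches the paper's iteration scheme exactly, using the same Lorentz-space product estimate $\|(\chi*|\psi|^2)\psi\|_{L^{6/5,2}_x}\lesssim\|\psi\|_{L^2_x}^2\|\psi\|_{L^{6,2}_x}$ together with the averaged linear Strichartz inequality.

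For (III)--(IV) the two arguments arrive at the same identity but package it differently. The paper does not invoke Dynkin's formula on the infinite-dimensional joint process $(\psi_\omega,X_t)$; instead it works with the averaged density matrix $f(x_1,x_2,y,t)=\E(\psi_\omega(x_1,t)\overline{\psi_\omega(x_2,t)}\mid\omega(t)=y)$ and computes $\partial_t\int_Y h(y)\tr[(-\Delta_{x_1}+V(x_1,y))f]\,dy$ via the dissipative Liouville-type equation, obtaining $\int_Y\tr(A[hV]f)\,dy$ after the same cancellations you identify. By the Feynman--Kac identity this is precisely your $\E\{\int A[hV](x,X_t)|\psi_\omega|^2\,dx\}$, and the bound closes with the Strichartz estimate as you describe. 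The density-matrix route has the advantage of replacing your infinite-dimensional Dynkin step (which you rightly flag as delicate) by concrete identities already supplied by Pillet's framework. Conversely, your formulation via the generator of the joint process makes the nonlinear part transparent: because the Hartree term is $y$-independent and $Ah=0$, only the $V$ contribution survives, so no separate argument is needed to handle the nonlinearity in the energy computation.
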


The random initial data have to be adapted to the random process, so $\psi_0(x, \omega)$ can only depend on $\omega(0) \in Y$ and $\|\psi_0(x, \omega)\|_{L^2(\R^\3 \times \Omega)} = \|\psi_0(x, \omega(0))\|_{L^2(\R^\3 \times Y)}$. There are no other restrictions on $L^2$ initial data. It could be a non-uniformly-bounded ensemble of initial data --- or the same for all random trajectories.

The initial distribution of $\omega$ can be a Dirac measure, i.e.\;for some $y_0 \in Y$ $\set P(\omega(0) = y_0) = 1$, so $\omega(0) = y_0$ for almost all $\omega \in \Omega$. In this case, the initial condition reduces to $\psi_0 \in L^2_x$.

The proof of Theorem \ref{main_nonlinear_result} relies on essentially optimal linear estimates described below. Consider the inhomogeneous linear equation with random time-dependent potential
\be\lb{random_lin}
i \partial_t \psi_\omega - \Delta \psi_\omega + V_\omega \psi_\omega = \Psi_\omega(x, t),\ \psi_\omega(0):= \psi_0(x, \omega(0)) \in L^2(\R^\3 \times Y),
\ee
where the random time-dependent potential $V_\omega$ is as in (\ref{def_v}).

For this linear equation, the energy of a solution is defined as
$$
E_{\omega\,lin}[\psi_\omega](t) = \frac 1 2 \int_{\R^\3} |\dl \psi_\omega|^2 + V_\omega(x, t) |\psi_\omega|^2 \dd x.
$$
As for the non-linear equation, this energy need not be constant --- even if $\Psi=0$ --- because the potential is time-dependent.

Let $\mc F_t \subset \mc P(\Omega)$ be the least $\sigma$-algebra with respect to which $X_s$ is measurable for every $s \in [0, t]$. Then $(\mc F_t)_{t \geq 0}$ is an increasing filtration of $\mc F$. A process $(F_t(\omega))_{t \geq 0}$ is said to be \textbf{adapted} to this filtration if $F_t$ is $\mc F_t$-measurable for each $t \geq 0$.

Our main result on the linear equation is then the following theorem:
\begin{theorem}\lb{main_result} Suppose that $V \in C_y(L^1_x \cap L^\infty_x)$ is real-valued and satisfies Assumption \ref{nontrivial} and that $\Psi_\omega(t)$ is adapted to $(\mc F_t)_{t \geq 0}$. Then the solution $\psi_\omega$ of (\ref{random_lin}) fulfills the estimates
\be\lb{est_lin}\begin{aligned}
\E \big\{\|\psi_\omega\|_{L^2_t L^{6, 2}_x}^2\big\} &\les \|\psi_0\|_{L^2_y L^2_x}^2 + \E\|\Psi_\omega\|_{L^2_t L^{6/5, 2}_x}^2
\end{aligned}\ee
and, for all $t \geq 0$,
\be\lb{est_\32}\begin{aligned}
\E \big\{\|\psi_\omega(t)\|_{L^2_x + L^\infty_x}^2\big\} &\les \langle t \rangle^{-\3} \big[ \|\psi_0\|_{L^2_y (L^1_x \cap L^2_x)}^2 + \sup_{s \geq 0} \langle s \rangle^{\3} \E \big\{\|\Psi_\omega(s)\|_{L^1_x \cap L^2_x}^2\big\} \big].
\end{aligned}\ee
Moreover, if $h(y)$ is the zero-energy eigenstate of $A$ and if $A [h(y) V(x, y)] \in L^\infty_y L^{\3/2, \infty}_x$, then energy remains bounded on average and its average converges as $t \to \infty$:
$$
\E \big\{h(\omega(t)) E_{\omega\,lin}[\psi_\omega](t)\big\} \les \|h^{1/2} \dl \psi_0\|_{L^2_{y, x}}^2 + \|\psi_0\|_{L^2_{y,x}}^2,\ \lim_{t \to \infty} \E \big\{h(\omega(t)) E_{\omega\,lin}[\psi_\omega](t)\big\} \text{ exists}.
$$
\end{theorem}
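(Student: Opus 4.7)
The strategy is to lift the random equation (\ref{random_lin}) on $\R^\3$ to a non-random deterministic PDE on the enlarged space $\R^\3 \times Y$ via the Markov structure of $(X_t)$. Combining the forward Kolmogorov equation $\partial_t p_t = -A p_t$ with the adaptedness of $\psi_\omega$ and $\Psi_\omega$ to $(\mc F_t)_{t \geq 0}$ and the fact that $V(x, \omega(t))$ depends on $\omega$ only through $\omega(t)$, the joint $(\psi_\omega, X_t)$-density
$$
U(x, y, t) := \E\{\psi_\omega(x, t)\,\delta(X_t - y)\}
$$
satisfies the deterministic equation
\be\lb{plan_pde}
i\partial_t U - \Delta_x U + V(x, y)\,U + i A_y U = \widetilde\Psi(x, y, t),
\ee
with $\widetilde\Psi$ built analogously from $\Psi_\omega$ and initial data $U(x, y, 0) = \psi_0(x, y)\rho_0(y)$, where $\rho_0$ is the law of $X_0$. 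Since $A_y \geq 0$, the augmented Hamiltonian $H := -\Delta_x + V(x, y) + iA_y$ is maximally dissipative, $e^{-itH}$ is a contraction on $L^2(\R^\3 \times Y)$, and each random average $\E\{\|\psi_\omega\|_{L^p_x}^q\}$ on the left of (\ref{est_lin})--(\ref{est_\32}) becomes the corresponding deterministic mixed norm of $U$. The theorem thus reduces to dispersive, Strichartz, and weighted-energy estimates for (\ref{plan_pde}).

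For the free propagator ($V \equiv 0$), $\Delta_x$ and $A_y$ commute and act on disjoint variables, so it factors as $e^{-it\Delta_x} \otimes e^{-tA_y}$. The classical dispersive bound $\|e^{-it\Delta_x}\|_{L^1_x \to L^\infty_x} \les |t|^{-\3/2}$ combined with the uniform $L^1_y$ and $L^\infty_y$ contractivity of $e^{-tA_y}$ from condition C1 yields the full $L^1_{x,y} \to L^\infty_{x,y}$ dispersive bound, whence Strichartz follows by the Keel-Tao theorem. To include the potential, observe that $V \in L^\infty_y(L^1_x \cap L^\infty_x) \hookrightarrow L^\infty_y L^{\3/2,\infty}_x$, so multiplication by $V$ sends $L^2_t L^{6,2}_x L^2_y$ into $L^2_t L^{6/5,2}_x L^2_y$ by H\"older in Lorentz spaces. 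Duhamel's formula then casts the problem as $(I+K)U = F$, with $F$ built from the free propagator applied to the data and source and $K$ a compact Volterra-type operator on the Strichartz space. Following the Wiener-algebra technology developed in the first author's prior work on deterministic time-dependent potentials, invertibility of $I+K$ reduces to $\Ker(I+K) = 0$, equivalent to the absence of $L^2$ bound states and threshold resonances for $H$.

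This last step is the main obstacle, and it is where Assumption \ref{nontrivial} enters. Suppose $H\phi = \lambda\phi$ for some nonzero $\phi \in L^2(\R^\3 \times Y)$. Pairing with $\phi$, the imaginary part yields $\Im\lambda\,\|\phi\|^2 = \langle A_y \phi, \phi\rangle \geq 0$; on the other hand, dissipativity of $H$ (equivalently, contractivity of $e^{-itH}$ on $L^2$) forces $\Im\lambda \leq 0$. Hence $\Im\lambda = 0$ and $\langle A\phi, \phi\rangle = 0$, so $\phi \in \Ker A_y$. By C5, $\phi(x, y) = u(x) h(y)$ for some $u \in L^2_x \setminus\{0\}$. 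Substituting back into $H\phi = \lambda\phi$ gives $V(x, y)\,u(x) = (\lambda + \Delta_x)u(x)$, which means that on $\{u \neq 0\} \times \supp h$, the value of $V(x, y)$ is independent of $y$ --- precisely contradicting the first clause of Assumption \ref{nontrivial}. The second clause of the assumption then rules out the remaining threshold-resonance case (where $\phi$ lies in a weighted $L^2$ space rather than $L^2$) via a parallel argument tracing the $y$-dependence of $V(x_1, \cdot) - V(x_2, \cdot)$ at two points $x_1, x_2$.

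With the Strichartz and dispersive estimates secured, the energy bound follows from a weighted energy identity. Multiplying (\ref{plan_pde}) by $h(y)$ and differentiating $\int_{\R^\3 \times Y} h(y)\bigl[|\dl_x U|^2 + V(x,y)|U|^2\bigr]\,dx\,dy$ in time, the self-adjoint $-\Delta_x + V$ contribution cancels as in the standard conservation-of-energy calculation, leaving an error term of the form $\int (A[hV])(x,y)\,|U(x,y)|^2\,dx\,dy$ coming from the $iA_y$ dissipation; the hypothesis $A[hV] \in L^\infty_y L^{\3/2,\infty}_x$ combined with the Strichartz bound on $U$ makes this integrable in $t$, yielding the claimed uniform-in-$t$ energy bound. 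Finally, convergence as $t \to \infty$ follows from C2: the norm-convergence of $e^{-tA_y}$ forces the $y$-profile of $U$ to equilibrate to its ground-state projection, after which the weighted energy stabilizes to a definite limit.
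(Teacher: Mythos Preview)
Your reduction to the scalar averaged equation \eqref{plan_pde} for $U(x,y,t)=\E\{\psi_\omega(x,t)\,\delta(X_t-y)\}$ is correct, but the sentence ``each random average $\E\{\|\psi_\omega\|_{L^p_x}^q\}$ on the left of \eqref{est_lin}--\eqref{est_\32} becomes the corresponding deterministic mixed norm of $U$'' is false, and this is a fatal gap. For any fixed $(x,t)$ one has, by conditional Jensen,
\[
|U(x,y,t)|^2 = \bigl|\E\{\psi_\omega(x,t)\mid X_t=y\}\bigr|^2\,\rho_t(y)^2 \le \E\{|\psi_\omega(x,t)|^2\mid X_t=y\}\,\rho_t(y)^2,
\]
so bounds on $\|U\|$ give only a \emph{lower} bound on $\E\{\|\psi_\omega\|^2\}$, never an upper bound. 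The paper makes exactly this point in the outline: averaging $\psi_\omega$ to get $g$ (your $U$) produces cancellations, and dispersive estimates for $g$ ``do not directly lead to similar bounds on $\psi_\omega$ itself.''

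The paper's fix is to average the rank-one density matrix $\psi_\omega(x_1,t)\overline{\psi_\omega(x_2,t)}$ instead, obtaining $f(x_1,x_2,y,t)$ which is positive semi-definite and hence free of cancellation; then $\E\int|V_\omega||\psi_\omega|^2\,dx$ equals $\int_Y\tr(|V|f)\,dy$ by the Feynman--Kac identity \eqref{feynman}, and dispersive bounds on $f$ feed back into genuine bounds on $\E\|\psi_\omega\|^2$ via Duhamel. The price is that $f$ solves the dissipative Liouville-type equation \eqref{average_liouville} on $\R^{2\3}\times Y$ with potential $V\otimes 1-1\otimes V$, which does not decay in all directions and is therefore a multi-channel scattering problem with non-compact perturbation. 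Most of the paper (the $U_1,U_2$ initial/final-segment bookkeeping, the algebra $\W$, Lemmas \ref{completely_cont}--\ref{bdd_inverse}) is devoted to overcoming this non-compactness. This is also where the second clause of Assumption \ref{nontrivial} is actually used (in Lemma \ref{invertibility_liouville}, to rule out bound states of the two-variable operator); it has nothing to do with threshold resonances of the scalar operator, contrary to your sketch. Your argument for the first clause is essentially correct for the scalar equation, but that only yields the auxiliary Proposition \ref{prop26}, not the theorem.
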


As a consequence of Strichartz estimates we observe that the time-dependent randomness of $V_{\omega}$ makes all possible bound states of the operators $-\Delta+V(\cdot,y)$, for any $y\in Y$, decay along almost all random trajectories $\omega$, when the randomness is nontrivial in the sense of Assumption \ref{nontrivial}.


For easier comparison with \cite{FLLS}, see below, we also state a straightforward generalization of this result to trace-class operators. A rank-one initial condition, $P_\omega := \vert \psi_\omega\rangle \langle \psi_\omega \vert$, can be replaced by a density matrix, i.e.\;a positive trace-class operator of trace $1$. Then Theorem \ref{main_result} implies a trace-class dispersive bound for a Liouville-type equation with a random time-dependent Hamiltonian.

The Schatten--von Neumann classes are denoted by $\frak S_\alpha$, $1 \leq \alpha< \infty$, with $\frak S_1$ the trace class, $\frak S_2$ the Hilbert--Schmidt class, and $\frak S_\infty$ the space of compact operators. Using a singular-value decomposition, any normal operator $\gamma \in \frak S_\alpha$ can be written as
$$
\gamma = \sum_j n_j u_j \otimes \ov u_j,
$$
where $(n_j)_j \in \ell^\alpha$ are the eigenvalues of $\gamma$, and the functions $u_j(x)$ form an orthonormal basis in $L^2$. The \textbf{density} of $\gamma$ is defined by
\be\lb{density}
\rho_{\gamma}(x) = \sum_j n_j |u_j(x)|^2.
\ee

\begin{corollary}\lb{cor_liouville} Consider the Liouville-type equation
\be\lb{random_liouville}
i \partial_t f_\omega = [-\Delta+V_\omega(t), f_\omega] + iF_\omega, f_\omega(0)=f_0(\omega(0)) \in L^1_y \B(L^2_x),
\ee
where $f_\omega$ and $F_\omega$ are random time-dependent families of self-adjoint trace-class operators, and $F_\omega$ is adapted to $(\mathcal{F}_t)_{t\geq 0}$. Then, under the hypotheses of Theorem \ref{main_result},
$$
\big\|\E \rho_{f(t)}\big\|_{L^1_t L^{\3, 1}_x} \les \E \|\rho_{f_0}\|_{L^1_x} + \int_\R \|\E  \rho_{F(t)}\|_{L^1_x} \dd t = \|\E f_0\|_{\frak S_1} + \int_\R \|\E F(t)\|_{\frak S_1} \dd t.
$$
\end{corollary}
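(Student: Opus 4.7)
My plan is to reduce the trace-class statement to a weighted sum of the rank-one Strichartz estimate of Theorem \ref{main_result} via spectral decomposition. Let $U_\omega(t,s)$ denote the unitary propagator of $-\Delta + V_\omega(t)$. Duhamel's formula for the Liouville equation gives
$$
f_\omega(t) = U_\omega(t,0) f_0 U_\omega(t,0)^* + \int_0^t U_\omega(t,s) F_\omega(s) U_\omega(t,s)^* \dd s.
$$
Spectrally decomposing $f_0 = \sum_j n_j |u_j\rangle\langle u_j|$ (treating the positive case first; the signed case follows by writing $f_0 = f_0^+ - f_0^-$), the homogeneous piece evolves as $\sum_j n_j |\psi_j^\omega(t)\rangle\langle \psi_j^\omega(t)|$, where each $\psi_j^\omega$ solves the linear equation (\ref{random_lin}) with data $u_j$ and zero source, and the density is $\rho^{\mathrm{hom}}_{f_\omega(t)}(x) = \sum_j n_j |\psi_j^\omega(x,t)|^2$.

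The analytic core is the Lorentz--H\"older inequality $\||\psi|^2\|_{L^{3,1}_x} \les \|\psi\|_{L^{6,2}_x}^2$, which combined with the Strichartz bound $\E \|\psi_j^\omega\|_{L^2_t L^{6,2}_x}^2 \les \|u_j\|_{L^2_x}^2$ of Theorem \ref{main_result} and the orthonormality of $\{u_j\}$ yields
$$
\E \|\rho^{\mathrm{hom}}_{f_\omega}\|_{L^1_t L^{3,1}_x} \les \sum_j n_j \|u_j\|_{L^2_x}^2 = \tr f_0.
$$
For $\omega(0)$-dependent initial data one first conditions on $\omega(0)$ to get $\E \|\rho^{\mathrm{hom}}\|_{L^1_t L^{3,1}_x} \les \E\, \tr f_0 = \tr(\E f_0) = \|\E f_0\|_{\frak S_1}$ (trace commutes with expectation for positive operators), and Jensen's inequality $\|\E \rho^{\mathrm{hom}}\|_{L^1_t L^{3,1}_x} \leq \E\|\rho^{\mathrm{hom}}\|_{L^1_t L^{3,1}_x}$ finishes the homogeneous contribution. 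For the source, fix $s$, decompose $F_\omega(s) = \sum_k m_k^\omega(s) |v_k^\omega(s)\rangle\langle v_k^\omega(s)|$, and condition on $\mc F_s$, which freezes $F_\omega(s)$. By the Markov and stationarity properties of $(X_t)$, the post-$s$ potential process is a fresh instance of the setup with new initial state $\omega(s)$; applying Theorem \ref{main_result} conditionally to each $v_k^\omega(s)$, summing with weights $m_k^\omega(s)$, taking unconditional expectation, and moving $\int_0^t \dd s$ outside $L^1_t$ by Minkowski produces the source contribution, bounded by $\int_0^\infty \|\E F(s)\|_{\frak S_1} \dd s$.

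The main obstacle is this conditional restart for the source: invoking Theorem \ref{main_result} with random, $\mc F_s$-adapted initial data $v_k^\omega(s)$ and with the potential process re-initialized at time $s$ requires careful conditioning together with the Markov property to decouple the post-$s$ evolution from the pre-$s$ randomness that generated $F_\omega(s)$. Once this restart argument is in place, the remainder is routine summation of rank-one Strichartz bounds, Lorentz--H\"older, and Jensen/Minkowski manipulations.
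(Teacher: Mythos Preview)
Your proposal is correct and follows the approach the paper itself indicates: the paper does not give an explicit proof of this corollary but describes it as a ``straightforward generalization'' of Theorem~\ref{main_result} to trace-class operators via singular-value decomposition, which is precisely your spectral-decomposition-plus-rank-one-Strichartz argument. Your identification of the Markov restart for the source term as the only nontrivial point is accurate, and the Lorentz--H\"older step $\||\psi|^2\|_{L^{3,1}_x} \les \|\psi\|_{L^{6,2}_x}^2$ is exactly what links the rank-one Strichartz norm to the density norm.
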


Related results for Hilbert--Schmidt operators and more general Schatten--von Neumann classes will be presented in another paper.

\subsection{Motivation and history of the problem}

The stochastic differential equation (\ref{random_nonlin}) is used to model and describe various physical phenomena, such as the following ones:

I. Equations such as Eq. \eqref{random_nonlin} describe the mean-field limit of systems of $N$ interacting bosons interacting through two-body potentials $\epsilon\, \chi$ and subject to a time-dependent external potential $V_\omega$; (number $N$ of particles going to infinity, coupling constant $\epsilon$ going to zero, with $N\cdot g$ kept constant).

II. Consider a heavy molecule with internal structure coupled to a noisy environment. The electrons in its outer shells may then satisfy the linear version ($\varepsilon=0$) of equation \eqref{random_nonlin}.

If the molecule interacts with a reactive material/medium then the effective dynamics of the electrons is influenced by the material, and the non-linear term in the NLS \eqref{random_nonlin} may represent a natural way of capturing the back-reaction of the material.
This model allows some understanding of the effect of the material/medium on the life-time of the molecule, unstable because of the noisy environment.

III. The non-linear Schr\"{o}dinger equation with random potential $V_\omega$ is sometimes used in nonlinear optics: The noise in one-dimensional nonlinear optical fibres can be ``time-dependent'' because of impurities, with the spatial $z$-coordinate now playing the role of ``time'' in the NLS \eqref{random_nonlin}.

IV. From a purely theoretical perspective, it is of interest to estimate the effect of a non-linearity on the lifetime of unstable states, namely bound states of Schr\"{o}dinger operators rendered unstable by time-dependent noise.

An early study of stochastic nonlinear evolution equations belongs to Wadati \cite{wadati}, who proved that solitons of the completely integrable Korteweg--de Vries equation are preserved, up to isometries, after adding external Gaussian random noise to the equation.

The linear version of our problem, equation (\ref{random_lin}), 
was first studied by Pillet in \cite{pillet2}, \cite{pillet}. \cite{pillet2} established a general framework, including the Feynman--Kac-type formula (\ref{feynman}) and several other identities we use throughout the paper. \cite{pillet} showed that wave operators are unitary with probability one, if the random perturbation essentially lives in a compact region and has a certain amount of smoothness.

Cheremshantsev \cite{cherem}, \cite{cherem2} extended Pillet's wave operator results to the case of a potential undergoing Brownian motion on the whole space. The potential $V \in L^2(\R^\3)$ was assumed to decay like $|x|^{-5/2}$ at infinity.

Erdogan--Killip--Schlag \cite{eks} proved that the energy of solutions to Schr\"{o}dinger's equation on the torus with a random, time-dependent potential grows on average at a rate of $\langle t \rangle^{1/2}$.

In \cite{beso}, Strichartz estimates --- among other results --- have been established for short-range potentials $V \in L^{\3/2, 1}$, translated by sufficiently rapid Brownian paths in $\R^\3$.

A related, but distinct problem is to consider random, spatially uncorrelated time-dependent potentials without decay at spatial infinity. This problem has been studied with $\mathbb{R}^{\3}$ replaced by a lattice, i.e.,\ for the Anderson model with time-dependent random potential. Solutions exhibit diffusive behavior, spreading at a rate of $\sqrt t$, instead of scattering behavior, as in our case.

Ovchinnikov and \'{E}rikhman \cite{over} showed this to be true for a Gaussian white noise potential. Kang and Schenker \cite{kasc} obtained similar results for a more general class of (Markovian) random potentials.
Che\-rem\-shan\-tsev \cite{cherem\3}, \cite{cherem4} proved that the averaged momentum exhibits diffusive scaling, up to logarithmic factors, for a general random potential. For certain models of a quantum particle with an internal degree of freedom hopping on a three- or higher-dimensional lattice and interacting with a simple quantum-mechanical thermal reservoir, a central limit theorem, diffusive scaling for the mean-square displacement and a Maxwellian distribution law have been established in \cite{fdp, fd, dk}.

%
%

Many results are known for nonlinear Schr\"{o}dinger equations with additive or multiplicative white noise terms. This problem was considered in \cite{bang1}, \cite{bang2}, \cite{bang\3}, \cite{bang4}, \cite{bouard1}, \cite{bouard2}, \cite{bouard\3}, \cite{gautier}, \cite{barbu}, \cite{barbu2}, and many other papers. The authors studied coherence times, the global existence and uniqueness of solutions, blow-up phenomena, and the large deviation principle.


Our linear results can be compared with those of Frank--Lewin--Lieb--Seiringer \cite{FLLS}, who prove the following: consider the Liouville-type equation
\be\begin{aligned}\lb{equation}
& i \partial_t \gamma(t)=[-\Delta ,\gamma(t)]+i \Gamma(t),\ \gamma(0)=\gamma_0 \in \B(L^2).
\end{aligned}\ee
Here the source term $\Gamma(t)$ is also a family of self-adjoint operators on $L^2(\mathbb{R}^d)$, $d \geq 1$, and $\gamma(t)$ is a time-dependent family of normal operators in some Schatten--von Neumann class $\mathfrak{S}^{\frac{2q}{q+1}}$, for $q$ as below.

The density (\ref{density}) satisfies the following dispersive bound:
\begin{theorem}[\cite{FLLS}]
Assume that $p,q,d \ge1$ satisfy 
$$
1\leq q\le 1+2/d \quad \text{and}\quad 2/p+d/q=d,
$$
and let $\gamma(t)$ solve the Liouville equation (\ref{equation}). Then
$$
\|\rho_{\gamma(t)}\|_{L_t^p L_x^q} \les \|\gamma_0\|_{\mathfrak{S}^{\frac{2q}{q+1}}} + \int_\R \|\Gamma(t)\|_{\mathfrak{S}^{\frac{2q}{q+1}}} \dd t.
$$
\end{theorem}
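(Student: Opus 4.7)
The plan is to recast (\ref{equation}) in Duhamel form, reduce to a homogeneous estimate, dualize to a Schatten-class bound for a conjugated Schr\"{o}dinger operator, verify two endpoints, and interpolate.

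First I would use the mild solution
$$
\gamma(t) = e^{-it\Delta}\gamma_0 e^{it\Delta} + \int_0^t e^{-i(t-s)\Delta}\Gamma(s) e^{i(t-s)\Delta}\,\dd s
$$
and observe that, once the homogeneous estimate
$$
\|\rho_{e^{-it\Delta}\gamma_0 e^{it\Delta}}\|_{L^p_t L^q_x} \les \|\gamma_0\|_{\frak S^{2q/(q+1)}}
$$
is in hand, the inhomogeneous piece follows by Minkowski in $s$ combined with the Christ--Kiselev lemma to enforce the causal cutoff $s \leq t$, which is legitimate since $p>1$ throughout the admissible range $1 < q \leq (\3+2)/\3$; the boundary case $q=1$, $p=\infty$ is degenerate and is handled directly by conservation of trace.

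Next I would dualize. Decomposing $\gamma_0 = \sum_j n_j |u_j\rangle\langle u_j|$ with $(u_j)$ orthonormal and $(n_j) \in \ell^{2q/(q+1)}$, the density is $\rho_{\gamma(t)}(x) = \sum_j n_j |e^{-it\Delta}u_j(x)|^2$. Testing against $V \in L^{p'}_t L^{q'}_x$ and using the identity
$$
\int_{\R\times\R^\3} V(t,x)\,|e^{-it\Delta}u_j(x)|^2\,\dd x\,\dd t = \big\langle u_j,\; T(V)\,u_j\big\rangle,\quad T(V) := \int_\R e^{it\Delta}V(t,\cdot) e^{-it\Delta}\,\dd t,
$$
the desired inequality is equivalent, by a $TT^*$ argument, to the Schatten-class bound
$$
\|T(V)\|_{\frak S^{2q/(q-1)}} \les \|V\|_{L^{p'}_t L^{q'}_x}.
$$

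I would then verify the two endpoints. At $q=1$ (so $p'=1$, $q'=\infty$, dual Schatten index $\infty$), the bound $\|T(V)\|_{\B(L^2)} \leq \|V\|_{L^1_t L^\infty_x}$ is immediate from unitarity of $e^{it\Delta}$. At the hard endpoint $q=(\3+2)/\3$ (so $p=q$, dual Schatten index $\3+2$), evaluating $\langle u,T(V)u\rangle$ on a single vector reduces, via squaring, to the standard Strichartz estimate $\|e^{-it\Delta}u\|_{L^{2(\3+2)/\3}_{t,x}}^2 \les \|u\|_{L^2}^2$. Upgrading this rank-one bound to the full $\frak S^{\3+2}$ bound on $T(V)$ for general $V \in L^{(\3+2)/2}_{t,x}$ is the crux, and I would handle it by Stein's complex interpolation applied to the analytic family $z \mapsto T(|V|^{z}\sgn V)$, interpolating between an operator-norm bound (at one boundary of the strip, derived from the $q=1$ endpoint) and a Hilbert--Schmidt bound (at the other boundary, obtained from Plancherel combined with the $L^2_{t,x}$ Strichartz estimate). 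With the two endpoints established, Stein interpolation between $(L^1_tL^\infty_x, \frak S^\infty)$ and $(L^{(\3+2)/2}_{t,x}, \frak S^{\3+2})$ yields every admissible pair $2/p+\3/q=\3$, $1 \leq q \leq (\3+2)/\3$.

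The main obstacle is clearly the Schatten endpoint $\frak S^{\3+2}$. A naive triangle inequality on $\rho = \sum_j n_j|e^{-it\Delta}u_j|^2$ only yields $\|\rho\|_{L^{(\3+2)/\3}_{t,x}} \les \sum_j n_j = \|\gamma_0\|_{\frak S^1}$, which is much weaker than the target norm $\|\gamma_0\|_{\frak S^{(\3+2)/(\3+1)}}$. The improvement reflects genuine cancellation due to the orthogonality of the $u_j$, invisible to pointwise-in-$x$ arguments; capturing it rigorously requires the complex interpolation in the Schatten scale outlined above, and this is precisely the content of the Frank--Lewin--Lieb--Seiringer argument.
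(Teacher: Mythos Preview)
The paper does not contain a proof of this theorem: it is quoted verbatim from \cite{FLLS} as a known result, purely for comparison with the authors' own Corollary~\ref{cor_liouville}. There is therefore no ``paper's own proof'' to compare your proposal against.

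For what it is worth, your sketch is a faithful outline of the actual argument in \cite{FLLS}: Duhamel reduction to the homogeneous case, duality to the Schatten bound $\|T(V)\|_{\frak S^{2q/(q-1)}} \les \|V\|_{L^{p'}_t L^{q'}_x}$ for $T(V)=\int e^{it\Delta}V(t)e^{-it\Delta}\dd t$, and Stein complex interpolation for the analytic family $z\mapsto T(|V|^z\sgn V)$ between the trivial $\frak S^\infty$ endpoint and a $\frak S^2$ bound. One point to watch: your description of the Hilbert--Schmidt endpoint (``Plancherel combined with the $L^2_{t,x}$ Strichartz estimate'') is a little glib; in \cite{FLLS} the $\frak S^2$ bound on the appropriate boundary line comes from the explicit dispersive kernel $|e^{it\Delta}(x,y)|\les |t|^{-\3/2}$ together with Hardy--Littlewood--Sobolev in $t$, not from a Strichartz inequality per se. Also, the inhomogeneous term in \cite{FLLS} is handled directly by Minkowski in $s$ (the Schatten right-hand side is already an $L^1_t$ norm), so Christ--Kiselev is not needed.
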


This is further improved to $1\leq q< 1+2/(d-1)$ in \cite{FrSa}.

Corollary \ref{cor_liouville} intersects with the non-dispersive endpoint case $(p=\infty,\ q=1)$ in \cite{FLLS}, concerning the conservation of density. In addition to this conservation law, however, we have trace-class dispersive estimates as well.




\subsection{Notations} $a \les b$ means that $|a| \leq C |b|$ for some constant $C$.

We denote various positive constants, not always the same, by $C$.

We denote Lorentz spaces by $L^{p, q}$; see \cite{bergh} for the definition and properties.

Let $f_1 \otimes f_2$ mean $f_1(x_1) f_2(x_2)$.

\subsection{Conditions on the potential}

For the Schr\"{o}dinger-type equation (\ref{average}) and throughout the paper, the sharp condition for Strichartz estimates on the potential $V$ is given by real interpolation:
\be\lb{weak_cond}
V \in (C_y L^1_x, C_y L^\infty_x)_{1/\3, 1}.
\ee
However, such interpolation spaces do not have a simple concrete representation, see Note 5.8.6, p.\;1\30, in \cite{bergh}, as well as the original reference \cite{cwikel}.

A simpler, but too strong replacement condition would be
\be\lb{strong_cond}
V \in L^{\3/2, 1}_x C_y.
\ee
Indeed, since $L^1_x C_y \subset C_y L^1_x$ and $L^p_x C_y \subset C_y L^p_x$ for arbitrarily large $p$, by interpolation we get that
$$
L^{\3/2, 1}_x C_y \subset (C_y L^1_x, C_y L^\infty_x)_{1/\3, 1}.
$$
All results in this paper are simpler to prove assuming (\ref{strong_cond}), but (\ref{strong_cond}) does not hold when $V_\omega = V(x-B_t)$, where $B_t$ is Brownian motion, an application in which we are interested.

Results where the norms are in the opposite order ($x$ on the inside, $y$ on the outside) are slightly harder to prove, but have more general applicability. In this paper, to keep our proofs reasonably simple, we use the following condition: $V \in C_y (L^1_x \cap L^\infty_x)$. This suffices for most applications we wish to model through (\ref{random_nonlin}).

In general, the same proofs work when $V \in C_y (L^p_x \cap L^q_x)$ for $1 \leq p < \3/2 < q \leq \infty$, but lead to a decay rate of $\min(t^{-\3/(2p)}, t^{-\3/(2q)})$ instead of $\langle t \rangle^{-\3/2}$.

We also consider the following norm, for square roots of $V$:
$$
\|v\|_{\v} = \sum_{n \in \Z} \sup_{y \in Y} \|\chi_{|v| \in [2^n, 2^{n+1}]} v\|_{L^\3_x}.
$$
The finiteness of this norm means that $v(x, y)$ is uniformly, in some sense, in $L^{\3,1}_x$ for all $y \in Y$.

\subsection{Outline of proofs}\subsubsection{Reduction to a deterministic equation}
A crucial idea, introduced by Pillet in \cite{pillet}, is that if $\psi$ satisfies a stochastic equation then its average will satisfy some related deterministic equation.

This technique is analogous to the Feynman--Kac formula. In its simplest form, the Feynman--Kac formula states that, given a Brownian motion $B_t$ and a function $\phi \in L^p(\R^d)$, $\phi$'s average value when sampled at $B_t$
$$
\gamma(x, t) := \E\big(\phi(x-B_t)\big)
$$
will satisfy the parabolic equation
$$
\partial_t \gamma = \Delta \gamma,\ \gamma(0) = \phi.
$$

In general, a random term in the stochastic equation will lead to a dissipative term in the deterministic equation satisfied by the average of the solution to the stochastic equation. For equation (\ref{random_nonlin}), the randomness leads to the dissipative term $-A$ in (\ref{average}) and (\ref{average_liouville}).

For the Schr\"{o}dinger-type equation (\ref{random_lin}), consider the average
$$
g(x, y, t):=\E\big\{\psi_\omega(x, t) \mid X_t = \omega(t) = y\big\}.
$$
Then $g$ will satisfy the mixed-type (Schr\"{o}dinger and parabolic) equation
\be\lb{average}\begin{aligned}
i \partial_t g - \Delta_x g + i A g + V g = G,\ g(0) = \E (\psi_0(x, \omega) \mid \omega(0) = y),
\end{aligned}\ee
where $G$ is an appropriate average of the inhomogenous term $\Psi$:
$$
G(x, y, t):=\E\big\{\Psi_\omega(x, t) \mid X_t = y\big\}.
$$
We prove dispersive estimates for $g$, see Proposition \ref{prop26}. However, even though such bounds are useful, they do not directly lead to similar bounds on $\psi_\omega$ itself, because when averaging $\psi_\omega$ to obtain $g$ there will be cancellations.

Thus Pillet, and we following him, further considered the average of the density matrix $\psi_\omega \otimes \ov \psi_\omega$:
\be\lb{def_avg}
f(x_1, x_2, y, t):=\E\big\{\psi_\omega(x_1, t) \ov \psi_\omega(x_2, t) \mid X_t = \omega(t) = y\big\}.
\ee
The density matrix $\psi_\omega \otimes \ov \psi_\omega = \psi_\omega(x_1, t) \ov \psi_\omega(x_2, t)$ satisfies the Liouville equation
\be\lb{lio}
\begin{aligned}
&i \partial_t [\psi_\omega \otimes \ov \psi_\omega] + (- \Delta_{x_1} + \Delta_{x_2} + V_\omega \otimes 1 - 1 \otimes V_\omega) [\psi_\omega \otimes \ov \psi_\omega] = \Psi_\omega \otimes \ov \psi_\omega - \psi_\omega \otimes \ov \Psi_\omega,
\end{aligned}
\ee
where $[\psi_\omega \otimes \ov \psi_\omega](0) = \psi_0(x_1, \omega(0)) \ov \psi_0(x_2, \omega(0))$.

The density matrix $\psi_\omega \otimes \ov \psi_\omega$ is Hermitian and positive semi-definite, hence so will be its average $f$, which satisfies the following mixed-type (Liouville and dissipative) equation:
\be\lb{average_liouville}\begin{aligned}
&i \partial_t f - \Delta_{x_1} f + \Delta_{x_2} f + i A f + (V \otimes 1 - 1 \otimes V) f = F,\\
&f(0) = \E \big\{\psi_0(x_1, \omega) \ov \psi_0(x_2, \omega) \mid \omega(0) = y\big\}.
\end{aligned}\ee
The source term $F$ comes from averaging the tensor product of $\Psi_\omega$ and of the solution $\psi_\omega$:
$$
F(x_1, x_2, y, t):=\E\big\{\Psi_\omega(x_1, t) \ov \psi_\omega(x_2, t) - \psi_\omega(x_1, t) \ov \Psi_\omega(x_2, t) \mid X_t = y\big\}.
$$
Due to the positive semi-definiteness of the density matrix, bounds on $f$ will translate into average (probabilistic) bounds for the density matrix.

Both the density matrix, being a rank-one operator, and its average $f$ belong to the trace class $\frak S_1$; for more details, see Section \ref{sec_trace}. The density matrices' traces are given by
$$
\tr (\psi_\omega(t) \otimes \ov \psi_\omega(t)) = \int_{\R^\3} |\psi_\omega(x, t)|^2 \dd x,
$$
for each $t$; respectively, for each $y$ and $t$,
$$
\tr f(y, t) = \int_{\R^\3} f(x, x, y, t) \dd x = \int_{\R^\3} \E\big\{|\psi_\omega(x, t)|^2 \mid X_t=y\big\} \dd x = \E \Big\{\int_{\R^\3} |\psi_\omega(x, t)|^2 \dd x \mid X_t = y \Big\}.
$$

Due to the unitarity of the evolution, $\tr (\psi_\omega \otimes \ov \psi_\omega) = \|\psi_\omega(t)\|_{L^2}^2 = \|\psi_0(x, \omega(0))\|_{L^2_x}^2$ is constant for each trajectory $\omega \in \Omega$ and likewise for the average
$$
\int_Y \tr f(y, t) \dd y = \|\psi_0\|_{L^2_{y, x}}^2.
$$

However, in order to bound the nonlinear term's contribution, unitarity is not enough; we need to prove that the solution decays in some sense. To obtain dispersive estimates we employ the following Feynman--Kac-type formula, proved by Pillet \cite{pillet2}:
\begin{proposition} For solutions $\psi_\omega$ to (\ref{random_lin}), $f$ defined by (\ref{def_avg}), and any $t \geq 0$
\be\lb{feynman}
\E \int_{\R^\3} |\psi_\omega(x, t)|^2 |V_\omega(x, t)| \dd x = \int_{Y \times \R^\3} |V(x, y)| f(x, x, y, t) \dd x \dd y.
\ee
\end{proposition}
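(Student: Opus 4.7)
The plan is to prove the identity as a direct consequence of the tower property of conditional expectation, using only the definition $V_\omega(x,t)=V(x,X_t(\omega))$ from (\ref{def_v}) and the definition of $f$ in (\ref{def_avg}).

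First I would use the definition of $V_\omega$ to rewrite the integrand on the left-hand side as $|\psi_\omega(x,t)|^2\,|V(x,X_t(\omega))|$. Since this is non-negative, Tonelli's theorem permits interchanging $\E$ with the spatial integral, reducing the left-hand side to
\[
\int_{\R^\3} \E\bigl\{|\psi_\omega(x,t)|^2\,|V(x,X_t)|\bigr\}\,\dd x.
\]
For each fixed $x$, I would then apply the tower property, $\E\{Z\}=\E\{\E\{Z\mid X_t\}\}$, combined with the fact that $V(x,X_t(\omega))$ is $\sigma(X_t)$-measurable and therefore pulls out of the inner conditional expectation:
\[
\E\bigl\{|\psi_\omega(x,t)|^2\,|V(x,X_t)|\,\bigm|\,X_t=y\bigr\} = |V(x,y)|\cdot\E\bigl\{|\psi_\omega(x,t)|^2\,\bigm|\,X_t=y\bigr\}.
\]
By the diagonal ($x_1=x_2=x$) specialization of (\ref{def_avg}), the remaining conditional expectation is exactly $f(x,x,y,t)$.

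Finally, I would write the outer expectation as an integral over $Y$ against the stationary distribution of $X_t$ (which by the stationarity assumption does not depend on $t$, and with respect to which $f(x,x,\cdot,t)$ is by construction defined), and then apply Tonelli once more to rearrange the joint integral into $\int_{Y\times\R^\3}|V(x,y)|\,f(x,x,y,t)\,\dd x\,\dd y$, as claimed.

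Conceptually the result is immediate; the main obstacle is purely measure-theoretic bookkeeping. One must justify the existence of a regular conditional distribution for $X_t$ (which is standard since $Y$ is compact, hence Polish), verify joint measurability of $\psi_\omega(x,t)$ in $(x,\omega)$ so that Tonelli applies, and confirm that the symbol $\dd y$ in the statement refers precisely to the distribution of $X_t$ --- i.e.\ the stationary measure of the Markov process --- so that the outer expectation in the tower step is correctly expressed as an integral over $Y$. Under the standing assumptions of the paper each of these points is routine, and no further estimate on $V$, $\psi_\omega$ or the process $(X_t)$ beyond those already imposed is needed.
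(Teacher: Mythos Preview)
The paper does not supply its own proof of this proposition; it attributes the identity to Pillet \cite{pillet2} and simply quotes it. Your argument is correct and is exactly the natural one: unwind the definition of $V_\omega$, apply Tonelli (the integrand is non-negative), condition on $X_t$ and use the tower property together with $\sigma(X_t)$-measurability of $|V(x,X_t)|$, and identify the resulting conditional expectation with the diagonal of $f$ from (\ref{def_avg}).

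Your caveat about the meaning of $\dd y$ is well placed and is the only genuine point to check. The paper's own usage confirms your reading: immediately before the proposition it writes $\int_Y \tr f(y,t)\,\dd y = \|\psi_0\|_{L^2_{y,x}}^2$, which is precisely $\E\|\psi_\omega(t)\|_{L^2_x}^2$, so integration in $y$ is against the law of $X_t$. With that understood, the outer expectation in the tower step becomes the $Y$-integral and the identity follows. The measure-theoretic hygiene you list (regular conditional distribution on the compact space $Y$, joint measurability of $\psi_\omega$) is routine under the standing hypotheses.
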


After integrating in time,
\be\lb{feynman'}
\E \int_{\R^{\3+1}} |\psi_\omega(x, t)|^2 |V_\omega(x, t)| \dd x \dd t = \int_{Y \times \R^{\3+1}} |V(x, y)| f(x, x, y, t) \dd x \dd y \dd t.
\ee
Hence the finiteness of the right-hand side in (\ref{feynman}) will imply a weighted space-time bound on the average trace of the density matrix.

Strichartz and pointwise decay estimates for $\psi_\omega$ will follow from it, by bootstrapping in the Duhamel formula
$$
\psi_\omega(t) = e^{-it\Delta} \psi_0 + i\int_0^t e^{-i(t-s)\Delta} V_\omega \psi_\omega(s) \dd s.
$$

The problem hence reduces to obtaining dispersive estimates for equation (\ref{average_liouville}):
$$\begin{aligned}
&i \partial_t f - \Delta_{x_1} f + \Delta_{x_2} f + i A f + (V \otimes 1 - 1 \otimes V) f = F,\\
&f(0) = \psi_0(x_1, \omega(0)) \ov \psi_0(x_2, \omega(0)).
\end{aligned}$$

The validity of dispersive estimates for (\ref{average_liouville}) depends on the spectrum of the evolution operator
\be\lb{ham}
H=-\Delta_{x_1} + \Delta_{x_2} + iA + (V \otimes 1 - 1 \otimes V).
\ee

Absent dissipation, represented here by $-A$, this operator may have bound states corresponding to those of $-\Delta+V$. However, Pillet \cite{pillet} proved that the operator (\ref{ham}) has no eigenstates; also see Lemma \ref{invertibility_liouville} for a similar result in our context.

Indeed, under Assumption \ref{nontrivial} on the Markov process generator $A \geq 0$, which guarantees the non-triviality of the random part of the potential, all the bound states present in the deterministic case become exponentially decaying complex resonances in the positive half-plane.

Getting rid of bound states simplifies the analysis --- and is one reason to consider random perturbations. On the other hand, equation (\ref{average_liouville}) becomes more complicated because $H$ is no longer the product of two commuting terms corresponding to the two space variables, unlike in the Liouville equation (\ref{lio}).


\subsubsection{A model case: the scalar equation} Consider the scalar Schr\"{o}dinger-type equation (\ref{average}) first, which helps understanding (\ref{average_liouville}). Here the perturbed and free evolution operators, acting on functions on $\R^\3 \times Y$, are
$$
-\Delta + V(x, y) + iA \text{ and } -\Delta + iA.
$$
The Laplacian is in $x \in \R^\3$ and $A$ acts on $y \in Y$. The free resolvent and perturbed resolvents are
$$
R^{iA}_0(\lambda) := (-\Delta + i A -\lambda)^{-1},\ R^{iA}_V(\lambda) := (-\Delta + i A + V -\lambda)^{-1}.
$$

The dissipative term $iA$ breaks the time symmetry. Infinitely many horizontal lines in the upper half-plane $\Im \lambda \geq 0$, more precisely those given by $\Im \lambda \in \sigma (A)$, will be in the spectrum of $-\Delta+iA$. Hence the spectral analysis takes place in the lower half-plane.

Consider a symmetric decomposition of the potential $V = v_1 v_2$, where each of $v_1$ and $v_2$ has size $\sim |V|^{1/2}$, but need not commute. Let the symmetric Kato--Birman operator be
\be\lb{katobirman}
KB(\lambda): = I + v_2 R^{iA}_0(\lambda) v_1.
\ee
This perturbation of the identity is compact on $L^p_y L^2_x$, $1 \leq p \leq \infty$, for $\Imim \lambda \leq 0$, by Lemma \ref{lemma_compact}. The resolvent identity can be written in symmetric form as
\be\lb{symmetric_resolvent}
(I + v_2 R^{iA}_0(\lambda) v_1) (I - v_2 R^{iA}_V(\lambda) v_1) = I.
\ee

Due to the dissipativity of $-A$, $KB(\lambda)$ is invertible whenever $\Im \lambda \leq 0$. This is proved by Fredholm's alternative, see Proposition \ref{fredholm} and Lemma \ref{invertibility}. If the equation 
$$
f = - v_2 (-\Delta+iA+V-\lambda)^{-1} v_1 f
$$
had some nonzero solution $f \in L^2$, $f \ne 0$, by Agmon's argument the solution must have the form $f(x, y) = v_2(x, y) h(y) g(x)$, where $h$ is the ground state of $A$ and $g(x)$ is a solution of
$$
(-\Delta+V(x, y)-\lambda) h(y) g(x) = 0.
$$
But then the non-triviality Assumption \ref{nontrivial} implies that $g$ vanishes on an open set and the existence of such bound states is precluded by unique continuation.

Furthermore, the resolvent is the Fourier transform of the evolution operator:
$$
\int_0^\infty e^{it(-\Delta_x +i A)} e^{-it\lambda} \dd \lambda = -(-i\Delta-A-i\lambda)^{-1} = i (-\Delta+iA-\lambda)^{-1} = i R^{iA}_0(\lambda)
$$
and
$$
\int_0^\infty e^{it(-\Delta_x +i A+V)} e^{-it\lambda} \dd \lambda = i R^{iA}_V(\lambda).
$$
The Kato--Birman operator is the Fourier transform of the integral kernel
$$
\big(I + KB\big)^\vee(t) = \one + i T(t),\ \one = \delta_{t=0} I,\ T(t) := \chi_{t \geq 0}(t) v_2 e^{it(-\Delta+iA)} v_1
$$
and likewise in the perturbed case:
$$
\big(I-v_2 R^{iA}_V(\lambda)v_1\big)^\vee(t) = \one - i T_V(t),\ T_V(t) := \chi_{t \geq 0}(t) v_2 e^{it(-\Delta+iA+V)} v_1.
$$
Duhamel's identity for equation (\ref{average}) is
\be\lb{duhamel_v}
g(t) = e^{it(-\Delta+iA)} g(0) -i \int_0^t e^{i(t-s)(-\Delta+iA)} (G(s)-Vg(s)) \dd s.
\ee
Setting $G=0$, the Fourier transform of (\ref{duhamel_v}) is the resolvent identity:
$$
R^{iA}_V = R^{iA}_0 - R^{iA}_0 V R^{iA}_V.
$$
The symmetric resolvent identity (\ref{symmetric_resolvent}) is the Fourier transform of the following symmetric version of Duhamel's identity:
\be\lb{duhamel_T}
(\one + i T) \ast (\one - iT_V) = \one.
\ee

This correspondence is useful both ways. Starting from known properties of the free evolution, we obtain the compactness of the free resolvent in Lemma \ref{lemma_compact}. Conversely, an analysis of the perturbed resolvent is crucial in our proof of the dispersive estimates, which is based on a variant of Wiener's Theorem, Theorem \ref{thm:Wiener}.

Given a Banach space $X$, for some arbitrary $p>1$ let
$$
\W_X:=\langle t \rangle^{-p} L^\infty_t \B(X).
$$
$\W_X$ is a Banach algebra, with the algebra operation given by convolution in the $t$ variable and composition of operators in $\B(X)$. To $\W_X$ we can adjoin an identity element, $\one = \delta_{t=0} I$, to form the unital algebra $\ov \W_X = \W_X \oplus \C \one$.

Elements of $\W_X$ are families of $X$-bounded operators $T=T(t)$, indexed by $t \in \R$. If $T \in \W_X$, then $T(\cdot-\delta) \in \W_X$ as well, for any $\delta \in \R$, albeit with non-uniformly bounded norm.

\begin{theorem}[Wiener's Theorem] \label{thm:Wiener}
Fix $p>1$ and let $T$ be an element of $\W_X$ with the property that
\be
\begin{aligned}
\label{translation} &\lim_{\delta \to 0}
  \|T - T(\cdot-\delta)\|_{\W_X} = 0
\end{aligned}
\ee
or more generally for some $N \geq 1$
\be\tag{\ref{translation}'}
\begin{aligned}
\label{translation_high} &\lim_{\delta \to 0}
  \|T^N - T^N(\cdot-\delta)\|_{\W_X} = 0,
\end{aligned}
\ee
where $T^N$ refers to the $N$-th power of $T$ under the $\W_X$ algebra operation.

If $I + \widehat{T}(\lambda)$ is an invertible element of $\B(X)$ for every
$\lambda \in \R$, then $\one + T$ possesses an inverse in
$\overline{\W}_X$ of the form $\one + S$, with $S \in \W_X$.
\end{theorem}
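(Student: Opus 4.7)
My strategy is to adapt Wiener's classical inversion method to the operator-valued weighted algebra $\mathcal{W}_X$: establish decay and continuity of $\widehat T(\lambda)$ on $\mathbb R$, invert $I + \widehat T(\lambda)$ locally by Neumann series, and assemble a global time-domain inverse via a partition of unity in the frequency variable $\lambda$.

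First, since $p>1$, one has the continuous embedding $\mathcal W_X \hookrightarrow L^1_t\mathcal B(X)$, so the Banach-valued Riemann--Lebesgue lemma gives $\widehat T \in C_0(\mathbb R;\mathcal B(X))$. In particular, $\|\widehat T(\lambda)\|_{\mathcal B(X)} \to 0$ as $|\lambda|\to\infty$, so $I+\widehat T(\lambda)$ is invertible by Neumann series for $|\lambda|$ large. Combined with pointwise invertibility on all of $\mathbb R$ (hypothesis), norm-continuity of $\widehat T$, and compactness of the remaining interval, this yields $M := \sup_\lambda\|(I+\widehat T(\lambda))^{-1}\|_{\mathcal B(X)} < \infty$.

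Next, fix $R$ so large that $\|\widehat T(\lambda)\|<1/2$ for $|\lambda|>R$, and cover $[-R-1,R+1]$ by finitely many intervals $U_j$ centered at points $\lambda_j$ so small that $\|\widehat T(\lambda)-\widehat T(\lambda_j)\|<1/(2M)$ on $U_j$. Together with $\{|\lambda|>R\}$, this is a finite open cover of $\mathbb R$; let $\{\phi_j\}\cup\{\phi_\infty\}$ be a smooth partition of unity subordinate to it. On each $U_j$, the Neumann series
$$(I+\widehat T(\lambda))^{-1}=\sum_{k\geq 0}(-1)^k\bigl[(I+\widehat T(\lambda_j))^{-1}(\widehat T(\lambda)-\widehat T(\lambda_j))\bigr]^k(I+\widehat T(\lambda_j))^{-1}$$
converges geometrically, with an analogous Neumann series in $\widehat T(\lambda)$ handling the outer piece. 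Inverse-Fourier-transforming: multiplication by $\phi_j$ becomes convolution with $\check\phi_j\in\mathcal S(\mathbb R)$, each factor $\widehat T(\lambda)-\widehat T(\lambda_j)$ becomes $(1-e^{i\lambda_j t})T(t)\in\mathcal W_X$, and the constants $(I+\widehat T(\lambda_j))^{-1}$ remain constants. Since $\mathcal W_X$ is a Banach algebra under convolution (again using $p>1$) and $\mathcal S(\mathbb R)\ast\mathcal W_X\subset\mathcal W_X$, each summand lies in $\mathcal W_X$, and the full series converges there. Writing $\hat S(\lambda):=(I+\widehat T(\lambda))^{-1}-I=\sum_j\phi_j(\lambda)\bigl[(I+\widehat T(\lambda))^{-1}-I\bigr]$ and summing over the finite cover produces $S\in\mathcal W_X$ with $(\mathbf 1+T)\ast(\mathbf 1+S)=\mathbf 1$.

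The principal technical obstacle is not the existence of an inverse in $L^1_t\mathcal B(X)$, which already follows from pointwise invertibility and Riemann--Lebesgue alone, but rather the upgrade to the weighted algebra $\mathcal W_X$. The translation-continuity hypothesis enters here: it controls the modulus of continuity of $\widehat T(\lambda)$ in $\lambda$, ensuring that the factors $\widehat T(\lambda)-\widehat T(\lambda_j)$, once localized by $\phi_j$, have small $\mathcal W_X$ norm and not merely small $L^1$ norm, so that the geometric convergence of the Neumann series persists in the stronger weighted topology. When only the weaker hypothesis (\ref{translation_high}) on $T^N$ is available, the same argument applies after replacing $T$ by $T^N$ in every step where translation continuity is invoked, using the factorization
$$(I+\widehat T(\lambda))^{-1}=\Bigl(\sum_{k=0}^{N-1}(-\widehat T(\lambda))^k\Bigr)\bigl(I-(-\widehat T(\lambda))^N\bigr)^{-1}$$
to transfer invertibility between $I+\widehat T$ and $I-(-\widehat T)^N$. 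A symmetric construction produces the left inverse; uniqueness of two-sided inverses in $\overline{\mathcal W}_X$ then yields $\mathbf 1+S$ as the inverse of $\mathbf 1+T$.
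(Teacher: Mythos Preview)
Your overall architecture---Neumann series at infinity, local Neumann series on a compact frequency interval, partition of unity---matches the paper's. But the step you flag as ``the principal technical obstacle'' is where your argument breaks down.

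You assert that translation continuity of $T$ ``ensures that the factors $\widehat T(\lambda)-\widehat T(\lambda_j)$, once localized by $\phi_j$, have small $\mathcal W_X$ norm.'' This is false. The paper computes the inverse Fourier transform $S_\epsilon$ of $\eta(\lambda/\epsilon)(\widehat T(\lambda)-\widehat T(\lambda_0))$ explicitly and shows it is only \emph{uniformly bounded} in $\mathcal W_X$, not small: for large $|\rho|$ one has $\|S_\epsilon(\rho)\|\lesssim |\rho|^{-p}\|T\|_{\mathcal W_X}$ with a constant independent of $\epsilon$, so $\|S_\epsilon\|_{\mathcal W_X}$ does not vanish as $\epsilon\to 0$. (Your formula $(1-e^{i\lambda_j t})T(t)$ is also wrong---that has Fourier transform $\widehat T(\lambda)-\widehat T(\lambda-\lambda_j)$, not $\widehat T(\lambda)-\widehat T(\lambda_j)$; the latter is a constant in $\lambda$ and its inverse transform involves $\delta_0$.) The underlying reason the local factor cannot be made small is that in the $L^\infty$-weighted norm, $\|\chi_{|t|>R}T\|_{\mathcal W_X}$ does not tend to zero as $R\to\infty$ unless $T$ decays strictly faster than $t^{-p}$.

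The paper overcomes this by proving smallness of the \emph{square} $S_\epsilon A_0^{-1}\ast S_\epsilon A_0^{-1}$ instead: splitting $S_\epsilon=\chi_{|\rho|\le R}S_\epsilon+\chi_{|\rho|>R}S_\epsilon$, the near part is small (of size $\langle R\rangle^p\epsilon$) while the far part is only bounded, but the convolution of two far tails gains a factor $\langle R\rangle^{1-p}$. Choosing $R$ large and then $\epsilon$ small makes the square small, so the Neumann series converges in $\mathcal W_X$. Translation continuity is used not for the local inversion but for the high-frequency piece, to show $\|T-L\check\eta(L\cdot)\ast T\|_{\mathcal W_X}\to 0$ as $L\to\infty$. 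Your outline conflates these two roles and skips the squaring argument, which is the essential new idea for the weighted-$L^\infty$ setting.
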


Thus, when $\one + i T$ is invertible in an appropriate operator algebra, inverting it leads to decay estimates for the perturbed evolution $T_V$, due to (\ref{duhamel_T}). This is the method we use to prove all dispersive estimates in this paper.

For the scalar case, the main result obtained in this manner is as follows:
\begin{proposition}\lb{prop26}
For $V \in C_y (L^1_x \cap L^\infty_x)$ real-valued and satisfying Assumption \ref{nontrivial}, consider the equation
\be\lb{eq\3D}\begin{aligned}
i \partial_t f - \Delta_x f + i A f + V(x, y) f = F,\ f(0) = f_0.
\end{aligned}\ee
Then, for $1 \leq p \leq \infty$ and $F=0$,
\be\lb{weaker_bound}
\|e^{it(-\Delta+V+iA)}f_0\|_{L^p_y (L^2_x + L^\infty_x)} \les \langle t \rangle^{-\3/2} \|f_0\|_{L^p_y(L^1_x \cap L^2_x)}.
\ee
\end{proposition}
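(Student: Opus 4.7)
The plan is to reduce the perturbed dispersive estimate to the corresponding bound for the free evolution $e^{itH_0}$, where $H_0:=-\Delta_x+iA$, via the Wiener-algebra machinery of Theorem \ref{thm:Wiener}. First observe that $-\Delta_x$ and $iA$ act on disjoint variables and hence commute, so $e^{itH_0}=e^{-it\Delta_x}e^{-tA}$. Pairing the classical Schr\"odinger estimate $\|e^{-it\Delta_x}\phi\|_{L^\infty_x}\les|t|^{-\3/2}\|\phi\|_{L^1_x}$ with $L^2_x$-unitarity, then multiplying by the $L^p_y$-contractivity of $e^{-tA}$ furnished by C1, already yields the free-evolution version of \eqref{weaker_bound}:
\be\lb{free_disp_plan}
\|e^{itH_0} f_0\|_{L^p_y(L^2_x+L^\infty_x)}\les \langle t\rangle^{-\3/2}\,\|f_0\|_{L^p_y(L^1_x\cap L^2_x)}.
\ee

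Next I would factor $V=v_1 v_2$ with $|v_1|=|v_2|=|V|^{1/2}$, so each factor lies in $L^\infty_y(L^2_x\cap L^\infty_x)$ by the hypothesis $V\in C_y(L^1_x\cap L^\infty_x)$, and apply Theorem \ref{thm:Wiener} to $T(t):=\chi_{t\ge 0}\,v_2\,e^{itH_0}v_1$ on $X:=L^p_y L^2_x$. Three hypotheses must be verified: (i) $T\in\W_X$ with $\langle t\rangle^{-\3/2}$ decay, which follows by combining \eqref{free_disp_plan} with the $L^2_x$- and $L^\infty_x$-boundedness of the multipliers $v_1,v_2$, so that $v_1:L^2_x\to L^1_x\cap L^2_x$ and $v_2:L^2_x+L^\infty_x\to L^2_x$; (ii) the translation continuity \eqref{translation_high} for some $N$, obtained by taking $N$ large enough that $T^N$ smooths the short-time singularity of $e^{-it\Delta_x}$, combined with norm-continuity of $e^{-tA}$ from C2 and finite-rank approximation of the $y$-kernel via C4; (iii) invertibility of $I+\widehat T(\lambda)=KB(\lambda)$ on $X$ for every real $\lambda$, which, as outlined in the paragraph preceding the statement, comes from Fredholm's alternative together with the Agmon-type factorization of any putative eigenfunction, Assumption \ref{nontrivial}, and unique continuation (cf.\ Proposition \ref{fredholm} and Lemma \ref{invertibility}). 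Wiener's theorem then delivers $T_V\in\W_X$, i.e.\ $\|v_2 e^{itH_V}v_1\|_{\B(X)}\les\langle t\rangle^{-\3/2}$, where $H_V:=H_0+V$.

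To upgrade this smoothing bound into the full dispersive estimate for $e^{itH_V}f_0$, set $Y(t):=v_2 e^{itH_V}f_0$. Duhamel yields the integral equation $Y=v_2 e^{itH_0}f_0-iT*Y$ for $t\ge 0$, whose solution, obtained by inverting $\one+iT$ via Wiener, is $Y=(\one-iT_V)*(v_2 e^{itH_0}f_0)$. Since $\|v_2 e^{isH_0}f_0\|_{L^p_y L^2_x}\les\langle s\rangle^{-\3/2}\|f_0\|_{L^p_y(L^1_x\cap L^2_x)}$ by \eqref{free_disp_plan} and the bounds on $v_2$, and since $\|T_V(s)\|_{\B(X)}\les\langle s\rangle^{-\3/2}$, the elementary time-convolution estimate $\int_0^t\langle t-s\rangle^{-\3/2}\langle s\rangle^{-\3/2}\,ds\les\langle t\rangle^{-\3/2}$ (valid because $\3/2>1$) transfers the decay to $Y$. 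Substituting $Y$ back into the Duhamel identity $e^{itH_V}f_0=e^{itH_0}f_0-i\int_0^t e^{i(t-s)H_0}\,v_1\,Y(s)\,ds$, bounding the integrand by \eqref{free_disp_plan}, and applying the same convolution estimate one more time completes the proof of \eqref{weaker_bound}.

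The main obstacle is hypothesis (iii). In the purely deterministic regime, $-\Delta+V(\cdot,y)$ typically supports bound states, and even a zero-energy resonance suffices to destroy uniform invertibility of $KB(\lambda)$ on the real axis. The dissipative term $iA$ pushes generic obstructions into the open lower half-plane, but the real axis still requires a dedicated argument: a hypothetical nontrivial $f$ solving $f=-v_2 R^{iA}_V(\lambda)v_1 f$ must, by an Agmon-type factorization, take the form $f(x,y)=v_2(x,y)h(y)g(x)$ with $h$ the ground state of $A$ and $g$ satisfying $(-\Delta+V(x,y)-\lambda)g(x)=0$ for every $y\in\supp h$; subtracting two such equations for $y_1\ne y_2\in\supp h$ and invoking Assumption \ref{nontrivial} forces $g$ to vanish on the open set $\mc O$, whence unique continuation yields $g\equiv 0$. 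A secondary technical hurdle is establishing \eqref{translation_high}: because the free Schr\"odinger kernel is not norm-continuous at $t=0$, $N$ must be chosen large enough that $T^N$ acquires norm-continuous time dependence in $\B(X)$, at which point C4 provides the finite-rank approximations needed to finish the verification.
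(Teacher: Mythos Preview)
Your proposal is correct and follows essentially the same route as the paper: verify $T\in\W_X$, establish condition (\ref{translation_high}) for some $N\ge2$, invoke Lemma \ref{invertibility} for the spectral condition, apply Theorem \ref{thm:Wiener} to obtain $T_V\in\W_X$, and then chain this with the free dispersive estimate via Duhamel exactly as you describe. Two minor corrections: the null-space equation should involve the \emph{free} resolvent, $f=-v_2 R^{iA}_0(\lambda)v_1 f$, not $R^{iA}_V$; and the need for $N\ge2$ stems not only from the short-time jump at $t=0$ but also from the fact that the large-$t$ tail $\chi_{|t|\ge R}T$ does not become small in the $\langle t\rangle^{-\3/2}L^\infty_t$ norm as $R\to\infty$, whereas its square does (see (\ref{prop\3})).
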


\subsubsection{The main estimate} Next, consider the dissipative Liouville-type equation (\ref{average_liouville}).

One cannot expect better results in the perturbed case than for the free equation, which shall serve as our model:
\be\lb{free_l}
i \partial_t F - \Delta_{x_1} F + \Delta_{x_2} F + iAF = G,\ F(0)=F_0.
\ee
For trace-class initial data, the $x_1$ and $x_2$ components decouple and the estimates we obtain are products of two separate estimates, one for each component. In particular, the rate of decay in time should be the square of the $\3$-dimensional one.

We prove two kinds of trace-class estimates in Proposition \ref{prop27} and Corollary \ref{cor_strichartz}: with an average ($L^1_t$) rate of decay and with a pointwise-in-$t$ rate of decay. Both have straightforward analogues in the free case:
$$
\|F\|_{L^1_t L^1_y (L^{6, 2}_{x_1} \otimes L^{6, 2}_{x_2})} \les \|F_0\|_{L^1_y \frak S_1}
$$
and
$$
\|F(t)\|_{L^1_y ((L^2_{x_1}+ L^\infty_{x_1}) \otimes (L^2_{x_2} + L^\infty_{x_2}))} \les \langle t \rangle^{-\3} \|F_0\|_{L^1_y [(L^1_{x_1} \cap L^2_{x_1}) \otimes (L^1_{x_2} \cap L^2_{x_2})]}.
$$
For $L^1_{x_1, x_2} = L^1_{x_1} \otimes L^1_{x_2}$ initial data, one also gets the sharp $t^{-\3}$ decay rate. Proving this estimate in the perturbed case is left for a future paper.

The free equation (\ref{free_l}) can also serve as a model for the inhomogenous estimates that can be proved for equation (\ref{average_liouville}). It turns out that for technical reasons the best choice of source term $G$ in either (\ref{free_l}) or (\ref{average_liouville}) will be one that allows the Schr\"{o}dinger evolution in the two variables $x_1$ and $x_2$ to decouple and take place for different lengths of time:
$$
G(y, s, x_1, x_2) = \int_0^s e^{i(s-\tilde s)\Delta_{x_2}} \tilde G(y, s, \tilde s, x_1, x_2) \dd \tilde s.
$$
This is akin to appending an extra interval of Schr\"{o}dinger evolution in just one of the variables, thus making them of different lengths.


%
%

%


We aim to get such bounds in the presence of a potential and source terms, i.e.\;for equation (\ref{average_liouville}).

It is not obvious that the potential $V \otimes 1 - 1 \otimes V$ has a symmetric decomposition like the one in (\ref{katobirman}), but we write it as the matrix product $V \otimes 1 - 1 \otimes V = V_1 V_2$, where
\be\begin{aligned}\lb{more_notations}
&V_2 := \bpm V_{21} \\ V_{22} \epm := \bpm v_2 \otimes 1 \\ -1 \otimes v_2 \epm,\ V_1 := \bpm V_{11} & V_{12} \epm := \bpm v_1 \otimes 1 & 1 \otimes v_1 \epm,
\end{aligned}\ee
and
$$\begin{aligned}
v_1(x, y):= |V|^{1/2}(x, y),\ v_2(x, y):= |V|^{1/2}(x, y) \sgn V(x, y).
\end{aligned}$$
Also for $\Im \lambda \leq 0$ let
$$
R^{iA}(\lambda):=(-\Delta_{x_1}+\Delta_{x_2}+iA_y-\lambda)^{-1},
$$
$$\begin{aligned}
T(t)&:= \bpm T_{11} & T_{12} \\ T_{21} & T_{22} \epm := \chi_{t>0} V_2 e^{it(-\Delta_{x_1}+\Delta_{x_2}+iA_y)} V_1.
\end{aligned}$$
The symmetric Kato--Birman perturbation is
$$\begin{aligned}
i\widehat T(\lambda) &= \bpm i\widehat T_{11}(\lambda) & i\widehat T_{12}(\lambda) \\ i\widehat T_{21}(\lambda) & i\widehat T_{22}(\lambda) \epm = V_2 R^{iA}(\lambda) V_1.
\end{aligned}$$

The main issue is that $V \otimes 1 - 1 \otimes V$ does not vanish at infinity in all directions, making it in fact a multi-channel potential. Hence there are two kinds of terms in $i\widehat T(\lambda)$: diagonal ones, with weights in the same variable at both ends, and off-diagonal cross-terms, whose weights involve different channels at each end.

Diagonal terms are easier to study and essentially reduce to the scalar case, equation (\ref{average}). However, the cross-terms are not compact, making $\widehat T(\lambda)$ not compact either.

To address this issue and be able to use Fredholm's alternative, we employ the following decomposition, along the lines of Pillet's proof of Lemma 5.2, p.\;11 in \cite{pillet}:
$$\begin{aligned}
I + i\widehat T &= \bpm I + i\widehat T_{11} & 0 \\ 0 & I + i\widehat T_{22} \epm\ \bpm I & i(I + i\widehat T_{11})^{-1} \widehat T_{12} \\ i(I + i\widehat T_{22})^{-1} \widehat T_{21} & I \epm \\
& = (I + i\widehat T_{diag}) (I + i(I + i\widehat T_{diag})^{-1} \widehat S_0) \\
&= (I + i\widehat T_{diag}) (I + i\widehat S),
\end{aligned}$$
where we let
$$
\widehat T_{diag} = \bpm \widehat T_{11} & 0 \\ 0 & \widehat T_{22} \epm,\ \widehat S_0 = \bpm 0 & \widehat T_{12} \\ \widehat T_{21} & 0 \epm,
$$
and
$$\begin{aligned}
\widehat S := (I + i\widehat T_{diag})^{-1} \widehat S_0 = \bpm 0 & (I + i \widehat T_{11})^{-1} \widehat T_{12} \\ (I + iT_{22})^{-1} \widehat T_{21} & 0 \epm.
\end{aligned}$$
Then
$$
(I + i\widehat T)^{-1} = (I + i \widehat S)^{-1} (I + i\widehat T_{diag})^{-1}.
$$
Hence, if we could invert $I + i \widehat S$, we could also invert the Kato--Birman operator.

It turns out that $I + i \widehat S$ is not compact either, in the appropriate operator space; so we instead consider
$$
I + \widehat S^2 = (I+i\widehat S)(I-i\widehat S).
$$
Even though $\widehat S^2$ is still not compact, $\widehat S^2$ admits a further decomposition
$$
\widehat S(\lambda)^2 = U_1 B_\lambda U_2,
$$
such that $B_\lambda U_2 U_1$ is compact. Using Fredholm's alternative, one can invert $I + B_\lambda U_2 U_1$ and thus eventually obtain an inverse for the Kato--Birman operator as well.

In order to get the optimal rate of decay of $\langle t \rangle^{-\3}$, we use Wiener's Theorem \ref{thm:Wiener} in this rather complicated setting, while also keeping track of the initial and final segments $U_1$ and $U_2$. The main results are Proposition \ref{prop27} and Corollary \ref{cor_strichartz}.


\section{Proofs}

\subsection{Spectral analysis}
Our goal in this section is to prove the validity of the spectral condition for the scalar averaged equation (\ref{average})
$$
i \partial_t g - \Delta_x g - i A g + V g = G.
$$

For the sake of completeness, we define complete continuity and prove the appropriate version of Fredholm's alternative.

\begin{definition} Given a Banach space of operators $W \subset \B(X)$ and $B \in W$, we say that $B$ is completely continuous in $W$ if $B$ can be approximated by finite-rank operators in the $W$-norm.
\end{definition}

\begin{lemma}[Fredholm's alternative]\lb{fredholm} Let $X$ be a Banach space and consider an operator $T \in \B(X)$ which can be approximated in the $\B(X)$ norm by finite-rank operators (i.e.\;is completely continuous in $\B(X)$). Then either the equation
$$
f = T f
$$
has a nonzero solution $f \in X$, $f \ne 0$, or $(I-T)^{-1} \in \B(X)$.

In the latter case, if $T$ has finite rank and belongs to a (not necessarily closed) Banach operator subalgebra $\widehat X \subset \B(X)$, then
$$
(I-T)^{-1} - I \in \widehat X.
$$

Finally, consider a subalgebra $\widehat X$ such that $\widehat X \B(X) \widehat X \subset \widehat X$, i.e.\;if $T_1, T_2 \in \widehat X$ and $T \in \B(X)$, then $T_1 T T_2 \in \widehat X$ and
\be\lb{proper}
\|T_1 T T_2\|_{\widehat X} \les \|T_1\|_{\widehat X} \|T\|_{\B(X)} \|T_2\|_{\widehat X}.
\ee

Then, if $T \in \widehat X$ is completely continuous in $\widehat X$, it follows that $(I-T)^{-1} - I \in \widehat X$.
\end{lemma}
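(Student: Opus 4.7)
The plan is to treat the three assertions of the lemma in turn. For the Fredholm alternative itself, I would note that approximability by finite-rank operators in the $\B(X)$-norm makes $T$ compact on the Banach space $X$, and then invoke the classical Riesz--Schauder theorem: $I-T$ is Fredholm of index zero, so injectivity is equivalent to bijectivity, and in the bijective case the open mapping theorem supplies a bounded inverse $(I-T)^{-1} \in \B(X)$.

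For the finite-rank refinement, my plan is to express $(I-T)^{-1} - I = T(I-T)^{-1}$ as a polynomial in $T$ using only positive powers, so that the sum automatically lies in the subalgebra $\widehat X$. Put $V := \Ran T$; this is a finite-dimensional subspace into which $T$ maps all of $X$, and invertibility of $I - T$ on $X$ forces $I - T|_V$ to be injective and hence invertible on $V$. Cayley--Hamilton then expresses $(I - T|_V)^{-1} = p(T|_V) = \sum_{j=0}^m a_j (T|_V)^j$. Setting $y = (I-T)^{-1} x$ and observing that $(I - T|_V)(Ty) = Tx$ in $V$ gives $Ty = p(T|_V)(Tx)$; since $Tx \in V$ and $(T|_V)^j$ agrees with $T^j$ there, one arrives at
$$
(I-T)^{-1} - I \;=\; \sum_{j=0}^{m} a_j \, T^{j+1}.
$$
Every summand is a positive power of $T$, hence lies in $\widehat X$ because $\widehat X$ is a subalgebra containing $T$.

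For the third assertion, my plan is to avoid any approximation argument and rely instead on the purely algebraic identity
$$
(I-T)^{-1} - I \;=\; T \,+\, T(I-T)^{-1} T,
$$
checked by multiplying through by $I-T$ on either side. Complete continuity of $T$ in $\widehat X$, combined with the embedding $\widehat X \hookrightarrow \B(X)$, makes $T$ compact in $\B(X)$, so the first assertion supplies $(I-T)^{-1} \in \B(X)$ in the nondegenerate case. The two-sided property $\widehat X \B(X) \widehat X \subset \widehat X$, applied with outer factors $T \in \widehat X$ and middle factor $(I-T)^{-1} \in \B(X)$, places $T(I-T)^{-1} T$ in $\widehat X$, and adding the separate term $T \in \widehat X$ finishes the argument. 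The main obstacle is recognizing this identity in the first place: a natural first attempt would approximate $T$ by finite-rank $T_n \in \widehat X$ and invoke the resolvent identity $(I-T)^{-1} - (I-T_n)^{-1} = (I-T)^{-1}(T-T_n)(I-T_n)^{-1}$, which sandwiches a $\widehat X$-factor between two $\B(X)$-factors --- the wrong order for the hypothesis. The self-dressing identity above matches the structure $\widehat X \B(X) \widehat X$ exactly, and spotting this is the decisive step.
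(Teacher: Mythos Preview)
Your proof is correct and matches the paper's approach closely. For the first assertion the paper gives a self-contained argument (determinants for finite rank, then a diagonalization/approximation argument) rather than citing Riesz--Schauder, but your shortcut is perfectly valid. For the second assertion both you and the paper use Cayley--Hamilton on the finite-dimensional algebra generated by $T$; the paper phrases it as applying Fredholm's alternative inside $\B(\langle T\rangle)$ while you write out the polynomial explicitly, which is the same idea. For the third assertion the paper uses the equivalent identity $(I-T)^{-1} = I + T + T^2 + T[(I-T)^{-1}-I]T$, which after simplification is exactly your $(I-T)^{-1}-I = T + T(I-T)^{-1}T$; your remark that this is the decisive step, and that the naive resolvent-identity approximation puts the factors in the wrong order, is exactly the point.
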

\begin{proof}
First consider a finite-rank operator
$$
\tilde T = \sum_{n=1}^N f_n \otimes g_n,
$$
where $f_n \in X$ and $g_n \in X^*$. The invertibility of $I-\tilde T$ is determined by its behavior on the finite-dimensional vector space spanned by $\{f_n: 1 \leq n \leq N\}$, which in turn is completely characterized by the $N \times N$ determinant
$$
\det(\delta_{mn}+\langle g_m, f_n\rangle).
$$
If the determinant is zero, then $I+\tilde T$ cannot be invertible, because it has a nontrivial kernel. If the determinant is nonzero, then one can explicitly construct an inverse, which is again a finite-rank perturbation of the identity whose support and range are spanned by the same vectors:
\be\lb{finite_rank}
(I-\tilde T)^{-1} = I + \sum_{m, n=1}^N c_{mn} f_m \otimes g_n.
\ee
So Fredholm's alternative is true for finite-rank perturbations of the identity.

Next, consider a sequence of finite-rank operators $T_n \to T$. Applying Fredholm's alternative to each, one possibility is that the inverses do not exist or are not uniformly bounded. Then there exist sequences $n_k$ and $f_{n_k}$, $g_{n_k}$ such that $(I-T_{n_k}) f_{n_k} = g_{n_k}$, but $\|f_{n_k}\|/\|g_{n_k} \to \infty$. Normalizing them so that $\|f_{n_k}\|=1$, we get that $\|g_{n_k}\| \to 0$.

Using a ``diagonalization'' argument one can find a subsequence $n_{k_\ell}$, $T_{n_{k_\ell}} f_{n_{k_\ell}} \to T f$, so $(I-T)f=0$.

The other possibility is that the inverses exist and are uniformly bounded: if $(I-T_n) f_n = g$ then $\|f_n\| \les \|g\|$ with a constant that does not depend on $n$.

Again, using a ``diagonalization'' argument one can find a subsequence $n_k$ such that $T_{n_k} f_{n_k} \to T f$ and $\|f\| \les \|g\|$. Then $(I-T) f = g$ and we let $f = (I-T)^{-1} g$.

Moreover, this $f$ is uniquely defined, because otherwise $I-T_n$ would have a non-unique inverse for sufficiently large $n$, which contradicts the inverse being bounded.

Hence $I-T$ must have a bounded inverse as well.

If $T$ is of finite rank, let $T \in \widehat X$ 
act on the subalgebra $\langle T \rangle \subset \widehat X \subset \B(X)$ spanned by its powers $T^n$, $n \geq 0$. This subalgebra is finite-dimensional by the Cayley--Hamilton theorem, of dimension no larger than the rank of $T$.



Applying Fredholm's alternative in this context yields two cases. Either $I-T$ is invertible in $\B(\langle T \rangle)$, in which case there exists $S \in \B(\langle T \rangle)$ such that
$$
(I-T)S = S(I-T) = I_{\B(\langle T \rangle)}.
$$
Applying this identity to $I \in \langle T \rangle$, we get that
$$
(I-T)S(I) = S(I)(I-T) = I_X,
$$
so
$$
(I-T)^{-1} = S(I) \in \langle T \rangle \subset \widehat X.
$$
Or there exists some $s \in \langle T \rangle$, $s \ne 0$, such that
$$
(I-T)s = 0.
$$
But if $s \ne 0$ then there must exist some $f \in X$ such that $sf \ne 0$, so $(I-T)(sf) = 0$ for $sf \in X$, $sf \ne 0$. This contradicts the invertibility of $I-T$.

Thus, when $I-T$ is invertible and $T$ has finite rank, its inverse must belong to $\widehat X$ as well.

Finally, if $\widehat X \subset \B(X)$ has property (\ref{proper}), let $T \in \widehat X$ be completely continuous, such that $I-T$ is invertible in $\B(X)$. Then
$$
(I-T)^{-1} = I + T + T^2 + T [(I-T)^{-1}-I] T,
$$
so by property (\ref{proper}) it follows that $(I-T)^{-1} \in \widehat X$ as well.
\end{proof}

\subsection{Free evolution and free resolvent bounds}

Returning to the mixed-type equation (\ref{average}), define the free resolvent
$$
R^{iA}_0(\lambda) := (-\Delta + i A -\lambda)^{-1}.
$$

To characterize the resolvent, we commence with a statement that holds under fewer conditions concerning $A$. In particular, when $p=2$, it suffices to assume that $A$ is self-adjoint.
\begin{lemma} Fix $1 \leq p \leq \infty$. Let $v_1$ and $v_2$ belong to $\v \subset (C_y L^\infty_x, C_y L^2_x)_{2/\3, 1}$ and suppose that $e^{-tA}$ is uniformly bounded on $L^p_y$:
$$
\sup_{t \geq 0} \|e^{-tA}\|_{\B(L^p_y)} < \infty.
$$
Then
\be\lb{dispersive}
\|v_2 e^{it (-\Delta_x+iA)} v_1 f\|_{L^1_t L^p_y L^2_x} \les \|v_2\|_{\v} \|v_1\|_{\v} \|f\|_{L^p_y L^2_x}.
\ee
Hence, for $\lambda \in \C$ with $\Im \lambda \leq 0$, the operator
$$
v_2(x, y) R^{iA}_0(\lambda) v_1(x, y)
$$
is uniformly bounded on $L^p_y L^2_x$ and
$$
\sup_{\Im \lambda \leq 0}  \|v_2(x, y) R^{iA}_0(\lambda) v_1(x, y) f\|_{L^p_y L^2_x} \les \|v_2\|_{\v} \|v_1\|_{\v} \|f\|_{L^p_y L^2_x}.
$$
\end{lemma}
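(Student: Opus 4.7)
The plan is to prove the dispersive bound first and then deduce the uniform resolvent estimate as an immediate corollary via the Laplace-transform representation. Since $-\Delta_x$ and $iA$ act on disjoint variables they commute, so the evolution factors as $e^{it(-\Delta_x + iA)} = e^{-it\Delta_x}\, e^{-tA}$. The semigroup $e^{-tA}$ is uniformly bounded on $L^p_y$ by hypothesis, and $e^{-it\Delta_x}$ is unitary on $L^2_x$, so the dissipative factor may be absorbed throughout without loss of $t$-decay.

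To exploit the $\v$-norm, decompose $v_i = \sum_{n \in \Z} v_i^{(n)}$ with $v_i^{(n)} := \chi_{|v_i| \in [2^n, 2^{n+1})}\, v_i$, and set $N^i_n := \sup_y \|v_i^{(n)}(\cdot, y)\|_{L^\3_x}$, so that $\|v_i\|_{\v} = \sum_n N^i_n$. Pointwise in $y$, each atom satisfies $\|v_i^{(n)}(\cdot, y)\|_{L^\infty_x} \leq 2^{n+1}$; and the support bound $|\{x : v_i^{(n)}(x, y) \neq 0\}| \lesssim (N^i_n)^\3\, 2^{-n\3}$ yields $\|v_i^{(n)}(\cdot, y)\|_{L^2_x} \lesssim (N^i_n)^{\3/2}\, 2^{n(1-\3/2)}$. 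For each atom pair $(n_1, n_2)$, two competing pointwise-in-$t$ bounds on $\|v_2^{(n_2)} e^{it(-\Delta_x+iA)} v_1^{(n_1)} f\|_{L^p_y L^2_x}$ are available: a trivial bound $\lesssim 2^{n_1+n_2}\|f\|_{L^p_y L^2_x}$ from the $L^\infty_x$ estimate on the atoms plus $L^2_x$-unitarity of $e^{-it\Delta_x}$; and a dispersive bound $\lesssim |t|^{-\3/2}\, (N^1_{n_1} N^2_{n_2})^{\3/2}\, 2^{(n_1+n_2)(1-\3/2)}\|f\|_{L^p_y L^2_x}$, obtained from $\|e^{-it\Delta_x}\|_{L^1_x \to L^\infty_x} \lesssim |t|^{-\3/2}$ together with H\"older placing the atoms in $L^2_x$ at both ends. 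In either case $e^{-tA}$ is pushed through via the commutation $e^{-it\Delta_x}e^{-tA} = e^{-tA}e^{-it\Delta_x}$ and absorbed using its uniform $L^p_y$-boundedness (after Cauchy--Schwarz in $x$ one ends up with $[e^{-tA}\|f(\cdot,\cdot)\|_{L^2_x}](y)$, whose $L^p_y$-norm is then controlled by $\|f\|_{L^p_y L^2_x}$).

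Integrating the minimum of the two bounds over $t \in [0, \infty)$ is the main calculation: they balance at $t_0 \sim (N^1_{n_1} N^2_{n_2})\, 2^{-(n_1+n_2)}$, and both the constant piece on $[0, t_0]$ and the $|t|^{-\3/2}$ tail on $[t_0, \infty)$ evaluate to $\lesssim N^1_{n_1} N^2_{n_2}$, the tail converging precisely because $\3 \geq 3$ makes $|t|^{-\3/2}$ integrable at infinity. Summing over $(n_1, n_2)$ factors as $\big(\sum_{n_1} N^1_{n_1}\big)\big(\sum_{n_2} N^2_{n_2}\big) = \|v_1\|_{\v}\|v_2\|_{\v}$, which yields (\ref{dispersive}). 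The uniform resolvent estimate then follows from $v_2 R^{iA}_0(\lambda) v_1 = i \int_0^\infty e^{-it\lambda}\, v_2\, e^{it(-\Delta_x + iA)}\, v_1\, \dd t$ for $\Im \lambda \leq 0$, using $|e^{-it\lambda}| = e^{t\Im \lambda} \leq 1$ to dominate the resolvent's $L^p_y L^2_x$-norm by the $L^1_t L^p_y L^2_x$-norm just established. The main delicate point is the $t$-integral of the minimum of the two atomic bounds: the hypothesis $\3 \geq 3$ is essential, since the $|t|^{-\3/2}$ tail becomes nonintegrable at infinity in $\3 = 2$, and this dimensional restriction is exactly what allows global-in-time control of the Kato--Birman-type kernel.
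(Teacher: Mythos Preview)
Your proof is correct and follows essentially the same strategy as the paper: both arguments are real interpolation between the unitary $L^\infty_x$-endpoint and the dispersive $L^2_x$-endpoint, with the semigroup $e^{-tA}$ absorbed via its uniform $L^p_y$-boundedness (and the implicit positivity of its Markov kernel to pass to $L^p_y L^2_x$). The only cosmetic difference is that the paper phrases this lemma via abstract real interpolation on dyadic $t$-blocks and cites Bergh--L\"ofstr\"om, whereas you carry out the equivalent explicit atomic decomposition of $v_1,v_2$ and integrate the minimum of the two bounds by hand---which is precisely what the paper itself does in its proof of the very next lemma (Lemma~\ref{uniform_lemma}) when it writes ``To understand the role played by real interpolation, consider a dyadic atomic decomposition\ldots''. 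Your version is slightly more elementary (no interpolation theorem needed) and makes the role of $\3\geq 3$ in the integrability of the $|t|^{-\3/2}$ tail explicit; the paper's version is more systematic and connects directly to the interpolation space $(C_y L^\infty_x, C_y L^2_x)_{2/\3,1}$ appearing in the hypothesis.
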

The proof is based on the correspondence between the time-independent and the time-dependent settings, given by the Fourier transform, which we shall also use in several subsequent proofs.

\begin{proof}
We use the real interpolation method, especially Theorem 5.6.1, p.\;122, from \cite{bergh}.

Since, for each $t \geq 0$, $e^{-tA}$ is a contraction on $L^p_y$, $1 \leq p \leq \infty$,
$$\begin{aligned}
\|v_2 \chi_{[2^n, 2^{n+1}]}(t) e^{it (-\Delta_x+iA)} v_1 f\|_{\ell^\infty_n L^\infty_t L^\infty_y L^2_x} &\les \|v_2\|_{C_y L^\infty_x} \|\chi_{t \in [2^n, 2^{n+1}]} e^{it(-\Delta_x+iA)} v_1 f\|_{\ell^\infty_n L^\infty_t L^\infty_y L^2_x} \\
&\les \|v_2\|_{C_y L^\infty_x} \|\chi_{t \in [2^n, 2^{n+1}]} e^{-it\Delta_x} v_1 f\|_{\ell^\infty_n L^\infty_t L^\infty_y L^2_x} \\
&\les \|v_2\|_{C_y L^\infty_x} \|v_1\|_{C_y L^\infty_x} \|f\|_{L^\infty_y L^2_x}
\end{aligned}$$
and
$$\begin{aligned}
\|v_2 \chi_{[2^n, 2^{n+1}]}(t) e^{it (-\Delta_x+iA)} v_1 f\|_{2^{-\3n/2} \ell^\infty_n L^\infty_t L^\infty_y L^2_x} &\les \|v_2\|_{C_y L^2_x} \|\chi_{t \in [2^n, 2^{n+1}]} e^{it (-\Delta_x+iA)} v_1 f\|_{2^{-\3n/2} \ell^\infty_n L^\infty_t L^\infty_y L^\infty_x} \\
&\les \|v_2\|_{C_y L^2_x} \|\chi_{t \in [2^n, 2^{n+1}]} e^{-it\Delta_x} v_1 f\|_{2^{-\3n/2} \ell^\infty_n L^\infty_t L^\infty_y L^\infty_x} \\
&\les \|v_2\|_{C_y L^2_x} \|v_1\|_{C_y L^2_x} \|f\|_{L^\infty_y L^2_x}.
\end{aligned}$$


By real interpolation (see Problem 5, p.\;76, from \cite{bergh}) we then obtain that
$$
\|v_2 \chi_{[2^n, 2^{n+1}]}(t) e^{it (-\Delta_x+iA)} v_1 f\|_{2^{-n} \ell^1_n L^\infty_t L^\infty_y L^2_x} \les \|v_2\|_{(C_y L^\infty_x, C_y L^2_x)_{2/\3, 1}} \|v_1\|_{(C_y L^\infty_x, C_y L^2_x)_{2/\3, 1}} \|f\|_{L^\infty_y L^2_x}.
$$
This immediately implies (\ref{dispersive}) for $p=\infty$.
%
%

With minor modifications, this proof applies to all of $L^p_y L^2_x$, $1 \leq p \leq \infty$, as well. The key is using the same Lebesgue exponent for the $t$ and $y$ norms and having these two norms next to each other, so that they can be swapped as needed.

As above, one shows that
$$
\|v_2 \chi_{[2^n, 2^{n+1}]}(t) e^{it (-\Delta_x+iA)} v_1 f\|_{2^{n/p} \ell^\infty_n L^p_t L^p_y L^2_x} \les \|v_2\|_{C_y L^\infty_x} \|v_1\|_{C_y L^\infty_x} \|f\|_{L^p_y L^2_x}
$$
and
$$
\|v_2 \chi_{[2^n, 2^{n+1}]}(t) e^{it (-\Delta_x+iA)} v_1 f\|_{2^{n/p-\3n/2} \ell^\infty_n L^p_t L^p_y L^2_x} \les \|v_2\|_{C_y L^2_x} \|v_1\|_{C_y L^2_x} \|f\|_{L^p_y L^2_x},
$$
leading to
$$
\|v_2 \chi_{[2^n, 2^{n+1}]}(t) e^{it (-\Delta_x+iA)} v_1 f\|_{2^{n(-1+1/p)} \ell^1_n L^p_t L^p_y L^2_x} \les \|v_2\|_{\v} \|v_1\|_{\v} \|f\|_{L^p_y L^2_x}.
$$
This implies (\ref{dispersive}) for $1 \leq p \leq \infty$.

By taking the Fourier transform of (\ref{dispersive}) in the variable $t$, we obtain that for $\Im \lambda \leq 0$
\be\lb{marginire}
\|v_2 R^{iA}_0(\lambda) v_1\|_{\B(L^p_y L^2_x)} \les \|v_2\|_{\v} \|v_1\|_{\v}.
\ee
\end{proof}

Another more precise characterization of $v_2 R^{iA}_0 v_1$ is based on the following result.

\begin{lemma}\lb{uniform_lemma} Consider $v_1, v_2 \in L^{\3, 1}$. Then
$$
\int_\R \|v_2 e^{-it\Delta} v_1\|_{\B(L^2)} \dd t \les \|v_1\|_{L^{\3, 1}} \|v_2\|_{L^{\3, 1}}.
$$
More generally, consider two families $(v_1^i(t))_{i \in I}$ and $(v_2^j(t))_{j \in J}$, which belong to $L^{\3, 1}$, uniformly for all $y \in Y$, in the following sense:
\be\lb{uniformL\31}
\sum_{k \in \Z} \sup_{i \in I} \sup_t \|(\chi_{[2^k, 2^{k+1}]} \circ v_1^i(t)) v_1^i(t)\|_{L^\3} < \infty,\ \sum_{k \in \Z} \sup_{i \in I} \sup_t \|(\chi_{[2^k, 2^{k+1}]} \circ v_2^i(t)) v_2^i(t)\|_{L^\3} < \infty.
\ee
Then the following estimate holds uniformly for all $v_1^i(t)$ and $v_2^j(t)$:
\be\lb{estxy}
\sup_{i \in I, j \in J} \int_\R \|v_2^i(t) e^{-it\Delta} v_1^i(t)\|_{\B(L^2_x)} \dd t \les \|v_1\|_{L^{\3, 1}} \|v_2\|_{L^{\3, 1}}.
\ee
\end{lemma}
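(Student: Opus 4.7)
The plan is a dyadic block decomposition of $v_1$ and $v_2$ by magnitude, combined with an interpolation between two elementary bounds on the composition $v_2\,e^{-it\Delta} v_1$. Write $v_i = \sum_{k \in \Z} v_{i,k}$ with $v_{i,k} := \chi_{\{|v_i| \in [2^k, 2^{k+1})\}} v_i$; each block satisfies $\|v_{i,k}\|_{L^\infty} \les 2^k$ and is supported on a set of some measure $m_{i,k}$, so $\|v_{i,k}\|_{L^2} \sim 2^k m_{i,k}^{1/2}$ and $\|v_{i,k}\|_{L^\3} \sim 2^k m_{i,k}^{1/\3}$. The standard dyadic characterization of Lorentz norms gives $\|v_i\|_{L^{\3,1}} \sim \sum_k \|v_{i,k}\|_{L^\3}$, so everything reduces to the single-block inequality $\int_\R \|v_{2,\ell}\,e^{-it\Delta} v_{1,k}\|_{\B(L^2_x)} \dd t \les \|v_{1,k}\|_{L^\3} \|v_{2,\ell}\|_{L^\3}$ for each $(k,\ell) \in \Z^2$, which is then summed in $k$ and $\ell$.

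For such a single pair I would use two pointwise-in-$t$ bounds on the operator norm. By unitarity of the Schr\"odinger flow on $L^2$, the trivial bound $\|v_{2,\ell}\,e^{-it\Delta} v_{1,k}\|_{\B(L^2)} \les \|v_{1,k}\|_{L^\infty}\|v_{2,\ell}\|_{L^\infty} \les 2^{k+\ell}$ holds. The classical dispersive estimate $\|e^{-it\Delta}\|_{L^1 \to L^\infty} \les |t|^{-\3/2}$ shows that the integral kernel of $v_{2,\ell}\,e^{-it\Delta} v_{1,k}$ is pointwise dominated by $C|t|^{-\3/2} |v_{2,\ell}(x)|\,|v_{1,k}(y)|$, so its Hilbert--Schmidt norm---and thus its $L^2$ operator norm---is at most $C|t|^{-\3/2}\|v_{1,k}\|_{L^2}\|v_{2,\ell}\|_{L^2}$. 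Splitting the $t$-integral at a cutoff $T$, bounding $|t| \leq T$ by the trivial estimate and $|t| \geq T$ by the dispersive one, and optimizing in $T$ produces exactly the claimed pair-bound through the single-dyadic-level identity $\|v\|_{L^\3} \sim \|v\|_{L^\infty}^{1/\3}\|v\|_{L^2}^{2/\3}$.

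For the uniform family version \eqref{estxy} I would rerun the same argument pointwise in $t$, applied to the $t$-dependent decompositions $v_1^i(t) = \sum_k v_{1,k}^i(t)$ and $v_2^j(t) = \sum_k v_{2,k}^j(t)$. The essential point is that the $L^\infty$ size $\|v_{i,k}^\bullet(t)\|_{L^\infty} \les 2^k$ is independent of both $t$ and the family index; only the support (and hence the $L^2$ norm) depends on $t$. Replacing the time-dependent $L^2$ factor by its supremum over $t$ (and over the appropriate family index) inside the optimization of $T$ dominates the $(k,\ell)$-contribution by $\sup_{i,t}\|v_{1,k}^i(t)\|_{L^\3} \cdot \sup_{j,t}\|v_{2,\ell}^j(t)\|_{L^\3}$, and hypothesis \eqref{uniformL\31} is precisely the statement that these suprema are summable in $k$ and $\ell$.

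The main obstacle is the bookkeeping involved in choosing the cutoff $T = T(k,\ell)$ so that the two interpolated contributions balance into $L^\3$ norms rather than $L^\infty$ or $L^2$ norms, and in verifying that this optimization remains valid uniformly in the time and family parameters. Once the Lorentz equivalence $\|v\|_{L^{\3,1}} \sim \sum_k \|v_k\|_{L^\3}$ and the two elementary bounds are in place, the double sum over $k,\ell$ factors neatly into the claimed product.
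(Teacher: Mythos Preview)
Your proposal is correct and follows essentially the same route as the paper: dyadic decomposition of $v_1,v_2$ by magnitude, then the single-block bound obtained by interpolating between the $L^\infty$ (unitarity) and $L^2$ (dispersive/Hilbert--Schmidt) estimates, summed via the atomic description of $L^{\3,1}$. The paper first packages this as abstract real interpolation between weighted $\ell^\infty_k L^1_t$ spaces and then unpacks it into precisely the dyadic computation you describe; your explicit $T$-splitting is just the hands-on version of that interpolation step.
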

Hence the same result holds when e.g.\;$v_1(t)$ and $v_2(t)$ are two time-dependent weights and we replace $\|v_1\|_{L^{\3, 1}}$ and $\|v_2\|_{L^{\3, 1}}$ by
$$
\sum_{n \in \Z} \sup_t \|(\chi_{[2^k, 2^{k+1}]} \circ v_1) v_1\|_{L^\3},\ \sum_{n \in \Z} \sup_t \|(\chi_{[2^k, 2^{k+1}]} \circ v_2) v_2\|_{L^\3}.
$$
This is a quantitative way of stating that $v_1(t)$ and $v_2(t)$ are uniformly in $L^{\3, 1}$, for all $t$.
\begin{proof} We again proceed by real interpolation. If $v_1, v_2 \in L^2$, then
\be\lb{L2}
\|v_2 e^{-it\Delta} v_1\|_{\B(L^2_x)} \les t^{-\3/2} \|v_2\|_{L^2} \|v_1\|_{L^2},
\ee
hence
$$
\|v_2 e^{-it\Delta} v_1\|_{2^{-k/2} \ell^\infty_k L^1_t([2^k, 2^{k+1}]) B(L^2_x)} \les \|v_2\|_{L^2} \|v_1\|_{L^2}.
$$
If $v_1, v_2 \in L^\infty$, then
\be\lb{Linfty}
\|v_2 e^{-it\Delta} v_1\|_{B(L^2_x)} \les \|v_2\|_{L^\infty} \|v_1\|_{L^\infty},
\ee
hence
$$
\|v_2 e^{-it\Delta} v_1\|_{2^k \ell^\infty_k L^1_t([2^k, 2^{k+1}]) B(L^2_x)} \les \|v_2\|_{L^\infty} \|v_1\|_{L^\infty}.
$$
So by real interpolation (see Problem 5, p.\;76, from \cite{bergh})
$$
\|v_2^n e^{-it\Delta} v_1^m\|_{\ell^1_k L^1_t([2^k, 2^{k+1}]) B(L^2_x)} \les \|v_2^n\|_{L^{\3, 1}} \|v_1^m\|_{L^{\3, 1}}.
$$
Summing over $k$ we get the desired conclusion.

To understand the role played by real interpolation, consider a dyadic atomic decomposition of the $v_1$ and $v_2$ weights:
$$
v_1 = \sum_{m \in \Z} v_1^{(m)},\ v_2 = \sum_{n \in \Z} v_2^{(n)}.
$$
For each of these atoms, by (\ref{L2}) and (\ref{Linfty})
$$
\int \|v_2^{(n)} e^{-it\Delta} v_1^{(m)}\|_{\B(L^2_x)} \dd t \les [\|v_2^{(n)}\|_{L^2} \|v_1^{(m)}\|_{L^2}]^{2/\3} [\|v_2^{(n)}\|_{L^\infty} \|v_1^{(m)}\|_{L^\infty}]^{(\3-2)/\3} = \|v_2^{(n)}\|_{L^\3} \|v_1^{(m)}\|_{L^\3},
$$
uniformly for all $n$ and $m$. Then, summing over all scales $n$ and $m$, we get
$$
\int \|v_2 e^{-it\Delta} v_1\|_{\B(L^2_x)} \dd t \les \|v_2^n\|_{L^{\3, 1}} \|v_1^m\|_{L^{\3, 1}}.
$$

Thus, when dealing with the more general families $v_1^i(t)$ and $v_2^j(t)$, as long as they are uniformly in $L^{\3, 1}$ in the sense of (\ref{uniformL\31}), the same proof applies and we get the uniform bound (\ref{estxy}).
\end{proof}

Using Lemma \ref{uniform_lemma}, we next give a more precise characterization of $v_2 R^{iA} v_1$ in Lemma \ref{bdd}. We first analyze the time evolution $v_2 e^{it(-\Delta+iA)} v_1$, then use this to characterize $v_2 R^{iA} v_1$ by using the Fourier transform in the $t$ variable.

\begin{lemma}\lb{bdd} Suppose that
$$
\sup_{t > 0} \|e^{-tA}(y_1, y_2)\|_{L^\infty_{y_2} L^1_{y_1} \cap L^\infty_{y_2} L^1_{y_1}} < \infty.
$$
If $v_1, v_2 \in \v$ then
$$
\int_0^\infty \sup_{y_1, y_2 \in Y} \|v_2(x, y_1) e^{-it\Delta} v_1(x, y_2)\|_{\B(L^2)} \dd t \les \|v_2\|_{\v} \|v_1\|_{\v},
$$
hence
$$
\int_0^\infty \|v_2 e^{it(-\Delta+iA)} v_1\|_{(L^\infty_{y_1} L^1_{y_2} \cap L^\infty_{y_2} L^1_{y_1}) \B(L^2_x)} \dd t \les \|v_2\|_{\v} \|v_1\|_{\v}.
$$
So for $\lambda \in \C$ with $\Im \lambda \leq 0$, the operator $v_2 R_0^{iA}(\lambda) v_1$ is uniformly bounded on $L^p_y L^2_x$, $1 \leq p \leq \infty$, and
\be\lb{R0A}
\sup_{\Im \lambda \leq 0} \|v_2 R_0^{iA} v_1\|_{(L^\infty_{y_1} L^1_{y_2} \cap L^\infty_{y_2} L^1_{y_1}) \B(L^2_x)} \les \|v_2\|_{\v} \|v_1\|_{\v}.
\ee
\end{lemma}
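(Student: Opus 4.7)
The plan is to deduce both statements from Lemma \ref{uniform_lemma} together with the product structure $e^{it(-\Delta+iA)} = e^{-it\Delta_x}\, e^{-tA_y}$, and to recover the resolvent bound (\ref{R0A}) by a Fourier--Laplace transform.

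First, I apply Lemma \ref{uniform_lemma} to the constant-in-$t$ families $\{v_1(\cdot, y_2)\}_{y_2 \in Y}$ and $\{v_2(\cdot, y_1)\}_{y_1 \in Y}$. For these families, the uniform $L^{\3,1}$ hypothesis (\ref{uniformL\31}) reduces exactly to the finiteness of $\|v_1\|_\v$ and $\|v_2\|_\v$, so estimate (\ref{estxy}) yields at once the first displayed inequality
$$\int_0^\infty \sup_{y_1, y_2 \in Y} \|v_2(\cdot, y_1) e^{-it\Delta_x} v_1(\cdot, y_2)\|_{\B(L^2_x)} \dd t \les \|v_1\|_\v \|v_2\|_\v.$$

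Next, because $-\Delta_x$ and $A_y$ act on disjoint variables they commute, so $e^{it(-\Delta+iA)} = e^{-it\Delta_x}\, e^{-tA_y}$. Viewed as a $\B(L^2_x)$-valued operator in the $y$ variable, $v_2 e^{it(-\Delta+iA)} v_1$ therefore has integral kernel
$$K_t(y_1, y_2) = e^{-tA}(y_1, y_2)\, v_2(\cdot, y_1) e^{-it\Delta_x} v_1(\cdot, y_2),$$
with $\B(L^2_x)$-norm equal to $|e^{-tA}(y_1, y_2)| \cdot \|v_2(\cdot, y_1) e^{-it\Delta_x} v_1(\cdot, y_2)\|_{\B(L^2_x)}$. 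Pulling out the supremum of the second factor over $y_1, y_2$ and applying the hypothesis that $e^{-tA}$ has uniformly bounded $L^\infty_{y_1} L^1_{y_2} \cap L^\infty_{y_2} L^1_{y_1}$ kernel norm gives
$$\|v_2 e^{it(-\Delta+iA)} v_1\|_{(L^\infty_{y_1} L^1_{y_2} \cap L^\infty_{y_2} L^1_{y_1}) \B(L^2_x)} \les \sup_{y_1, y_2} \|v_2(\cdot, y_1) e^{-it\Delta_x} v_1(\cdot, y_2)\|_{\B(L^2_x)},$$
and integrating in $t$ using the first step delivers the second displayed estimate.

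Finally, for $\lambda \in \C$ with $\Im \lambda \leq 0$ I use the Fourier--Laplace representation
$$v_2 R_0^{iA}(\lambda) v_1 = -i \int_0^\infty v_2 e^{it(-\Delta+iA)} v_1 \, e^{-it\lambda} \dd t,$$
in which $|e^{-it\lambda}| = e^{t\Im \lambda} \leq 1$. Taking the $(L^\infty_{y_1} L^1_{y_2} \cap L^\infty_{y_2} L^1_{y_1}) \B(L^2_x)$ norm under the integral and invoking the previous step yields (\ref{R0A}), uniformly in $\lambda$. Boundedness on $L^p_y L^2_x$ for every $1 \leq p \leq \infty$ then follows by interpolation between the endpoint $L^1_y$ and $L^\infty_y$ kernel bounds furnished by the two components of the intersection norm. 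The only real obstacle is organizational: one has to cleanly separate the scalar factor $e^{-tA}(y_1, y_2)$ from the operator-valued factor $v_2(\cdot, y_1) e^{-it\Delta_x} v_1(\cdot, y_2)$ in the kernel, which is possible precisely because $-\Delta_x$ and $A_y$ commute.
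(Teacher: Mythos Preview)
Your proof is correct and follows essentially the same route as the paper: invoke estimate (\ref{estxy}) from Lemma \ref{uniform_lemma} with the constant-in-$t$ families indexed by $y$, factor the kernel as $e^{-tA}(y_1,y_2)$ times the $x$-operator, use the uniform $L^\infty L^1$ bound on $e^{-tA}$, and pass to the resolvent by Fourier--Laplace/Minkowski. Your write-up is in fact more explicit than the paper's, which compresses the middle step into a single line.
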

See (\ref{syx}) for the notation.

\begin{proof}
Due to (\ref{estxy})
$$
\int \sup_{y_1, y_2 \in Y} \|v_2(x, y_1) e^{-it\Delta} v_1(x, y_2)\|_{\B(L^2_x)} \dd t \les \|v_2\|_{\v} \|v_1\|_{\v}.
$$

In particular,
$$
\int \|v_2(x, y_1) [e^{-tA}(y_1, y_2)] e^{-it\Delta} v_1(x, y_2)\|_{(L^\infty_{y_1} L^1_{y_2} \cap L^\infty_{y_2} L^1_{y_1}) \B(L^2_x)} \dd t \les \|v_2\|_{\v} \|v_1\|_{\v},
$$
which is the claimed conclusion.

The only fact we used about $A$ is that
$$
\sup_t \sup_{y_1} \int |e^{-tA}|(y_1, y_2) \dd y_2 < \infty,\ \sup_t \sup_{y_2} \int |e^{-tA}|(y_1, y_2) \dd y_1 < \infty,
$$
which follows from properties of Markov chains. Both integrals are always $1$.

Finally, taking the Fourier transform of $v_2 e^{it(-\Delta+iA)} v_1$, we obtain (\ref{R0A}) by Minkowski's inequality.
\end{proof}

We next prove that the Kato--Birman operator $KB(\lambda)$, see (\ref{katobirman}), is a compact perturbation of the identity for each $\lambda$ in the lower half-plane, $\Im \lambda \leq 0$, so that we can apply Fredholm's alternative, Lemma \ref{fredholm}.

In fact, this is already true for the time-dependent operator $v_2 e^{it(-\Delta+iA)} v_1$, which we show is completely continuous in a space involving the following norm:

\begin{definition}
For $R>0$, consider the Banach space
$$
{\frak L_R} := \{f \in L^1_{loc} \mid \|\chi_{|t|\leq R}(t) f(t)\|_{L^2_t} + \|\chi_{|t|\geq R}(t) f(t)\|_{\langle t \rangle^{-\3/2} L^\infty_t} < \infty \}
$$
with the norm
\be\lb{eser}
\|f\|_{{\frak L_R}} := \|f\|_{L^2([-R, R])} + \|\langle t \rangle^{\3/2} f(t)\|_{L^\infty(\R \setminus [-R, R])}.
\ee
\end{definition}

For fixed $R$, this space can serve as a slightly weaker replacement for $\langle t \rangle^{-\3/2} L^\infty_t$, because, for each $R>0$, $\langle t \rangle^{-\3/2} L^\infty_t \subset {\frak L_R}$ and ${\frak L_R} \ast {\frak L_R} \subset \langle t \rangle^{-\3/2} L^\infty_t$:
$$
\|f_1 \ast f_2\|_{\frak L_R} \les \|f_1 \ast f_2\|_{\langle t \rangle^{-\3/2} L^\infty_t} \les \|f_1\|_{{\frak L_R}} \|f_2\|_{{\frak L_R}},\ \|f_1 \ast f_2\|_{\langle t \rangle^{-\3/2} L^\infty_t} \les \|f_1\|_{{\frak L_R}} \|f_2\|_{\langle t \rangle^{-\3/2} L^\infty_t}.
$$

The reason for replacing $\langle t \rangle^{-\3/2} L^\infty_t$ by ${\frak L_R}$ is that when $f \in {\frak L_R}$
$$
\lim_{\epsilon \to 0} \|\chi_{|t| \leq \epsilon}(t) f(t)\|_{{\frak L_R}} = 0,
$$
so $f$ is well approximated by discarding some small interval near zero in the ${\frak L_R}$ norm.

This is not the case in $\langle t \rangle^{-\3/2} L^\infty_t$, so this $L^\infty$-based space is less suitable for proving complete continuity.

Any $\frak L_R$ with $R>0$ is appropriate. From here on, we shall take $R=1$.

%
%
%
%
%

\begin{lemma}\lb{lemma_compact} Assume that the generator $A$ satisfies conditions C1-C5. For $v_1, v_2 \in C_y(L^2_x \cap L^\infty_x)$ and any $\epsilon>0$, there exist $N$ and $f_n(t)$, $g_n(y)$, $\tilde g_n(y)$, $h_n(y)$, $\tilde h_n(y)$, $1 \leq n \leq N$, such that
$$
\|\chi_{t \geq 0}(t) v_2 e^{it(-\Delta+iA)} v_1 - \sum_{n=1}^N f_n(t) g_n(y_1) \tilde g_n(y_2) [h_n(x) \otimes \tilde h_n(x)]\|_{(\frak L_1)_t (L^\infty_{y_1} L^1_{y_2} \cap L^\infty_{y_2} L^1_{y_1}) \B(L^2_x)} < \epsilon.
$$
Here $\frak L_1$ is defined by (\ref{eser}) for $R=1$. Same is true when $v_1, v_2 \in \v$, if one replaces ${\frak L_1}$ by $L^1$.

Consequently, for $v_1, v_2 \in \v$ and $\Im \lambda \leq 0$, the family of operators
$$
v_2(x, y) R^{iA}_0(\lambda) v_1(x, y) \in (L^\infty_{y_1} L^1_{y_2} \cap L^\infty_{y_2} L^1_{y_1}) \B(L^2_x) \subset \B(L^p_y L^2_x)
$$
is norm-continuous as a function of $\lambda$, goes to zero in this norm as $\lambda \to \infty$, and can be approximated by finite-rank operators for $\Im \lambda \leq 0$ in this norm.
\end{lemma}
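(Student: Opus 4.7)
My plan is to factor the operator and approximate each factor separately. Since $Y$ is compact Hausdorff, $C(Y) \otimes (L^2_x \cap L^\infty_x)$ is dense in $C_y(L^2_x \cap L^\infty_x)$ via a continuous partition of unity on $Y$, so by linearity it suffices to treat $v_1(x, y) = \alpha_1(y) \beta_1(x)$ and $v_2(x, y) = \alpha_2(y) \beta_2(x)$; the error of this reduction in the target norm is controlled by the dispersive bound $\|v_2 e^{it(-\Delta + iA)} v_1\|_{(L^\infty_{y_1} L^1_{y_2} \cap L^\infty_{y_2} L^1_{y_1}) \B(L^2_x)} \les \min(1, t^{-\3/2})$, which follows from combining $\|e^{-it\Delta}\|_{L^1 \to L^\infty} \les t^{-\3/2}$, unitarity on $L^2$, and condition C1. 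Under this reduction,
\[
v_2 e^{it(-\Delta+iA)} v_1 = \bigl[\alpha_2(y_1) \alpha_1(y_2) e^{-tA}(y_1, y_2)\bigr] \cdot \bigl[\beta_2(x_1) e^{-it\Delta}(x_1, x_2) \beta_1(x_2)\bigr],
\]
and the task reduces to approximating each bracket by a finite tensor sum with scalar $t$-coefficients.

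For the $Y$-factor, condition C2 yields norm-continuity of $e^{-tA}$ on $(0, \infty)$ in $L^\infty_{y_1} L^1_{y_2} \cap L^\infty_{y_2} L^1_{y_1}$ with a norm limit at infinity, so for any $\eta > 0$ the one-point compactification $\{e^{-tA}(y_1, y_2) : t \in [\eta, \infty]\}$ is compact in this norm. By C4 each element is approximable by finite tensor products of bounded compactly supported functions; a finite cover of the compact family then yields a piecewise-constant-in-$t$ decomposition $\sum_{k, j} \chi_{E_k}(t)\, c_{kj}\, \phi_{kj}(y_1)\, \psi_{kj}(y_2)$ approximating $e^{-tA}$ uniformly on $[\eta, \infty)$.

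For the $x$-factor, the explicit kernel $(4\pi i t)^{-\3/2} e^{i|x_1-x_2|^2/(4t)}$ shows that $\beta_2 e^{-it\Delta} \beta_1$ is Hilbert--Schmidt with HS norm $\les t^{-\3/2} \|\beta_1\|_{L^2} \|\beta_2\|_{L^2}$, and that $\langle t \rangle^{\3/2} \beta_2 e^{-it\Delta} \beta_1$ converges in HS norm as $t \to \infty$ to the rank-one operator $(4\pi i)^{-\3/2} \beta_2 \otimes \beta_1$ (by dominated convergence on $|\exp(i|x_1-x_2|^2/(4t)) - 1|^2$). Hence $\{\langle t \rangle^{\3/2} \beta_2 e^{-it\Delta} \beta_1 : t \in [\eta, \infty]\}$ is relatively compact in $\B(L^2_x)$, and each element, being compact, admits an arbitrarily close finite-rank approximation, so a finite cover produces a uniform tensor approximation $\sum_m \chi_{F_m}(t)\, \langle t \rangle^{-\3/2}\, r_m\, h_m(x) \otimes \tilde h_m(x)$. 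I then absorb the regime $t \in [0, \eta]$ by approximating the operator there by zero, incurring an $L^2([0, \eta])$-error of $O(\sqrt{\eta})$ from the uniform bound, and combine the three approximations to obtain the claimed $(\frak L_1)_t (L^\infty_{y_1} L^1_{y_2} \cap L^\infty_{y_2} L^1_{y_1}) \B(L^2_x)$-closeness; the $\v$-case runs identically in $L^1_t$ using the global $L^1_t$-integrability given by Lemma \ref{bdd}.

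The resolvent consequence follows by Fourier transform in $t$: for $\Im \lambda \leq 0$ and $t \geq 0$, $|e^{-it\lambda}| \leq 1$, so Minkowski's inequality bounds the $\B(L^p_y L^2_x)$-error of the resolvent approximation by the $L^1_t$-error of the semigroup approximation (using the continuous embedding $\frak L_1 \hookrightarrow L^1_t$ for $\3 \geq 3$). The resulting approximant, of the form $\sum_n \widehat{f_n}(\lambda)$ times fixed finite-rank $(y, x)$-operators, is jointly continuous in $\lambda$ by continuity of $\widehat{f_n}$ and decays as $|\lambda| \to \infty$ by Riemann--Lebesgue, so all three claimed properties of $v_2 R^{iA}_0(\lambda) v_1$ follow. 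The principal technical obstacle is the uniform $x$-approximation on $[\eta, \infty)$: the correct normalization is $\langle t \rangle^{\3/2}$, and one must recognize that its $t \to \infty$ limit $(4\pi i)^{-\3/2} \beta_2 \otimes \beta_1$ is already finite-rank, which is precisely what unlocks the weighted compactification needed for uniform finite-rank approximation.
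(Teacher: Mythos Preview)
Your proof is correct and follows essentially the same route as the paper's: factor the evolution into its commuting $x$- and $y$-parts, use C2/C4 together with the compactness of $Y$ for the $y$-factor, use the Hilbert--Schmidt property and the $(4\pi it)^{-\3/2}$ asymptotic for the $x$-factor, discard $[0,\eta]$ in the $\frak L_1$ norm, and deduce the resolvent statement by Riemann--Lebesgue. The only organizational difference is that you reduce $v_1, v_2$ to $y$--$x$ tensors upfront via a partition of unity on $Y$, whereas the paper keeps $v_j$ general and samples them in $y$ only after decomposing $e^{-t_nA}$ into tensors of compactly supported functions; both choices are equivalent here.
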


This lemma does not prove the compactness of the time-dependent operator $\chi_{t \geq 0}(t) v_2 e^{it(-\Delta+iA)} v_1$, when it acts by convolution in $t$. Indeed, this would be impossible, due to its time-translation invariance. However, it proves that $v_2 R^{iA}_0(\lambda) v_1$ is compact on $L^p_y L^2_x$, $1 \leq p \leq \infty$.

Note that the functions $f_n(t)$ can all be chosen to be the characteristic functions of bounded intervals or of the form $\chi_{t \geq R} \langle t \rangle^{-\3/2}$.

\begin{proof}
The weights $v_1, v_2 \in \v$ can be approximated in norm by functions in $C_y (L^1_x \cap L^\infty_x)$. This can be done because $v_1$ and $v_2$ are uniformly in $L^{\3, 1}$; it suffices to keep a finite range of dyadic atoms in $x$, uniformly over $y \in Y$.

In such a case, the operator $\chi_{t \geq 0}(t) v_2 e^{it(-\Delta+iA)} v_1$ will have a definite rate of decay of $\langle t \rangle^{-\3/2}$ and will belong to
$$
\chi_{t \geq 0}(t) v_2 e^{it(-\Delta+iA)} v_1 \in \langle t \rangle^{-\3/2} L^\infty_t (L^\infty_{y_1} L^1_{y_2} \cap L^\infty_{y_2} L^1_{y_1}) \B(L^2_x).
$$

We can discard the initial and final portions for a good approximation in the $L^1_t$ norm, so it suffices to consider the restriction to $t \in [\epsilon, R]$:
$$
\chi_{t \geq 0}(t) v_2 e^{it(-\Delta+iA)} v_1 \sim \chi_{[\epsilon, R]}(t) v_2 e^{it(-\Delta+iA)} v_1.
$$
For $t>0$, $e^{-tA}(y_1, y_2) \in L^\infty_{y_1} L^1_{y_2} \cap L^\infty_{y_2} L^1_{y_1}$ is norm-continuous as a function of $t$, so for $t \in [\epsilon, R]$ it is uniformly continuous. Then
$$
\chi_{[\epsilon, R]}(t) v_2 e^{it(-\Delta+iA)} v_1 \sim \chi_{[\epsilon, R]}(t) \sum_{n=1}^N \chi_n(t) e^{-t_n A}(y_1, y_2) [v_2(x_1, y_1) e^{-it\Delta} v_1(x_2, y_2)].
$$
Next, for each fixed $t_n>0$, the integral kernel $e^{-t_n A}(y_1, y_2)$ can be approximated in the $L^\infty_{y_2} L^1_{y_1} \cap L^\infty_{y_1} L^1_{y_2}$ norm by the tensor product of some bounded, compactly supported functions:
$$
e^{-t_n A}(y_1, y_2) \sim \sum_{m=1}^N g_m^*(y_1) \tilde g_m^*(y_2).
$$
Since $v_1(\cdot, y)$ and $v_2(\cdot, y)$ are uniformly continuous in $y$ on compact sets, we can approximate them by finitely many samples: for each $t_n$, after further subdividing $\supp g_m^*$ and $\supp \tilde g_m^*$,
$$
e^{-t_n A}(y_1, y_2) v_2(x_1, y_1) e^{-it\Delta} v_1(x_2, y_2) \sim \sum_{m=1}^N g_m(y_1) \tilde g_m(y_2) v_2(x_1, y_1^m) e^{-it\Delta} v_1(x_2, y_2^m).
$$
Finally, for each $y_1, y_2 \in Y$, $v_1(\cdot, y_1)$ and $v_2(\cdot, y_2)$ can be approximated in $L^2_x \cap L^\infty_x$ by functions with compact support. Then for $t \in [\epsilon, R]$, $v_2(x_1, y_1^m) e^{-it\Delta} v_1(x_2, y_2^m)$ is norm-continuous in $\B(L^2_x)$, hence uniformly continuous, so
$$
\chi_{[\epsilon, R]}(t) v_2(x_1, y_1^m) e^{-it\Delta} v_1(x_2, y_2^m) \sim \sum_{n=1}^N \chi_n(t) v_2(x_1, y_1^m) e^{-it_n\Delta} v_1(x_2, y_2^m).
$$
Each operator $v_2(x_1, y_1^m) e^{-it_n\Delta} v_1(x_2, y_2^m)$ is Hilbert--Schmidt for $t_n>0$, so it is compact and can be approximated by finite-rank operators in $\B(L^2_x)$.

In the $({\frak L_1})_t$ norm we can still discard the initial interval $[0, \epsilon]$ whenever $\epsilon<1$, so the previous reasoning works for $t \in [0, R_0]$ and any $R_0>0$. Still, we need to find a separate approximation as $t \to \infty$.  Assuming that
$$
\lim_{t \to \infty} e^{-tA}(y_1, y_2) = e^{-\infty A}(y_1, y_2)
$$
exists in the $L^\infty_{y_2} L^1_{y_1} \cap L^\infty_{y_1} L^1_{y_2}$ norm, then
$$
\chi_{[R, \infty)}(t) v_2 e^{it(-\Delta+iA)} v_1 \sim \chi_{[R, \infty)}(t) e^{-\infty A}(y_1, y_2) [v_2(x_1, y_1) e^{-it\Delta} v_1(x_2, y_2)].
$$
Following the same steps as above, since when restricted to a bounded domain in $\R^d$
$$
e^{-it\Delta} \sim (4\pi i t)^{-\3/2} + O(t^{-(\3+2)/2}),
$$
we get that
$$
\chi_{[R_0, \infty)}(t) v_2(x_1, y_1^m) e^{-it\Delta} v_1(x_2, y_2^m) \sim \chi_{[R_0, \infty)}(t) (4\pi i t)^{-\3/2} v_2(x_1, y_1^m) v_1(x_2, y_2^m)
$$
for sufficiently large $R_0$. For each $m$ this is a simple function, so overall it still is a simple function.

Since all previous approximations can be achieved with any desired precision, the first conclusion is proved.

The conclusion pertaining to the resolvent $v_2 R^{iA}_0(\lambda) v_1$ follows immediately from this as well, via the standard argument used in the proof of the Riemann--Lebesgue lemma.
\end{proof}

\subsection{The scalar Kato--Birman operator}
Consider the perturbed resolvent
$$
R^{iA}_V(\lambda) := (-\Delta_x + i A + V -\lambda)^{-1}
$$
and recall that $V = v_1 v_2$, $v_1=|V|^{1/2}$, $v_2=|V|^{1/2} \sgn V$.

If the Kato--Birman operator $KB(\lambda)$, see (\ref{katobirman}), is invertible for some $\lambda \in \C$, then $R^{iA}_V(\lambda)$ is a bounded operator between the appropriate spaces, by the symmetric resolvent identity:
\be\lb{sym_res}
R^{iA}_V(\lambda) = R^{iA}_0(\lambda) - R^{iA}_0(\lambda) v_1 KB(\lambda) v_2 R^{iA}_0(\lambda).
\ee
Also,
\be\lb{inverse_res}
I - v_2 R_V^{iA}(\lambda) v_1 = KB(\lambda)^{-1} = (I + v_2 R_0^{iA}(\lambda) v_1)^{-1}.
\ee
In the next lemma, we invert the Kato--Birman operator by means of the Fredholm alternative, Lemma \ref{fredholm}, and use this to prove some useful properties of $R^{iA}_V$.
%

\begin{lemma}\lb{invertibility}
Let $v_1, v_2 \in \v$. Then $v_2 R^{iA}_V(\lambda) v_1$ belongs to $[L^\infty_{y_1} L^1_{y_2} \cap L^\infty_{y_2} L^2_{y_1}] \B(L^2_x)$, hence is bounded on $L^p_y L^2_x$, $1 \leq p \leq \infty$, uniformly for $\Im \lambda \leq 0$:
\be\lb{RiA}
\sup_{\Im \lambda \leq 0} \|v_2 R^{iA}_V(\lambda) v_1\|_{[L^\infty_{y_1} L^1_{y_2} \cap L^\infty_{y_2} L^1_{y_1}] \B(L^2_x)} < \infty.
\ee
\end{lemma}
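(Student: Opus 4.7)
The plan is to invert the Kato--Birman operator $KB(\lambda) = I + v_2 R_0^{iA}(\lambda) v_1$ uniformly for $\Im \lambda \leq 0$ and then read off the bound on $v_2 R_V^{iA}(\lambda) v_1$ from the identity $I - v_2 R_V^{iA}(\lambda) v_1 = KB(\lambda)^{-1}$ stated in (\ref{inverse_res}). By Lemma \ref{lemma_compact}, $v_2 R_0^{iA}(\lambda) v_1$ is completely continuous in $\widehat X := [L^\infty_{y_1} L^1_{y_2} \cap L^\infty_{y_2} L^1_{y_1}] \B(L^2_x)$, depends norm-continuously on $\lambda$ in the lower closed half-plane, and tends to zero in norm as $|\lambda| \to \infty$. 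Verifying that $\widehat X$ satisfies the Banach-subalgebra property (\ref{proper}) relative to $\B(L^p_y L^2_x)$ (which is essentially a Schur/Minkowski check on the integral kernel in $y$), Fredholm's alternative (Lemma \ref{fredholm}, third statement) then gives at each fixed $\lambda$ that either $KB(\lambda)$ is invertible with $KB(\lambda)^{-1} - I \in \widehat X$, or there exists a nonzero $f \in L^p_y L^2_x$ with $f = -v_2 R_0^{iA}(\lambda) v_1 f$.

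The heart of the proof is excluding the second alternative, and here the plan is to use the Agmon-type argument sketched in the excerpt. Setting $g := -R_0^{iA}(\lambda) v_1 f$, the function $g$ is a distributional solution of $(-\Delta_x + iA_y + V(x,y) - \lambda) g = 0$ in $L^2_{x,y}$, and pairing with $g$ gives
\[
\|\nabla_x g\|^2 + \langle g, A g\rangle_y + \langle g, V g\rangle - \lambda \|g\|^2 = 0.
\]
Taking imaginary parts yields $\langle g, Ag\rangle = \Im \lambda \, \|g\|^2$. Since $A \geq 0$, this forces $g \equiv 0$ whenever $\Im \lambda < 0$. At $\Im \lambda = 0$ it forces $Ag = 0$, so by condition C5 the function $g$ factors as $g(x,y) = h(y)\tilde g(x)$ with $h$ the (unique) ground state of $A$. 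Substituting back, $(-\Delta + V(x,y) - \lambda)\tilde g(x) = 0$ for $y \in \supp h$. Applying this at the two points $y_1, y_2 \in \supp h$ supplied by Assumption \ref{nontrivial} and subtracting yields $(V(x,y_1) - V(x,y_2))\tilde g(x) = 0$ on an open set $\mc O$, so $\tilde g$ vanishes on $\mc O$ and unique continuation for the Schr\"odinger operator $-\Delta + V(\cdot, y_1) - \lambda$ forces $\tilde g \equiv 0$; hence $g = 0$ and $f = v_2 g = 0$, contradicting $f \neq 0$.

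It remains to upgrade the pointwise-in-$\lambda$ invertibility to a uniform norm bound on $KB(\lambda)^{-1}$ in $\widehat X$. Norm-continuity of $\lambda \mapsto v_2 R_0^{iA}(\lambda) v_1$ (from Lemma \ref{lemma_compact}) together with the just-established pointwise invertibility makes $\lambda \mapsto KB(\lambda)^{-1}$ norm-continuous on the closed lower half-plane. As $|\lambda| \to \infty$, the same lemma gives $v_2 R_0^{iA}(\lambda) v_1 \to 0$ in $\widehat X$, so a Neumann series yields $KB(\lambda)^{-1} \to I$ in $\widehat X$-norm. Combining continuity on compact subsets with this control at infinity produces the uniform bound $\sup_{\Im \lambda \leq 0} \|KB(\lambda)^{-1} - I\|_{\widehat X} < \infty$, which translates via (\ref{inverse_res}) into (\ref{RiA}). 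The main obstacle I expect is the unique-continuation step: one must be careful that $\tilde g$ has enough regularity (and that $\lambda - V(\cdot, y_1) \in L^{\3/2}_{loc}$ or similar) to apply a standard unique continuation theorem; the $C_y(L^1_x \cap L^\infty_x)$ hypothesis on $V$ is exactly what makes this routine.
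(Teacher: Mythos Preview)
Your overall strategy coincides with the paper's: invert $KB(\lambda)$ via Fredholm's alternative, rule out the null space using Assumption \ref{nontrivial} and unique continuation, then harvest uniformity from norm-continuity and decay at infinity. The gap is in your Agmon step on the real axis.

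You set $g := -R_0^{iA}(\lambda) v_1 f$, claim $g \in L^2_{x,y}$, and pair the equation $(-\Delta_x + iA + V - \lambda)g = 0$ against $g$. (There is also a missing $i$ in front of $\langle g, Ag\rangle$ in your display, though your conclusion is consistent with the correct version.) For $\Im\lambda < 0$ this is fine: the resolvent is genuinely bounded on $L^2$, the pairing is legitimate, and $\langle g, Ag\rangle = (\Im\lambda)\|g\|^2$ forces $g=0$. But for real $\lambda \geq 0$ the object $R_0^{iA}(\lambda - i0)$ is only a limiting-absorption boundary value. On the $P_0$ component (kernel of $A$) it reduces to the free Laplacian boundary resolvent $R_0(\lambda - i0)$, which does \emph{not} map $L^2_x$ to $L^2_x$ when $\lambda > 0$. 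So $g$ is not in $L^2$, the quantity $\|g\|^2$ is undefined, and the pairing argument collapses precisely where the spectrum lives.

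The paper avoids this by never pairing with $g$. Instead it pairs with $f$, which \emph{is} in $L^2_{y,x}$ (after a bootstrap in the $y$-variable from $L^p_y$ to $L^2_y$ that you also skip): one observes that $\langle (\sgn V) f, f\rangle = -\langle R_0^{iA}(\lambda - i0) v_1 f, v_1 f\rangle$ is real because $\sgn V$ is real, and then computes the imaginary part of the right-hand side via the spectral-measure identity (\ref{spectral_projection}). That identity produces \emph{two} nonnegative pieces: the dissipative term $\langle A P_{>0} g, P_{>0} g\rangle$, which kills $P_{>0}g$, and the trace-on-sphere term $c\lambda^{(d-2)/2}\int_{S^{d-1}}|\widehat{P_0(Vg)}(\sqrt{\lambda}\,\omega)|^2\,d\omega$, which is the genuine Agmon condition on the $P_0$ component. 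Your argument recovers only the first piece and misses the second, which is exactly what is needed to control $P_0 g$ when $\lambda \geq 0$. Once both vanish, $g = g(x)h(y)$ and the unique-continuation endgame proceeds as you described.
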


For the resolvent $R_V^{iA}$ itself, by the symmetric resolvent identity (\ref{inverse_res}), (\ref{RiA}) implies that
$$
\sup_{\Im \lambda \leq 0} \|R^{iA}_V(\lambda)\|_{[L^\infty_{y_1} L^1_{y_2} \cap L^\infty_{y_2} L^1_{y_1}] \B(L^{6/5, 2}_x, L^{6, 2}_x)} < \infty.
$$

Note that $[L^\infty_{y_1} L^1_{y_2} \cap L^2_{y_1} L^2_{y_2}] \subset \B(L^p_y)$ for $1 \leq p \leq 2$ and $[L^\infty_{y_1} L^1_{y_2} \cap L^\infty_{y_2} L^1_{y_1}] \subset \B(L^p_y)$ for $1 \leq p \leq \infty$.


\begin{proof}
For each $\lambda \in \C$, $\Im \lambda \leq 0$, if the Kato--Birman operator $KB(\lambda)$ is invertible in $[L^\infty_{y_1} L^1_{y_2} \cap L^\infty_{y_2} L^1_{y_1}] \B(L^2_x) \oplus \C I$, then by (\ref{sym_res}) and (\ref{inverse_res})
$$
v_2 R_V^{iA}(\lambda) v_1 \in [L^\infty_{y_1} L^1_{y_2} \cap L^\infty_{y_2} L^1_{y_1}] \B(L^2_x) \subset \B(L^p_y L^2_x).
$$

Because the perturbation is continuous and goes to zero in norm, $KB(\lambda)$ is always invertible for large $\lambda$ and its inverse approaches the identity as $\lambda \to \infty$. Furthermore, if it is invertible for all $\Im \lambda \leq 0$, then the inverse will also be norm-continuous. Hence the inverse will be bounded, proving (\ref{RiA}).

It remains to be shown that $KB(\lambda)$ is invertible for all $\Im \lambda \leq 0$. The other possibility would be that $KB(\lambda)$ is not invertible for some $\Im \lambda \leq 0$.

Since $v_2 R^{iA}(\lambda) v_1$ is a completely continuous perturbation of the identity, in particular it is compact in $\B(L^p_y L^2_x)$.

By Fredholm's alternative, Lemma \ref{fredholm}, its non-invertibility implies the existence of some $f \in L^p_y L^2_x$, $f \ne 0$, $1 \leq p \leq \infty$, for which
\be\lb{eqf}
f = - v_2 R_0^{iA}(\lambda-i0) v_1 f.
\ee
In the following we show that such $f$ cannot exist for any value of $p \in [1, 2]$.

The (standard) proof depends on the fact that $V$ is real-valued, so $\sgn V = \pm 1$, hence $v_2 \sgn V = v_1$. 

Suppose that $f \in L^p_y L^2_x$, $f \ne 0$, solves (\ref{eqf}).

If $p<2$, we show that there exists some $\epsilon>0$ such that, whenever $f \in L^p_y L^2_x$, then $f \in L^{\tilde p}_y L^2_x$ as well, where
$$
\frac 1 {\tilde p} = \frac 1 p - \epsilon
$$
when $p<1/\epsilon$ and $\tilde p = \infty$ when $p \geq 1/\epsilon$. Then we can reach some value above $2$ in finitely many steps, hence $2$ as well, by interpolation.

We represent $R_0^{iA}(\lambda-i0)$ as the Fourier transform in $t$ of the time evolution. For
$$
T(t) = \chi_{t>0}(t) e^{it(-\Delta_x+iA)},
$$
its Fourier transform is given by
$$
i \widehat T(\lambda) = (i\chi_{t>0}(t) e^{it(-\Delta_x+iA)})^\wedge(\lambda) = R_0^{iA}(\lambda-i0).
$$
For arbitrary $\epsilon>0$, we split this kernel into two pieces, such that the Fourier transform of the second piece is smoothing in $y$ and the Fourier transform of the first piece is small:
$$
i\chi_{t>0}(t) T(t) = i\chi_{t>0}(t) \eta(t/\epsilon) T(t) + i\chi_{t>0}(t) (1-\eta(t/\epsilon)) T(t),
$$
where $\eta$ is a standard cutoff function: $\eta(t)=1$ for $|t| \leq 1$ and $\eta(t)=0$ for $|t| \geq 2$.

Then the same decomposition holds for its Fourier transform: when $\Im \lambda \leq 0$,
$$
R_0^{iA}(\lambda-i0) = [i \chi_{t>0} \eta(t/\epsilon) T(t)]^\wedge(\lambda) + [i \chi_{t>0} (1-\eta(t/\epsilon)) T(t)]^\wedge(\lambda).
$$

We rewrite the equation of $f$ as
$$
(I + v_2 [i \chi_{t>0} \eta(t/\epsilon) T(t)]^\wedge(\lambda) v_1) f = -v_2 [i \chi_{t>0} (1-\eta(t/\epsilon)) T(t)]^\wedge(\lambda) v_1 f.
$$

The last term is smoothing in the $y$ variable. Indeed, assuming that
$$
\||e^{-tA}|\|_{L^p \to L^{\tilde p}} = \|e^{-tA}\|_{L^p \to L^{\tilde p}} < \infty
$$
is finite for each $t \geq 0$ (and hence non-increasing), then
$$
\|v_2 i \chi_{t>0} (1-\eta(t/\epsilon)) T(t) v_1\|_{L^1_t \B(L^{\tilde p}_y L^2_x)} \les C(\epsilon) \|v_1\|_\v \|v_2\|_\v,
$$
so its Fourier transform in $t$ has finite norm, uniformly in $\lambda$:
$$
\|v_2 [i \chi_{t>0} (1-\eta(t/\epsilon)) T(t)]^\wedge(\lambda) v_1\|_{\B(L^p_y L^2_x, L^{\tilde p}_y L^2_x)} \leq C(\epsilon) \|v_1\|_\v \|v_2\|_\v < \infty.
$$

The other piece is not smoothing, but it becomes small in norm as $\epsilon \to 0$, due to Lemma \ref{bdd}.
%
%
%
Consequently, for sufficiently small $\epsilon$, the inverse can be realized as a converging geometric series:
$$
\big(I - v_2 [i\chi_{t>0}(t) \eta(t/\epsilon) e^{it(-\Delta_x+iA_y)}]^\wedge(\lambda)\, v_1\big)^{-1}
$$
is a bounded operator on $L^{\tilde p}_y L^2_x$ for any $\tilde p \in [1, \infty]$. Then
$$
f = -(I + v_2 [i\chi_{t>0}(t) \eta(t/\epsilon) e^{it(-\Delta_x+iA)}]^\wedge(\lambda) v_1)^{-1} v_2 (i\chi_{t>0}(t) (1-\eta(t/\epsilon)) e^{it(-\Delta_x+iA)})^\wedge(\lambda) v_1 f
$$
must be in $L^{\tilde p}_y L^2_x$.

After several such bootstrapping steps, we get that $f \in L^2_y L^2_x$.

Then by (\ref{eqf}) the pairing
$$
\langle \sgn V f, f \rangle = -\langle R_0^{iA}(\lambda-i0) v_1 f, v_1 f \rangle
$$
is finite and real-valued, because $\sgn V = \pm 1$ is real-valued and $(\sgn V) v_2 = v_1$. However, then
\be\lb{spectral_projection}\begin{aligned}
0 = -\Im \langle R_0^{iA}(\lambda-i0) V g, V g \rangle &= c \chi_{\lambda \geq 0} \lambda^{(\3-2)/2} \int_{S^{\3-1}} |\widehat {P_0(Vg)}(\sqrt \lambda \omega)|^2 \dd\omega + \\
&+ \langle A P_{>0} R_0^{iA}(\lambda-i0) v_1 f, P_{>0} R_0^{iA}(\lambda-i0) v_1 f \rangle,
\end{aligned}\ee
for some constant $c>0$, where $P_0$ is the projection on the zero energy states of $A$ and $P_{>0} = I - P_0$. The first term disappears when $\lambda < 0$ because the spectrum of the free Laplacian is supported on $\lambda \geq 0$.

To prove (\ref{spectral_projection}), we need no particular assumption about the self-adjoint operator $A \geq 0$ or its spectrum. Let $P_{\geq \lambda}$, for $\lambda \geq 0$, be spectral projections for $A$ and $E_\lambda = dP_{\geq \lambda}$ be the spectral density of $A$. Since $A$ and $-\Delta$ commute,
$$\begin{aligned}
-\Im \langle R_0^{iA}(\lambda-i0) v_1 f, v_1 f \rangle &= -\int_0^\infty \Im \langle R_0^{iA}(\lambda-i0) \dd P_{\geq \eta} v_1 f, v_1 f \rangle \\
&= -\Im \langle R_0^{iA}(\lambda-i0) P_0 v_1 f, P_0 v_1 f \rangle - \int_{(0, \infty)} \Im \langle R_0^{iA}(\lambda-i0) \dd P_{\geq \eta} v_1 f, P_{\geq \eta} v_1 f \rangle. 
\end{aligned}$$

The first therm is given by the spectral density of the free Laplacian, where $h=P_0 v_1 f$:
$$
-\Im \langle R_0^{iA}(\lambda-i0) h, h \rangle = -\Im \langle R_0(\lambda-i0) h, h \rangle = c \langle E_\lambda(-\Delta) h, h \rangle = c \chi_{\lambda \geq 0} \lambda^{(\3-2)/2} \int_{S^{\3-1}} |\widehat h(\sqrt \lambda \omega)|^2 \dd \omega.
$$
Here $E_\lambda(-\Delta)$ is the spectral density of the Laplacian, which has the explicit formula given here in terms of the Fourier transform.

Concerning the higher energy states, the other term is the integral of
$$
\begin{aligned}
\Im \langle R_0^{iA}(\lambda-i0) E_\eta v_1 f, E_\eta v_1 f\rangle
&= \Im \langle R_0(\lambda-i\eta) E_\eta v_1 f, E_\eta v_1 f\rangle \\
&= \frac 1 {2i}
\langle \big(R_0(\lambda-i\eta) - R_0(\lambda+i\eta)\big) E_\eta v_1 f, E_\eta v_1 f \rangle \\
&= - \langle R_0(\lambda+i\eta) \eta R_0(\lambda-i\eta) E_\eta v_1 f, E_\eta v_1 f \rangle
\end{aligned}
$$
by the resolvent identity. This gives the second term in (\ref{spectral_projection}).

For any $\epsilon>0$, since $A P_{\geq \epsilon}$ is a positive, coercive operator, it follows from (\ref{spectral_projection}) that
$$
P_{\geq \epsilon} R_0^{iA}(\lambda-i0) v_1 f = 0,
$$
so $P_{>0} R_0^{iA}(\lambda-i0) v_1 f = 0$.

We infer that $R_0^{iA}(\lambda-i0) v_1 f = P_0 R_0^{iA}(\lambda-i0) v_1 f$. Under our hypotheses, $A$ has at most one zero energy bound state, call it $h$, hence $R_0^{iA}(\lambda-i0) v_1 f$ is of the form $g(x) h(y)$ for all $x$.

Let
$$
g(x) h(y) = R_0^{iA}(\lambda-i0) v_1 f.
$$
Then by (\ref{eqf})
$$
g (x) h(y) = - R_0^{iA} V(x, y) (g(x) h(y))
$$
so $g(x)$ is a distributional solution of the equation
$$
(-\Delta+V(x, y)-\lambda) g(x) = 0
$$
for almost all $y \in \supp h$.

Due to the nontrivial-randomness Assumption \ref{nontrivial}, by giving different values to $y \in Y$ we get that $g(x)=0$ on an open set. By bootstrapping in the equation we also obtain that $g \in L^\3_x \cap L^\infty_x$. Furthermore,
$$
(-\Delta +1) g = V g + (\lambda +1) g,
$$
which implies that $g \in W^{2, \3}_x$. Moreover, $g$ solves the equation
$$
(-\Delta + V) g = \lambda g.
$$
Due to the unique continuation property of this equation (see \cite{carleman} or \cite{kochtataru} for a proof), since $g$ vanishes on an open set it must be everywhere zero.

But then $f = -v_2 g$ must also be zero. The contradiction shows that $KB(\lambda)$ must be invertible in $\B(L^p_y L^2_x)$, $1 \leq p \leq 2$.

For $p>2$, we simply perform the same reasoning for the adjoint of the operator we wish to understand,
$$
KB^*(\lambda) = -v_1 R^{-iA}_0(\lambda+i0) v_2,
$$
in the upper half-plane, and use duality.

Next, we show that the inverse has the specified structure. Due to Lemma \ref{lemma_compact}, $v_2 R^{iA}_0(\lambda) v_1$ is completely continuous and can be approximated by finite-rank operators in the $[L^\infty_{y_2} L^1_{y_1}]\B(L^2_x)$ norm.

Furthermore, $[L^\infty_{y_2} L^1_{y_1}]\B(L^2_x)$ has property (\ref{proper}) within $\B(L^1_y L^2_x)$:
$$
\big\{[L^\infty_{y_2} L^1_{y_1}]\B(L^2_x)\big\} \B(L^1_y L^2_x) \big\{[L^\infty_{y_2} L^1_{y_1}]\B(L^2_x)\big\} \subset [L^\infty_{y_2} L^1_{y_1}]\B(L^2_x).
$$
Consequently,
$$
KB(\lambda)^{-1} \in [L^\infty_{y_2} L^1_{y_1}]\B(L^2_x)
$$
and a similar reasoning shows that $KB(\lambda)^{-1} \in [L^\infty_{y_1} L^1_{y_2}]\B(L^2_x)$ as well.
%
\end{proof}

This concludes our spectral analysis of the scalar averaged equation (\ref{average}).

\subsection{Wiener's theorem} When proving dispersive estimates in $L^1$ in time, we employ an instance of Wiener's theorem; see \cite{bec} and, on Banach spaces, \cite{bego}. For convenience, we state this instance of Wiener's theorem below, exactly as we shall use it in the proof.

Given a Banach space $X$, let $\V_X=\B(X, L^1_t X)$.

Define the Fourier transform on $\V_X$ as follows:
$$
\widehat T(\lambda) f = \int_{-\infty}^\infty T(\rho) f e^{-i\rho\lambda} \dd \rho
$$
and the algebra operation on $\V_X$ by
$$
[(T_1 \ast T_2)f](\rho_0) = \int_{-\infty}^\infty T_1(\rho) T_2(\rho_0-\rho)f \dd \rho.
$$
If $T \in \V_X$, then for each $\lambda \in \R$ $\widehat T \in \B(X)$ and
$$
\sup_{\lambda \in \R} \|\widehat T(\lambda)\|_{\B(X)} \leq \|T\|_{\V_X}.
$$
The same holds for $\Im \lambda \leq 0$, i.e.\;in a half-plane, assuming that $\supp T = \{t: \exists x \in X\ T(x)(t) \ne 0\} \subset [0, \infty)$.

Furthermore, for any such $\lambda$
$$
\widehat{T_1 \ast T_2}(\lambda) = \widehat T_1(\lambda) \widehat T_2(\lambda).
$$

Let $\widehat {\V_X} \subset C_\lambda \B(X)$ be the algebra of Fourier transforms of kernels in $\V_X$.

$\V_X$ is a Banach algebra under convolution and consequently $\widehat {V_X}$ is one under pointwise composition of operators, but they lack a unit element. To remedy this, we adjoin to $\V_X$ the operator $\one \in \B(X, \mc M_t X)$ given by
$$
\one(x) := \delta_0(t) x,
$$
thus forming the Banach space $\ov \V_X := \V_X \oplus \C I \subset \B(X, L^1_t(m+\delta_0)X)$.

Here $L^1_t(m+\delta_0)X$ is the space of $X$-valued functions which are Lebesgue integrable with respect to the measure $m+\delta_0$, where $m$ is the Lebesgue measure on $\R$ and $\delta_0$ is the Dirac measure at $0$.

Clearly $\one$ is the identity under convolution. Its Fourier transform is constantly $I \in \B(X)$: $\widehat \one(\lambda) = I$ for every $\lambda$, so $\ov{\widehat\V_X} = \widehat {V_X} \oplus \C I$ is also a unital algebra.

Each element $T \in \V_X$ determines a bounded operator on $L^1_t X$ by acting as follows:
\be\lb{conv_act}
[T F](\rho_0) = \int_{-\infty}^\infty T(\rho) F(\rho_0-\rho) \dd \rho.
\ee

\begin{theorem} \label{thm:Wiener'}
Suppose $T$ is an element of $\V_X$ with the following properties:
\begin{align}
\label{translation'} &\lim_{\delta \to 0}
  \|T(\rho) - T(\rho-\delta)\|_{\V_X} = 0\\
\label{locality'} &\lim_{R \to \infty}
   \|\chi_{|\rho| \ge R}T\|_{\V_X} = 0.
\end{align}
If $I + \widehat{T}(\lambda)$ is an invertible element of $\B(X)$ for every
$\lambda \in \R$, then $\one + T$ possesses an inverse in
$\overline{\V}_X$ of the form $\one + S$, with $S \in \V_X$.
\end{theorem}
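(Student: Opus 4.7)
The plan is to adapt the standard non-commutative Wiener inversion argument to the algebra $\V_X = \B(X, L^1_t X)$, working through the Fourier-transform picture $\widehat{\V_X} \subset C_b(\R, \B(X))$. I would first extract basic regularity of $\hat T$: the translation continuity (\ref{translation'}) forces $\hat T : \R \to \B(X)$ to be norm-continuous, since on the Fourier side the hypothesis reads $\|(e^{-i\delta\lambda}-1)\hat T(\lambda)\|_{\B(X)} \to 0$ in an appropriate uniform sense; and the locality (\ref{locality'}), via a Riemann--Lebesgue-type argument, forces $\|\hat T(\lambda)\|_{\B(X)} \to 0$ as $|\lambda| \to \infty$. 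Thus $I + \hat T$ is a continuous $\B(X)$-valued function on the one-point compactification $\R \cup \{\infty\}$, taking the value $I$ at $\infty$. Combined with pointwise invertibility, a standard compactness argument shows that $(I+\hat T(\lambda))^{-1}$ is uniformly bounded and norm-continuous on $\R \cup \{\infty\}$.

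The heart of the proof is to produce local inverses in $\ov \V_X$ and patch them together. Fix $\lambda_0 \in \R$, set $U_0 = (I + \hat T(\lambda_0))^{-1}$, pick a Schwartz bump $\phi \in C_c^\infty(\R)$ with $\phi(0) = 1$, and form the modulated localizer $\Phi_{\epsilon,\lambda_0}(t) := \epsilon\,\check\phi(\epsilon t)\, e^{i\lambda_0 t}$, whose Fourier transform is $\phi(\epsilon^{-1}(\lambda-\lambda_0))$. The translation-continuity hypothesis (\ref{translation'}) is precisely what allows one to show that $\Phi_{\epsilon,\lambda_0} \ast (T - \hat T(\lambda_0)\,\one)$ can be made arbitrarily small in the $\V_X$-norm as $\epsilon \to 0$, because the convolution averages $T$ against a kernel of effective width $\epsilon^{-1}$, and the translation modulus controls this averaging. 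On the frequency band where $\phi(\epsilon^{-1}(\lambda-\lambda_0)) \neq 0$, a Neumann series then converges in $\V_X$ and provides a local right inverse of $\one + T$ up to a smoothly-localized error.

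For the global inverse, outside a large compact set $\{|\lambda| \leq R\}$ the decay $\|\hat T(\lambda)\|_{\B(X)} \to 0$ together with a high-frequency cutoff makes $\sum_{n \geq 0}(-T)^{\ast n}$ converge in $\V_X$; on the compact set I would take a finite partition of unity $1 = \sum_k \phi_k^2(\epsilon^{-1}(\lambda - \lambda_k))$, apply the local inversion at each center $\lambda_k$, and sum the resulting pieces, which lie in $\ov\V_X$ because the partition-of-unity multipliers correspond to admissible $L^1_t$ kernels. This yields an approximate global inverse $\one + S_0 \in \ov\V_X$ with $\|(\one + T) \ast (\one + S_0) - \one\|_{\V_X} < 1$, and a final Neumann series produces the exact inverse $\one + S$ with $S \in \V_X$. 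The main obstacle is precisely the transition from pointwise operator-norm data to algebra-norm inversion: pointwise invertibility of $I + \hat T(\lambda)$ only yields boundedness of the inverse in $\B(X)$, and bridging this to an element of $\V_X$ requires the quantitative modulus of translation continuity (\ref{translation'}) in an essential way. The partition-of-unity patching is then standard but demands careful bookkeeping to ensure the accumulated $\V_X$-norm errors across all the localizations remain controlled, so that the final correction step lands in $\V_X$ rather than only in $\ov \V_X$.
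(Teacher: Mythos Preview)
Your proposal is correct and follows essentially the same strategy as the paper. Note that the paper itself does not prove Theorem~\ref{thm:Wiener'}, deferring instead to \cite{bec} and \cite{bego}; however, it does prove the closely related Theorem~\ref{thm:Wiener} (for the Beurling-type algebra $\W_X=\langle t\rangle^{-p}L^\infty_t\B(X)$), and that proof proceeds exactly along your lines: a high-frequency piece $(1-\eta(\lambda/L))\widehat T(\lambda)$ is handled by a Neumann series using the translation-continuity hypothesis to make $S_L=T-L\widecheck\eta(L\cdot)\ast T$ small, a local inverse near each $\lambda_0$ is built from the series $\phi(\lambda)A_0^{-1}\sum_k(-\widehat{S_\epsilon}(\lambda)A_0^{-1})^k$ with $A_0=I+\widehat T(\lambda_0)$, and the compact region is covered by a finite partition of unity. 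The only cosmetic differences are that the paper sums the exact local inverses $\phi_j(\lambda)(I+\widehat T(\lambda))^{-1}$ directly rather than passing through an approximate inverse plus a final Neumann correction, and it does not square the partition functions; neither distinction affects the argument.
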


The proof of this statement can be found in \cite{bec} or \cite{bego}.

Note that Theorem \ref{thm:Wiener'} does not guarantee a specific rate of decay, but only integrability in $t$. However, in many cases we need a specific uniform rate of decay. This corresponds to using a Beurling-type algebra instead of a Wiener algebra.

For any Banach algebra $\mc A \subset L^1_t$, such as ${\frak L_1}$ defined by (\ref{eser}), $\mc A_t \B(X)$ is a subalgebra of $\V_X$.

Given a Banach space $X$, for any $p>1$ let
$$
\W_X:=\langle t \rangle^{-p} L^\infty_t \B(X).
$$
For any $T \in \W_X$, the map $f \mapsto T(t)f$ is an element of $\V_X$ that can be identified with $T$, making $\W_X$ a subalgebra of $\V_X$. The algebra operation reduces to
$$
[T_1 \ast T_2](\rho_0) = \int_{-\infty}^\infty T_1(\rho) T_2(\rho_0-\rho) \dd \rho.
$$

Using this sort of pointwise, as opposed to average, decay rate at infinity poses its own problems. The main issue is that for a typical element $T \in W_X$
$$
\lim_{R \to \infty} \|\chi_{[R, \infty)}(t) T(t)\|_{\W_X} \ne 0.
$$
Indeed, for this limit to be zero, one would need a $o(t^{-p})$ rate of decay instead of $O(t^{-p})$. Below we explain how to overcome this difficulty.

Again, $\ov \W_X := \W_X \oplus \C \one$ is a unital algebra under convolution.

Let $\widehat{\W_X}$ be the algebra of Fourier transforms of kernels in $\W_X$. In this paper we use a novel version of Wiener's Theorem, Theorem \ref{thm:Wiener}, to get pointwise decay estimates in this setting. Note that we do not need condition (\ref{locality'}) in this setting, since it is in some sense implied by the definition of $\W_X$. The proof is presented below.

It can also be useful to know when the inverse belongs to some specific Banach subalgebra $\ov \W$ of $\ov \W_X$, such as for example the subalgebra obtained by replacing $\B(X)$ with $\mc K(X)$, the space of compact operators, or with some other subspace of operators. Such properties are encapsulated in the following remarks.

\begin{proposition}\lb{subalg} Consider a Banach subalgebra ${\frak X} \subset \B(X)$ and for some fixed $p>1$ let
$$
\W = \W_{{\frak X}} = \langle t \rangle^{-p} L^\infty_t {\frak X}.
$$
If, in addition, 
$T \in \W$ has property (\ref{translation_high}) within $\W$,
$$
\lim_{\delta \to 0} \|T^N(t) - T^N(t-\delta)\|_{\W} = 0,
$$
and $(I + \widehat T(\lambda))^{-1} \in {\frak X}$ for all $\lambda \in \R$, then $S = (\one + T)^{-1} - \one \in \W$.

For $\tilde p > 1$ and an operator space $\tilde {\frak X}$, consider a Banach algebra $\tilde \W = \W \cap \langle t \rangle^{-\tilde p} L^\infty_t \tilde {\frak X}$, which satisfies the following generalized Leibniz inequality with respect to $\W$:
\be\lb{leib}
\|T_1 \ast T_2\|_{\tilde \W} \les \|T_1\|_{\tilde \W} \|T_2\|_\W + \|T_1\|_{\W} \|T_2\|_{\tilde \W}.
\ee
If $T \in \tilde \W$ and $(I+\widehat T(\lambda))^{-1} \in \tilde \W$ for all $\lambda \in \R$ as well, then $S = (\one+T)^{-1}-\one \in \tilde \W$ and
$$
\|S\|_{\tilde \W} \leq C \|T\|_{\tilde W},
$$
with a constant that depends only on $\|T\|_{\W}$ and on $\sup_{\lambda \in \R} \|(I+\widehat T(\lambda))^{-1}\|_{\tilde \W}$.

Furthermore, consider the function algebra ${\frak L_1}$, defined by (\ref{eser}), with the norm
$$
\|f\|_{{\frak L_1}} := \|f\|_{L^2([-1, 1])} + \|\langle t \rangle^{\3/2} f(t)\|_{L^\infty(\R \setminus [-1, 1])}.
$$
Suppose that $T$ is completely continuous in $({\frak L_1})_t {\frak X}$: for every $\epsilon>0$ there exist some functions $f_n \in {\frak L_1}$ and operators $g_n \in {\frak X}$, $1 \leq n \leq N$, such that
$$
\Big\|T - \sum_{n=1}^N f_n(t) g_n\Big\|_{(\frak L_1)_t {\frak X}} < \epsilon.
$$
Then $S = (\one+T)^{-1}-\one$ is also completely continuous in $({\frak L_1})_t {\frak X}$.
\end{proposition}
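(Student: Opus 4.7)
The plan is to prove each of the three parts by running the local Neumann-series argument underlying Theorem \ref{thm:Wiener}, while tracking at each step the extra algebraic structure demanded. The common core is that near any $\lambda_0 \in \R$ one may factor
\begin{equation*}
I + \widehat T(\lambda) = (I + \widehat T(\lambda_0))(I + F(\lambda)), \qquad F(\lambda) = (I + \widehat T(\lambda_0))^{-1}(\widehat T(\lambda) - \widehat T(\lambda_0)),
\end{equation*}
with $\|F(\lambda)\|_{\B(X)}$ small on a neighborhood $U_{\lambda_0}$ of $\lambda_0$ (by norm-continuity, which follows from (\ref{translation_high})), invert the second factor by its Neumann series, combine the pieces via a finite partition of unity $1 = \sum_\alpha \phi_\alpha(\lambda)^2$, and return to the time side via inverse Fourier transform.

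For Part 1, Theorem \ref{thm:Wiener} already yields $S \in \W_X$, and it suffices to check that the local Neumann series stays in $\frak X$: write $(I + \widehat T(\lambda_0))^{-1} = I + N_{\lambda_0}$ with $N_{\lambda_0} \in \frak X$ by hypothesis, and note that $\widehat T(\lambda) - \widehat T(\lambda_0) \in \frak X$ since $T$ is $\frak X$-valued; hence every higher-order term of $(I + F(\lambda))^{-1} - I$ is an ordered product of elements of the closed subalgebra $\frak X$, so the sum itself stays in $\frak X$. Patching across the POU and inverse-Fourier-transforming expresses $S(t)$ as a finite sum of Bochner integrals of $\frak X$-valued integrands, hence $S(t) \in \frak X$. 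Part 2 runs the same localization, now tracking the $\tilde \W$-norm via (\ref{leib}). Naively applying (\ref{leib}) to $S = -T - T \ast S$ only closes when $\|T\|_\W < 1$; the localization sidesteps this since each patch's Neumann series converges geometrically in $\W$ by smallness of $F(\lambda)$ on $U_{\lambda_0}$, and at most one factor per term needs to carry the $\tilde \W$-seminorm thanks to (\ref{leib}), the remaining factors being absorbed into powers of $\|T\|_\W$ and $\sup_\lambda \|(I + \widehat T(\lambda))^{-1}\|_{\B(X)}$. Summing over the finitely many patches produces the claimed bound, linear in $\|T\|_{\tilde \W}$.

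Part 3 is the main obstacle. The cleanest route is to approximate $T$ by tensor sums $\tilde T = \sum_{j=1}^N f_j(t) A_j$ in $({\frak L_1})_t \frak X$ with $f_j \in {\frak L_1}$ and $A_j \in \frak X$, then show both that the corresponding $\tilde S$ is itself a tensor sum and that $\tilde S \to S$ in $({\frak L_1})_t \frak X$. For the former, the $A_j$'s span a finite-dimensional subalgebra $\frak Y \subset \frak X$, and applying Part 1 intrinsically inside $\frak Y$ (whose finite-dimensionality makes all norms equivalent and automatically places every Neumann term in $\frak Y$) places $\tilde S$ in $({\frak L_1})_t \frak Y$; any basis of $\frak Y$ then writes $\tilde S = \sum_k g_k(t) B_k$ with $g_k \in {\frak L_1}$ and $B_k \in \frak Y$, a genuine tensor sum. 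For the latter, the stability identity $S - \tilde S = -(\one + S) \ast (T - \tilde T) \ast (\one + \tilde S)$, together with the $\W$-bounds on $S$ and $\tilde S$ (uniform for $\tilde T$ sufficiently close to $T$) and the fact that ${\frak L_1}$ is a convolution algebra, yields $\|S - \tilde S\|_{({\frak L_1})_t \frak X} \les \|T - \tilde T\|_{({\frak L_1})_t \frak X} \to 0$. The delicate step is verifying $(I + \widehat{\tilde T}(\lambda))^{-1} \in I + \frak Y$ for \emph{all} $\lambda \in \R$, so that Part 1 may indeed be applied intrinsically inside $\frak Y$; this is handled by choosing $\tilde T$ close enough to $T$ in $\W$ that the invertibility of $I + \widehat T(\lambda)$ passes to $I + \widehat{\tilde T}(\lambda)$ by a perturbation argument, and by using finite-dimensionality of $\frak Y$ to close the loop.
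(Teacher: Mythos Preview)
Parts 1 and 2 of your proposal are essentially in the spirit of the paper's argument, though you gloss over a key technical point: the localized perturbation $S_\epsilon$ (the inverse Fourier transform of $\eta(\lambda/\epsilon)(\widehat T(\lambda)-\widehat T(\lambda_0))$) is \emph{not} small in $\W$ as $\epsilon\to 0$; it is only uniformly bounded. What becomes small is the square $S_\epsilon A_0^{-1}\ast S_\epsilon A_0^{-1}$, via the tail-squaring property (\ref{prop\3}). Your phrase ``each patch's Neumann series converges geometrically in $\W$ by smallness of $F(\lambda)$'' is therefore misleading: smallness of $F(\lambda)$ in $\B(X)$ does not automatically give smallness on the time side in $\W$. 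Since you invoke Theorem \ref{thm:Wiener} as a black box for the $\W_X$ membership and only need to track $\frak X$-valuedness and the Leibniz estimate on top of that, this is more a point of clarification than a fatal gap.

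Part 3 has a genuine gap. You write that ``the $A_j$'s span a finite-dimensional subalgebra $\frak Y\subset\frak X$''. This is false in general: the linear span of finitely many operators need not be closed under composition, and the \emph{algebra} generated by finitely many elements of $\frak X$ is typically infinite-dimensional (take two non-commuting bounded operators). The proposition only assumes $g_n\in\frak X$, not that the $g_n$ are finite-rank; without finite rank there is no Cayley--Hamilton-type argument to force finite-dimensionality. Consequently your appeal to ``Part 1 intrinsically inside $\frak Y$'' and the claim that $(I+\widehat{\tilde T}(\lambda))^{-1}\in I+\frak Y$ both collapse. Even the stability step $S-\tilde S=-(\one+S)\ast(T-\tilde T)\ast(\one+\tilde S)$, while algebraically correct, does not help if $\tilde S$ is not itself a finite tensor sum.

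The paper's route avoids this entirely: it writes down the explicit formula (\ref{inverse_form}) for $(\one+T)^{-1}$ as a finite sum of geometric series in $T$ (and fixed operators $(I+\widehat T(\lambda_j))^{-1}\in\frak X$), truncates each series in the $\W\subset(\frak L_1)_t\frak X$ norm, and then uses that powers of a completely continuous element remain completely continuous. This sidesteps any need for a finite-dimensional algebra and works directly from the hypothesis on $T$.
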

All these operators act by convolution in the $t$ variable, following (\ref{conv_act}).

This notion of complete continuity does not imply compactness of $T$ itself, or of $S$. Indeed, due to the fact that the action of $T$ commutes with translations, i.e.\;$T(f(\cdot+t_0)) = T(f)(\cdot+t_0)$, $T \ne 0$ cannot be compact when acting by convolution on Banach spaces of the form $\langle t \rangle^{-\alpha} L^q_t$.

However, if the operators $g_n \in \frak X$ in the approximation above are compact, then both $\widehat T(\lambda)$ and $\widehat S(\lambda) \in \B(X)$ are compact for $\Im \lambda \leq 0$.

\begin{proof}[Proof of Theorem \ref{thm:Wiener} and Proposition \ref{subalg}]
We prove that $(I + \widehat{T}(\lambda))^{-1}-I$ is the Fourier transform
of an element  $S \in \W$, under the conditions listed in Proposition \ref{subalg}.

Obviously $\B(X)$ is a subalgebra of itself, so the proof will apply to the whole space, $\W = \W_X$, as well, thus proving Theorem \ref{thm:Wiener}.

%

Let $\eta \in C^\infty(\R)$ be a standard cutoff function, $\eta(\lambda)=1$ for $|\lambda| \leq 1$ and $\eta(\lambda)=0$ for $|\lambda| \geq 2$.  For $L \in \R$, the restriction
$(1 - \eta(\lambda/L))\widehat{T}(\lambda)$ is the Fourier transform of
$$
S_L(\rho) = \big(T - L\widecheck{\eta}(L\,\cdot\,) * T\big)(\rho) =
\int_\R L \widecheck{\eta}(L\sigma) [T(\rho) - T(\rho-\sigma)]\,d\sigma.
$$

Since convolution with a sufficiently rapidly decaying function preserves $\W$, clearly $S_L \in \W$.

If condition (\ref{translation}) is satisfied within $\W$, then
the $\W$ norm of the right-hand side vanishes as $L \to \infty$, because
$$
\|S_L(\rho)\|_{\W} \leq \int_\R L |\widecheck{\eta}(L\sigma)| \, \|T(\rho) - T(\rho-\sigma)\|_{\W} \,d\sigma = \int_\R |\widecheck{\eta}(\sigma)| \, \|T(\rho) - T(\rho-\sigma/L)\|_{\W} \,d\sigma.
$$
Thus, for some large number $L$,
\be\lb{serie'}
\sum_{k=0}^\infty (-S_L(\rho))^k = \one + \sum_{k=1}^\infty (-S_L(\rho))^k
\ee
is a convergent series in $\ov \W = \W \oplus \C \one$, where the powers of $-S_L(\rho)$ denote repeated convolution in the $\rho$ variable. The Fourier transform of $\sum_{k=0}^\infty (-S_L(\rho))^k$ as a function of $\rho$ is
\begin{equation*}
\sum_{k=0}^\infty (-1)^k \Big(\big(1 - \eta(\lambda/L))\widehat{T}(\lambda)\Big)^k
= \Big(I + \big(1-\eta(\lambda/L)\big)\widehat{T}(\lambda)\Big)^{-1}
\end{equation*}
which agrees with $(I + \widehat{T}(\lambda))^{-1}$ for all $\lambda > 2L$. Therefore
$$
[\big(1-\eta(\lambda/2L)\big) (I + \widehat{T}(\lambda))^{-1}]^\vee = [\big(1-\eta(\lambda/2L)\big) (I + (1-\eta(\lambda/L))\widehat{T}(\lambda))^{-1}]^\vee \in \ov \W
$$
and more specifically
\be\lb{infinit}
[\big(1-\eta(\lambda/2L)\big) (I + \widehat{T}(\lambda))^{-1}]^\vee - \one \in \W.
\ee

Assuming the weaker condition (\ref{translation_high}) instead of (\ref{translation}) only guarantees in like manner that
$$
[\big(1-\eta(\lambda/2L)\big) (I \pm \widehat{T}^N(\lambda))^{-1}]^\vee - \one \in \W,
$$
since $\widehat{T^N} = \widehat T^N$, but then one can write
$$
[\big(1-\eta(\lambda/2L)\big) (I + \widehat{T}(\lambda))^{-1}]^\vee = [\big(1-\eta(\lambda/2L)\big) (I - (-1)^N \widehat{T}^N(\lambda))^{-1}]^\vee \ast \bigg(\sum_{k=0}^{N-1} (-1)^k T^k\bigg) \in \ov \W. 
$$
So the more general condition (\ref{translation_high}) holding within $\W$ also implies (\ref{infinit}).

We then decompose $(I + \widehat{T}(\lambda))^{-1} - I$ into
\begin{align*}
(I + \widehat{T}(\lambda))^{-1} - I &= \eta(\lambda/2L) (I + \widehat{T}(\lambda))^{-1} +
\big(1-\eta(\lambda/2L)\big) (I + \widehat{T}(\lambda))^{-1} \\
&= \eta(\lambda/2L) (I + \widehat{T}(\lambda))^{-1} + (1 - \eta(\lambda/2L))
\big(I + (1-\eta(\lambda/L))\widehat{T}(\lambda)\big)^{-1} - I.
\end{align*}

Knowing by (\ref{infinit}) that the second term belongs to $\widehat  {\W}$, we next prove that the first term also belongs to $\widehat {\W}$.

We construct a local inverse for $I + \widehat{T}(\lambda)$ in the neighborhood
of
any $\lambda_0 \in \R$.  With no loss of generality let $\lambda_0 = 0$ and let $A_0 = I + \widehat{T}(0)$.
One can write $\eta(\lambda/\epsilon)(I + \widehat{T}(\lambda) - A_0)$
as the Fourier transform of
\be\lb{seps}\begin{aligned}
S_\epsilon(\rho) &= \epsilon\widecheck{\eta}(\epsilon\,\cdot\,) * T(\rho) +
\epsilon\widecheck{\eta}(\epsilon\rho)(I-A_0)
\\
&= \int_\R \epsilon\big(\widecheck{\eta}(\epsilon(\rho-\sigma))-
\widecheck{\eta}(\epsilon\rho)\big) T(\sigma)\,d\sigma.
\end{aligned}\ee
The second equality used the fact that
$$
I - A_0 = -\widehat{T}(0) = -\int_\R T(\sigma)\,d\sigma.
$$
As we shall see below, $S_\epsilon$ is uniformly bounded in $\W$.

For any smooth function $\phi$ supported in $[-\frac{\epsilon}2,\frac{\epsilon}2]$, consider the series expansion
\be\lb{serie}\begin{aligned}
\phi(\lambda)(I + \widehat{T}(\lambda))^{-1}
&= \phi(\lambda)\big(A_0 + \eta(\lambda/\epsilon)(1 + \widehat{T}(\lambda) -A_0)\big)^{-1}
\\
&= \phi(\lambda) A_0^{-1}\big(I + \widehat{S_\epsilon}(\lambda)A_0^{-1}\big)^{-1} \\
&= \phi(\lambda) A_0^{-1} (I - \widehat{S_\epsilon}(\lambda)A_0^{-1})\sum_{k=0}^\infty (-1)^k \big(\widehat{S_\epsilon}(\lambda)A_0^{-1}\widehat{S_\epsilon}(\lambda)A_0^{-1}\big)^k.
\end{aligned}\ee
Here $A_0^{-1} = (I + \widehat T(0))^{-1} \in {\frak X}$ by hypothesis and $\widehat {S_\epsilon}$ is the Fourier transform of $S_\epsilon$, which will be in $L^\infty_\lambda {\frak X}$ because $S_\epsilon \in \W$.

To prove that $S_\epsilon \in \W$ is uniformly bounded, we estimate the integral in (\ref{seps}) differently in the two regions $|\rho|\geq 2|\sigma|$ and $|\rho|<2|\sigma|$ to get $|\rho|^{-p}$ decay. For each $\sigma \in \R$, one has that
$$
\widecheck{\eta}(\epsilon(\rho-\sigma)) - \widecheck{\eta}(\epsilon\rho) = -\epsilon \int_0^\sigma (\widecheck{\eta})'(\epsilon(\rho-\tilde \sigma)) \dd \tilde \sigma. 
$$
Since $\widecheck{\eta}$ and $(\widecheck{\eta})'$ have rapid decay,
$$
\widecheck{\eta}(\epsilon \rho) \les (\epsilon |\rho|)^{-1},\ (\widecheck{\eta})'(\epsilon\rho) \les (\epsilon |\rho|)^{-2}.
$$
Thus, the integrand in (\ref{seps}) is bounded by 
$|\rho|^{-1}$ and $|\sigma| |\rho|^{-2}$ for $|\rho| \geq 2|\sigma|$.

Consequently, for $p<2$
\be\lb{estp2}
\bigg\|\int_{|\sigma| \leq |\rho|/2} \epsilon\big(\widecheck{\eta}(\epsilon(\rho-\sigma))-
\widecheck{\eta}(\epsilon\rho)\big) T(\sigma)\,d\sigma\bigg\|_{{\frak X}} \les \|T\|_{\W} |\rho|^{-2} \int_{|\sigma| \leq |\rho|/2} |\sigma| \langle \sigma \rangle^{-p} \dd \sigma \les |\rho|^{-p} \|T\|_{\W}.
\ee
Note that for $p>2$ this expression still decays no faster than $|\rho|^{-2}$, so a different approach is needed.

In the other region $|\rho|<2|\sigma|$, we estimate each term in (\ref{seps}) separately:
$$
\bigg\|\int_{|\sigma| > |\rho|/2} \epsilon\widecheck{\eta}(\epsilon(\rho-\sigma)) T(\sigma)\,d\sigma\bigg\|_{{\frak X}} \les \|\epsilon\widecheck{\eta}(\epsilon(\rho-\sigma)\|_{L^1_{\sigma}} \sup_{|\sigma| > |\rho|/2} \|T(\sigma)\|_{{\frak X}} \les |\rho|^{-p} \|T\|_{\W}
$$
and
$$
\bigg\|\int_{|\sigma| > |\rho|/2} \epsilon
\widecheck{\eta}(\epsilon\rho) T(\sigma)\,d\sigma\bigg\|_{{\frak X}} \les |\rho|^{-1} \int_{|\sigma| > |\rho|/2} \|T(\sigma)\|_{{\frak X}} \,d\sigma \les |\rho|^{-p} \|T\|_{\W}.
$$
Putting these estimates together, we get that uniformly for all $\epsilon>0$
\be\lb{rhop}
\|S_\epsilon(\rho)\|_{{\frak X}} \les |\rho|^{-p} \|T\|_{\W}.
\ee

In addition, the integrand in (\ref{seps}) is of size $\epsilon$, so $\|S_\epsilon(\rho)\|_{{\frak X}} \les \epsilon \|T\|_{\W}$ and consequently
\be\lb{bdint}
\|\chi_{|\rho| \leq R}(\rho) S_\epsilon(\rho)\|_{\W} \les \sup_{|\rho| \leq R} \langle \rho \rangle^p \|S_\epsilon(\rho)\|_{{\frak X}} \les \langle R \rangle^p \epsilon \|T\|_{\W}.
\ee
For $\epsilon \leq 1$, considering the previous $|\rho|^{-p}$ decay estimate, we get that $S_\epsilon \in \W$ is uniformly bounded:
$$
\|S_\epsilon\|_{\W} \les \|T\|_{\W}.
$$

Since $A_0^{-1} \in {\frak X}$, it is also true that
$$
\|S_\epsilon A_0^{-1} \|_{\W} \les \|T\|_{\W} \|A_0^{-1}\|_{{\frak X}}
$$
and
$$
\|\chi_{|\rho| \leq R}(\rho) S_\epsilon(\rho)A_0^{-1} \|_{\W} \les \langle R \rangle^p \epsilon \|T\|_{\W} \|A_0^{-1}\|_{{\frak X}}.
$$

The inverse Fourier transform of the series in (\ref{serie}) is
\be\lb{inv_F}
\sum_{k=0}^\infty (-1)^k \big(S_\epsilon A_0^{-1} \ast S_\epsilon A_0^{-1}\big)^k,
\ee
where the powers now denote repeated convolution.

Neither $S_\epsilon$ nor $A_0^{-1} S_\epsilon$ goes to zero in norm as $\epsilon \to 0$; only their restrictions to bounded intervals do, due to (\ref{bdint}). However, their square does, making the series (\ref{inv_F}) converge in $\W$ for any sufficiently small $\epsilon < \epsilon_0$, where $\epsilon_0$ depends on $\|T\|_{\W}$ and on $\|(I+\widehat T(\lambda_0))^{-1}\|_{{\frak X}}$.

Indeed, note that, uniformly for bounded $\tilde T \in \W$,
\be\lb{prop\3}
\lim_{R \to \infty} \|(\chi_{|t| \geq R}(t) \tilde T(t)) \ast (\chi_{|t| \geq R}(t) \tilde T(t))\|_{\W} = 0.
\ee
This is shown by the following computation: for any $\tilde T \in \W$,
$$\begin{aligned}
&\langle \rho \rangle^p \|[(\chi_{|\sigma| \geq R}(\sigma) \tilde T(\sigma)) \ast (\chi_{|\sigma| \geq R}(\sigma) \tilde T(\sigma))](\rho)\|_{{\frak X}} \leq \\
&\leq \langle \rho \rangle^p \|\tilde T\|_{\W}^2 |(\chi_{|\sigma| \geq R} \langle \sigma \rangle^{-p}) \ast (\chi_{|\sigma| \geq R} \langle \sigma \rangle^{-p})(\rho)| \\
&\les \|\tilde T\|_{\W}^2 \big[|(\chi_{|\sigma| \geq R}) \ast (\chi_{|\sigma| \geq R} \langle \sigma \rangle^{-p})(t)| + |(\chi_{|\sigma| \geq R} \langle \sigma \rangle^{-p}) \ast (\chi_{|\sigma| \geq R})(\rho)|\big] \\
&\les \langle R \rangle^{1-p} \|\tilde T\|_{\W}^2.
\end{aligned}$$
Consequently, for any $\epsilon$ and $R>0$, writing
$$
S_\epsilon A_0^{-1} = \chi_{|\rho| \leq R}(\rho) S_\epsilon(\rho) + \chi_{|\rho| \geq R}(\rho) S_\epsilon(\rho)
$$
and using (\ref{bdint}), we get
\be\lb{mainest}\begin{aligned}
&\|S_\epsilon A_0^{-1} \ast S_\epsilon A_0^{-1}\|_{\W} \les \\
&\les \|\chi_{|\rho| \leq R}(\rho) S_\epsilon(\rho) A_0^{-1} \ast \chi_{|\rho| \leq R}(\rho) S_\epsilon(\rho) A_0^{-1}\|_{\W} + \|\chi_{|\rho| \leq R}(\rho) S_\epsilon(\rho) A_0^{-1} \ast \chi_{|\rho| \geq R}(\rho) S_\epsilon(\rho) A_0^{-1}\|_{\W} \\
& + \|\chi_{|\rho| \geq R}(\rho) S_\epsilon(\rho) A_0^{-1} \ast \chi_{|\rho| \leq R}(\rho) S_\epsilon(\rho) A_0^{-1}\|_{\W} + \|\chi_{|\rho| \geq R}(\rho) S_\epsilon(\rho) A_0^{-1} \ast \chi_{|\rho| \geq R}(\rho) S_\epsilon(\rho) A_0^{-1}\|_{\W} \\
&\les (\langle R \rangle^p \epsilon)^2 \|T\|_{\W}^2 \|A_0^{-1}\|_{{\frak X}}^2 + 2 \langle R \rangle^p \epsilon \|T\|_{\W} \|A_0^{-1}\|_{{\frak X}} \|S_\epsilon A_0^{-1}\|_{\W} +  \langle R \rangle^{1-p} \|S_\epsilon A_0^{-1}\|_{\W}^2\\
&\les \|T\|_{\W}^2 \|A_0^{-1}\|_{{\frak X}}^2 (\langle R \rangle^{2p} \epsilon^2 + \langle R \rangle^p \epsilon + \langle R \rangle^{1-p}).
\end{aligned}\ee

For sufficiently large $R$ and correspondingly small chosen $\epsilon<\epsilon_0(R)=\langle R \rangle^{-p}$, the right-hand side can be made arbitrarily small. Then $\|S_\epsilon A_0^{-1} \ast S_\epsilon A_0^{-1}\|_{\W}$ will be small as well, making the series (\ref{inv_F}) converge in $\W$ and (\ref{serie}) converge in $\widehat{\W}$.

Under assumption (\ref{leib}), whenever $T \in \tilde \W$, for $N \geq 1$ one has that
$$
\|T^N\|_{\tilde \W} \les N C^{N-1} \|T\|_{\W}^{N-1} \|T\|_{\tilde \W}.
$$
Consequently, the geometric series (\ref{serie'}) and (\ref{serie}) can also be made to converge in $\tilde W$, for radii $L$ and $\epsilon_0$ that do not depend on $\|T\|_{\tilde W}$, but only on $\|(I+\widehat T(0))^{-1}\|_{\frak X \cap \tilde {\frak X}}$, in the latter case.



The expression $(I+\widehat T(\lambda_0))^{-1}$ being norm-continuous in ${\frak X}$, it follows that for each compact interval $[-2L, 2L]$ there is a nonzero lower bound on the length $\epsilon$ required to make the series (\ref{serie}), which represents $\phi(\lambda) (I+\widehat T(\lambda))^{-1}$ for some $\phi$ supported on $(\lambda_0-\epsilon/2, \lambda_0+\epsilon/2)$, convergent in $\widehat{\W}$ for any $\lambda_0 \in [-2L, 2L]$.


Choose a finite covering of the compact set $[-2L, 2L]$ and a subordinated
partition of the unity $(\phi_j)_j$ with
$\sum_j \phi_j = \eta(\lambda/2L)$, such that for each $j$ the local inverse
$\phi_j(\lambda) \big(I + \widehat
T(\lambda)\big)^{-1} \in \widehat {\W}$ is given
by an explicit series as above. Then $\eta(\lambda/2L) \big(I + \widehat
T(\lambda)\big)^{-1}$ is the sum of finitely many elements of $\widehat {\W}$, so it belongs to $\widehat {\W}$.

Taking into account (\ref{infinit}), consequently $\big(\one + T\big)^{-1} - \one$ belongs to $\W$, as well as to $\tilde W$ if $T \in \tilde W$, $(I+\widehat T(0))^{-1} \in \tilde {\frak X}$ for $\lambda \in \R$, and (\ref{leib}) holds.

Summarizing previous computations, the inverse is given by an expression of the following type:
\be\lb{inverse_form}
\big(\one + T\big)^{-1} = (\one - \eta) \ast \sum_{k=0}^\infty (\tilde \eta \ast T - T)^k + \sum_{j=1}^J \widecheck{\phi_j} \ast (I+\widehat T(\lambda_j))^{-1} \sum_{k=0}^\infty [[\eta_j(\rho) \widehat T(\lambda_j) - \eta_j \ast (e^{-\lambda_j \rho} T(\rho))] (I+\widehat T(\lambda_j))^{-1}]^k,
\ee
where $\eta$, $\tilde \eta$, $\phi_j$, and $\eta_j$ are Schwartz-class functions and the sums converge in the $\W \subset ({\frak L_1})_t {\frak X}$ norm. The right-hand side contains an identity term.

We can approximate $S = (\one + T)^{-1}-\one$ in $\W$ with arbitrary precision by truncating each infinite sum in (\ref{inverse_form}) to some large, but finite, order. Since $\W \subset ({\frak L_1})_t {\frak X}$, this also provides an arbitrarily good approximation in $({\frak L_1})_t {\frak X}$.

Likewise, we can also truncate the cutoff functions to compact sets and approximate them by simple functions. Since $T$ is completely continuous in $({\frak L_1})_t {\frak X}$, it follows that all its powers are too.

By (\ref{inverse_form}), then, $S = \big(\one + T\big)^{-1} - \one$ is also completely continuous in $({\frak L_1})_t {\frak X}$.

When $p>2$, the proof changes because estimate (\ref{estp2}) does not hold. Instead
$$
\bigg\|\int_{|\sigma| \leq |\rho|/2} \epsilon\big(\widecheck{\eta}(\epsilon(\rho-\sigma))-
\widecheck{\eta}(\epsilon\rho)\big) T(\sigma)\,d\sigma\bigg\|_{{\frak X}} \les \|T\|_{\W} |\rho|^{-2-\delta} \epsilon^{-\delta} \int_{|\sigma| \leq |\rho|/2} |\sigma| \langle \sigma \rangle^{-p} \dd \sigma \les |\rho|^{-2+\delta} \epsilon^{-\delta} \|T\|_{\W}.
$$
So there is an $\epsilon^{-\delta}$ loss. For $2+\delta=p$ we obtain, analogously to (\ref{mainest}),
$$
\|S_\epsilon A_0^{-1} \ast S_\epsilon A_0^{-1}\|_{\W} \les \|T\|_{\W}^2 \|A_0^{-1}\|_{{\frak X}}^2 \langle R \rangle^p \epsilon^{\3-p} + \langle R \rangle^{1-p} \epsilon^{2(2-p)}).
$$
Setting $\langle R \rangle = \epsilon^{-\alpha}$, we get powers of $2-2p\alpha$, $\3-p-p\alpha$, and $2(2-p)+(p-1)\alpha$ on the right-hand side. For all of them to be positive, the conditions
$$
\alpha<1/p,\ \alpha<(\3/p)-1,\ \alpha>2(p-2)/(p-1)
$$
produce two inequalities. The first inequality
$$
\frac 1 p > \frac {2(p-2)}{p-1}
$$
gives $2p^2-5p+1<0$, so $p<(5+\sqrt{17})/4 \simeq 2.28$, while the other
$$
\frac {\3-p} p > \frac {2(p-2)}{p-1}
$$
gives $\3p^2-8p+\3<0$, so $p<(4+\sqrt 7)/\3 \simeq 2.21$. So it is possible to go slightly above $p=2$.

Next, we aim to get to $p<3$. For this purpose, we replace $\eta(\lambda/\epsilon)(I+\widehat T(\lambda)-A_0)$ by $\eta(\lambda/\epsilon)(I+\widehat T(\lambda)-A_0-\lambda\tilde A_0)$, where $\tilde A_0=[\partial_\lambda \widehat T](0)$ is well-defined when $p>2$ as the Fourier transform of $\rho T(\rho)$.

In other words, here we can use one more term in the Taylor expansion. $S_\epsilon$ is replaced by
$$\begin{aligned}
\tilde S_\epsilon(\rho) &= \epsilon \widecheck \eta(\epsilon \cdot) \ast T(\rho) + \epsilon \widecheck \eta(\epsilon \rho)(I-A_0) + \epsilon^2 \widecheck \eta'(\epsilon \rho) \tilde A_0 \\
&= \int_\R \epsilon \big(\widecheck \eta(\epsilon(\rho-\sigma)) - \widecheck \eta(\epsilon \rho) + \epsilon \sigma (\widecheck \eta)'(\epsilon \rho) \big) T(\sigma) \dd \sigma.
\end{aligned}$$

Now the parenthesis is bounded by $|\rho|^{-3} |\sigma|^2$ for $|\sigma| \leq |\rho|/2$. Consequently, the analogue of (\ref{estp2}) is now valid for $p<3$:
$$
\bigg\|\int_{|\sigma| \leq |\rho|/2} \epsilon\big(\widecheck{\eta}(\epsilon(\rho-\sigma))-
\widecheck{\eta}(\epsilon\rho)\big) T(\sigma)\,d\sigma\bigg\|_{{\frak X}} \les \|T\|_{\W} |\rho|^{-3} \int_{|\sigma| \leq |\rho|/2} |\sigma|^2 \langle \sigma \rangle^{-p} \dd \sigma \les |\rho|^{-p} \|T\|_{\W}.
$$

In the region $|\sigma| \geq |\rho|/2$, the two old terms behave as before and the new term is also bounded:
$$
\bigg\|\int_{|\sigma| > |\rho|/2} \epsilon^2
(\widecheck{\eta})'(\epsilon\rho) T(\sigma)\,d\sigma\bigg\|_{\B(X)} \les |\rho|^{-2} \int_{|\sigma| > |\rho|/2} \|T(\sigma)\|_{\B(X)} \,d\sigma \les |\rho|^{-p}.
$$

One final difference is that now we are perturbing around $A_0+\lambda\tilde A_0$ instead of simply $A_0$, so we have to bound $(A_0+\lambda\tilde A_0)^{-1}$. It is convenient to use a cutoff and estimate
\be\lb{newexp}
\|\eta(\lambda/\delta)(A_0+\lambda\tilde A_0)^{-1}\|_{\widehat \W}.
\ee

If $A_0$ is invertible, then so is $A_0 + \lambda \tilde A_0$ for all sufficiently small $\lambda$. After factoring out $A_0^{-1}$, bounding (\ref{newexp}) reduces to bounding $\eta(\lambda/\epsilon)(I-\lambda B)^{-1}$, where $B = -\tilde A_0 A_0^{-1}$.

Consider a $C^\infty$ operator family $F(\lambda)$; then for $\delta<1$
$$
\|[\eta(\lambda/\delta) F(\lambda)]^\vee\|_{\langle \rho \rangle^{-1} L^\infty_{\rho} {\frak X}} \les \|[\partial_\lambda (\eta(\lambda/\delta) F(\lambda))]^\vee\|_{L^\infty_{\rho} {\frak X}} + \|[\eta(\lambda/\delta) F(\lambda)]^\vee\|_{L^\infty_{\rho} {\frak X}} \les 1.
$$
More generally, for any $n \geq 0$ and $\epsilon \leq 1$
$$
\|[\eta(\lambda/\delta) \lambda^n F(\lambda)]^\vee\|_{\langle \rho \rangle^{-n-m-1} L^\infty_{\rho} {\frak X}} \les_n \delta^{-m}.
$$
Then, for all sufficiently small $\delta$ such that $I-\lambda B$ is invertible whenever $|\lambda|<\delta$,
$$
\|\eta(\lambda/\delta)(I-\lambda B)^{-1}\|_{\langle \rho \rangle^{-n} L^\infty_\rho {\frak X}} \les_n \delta^{1-n}.
$$
Hence, clearly, $\eta(\lambda/\delta)(I-\lambda B)^{-1} \in \W$ and is completely continuous in $({\frak L_1})_t {\frak X}$ for small $\delta$.
%

Since blow-up is not advantageous, we freeze $\delta$ at some small value for which the previous expression has finite norm.

Likewise, when analyzing the case $p<2$, we could have used that
$$
\|\eta(\lambda/\delta) A_0^{-1}\|_{\langle \rho \rangle^{-n} L^\infty_\rho \B(X)} \les_n \delta^{1-n},
$$
with $\delta$ fixed to some suitable small value. A similar bound holds in all cases of interest.

With these modifications, for $p<3$ and small $\epsilon$, the previous bound (\ref{mainest}) becomes
$$
\|S_\epsilon A_0^{-1} \ast S_\epsilon A_0^{-1}\|_{\W} \les \|T\|_{\W}^2 \|\eta(\lambda/\delta)(A_0 + \tilde A_0)^{-1}\|_{\widehat{\W}} (\langle R \rangle^{2p} \epsilon^2 + \langle R \rangle^p \epsilon + \langle R \rangle^{p-1}).
$$
Again, for sufficiently large $R$ and correspondingly small chosen $\epsilon<\epsilon_0(R)=\langle R \rangle^{-p}$, the right-hand side can be made arbitrarily small.

In order to go slightly above $3$, using the same modification as previously leads to a bound of
$$
\|S_\epsilon A_0^{-1} \ast S_\epsilon A_0^{-1}\|_{\W} \les \langle R \rangle^{2p} \epsilon^2 + \langle R \rangle^p \epsilon^{4-p} + \langle R \rangle^{p-1} \epsilon^{2(3-p)}.
$$
If $\langle R \rangle = \epsilon^{-\alpha}$, then obtaining an arbitrarily small norm requires the conditions
$$
\frac 1 p > \frac {2(p-3)}{p-1},\ \frac {4-p} p > \frac {2(p-3)}{p-1},
$$
which are met for some value of $p>3$.

The same idea, using a Taylor expansion, applies when $3$ is replaced with an arbitrarily large integer $n$, so in fact there is no upper limit for $p$.

\end{proof}

\subsection{Dispersive estimates in the scalar case}

We initially apply Theorem \ref{thm:Wiener} to prove decay estimates for the averaged solution (\ref{average}) of the Schr\"{o}dinger equation (\ref{random_lin}). This is both easier than and a prerequisite to studying the Liouville-type equation (\ref{average_liouville}).

More specifically, we prove that the integral kernel $e^{it(-\Delta+V+iA)}(y_1, y_2)$ decays like $\langle t \rangle^{-\3/2}$ and its norm forms a bounded kernel from $L^p_{y_2}$ to $L^p_{y_1}$, for all $p \in [1, \infty]$:
\begin{lemma}\lb{compl_cont_1var}
\begin{multline}\lb{kernel_estimate}
\sup_{y_2} \int_Y \|e^{it(-\Delta+V+iA)}(y_1, y_2)\|_{\B(L^1_x \cap L^2_x, L^2_x + L^\infty_x)} \dd y_1 + \\
+ \sup_{y_1} \int_Y \|e^{it(-\Delta+V+iA)}(y_1, y_2)\|_{\B(L^1_x \cap L^2_x, L^2_x + L^\infty_x)} \dd y_2 \les \langle t \rangle^{-\3/2}.
\end{multline}
Furthermore, the operator
$$
v_2(x, y) e^{it(-\Delta+V+iA)} v_1(x, y)
$$
is completely continuous in $({\frak L_1})_t [L^\infty_{y_2} L^1_{y_1} \cap L^\infty_{y_1} L^1_{y_2}] \B(L^2_x)$, i.e.\;for any $\epsilon>0$ there exist $N$ and $f_n$, $g_n$, $\tilde g_n$, $h_n$, $\tilde h_n$, $1 \leq n \leq N$, such that
$$
\bigg\|v_2(x, y) e^{it(-\Delta+V+iA)} v_1(x, y) - \sum_{n=1}^N f_n(t) g_n(y_1) \tilde g_n(y_2) [h_n(x) \otimes \tilde h_n(x)] \bigg\|_{({\frak L_1})_t [L^\infty_{y_2} L^1_{y_1} \cap L^\infty_{y_1} L^1_{y_2}] \B(L^2_x)} < \epsilon.
$$
\end{lemma}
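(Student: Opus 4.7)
The plan is to apply Wiener's theorem (Theorem \ref{thm:Wiener} and Proposition \ref{subalg}) to the symmetric Duhamel identity (\ref{duhamel_T}), $(\one + iT) \ast (\one - iT_V) = \one$, where
\[ T(t) := \chi_{t \geq 0} v_2 e^{it(-\Delta+iA)} v_1, \qquad T_V(t) := \chi_{t \geq 0} v_2 e^{it(-\Delta+V+iA)} v_1. \]
Since $(\one + iT)^{-1} = \one - iT_V$, inverting $\one + iT$ in the Banach algebra
\[ \W := \langle t \rangle^{-\3/2} L^\infty_t X, \qquad X := [L^\infty_{y_2} L^1_{y_1} \cap L^\infty_{y_1} L^1_{y_2}] \B(L^2_x), \]
and simultaneously in $({\frak L_1})_t X$ via the complete-continuity clause of Proposition \ref{subalg}, will yield both the pointwise $\langle t \rangle^{-\3/2}$ decay of $T_V$ and its approximation by the finite tensor-product sums $\sum_n f_n(t) g_n(y_1) \tilde g_n(y_2)[h_n(x) \otimes \tilde h_n(x)]$ demanded in the second part of the lemma.

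The hypotheses reduce to three standard items plus a translation-continuity condition. First, $T \in \W$: the bilinear dispersive bound $\|v_2 e^{-it\Delta} v_1\|_{\B(L^2_x)} \les \langle t \rangle^{-\3/2}$ (the minimum of the $L^2 \otimes L^2$ dispersive estimate and the $L^\infty \otimes L^\infty$-unitarity bound, both finite since $v_j \in L^2_x \cap L^\infty_x$) combines with the uniform $L^\infty_{y_2} L^1_{y_1} \cap L^\infty_{y_1} L^1_{y_2}$ boundedness of $e^{-tA}$ from condition C1 to give the required $\W$-estimate. Second, $I + i\widehat T(\lambda) = KB(\lambda)$ is invertible in $X \oplus \C I$ for every $\lambda \in \R$ by Lemma \ref{invertibility}. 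Third, $T$ is completely continuous in $({\frak L_1})_t X$ by Lemma \ref{lemma_compact}.

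The main obstacle is the translation-continuity hypothesis (\ref{translation}): the $\chi_{t \geq 0}$ indicator creates a jump at $t = 0$ that prevents $T$ itself from being $\W$-continuous under translations. I would instead verify the weaker condition (\ref{translation_high}) with $N = 2$, observing that the convolution square $T^2(t) = \int_0^t T(s) T(t-s)\dd s$ vanishes at $t = 0$, is norm-continuous on $[0, \infty)$ with a uniform modulus on compact subsets by dominated convergence, and inherits the $\langle t \rangle^{-\3/2}$ tail from the $\W$-bound on $T$; together these give $\|T^2 - T^2(\cdot - \delta)\|_{\W} \to 0$ as $\delta \to 0$. When $\3 \geq 4$ the exponent $p = \3/2 \geq 2$, and the Taylor-expansion variant of Wiener's argument at the end of the proof of Proposition \ref{subalg}---stated there to be valid for arbitrary $p > 1$---must be invoked.

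With these verifications in place, Wiener's theorem yields the complete-continuity assertion of the lemma. To derive the unweighted kernel bound (\ref{kernel_estimate}), I would iterate Duhamel's formula twice via the symmetric resolvent identity (\ref{sym_res}), obtaining a representation of the form
\[ e^{it(-\Delta+V+iA)} = e^{it(-\Delta+iA)} - i\, e^{it(-\Delta+iA)} \ast V\, e^{it(-\Delta+iA)} + e^{it(-\Delta+iA)} v_1 \ast T_V \ast v_2\, e^{it(-\Delta+iA)}, \]
split each $t$-convolution at its midpoint, and combine the free dispersive bound $\|e^{it(-\Delta+iA)}\|_{L^1_x \cap L^2_x \to L^2_x + L^\infty_x} \les \langle t \rangle^{-\3/2}$ (in the mixed $y$-norms, once again by C1) with the just-proved $\W$-bound on $T_V$ to recover the $\langle t \rangle^{-\3/2}$ rate in the $L^\infty_{y_2} L^1_{y_1} \cap L^\infty_{y_1} L^1_{y_2}$ sense.
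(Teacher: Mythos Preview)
Your approach matches the paper's: set $\frak X = [L^\infty_{y_2} L^1_{y_1} \cap L^\infty_{y_1} L^1_{y_2}] \B(L^2_x)$ and $\W = \langle t \rangle^{-\3/2} L^\infty_t \frak X$, verify $T \in \W$, invoke Lemma~\ref{invertibility} for the spectral condition, Lemma~\ref{lemma_compact} for complete continuity, and then Wiener's theorem together with Proposition~\ref{subalg}; finally recover (\ref{kernel_estimate}) from the Duhamel expansion with the free initial and final segments.

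There is one soft spot in your translation-continuity argument. You assert that norm-continuity of $T^2$ on compact subsets of $[0,\infty)$ together with the fact that $T^2$ ``inherits the $\langle t\rangle^{-\3/2}$ tail'' yields $\|T^2 - T^2(\cdot-\delta)\|_\W \to 0$. The second fact is not enough by itself: an element of $\W$ can oscillate at the scale of its decay (think of a scalar model like $t^{-\3/2}\sin(t^2)$), so membership in $\W$ plus continuity on compacts does not force translation continuity in the weighted-sup norm at infinity. The paper closes this gap with the convolution-specific observation (\ref{prop\3}), namely that
\[
\big\|(\chi_{|t|\ge R}\,T)\ast(\chi_{|t|\ge R}\,T)\big\|_\W \les \langle R\rangle^{1-\3/2}\|T\|_\W^2 \to 0,
\]
which is exactly what lets one reduce to compact time intervals; there the complete continuity from Lemma~\ref{lemma_compact} gives the required uniform continuity. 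You should invoke this property (or equivalently appeal to the complete-continuity approximation, where the tail piece $\chi_{t\ge R_0}\langle t\rangle^{-\3/2}$ is explicitly translation-continuous in $\W$) rather than the bare $\W$-bound on $T^2$. With that amendment your proof is the same as the paper's.
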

\begin{proof}[Proofs of Proposition \ref{prop26} and Lemma \ref{compl_cont_1var}]
For $1 \leq p \leq \infty$, let $X=L^p_y L^2_x$ and consider the operator subalgebra ${\frak X} \subset \B(X)$ of operators given by kernels $T$ with the property that
$$
\sup_{y_2} \int_Y \|T(y_1, y_2)\|_{\B(L^2_x)} \dd y_1 + \sup_{y_1} \int_Y \|T(y_1, y_2)\|_{\B(L^2_x)} \dd y_2 < \infty
$$
or using a notation in the form (\ref{syx})
$$
{\frak X} = [L^\infty_{y_2} L^1_{y_1} \cap L^\infty_{y_1} L^1_{y_2}] \B(L^2_x).
$$
Let $\W$ be the Banach space of families of operators
$$
\W = \W_{{\frak X}} = \{(T(t))_{t \geq 0} \mid \|T(t)\|_{{\frak X}} \les \langle t \rangle^{-\3/2}\} = \langle t \rangle^{-\3/2} L^\infty_t [L^\infty_{y_2} L^1_{y_1} \cap L^\infty_{y_1} L^1_{y_2}] \B(L^2_x).
$$
Then $\W=\W_{{\frak X}}$ is a subalgebra of $\W_X = \langle t \rangle^{-\3/2} L^\infty_t \mc L(X)$.

In particular, let
$$
T(t) = \chi_{t>0}(t) v_2 e^{it(-\Delta+iA)} v_1.
$$
Clearly $T \in \W$ and $\|T\|_\W \les \|v_2\|_{C_y (L^2_x \cap L^\infty_x)} \|v_1\|_{C_y (L^2_x \cap L^\infty_x)} = \|V\|_{C_y (L^2_x \cap L^\infty_x)}$. Indeed, the integral kernel of $T$ has enough decay because
\be\lb{decay}
\|e^{it\Delta} f\|_{L^2_x + L^\infty_x} \les \langle t \rangle^{-\3/2} \|f\|_{L^1_x \cap L^2_x}
\ee
and the diffusion $e^{-tA}$ in the $y$ variable, by itself, has the property that
$$
\sup_{t > 0} \sup_{y_2} \int_Y |e^{-tA}(y_1, y_2)| \dd y_1 + \sup_{t > 0} \sup_{y_1} \int_Y |e^{-tA}(y_1, y_2)| \dd y_2 < \infty.
$$
In our case, both quantities equal $1$ due to properties of Markov chains.

In particular, as a consequence of the $\langle t \rangle^{-\3/2}$ decay in (\ref{decay}), $T(t)$ is integrable in $t$ and therefore its Fourier transform is bounded:
$$
\sup_{\Im \lambda \leq 0} \|\widehat T(\lambda)\|_{\B(L^p_y L^2_x)} \leq \sup_{\Im \lambda \leq 0} \|\widehat T(\lambda)\|_{{\frak X}} < \infty.
$$

After this reduction, we still have to prove that Wiener's Theorem \ref{thm:Wiener} and Proposition \ref{subalg} apply in this case, by checking that their hypotheses hold.


We show that the condition (\ref{translation_high}) from the theorem's statement holds in $\W$, hence also in $\W_X$.
If (\ref{translation}) holds for the cutoff kernels $\chi_{t \in [1/R, R]}(t) T(t)$ and each $R>0$, then (\ref{translation_high}) holds for $T$ with $N=2$, because the square of the tail at infinity is small in norm, see (\ref{prop\3}). In fact, it suffices to prove the following similar statement:
\be\lb{concl}\begin{aligned}
&\lim_{\epsilon \to 0} \|\chi_{t \in [1/R, R]}(t) \big(T(t+\epsilon) - T(t)\big)\|_{L^\infty_t [L^\infty_{y_2} L^1_{y_1} \cap L^\infty_{y_1} L^1_{y_2}] \B(L^2_x)} = 0.
\end{aligned}\ee
This follows immediately from Lemma \ref{lemma_compact} (complete continuity).

By Proposition \ref{invertibility} we know that $I+i\widehat T(\lambda)$ is invertible and
$$
(I+i\widehat T(\lambda))^{-1}-I \in {\frak X}.
$$


Theorem \ref{thm:Wiener} (Wiener's Theorem) and Proposition \ref{subalg} applied to $\one +iT$, where ${\frak X} \subset \B(X)$ is defined above and $X=L^p_y L^2_x$, 
now imply that
\be\lb{first_bound}
\chi_{t\geq 0}(t) v_2 e^{it(-\Delta+V+iA)} v_1 \in \W,
\ee
since
$$\begin{aligned}
\big(\one + i\chi_{t \geq 0}(t) v_2 e^{it(-\Delta+iA)} v_1\big)^{-1} = \one - i\chi_{t \geq 0}(t) v_2 e^{it(-\Delta+V+iA)} v_1.
\end{aligned}$$

Complete continuity in $({\frak L_1})_t {\frak X}$ also follows from Lemma \ref{lemma_compact} combined with Proposition \ref{subalg}.

We can use (\ref{first_bound}) to bound the corresponding factor $(\one + iT)^{-1}$ in Duhamel's formula
\be\lb{du}\begin{aligned}
&\chi_{t>0}(t) e^{it(-\Delta_x+iA_y+ V(x))} =\chi_{t>0}(t) e^{it(-\Delta_x+iA_y)} + \\
&+i\int_{t>s_1>s_2>0}e^{i(t-s_1)(-\Delta_x+iA_y)} v_1 (\one +iT)^{-1}(s_1-s_2) v_2 e^{is_2(-\Delta_x+iA_y)} \dd s_1 \dd s_2.
\end{aligned}\ee
The other terms and factors in (\ref{du}) can be estimated as follows:
$$\begin{aligned}
\|e^{it(-\Delta+iA)} f\|_{L^p_y (L^2_x+L^\infty_x)} \les \langle t \rangle^{-\3/2} \|f\|_{L^p_y (L^1_x \cap L^2_x)},\\
\|v_2 e^{it(-\Delta+iA)} f\|_{L^p_y L^2_x} \les \langle t \rangle^{-\3/2} \|f\|_{L^p_y (L^1_x \cap L^2_x)},\\
\|e^{it(-\Delta+iA)} v_1 f\|_{L^p_y (L^2_x+L^\infty_x)} \les \langle t \rangle^{-\3/2} \|f\|_{L^p_y L^2_x}.
\end{aligned}$$
This suffices to prove (\ref{weaker_bound}).

However, 
the other terms in (\ref{du}) also obey the stronger bounds
$$
\sup_{y_2} \int_Y \|[v_2 e^{it(-\Delta+iA)}](y_1, y_2)\|_{\B(L^1_x \cap L^2_x)} \dd y_1 + \sup_{y_1} \int_Y \|[v_2 e^{it(-\Delta+iA)}](y_1, y_2)\|_{\B(L^1_x \cap L^2_x)} \dd y_2 \les \langle t \rangle^{-\3/2},
$$
$$
\sup_{y_2} \int_Y \|[e^{it(-\Delta+iA)} v_1](y_1, y_2)\|_{\B(L^2_x + L^\infty_x)} \dd y_1 + \sup_{y_1} \int_Y \|[e^{it(-\Delta+iA)} v_1](y_1, y_2)\|_{\B(L^2_x + L^\infty_x)} \dd y_2 \les \langle t \rangle^{-\3/2},
$$
and
$$
\sup_{y_2} \int_Y \|e^{it(-\Delta+iA)}(y_1, y_2)\|_{\B(L^1_x \cap L^2_x, L^2_x + L^\infty_x)} \dd y_1 + \sup_{y_1} \int_Y \|e^{it(-\Delta+iA)}(y_1, y_2)\|_{\B(L^1_x \cap L^2_x, L^2_x + L^\infty_x)} \dd y_2 \les \langle t \rangle^{-\3/2}.
$$
This implies the stronger statement (\ref{kernel_estimate}) and completes the proof.
\end{proof}

%


\subsection{Trace-class dispersive estimates}\lb{sec_trace}

We begin by recalling some basic notions about the trace class and tensor products of Banach and Hilbert spaces.
\begin{definition}
The \textbf{trace class} $\frak S_1$ on a Hilbert space $H$ (in this paper, $H=L^2$) is defined as the set of bounded operators $K \in \B(H)$ such that
\be\lb{tracedef}
\sum_{n=1}^\infty |\langle K e_n, e_n \rangle| < \infty
\ee
for some orthonormal basis $(e_n)_{n \geq 1}$ of $H$.
\end{definition}
If the sum (\ref{tracedef}) is finite for some orthonormal basis, then it is finite for every orthonormal basis. Then the sum without absolute values
$$
\sum_{n=1}^\infty \langle K e_n, e_n \rangle
$$
is absolutely convergent and does not depend on the chosen orthonormal basis. Its value is called $\tr K$, the \textbf{trace} of $K$.

Equivalently, trace-class operators have kernels
\be\lb{tracedef2}\begin{aligned}
\frak S_1 := \{K(x_1, x_2) \in L^2_{x_1, x_2} \mid\ & K(x_1, x_2) = \sum_{n=1}^\infty f_n(x_1) g_n(x_2),\ \sum_{n=1}^\infty \|f_n\|_{L^2_{x_1}} \|g_n\|_{L^2_{x_2}} < \infty\}.
\end{aligned}\ee
In particular, $\frak S_1 \subset L^2_{x_1, x_2} \equiv \frak S_2$, the Hilbert--Schmidt class.

The trace class $\frak S_1$ is a Banach space, with the norm given by the infimum of (\ref{tracedef}) over all orthonormal bases or equivalently
$$
\|K\|_{\frak S_1} = \tr |K| = \tr [(K K^*)^{1/2}].
$$
Another equivalent norm is the infimum of the sum in (\ref{tracedef2}) over all possible representations.

The trace-class norm of the rank-one operator $f \otimes \ov f$ is
$$
\|f \otimes \ov f\|_{\frak S_1} = \|f\|_{L^2}^2.
$$

Both $\frak S_1$ and $\frak S_2$ are Banach spaces of compact operators and are ideals within $\B(L^2)$, whose closure in the operator norm is the set of compact operators. In addition, $\frak S_2$ is a Hilbert space.


In general, we define the \textbf{tensor product of two Banach spaces} $B_1$ and $B_2$ as the following Banach space. Let their algebraic tensor product $B_1 \otimes B_2$ be endowed with the norm
$$
\|K\|_{B_1 \otimes_p B_2} = \inf \Big\{\Big(\sum_{n=1}^N \|a_n\|_{B_1}^p \|b_n\|_{B_2}^p\Big)^{1/p} \mid \sum_{n=1}^N a_n \otimes b_n = K\Big\}.
$$
Then $B_1 \otimes_p B_2$ is defined as the completion of $B_1 \otimes B_2$ under this norm, i.e.\;the complete Banach space of equivalence classes of Cauchy sequences of elements of $B_1 \otimes B_2$ that converge together in this norm.

Let $\otimes = \otimes_1$. For any Banach spaces $B_1$ and $B_2$, an element of $B_1 \otimes B_2$ defines an operator from $B_2^*$ to $B_1$ or from $B_1^*$ to $B_2$. In particular, $\frak S_1 \equiv L^2_{x_1} \otimes_1 L^2_{x_2}$. Also, $L^1_x \otimes B_2 \equiv L^1_x B_2$ whenever $B_2$ is separable.

For other values of $p$, $B_1 \otimes_p B_2$ need not always have a simple realization. In general, though, if $p \leq q$ and $L^q$ is separable, then $L^p \otimes_p L^q \equiv L^p L^q$. In particular, $L^2 \otimes_p L^2 \equiv \frak S_p$ for $1 \leq p \leq 2$.

Next, we prove decay estimates for equation (\ref{average_liouville}) with trace-class initial data.


\begin{proposition}[Trace-class pointwise decay]\lb{prop27}\lb{prop28} Suppose that $V \in C_y (L^1_x \cap L^\infty_x)$ is real-valued and satisfies Assumption \ref{nontrivial}. Then the solution $f$ to the homogenous equation (\ref{average_liouville})
$$\begin{aligned}
i \partial_t f - \Delta_{x_1} f + \Delta_{x_2} f + i A f + (V \otimes 1 - 1 \otimes V) f = 0,\ f(0) = f_0
\end{aligned}$$
satisfies the estimate
\be\lb{point}
\|f(t)\|_{L^1_y (L^2_{x_1}+L^\infty_{x_1}) \otimes (L^2_{x_2}+L^\infty_{x_2})} \les \langle t \rangle^{-\3}
\|f_0\|_{L^1_y (L^1_{x_1}\cap L^2_{x_1}) \otimes (L^1_{x_2} \cap L^2_{x_2})}.
\ee
Moreover, consider the inhomogenous version of this equation:
$$\begin{aligned}
i \partial_t f - \Delta_{x_1} f + \Delta_{x_2} f + i A f + (V \otimes 1 - 1 \otimes V) f = F,\ f(0) = 0,
\end{aligned}$$
where
$$
F(s, y, x_1, x_2) = \int_{s \geq s_1, s_2} e^{-i(s-s_1)\Delta_{x_1} + i(s-s_2)\Delta_{x_2}} G(s, s_1, s_2, y, x_1, x_2) \dd s_1 \dd s_2.
$$
Given some fixed $s \geq s_1, s_2$,
\be\lb{decay_est'}\begin{aligned}
\left\|e^{i(t-s)(-\Delta_{x_1}+\Delta_{x_2}+iA+1\otimes V - V\otimes 1)} e^{-i(s-s_1)\Delta_{x_1} + i(s-s_2)\Delta_{x_2}} G \right\|_{L^1_y (L^2_{x_1} +L^\infty_{x_1}) \otimes (L^2_{x_2}+L^\infty_{x_2})} \les \\
\les \langle t - s_1 \rangle^{-\3/2} \langle t - s_2 \rangle^{-\3/2} \|G\|_{L^1_y (L^1_{x_1} \cap L^2_{x_1}) \otimes (L^1_{x_2} \cap L^2_{x_2})}.
\end{aligned}\ee
Furthermore, for fixed $t_1, t_2 \geq t$,
\be\lb{decay_est''}\begin{aligned}
\left\|e^{-i(t_1-t)\Delta_{x_1} + i(t_2-t)\Delta_{x_2}} e^{i(t-s)(-\Delta_{x_1}+\Delta_{x_2}+iA+1\otimes V - V\otimes 1)} e^{-i(s-s_1)\Delta_{x_1} + i(s-s_2)\Delta_{x_2}} G \right\|_{L^1_y (L^2_{x_1} +L^\infty_{x_1}) \otimes (L^2_{x_2}+L^\infty_{x_2})} \les \\
\les \langle t_1 - s_1 \rangle^{-\3/2} \langle t_2 - s_2 \rangle^{-\3/2} \|G\|_{(L^1_{x_1}\cap L^2_{x_1}) \otimes (L^1_{x_2} \cap L^2_{x_2})}.
\end{aligned}\ee
\end{proposition}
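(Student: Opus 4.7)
The plan is to apply Wiener's Theorem \ref{thm:Wiener} together with Proposition \ref{subalg} in the Beurling algebra $\W = \langle t \rangle^{-\3} L^\infty_t \frak X$, where $\frak X$ is the appropriate trace-class tensor-product operator space on $L^1_y$, and then read off the desired bound from the symmetric Duhamel identity
$$e^{itH} = e^{itH_0} - i\int_{t>s_1>s_2>0} e^{i(t-s_1)H_0} V_1 (\one + iT)^{-1}(s_1-s_2) V_2 \, e^{is_2 H_0} \dd s_1 \dd s_2,$$
where $H_0 = -\Delta_{x_1}+\Delta_{x_2}+iA$ and $T$, $V_1$, $V_2$ are as in (\ref{more_notations}). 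The free propagator $e^{itH_0}$ automatically has the rate $\langle t\rangle^{-\3}$ on tensor-product data, since the $x_1$ and $x_2$ evolutions decouple and each contributes $\langle t\rangle^{-\3/2}$, while $e^{-tA}$ acts contractively on $L^1_y$ by conditions C1--C2. It therefore remains to handle the perturbative correction, which reduces to inverting the matrix Kato--Birman operator $I + i\widehat T(\lambda)$ in an algebra with enough complete continuity to feed Proposition \ref{subalg}.

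The main step is the spectral analysis of $I+i\widehat T(\lambda)$. Following the outline in the excerpt, I would first split
$$I + i\widehat T = (I + i\widehat T_{diag})(I + i\widehat S), \qquad (I+i\widehat S)(I - i\widehat S) = I + \widehat S^2,$$
and invert $I + i\widehat T_{diag}$ block by block using the scalar analysis already carried out in Lemma \ref{compl_cont_1var} and Proposition \ref{prop26}. The genuinely new obstruction is that the off-diagonal cross-terms $\widehat T_{12}, \widehat T_{21}$ are not compact in the trace-class norm because they involve $x$-weights at different endpoints, and neither $\widehat S$ nor $\widehat S^2$ is compact. The cure is the factorization $\widehat S(\lambda)^2 = U_1 B_\lambda U_2$ with $B_\lambda U_2 U_1$ compact, which makes $(I+B_\lambda U_2 U_1)^{-1}$ accessible via Fredholm's alternative (Lemma \ref{fredholm}); absence of eigenvalues comes from Assumption \ref{nontrivial} and unique continuation, exactly as in Lemma \ref{invertibility}. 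To promote this pointwise-in-$\lambda$ inverse to membership of $(\one + iT)^{-1}-\one$ in $\W$, I would verify translation continuity (\ref{translation_high}) for some power $N$ and complete continuity in $({\frak L_1})_t \frak X$, both inherited from the scalar block structure, and then apply Proposition \ref{subalg} while carrying the outer factors $U_1$, $U_2$ through the resulting inverse.

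Once $(\one + iT)^{-1} \in \ov \W$ is available, the homogeneous bound (\ref{point}) follows by plugging it into Duhamel's formula and bounding the outer propagators $e^{i(t-s_1)H_0}$ and $e^{is_2 H_0}$ using the tensorized decay $\langle \cdot\rangle^{-\3/2}\langle \cdot\rangle^{-\3/2}$; convolution in $\W$ preserves the $\langle t\rangle^{-\3}$ rate. For the inhomogeneous estimates (\ref{decay_est'}) and (\ref{decay_est''}), the tensor-decoupled form of the source is essential: the factors $e^{-i(s-s_1)\Delta_{x_1}}$ and $e^{i(s-s_2)\Delta_{x_2}}$ merge with the outer free evolutions to produce two independent intervals of free Schr\"odinger propagation of lengths $|t-s_1|$ and $|t-s_2|$ (respectively $|t_1-s_1|$, $|t_2-s_2|$), each contributing its own $\langle\cdot\rangle^{-\3/2}$ factor to the split bound, so the same homogeneous argument applies verbatim after this absorption. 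The hardest step will be the spectral analysis in the second paragraph: keeping the factorization $\widehat S^2 = U_1 B_\lambda U_2$ compatible with the precise Beurling algebra $\W$, so that both the pointwise $\langle t\rangle^{-\3}$ rate and the tensor-decoupling structure survive through the Wiener inversion.
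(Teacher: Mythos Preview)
Your outline captures the right ingredients --- the factorization $(\one+iT)^{-1}=(\one+S^2)^{-1}(\one-iS)(\one+iT_{diag})^{-1}$, the further decomposition $\widehat S^2=U_1 B_\lambda U_2$, and the Fredholm/unique-continuation argument for the spectral condition --- but there is a real gap in the choice of algebra. You propose to run Wiener's theorem in $\W=\langle t\rangle^{-\3}L^\infty_t\frak X$ with $\frak X$ a trace-class operator space on $L^1_y$. The trouble is that $T(t)=V_2 e^{itH_0}V_1$ does \emph{not} belong to this space: each entry of the matrix weight $V_2$ (and $V_1$) localizes in only one of $x_1,x_2$, so $T$ decays in $\B(L^1_y\frak S_1)$ only like $\langle t\rangle^{-\3/2}$, not $\langle t\rangle^{-\3}$. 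Even after passing to $S^2$, the best one gets in any ordinary operator norm is $\langle t\rangle^{-\3/2}$ (this is exactly the weak estimate (\ref{weak_est}) in the paper, and the paper explicitly notes that it does not yield the full rate). Hence Wiener's theorem cannot be applied in the algebra you name, because the object to be inverted is not in it.

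The paper's fix is not to carry $U_1,U_2$ ``through'' a standard Wiener inversion, but to build the algebra \emph{around} them: one works in spaces $X_t$, $\mc X_t$, $W$, $\frak W_\beta$ (Definitions \ref{defx}--\ref{defxtilde}) whose elements are kernels $B(\sigma_1,\sigma_2)$ in auxiliary time variables, with the norm carrying a weight $\langle t-(\sigma_1+\sigma_2)\rangle^{-\3/2}$, and whose \emph{algebra operation} is $B\mapsto BU_2\ast U_1\tilde B$ rather than plain convolution. The composition $U_2U_1$ contributes the missing $\langle s_1+s_2\rangle^{-\3/2}$ factor (see (\ref{u2u1bound})), so that after multiplication the combined decay is $\langle t\rangle^{-\3}$. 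Wiener's theorem is then applied in $U_1\ov{\frak W}_\beta U_2$, with the Leibniz property (\ref{leib}) of the family $\frak W_\beta$ ensuring uniformity in $\beta$; the final integral (\ref{sint}) is what actually produces $\langle t\rangle^{-\3}$. Your plan would need this nonstandard algebra structure made explicit before the Wiener step, not after.
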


The proof is again based on Wiener's Theorem \ref{thm:Wiener}. The most important ingredient is a suitable Banach operator space, which we construct step by step below in a sequence of lemmas.

The point of estimates (\ref{decay_est'}) and (\ref{decay_est''}) is that one can put the free evolution in either variable, not necessarily for the same lengths of time, to both the beginning and the end of the perturbed evolution, and still get the expected rate of decay.

\begin{observation} These explicit bounds permit including a small deterministic time-dependent potential into the equation. If we perturb $V \otimes 1 - 1 \otimes V$ by $\tilde V_1 \otimes 1 + 1 \otimes \tilde V_2$, where
$$
\|\tilde V_1\|_{L^\infty_t L^\infty_y (L^1_x \cap L^\infty_x)} + \|\tilde V_2\|_{L^\infty_t L^\infty_y (L^1_x \cap L^\infty_x)} << 1,
$$
then the same estimates (\ref{point}-\ref{decay_est''}) hold.

Moreover, in equation (\ref{average_liouville}), the potential $V \otimes 1 - 1 \otimes V$ can be replaced by $V_1 \otimes 1 - 1 \otimes V_2$: the two potentials need not be the same.
\end{observation}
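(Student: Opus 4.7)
The plan is to treat $\tilde V(t):=\tilde V_1(t)\otimes 1+1\otimes \tilde V_2(t)$ as a small, bounded, time-dependent perturbation of $H_0:=-\Delta_{x_1}+\Delta_{x_2}+iA+V\otimes 1-1\otimes V$ and to transfer the bounds (\ref{point})--(\ref{decay_est''}), already established for the unperturbed propagator $e^{itH_0}$ by Proposition \ref{prop27}, via a Neumann/bootstrap argument based on Duhamel's formula
$$U(t,0)=e^{itH_0}-i\int_0^t e^{i(t-s)H_0}\,\tilde V(s)\,U(s,0)\,\dd s.$$
The essential multiplier property is that multiplication by $\tilde V_1(s)\in L^\infty_y(L^1_x\cap L^\infty_x)$ sends $L^2_{x_1}+L^\infty_{x_1}$ into $L^1_{x_1}\cap L^2_{x_1}$ with norm bounded by $\|\tilde V_1\|_{L^\infty_y(L^1_x\cap L^\infty_x)}$ (using the four elementary mappings $L^p\cdot L^q$ with $p\in\{1,\infty\}$, $q\in\{2,\infty\}$), while acting only in the $x_1$-slot of the tensor product; symmetrically for $\tilde V_2$ in the $x_2$-slot.

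I would package (\ref{point})--(\ref{decay_est''}) into a single operator norm
$$\|\mc U\|_*:=\sup\langle t_1-s_1\rangle^{\3/2}\langle t_2-s_2\rangle^{\3/2}\bigl\|e^{-i(t_1-t)\Delta_{x_1}+i(t_2-t)\Delta_{x_2}}\,\mc U\,e^{-i(s-s_1)\Delta_{x_1}+i(s-s_2)\Delta_{x_2}}\bigr\|_{L^1_y\mc X\to L^1_y\mc Y},$$
with supremum over $0\leq s_1,s_2\leq s\leq t\leq t_1,t_2$, $\mc X=(L^1_{x_1}\cap L^2_{x_1})\otimes(L^1_{x_2}\cap L^2_{x_2})$, and $\mc Y=(L^2_{x_1}+L^\infty_{x_1})\otimes(L^2_{x_2}+L^\infty_{x_2})$. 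Proposition \ref{prop27} is exactly the statement that $e^{itH_0}$ has finite $\|\cdot\|_*$-norm. Iterating Duhamel expresses the perturbed propagator as a Neumann series $U=\sum_{n\geq 0}U_n$ whose $n$-th term is an $n$-fold time integral of alternating $e^{iH_0}$'s and single-slot multipliers $\tilde V(\sigma_j)$. Inserting a zero-length free evolution in the idle slot at each intermediate time $\sigma_j$ brings every adjacent perturbed propagator into the shape required by (\ref{decay_est''}), so the bound on $U_n$ becomes an $n$-fold convolution of $\langle\cdot\rangle^{-\3/2}$ kernels in each of the two time variables, which collapses to $\langle\cdot\rangle^{-\3/2}$ for $\3\geq 3$ (since $\langle\cdot\rangle^{-\3/2}\ast\langle\cdot\rangle^{-\3/2}\les\langle\cdot\rangle^{-\3/2}$ in that range) with prefactor $C^n(\|\tilde V_1\|+\|\tilde V_2\|)^n$; the smallness hypothesis then gives geometric convergence in $\|\cdot\|_*$.

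For the ``moreover'' clause permitting $V_1\ne V_2$, I would trace through the proof of Proposition \ref{prop27} and verify that the two copies of $V$ are never coupled beyond what the matrix decomposition (\ref{more_notations}) dictates. One replaces the factors $v_1,v_2$ in the first tensor slot by the square-root factorization of $V_1$ and in the second slot by that of $V_2$; the compactness, Fredholm-alternative, and Wiener-theorem steps are structural and go through verbatim. The only place the two potentials genuinely enter is in the unique-continuation step that rules out embedded bound states: any such eigenfunction reduces (by Agmon's argument together with condition C5) to a factored form $g(x_1,x_2)h(y)$ solving $(-\Delta_{x_1}+\Delta_{x_2}+V_1(x_1,y)-V_2(x_2,y)-\lambda)g=0$ for a.e.\ $y\in\supp h$. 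Assumption \ref{nontrivial} imposed on each $V_i$ forces $g$ to vanish on an open set in the appropriate slot, and the usual unique continuation property delivers $g\equiv 0$.

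The main obstacle is the telescoping bookkeeping in the first paragraph: the norm $\|\cdot\|_*$ must be designed so that inserting a single-slot multiplier $\tilde V_j$ between two perturbed propagators does not destroy the precise arrangement of free evolutions demanded by (\ref{decay_est''}) on each neighbouring factor. The ``glue a zero-length free evolution at each intermediate $\sigma_j$'' trick is what closes the iteration; once one commits to it, the convolution bookkeeping in $\3\geq 3$ and the small-norm geometric-series argument are both routine.
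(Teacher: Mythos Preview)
The paper states this observation without proof, so there is no argument to compare against directly; your Duhamel/Neumann-series strategy for the first claim is the natural one and is surely what the authors have in mind. There is, however, a genuine gap in the way you close the iteration.

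Your norm $\|\cdot\|_*$ is not submultiplicative under insertion of the single-slot multiplier $\tilde V_1\otimes 1$ in the way you assert. After applying $\tilde V_1\otimes 1$ to an element of $\mc Y=(L^2_{x_1}+L^\infty_{x_1})\otimes(L^2_{x_2}+L^\infty_{x_2})$, the $x_1$ slot lands in $L^1\cap L^2$ but the $x_2$ slot remains in $L^2+L^\infty$. Estimate (\ref{decay_est''}) for the \emph{next} $e^{iH_0}$-segment requires its input to be of the form $e^{-i\alpha\Delta_{x_1}+i\beta\Delta_{x_2}}G'$ with $G'\in\mc X$; an element of $(L^1\cap L^2)_{x_1}\otimes(L^2+L^\infty)_{x_2}$ is not of that form, because the perturbed propagator you just applied does not commute with $e^{i\beta\Delta_{x_2}}$ and hence does not deliver its output as ``free $x_2$-evolution of something localized.'' Inserting a zero-length free evolution $e^{i0\Delta_{x_2}}=I$ is a tautology and does not move the $x_2$ slot back into $L^1\cap L^2$; the idle slot never gets relocalized, so the chain of $\mc X\to\mc Y$ bounds cannot be linked.

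What actually closes the loop is the \emph{structural} output of the proof of Proposition~\ref{prop27}, not merely its final estimates. Formula (\ref{complicated}) exhibits the Duhamel part of $e^{itH_0}$ as an explicit superposition of terms of the shape
\[
e^{-is_1\Delta_{x_1}+i(s_1+\sigma_1)\Delta_{x_2}}\,(\text{localized kernel with the }\frak B\text{ bounds})\,e^{-is_2\Delta_{x_1}+i(s_2+\sigma_2)\Delta_{x_2}},
\]
i.e.\ with genuine outer free tails of independent lengths in $x_1$ and $x_2$ on both sides. Sandwiching $\tilde V_1\otimes 1$ between two such representations, the adjacent free $x_2$-tails simply concatenate (they commute with $\tilde V_1\otimes 1$), while the adjacent free $x_1$-tails combine with $\tilde V_1$ to produce a $\langle\cdot\rangle^{-\3/2}$ factor via the usual $L^2\cap L^\infty$ multiplier bound. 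This is what gives the two independent convolution structures you describe. So your outline is salvageable, but $\|\cdot\|_*$ must be replaced by membership in the class of operators admitting a representation of the type (\ref{complicated}) --- which is exactly what the algebras $\W$, $\frak W_\beta$ built in the proof encode.

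For the second claim your outline is correct: the two copies of $V$ enter the machinery of Proposition~\ref{prop27} only through the separate channels of (\ref{more_notations}), and every step goes through with $V_1\ne V_2$. One refinement: in the unique-continuation step (Lemma~\ref{invertibility_liouville}) it is the \emph{second} clause of Assumption~\ref{nontrivial} that is invoked, and with distinct potentials it must be read as a joint condition on the pair, namely that $y\mapsto V_1(x_1,y)-V_2(x_2,y)$ is nonconstant on $\supp h$ for $(x_1,x_2)$ in an open set. Imposing Assumption~\ref{nontrivial} separately on each $V_i$ is neither what is used nor quite sufficient.
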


Pointwise-in-time decay estimates, especially (\ref{decay_est''}), immediately lead to Strichartz estimates as well.

\begin{corollary}[Trace-class local decay]\lb{cor_strichartz}
Suppose that $V \in C_y (L^1_x \cap L^\infty_x)$ is real-valued and satisfies Assumption \ref{nontrivial}. Then, for any weights $w_1, w_2 \in L^\infty_y L^{\3, 1}_x$, the solution $f$ to the homogenous equation (\ref{average_liouville})
$$\begin{aligned}
i \partial_t f - \Delta_{x_1} f + \Delta_{x_2} f + i A f + (V \otimes 1 - 1 \otimes V) f = 0,\ f(0) = f_0
\end{aligned}$$
satisfies the estimate
\be\lb{strichartz}
\|w_1(x_1) f w_2(x_2)\|_{L^1_y L^1_t \frak S_1} \les \|w_1\|_{L^\infty_y L^{\3, 1}_{x_1}} \|w_2\|_{L^\infty_y L^{\3, 1}_{x_2}}
\|f_0\|_{L^1_y \frak S_1}.
\ee
More generally, consider the inhomogenous version of the equation:
$$\begin{aligned}
i \partial_t f - \Delta_{x_1} f + \Delta_{x_2} f + i A f + (V \otimes 1 - 1 \otimes V) f = F,\ f(0) = 0,
\end{aligned}$$
where
$$
F(y, s, x_1, x_2) = \int_{s \geq s_2} \int_Y e^{i(s-s_2)\Delta_{x_2}} G(y, s, s_2, x_1, x_2) \dd s_2.
$$
Then
\be\lb{strichartz'}
\|w_1(x_1) f w_2(x_2)\|_{L^1_y L^1_t \frak S_1} \les \|w_1\|_{L^\infty_y L^{\3, 1}_{x_1}} \|w_2\|_{L^\infty_y L^{\3, 1}_{x_2}} \|G\|_{L^1_y (L^1_s L^2_{x_1} + L^2_s L^{6/5, 2}_{x_1}) \otimes (L^1_{s_2} L^2_{x_2} + L^2_{s_2} L^{6/5, 2}_{x_2})}.
\ee
\end{corollary}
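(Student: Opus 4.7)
The plan is to derive both estimates in Corollary~\ref{cor_strichartz} from the trace-class pointwise dispersive bounds of Proposition~\ref{prop27} by running a Hardy--Littlewood--Sobolev / $TT^*$ argument independently in each of the two spatial variables. The feature that makes this decoupling possible is that~(\ref{decay_est''}) allows independent free Schr\"odinger evolutions on both ends, of independent lengths $t_1-t$, $t_2-t$, $s-s_1$ and $s-s_2$, each contributing its own $\langle\cdot\rangle^{-\3/2}$ decay factor. This is precisely the shape needed to perform a separate scalar Strichartz argument in each of $x_1$ and $x_2$.

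For the homogeneous estimate~(\ref{strichartz}), set $U(t):=e^{it(-\Delta_{x_1}+\Delta_{x_2}+iA+1\otimes V-V\otimes 1)}$. By a $TT^*$ argument, bounding the map $T:f_0\mapsto w_1 U(t) f_0 w_2$ from $L^1_y\frak S_1$ into $L^1_y L^1_t\frak S_1$ reduces to controlling
\[
\left\|\int w_1\, U(t-s)\bigl(w_1 K(s,\cdot) w_2\bigr) w_2\,ds\right\|_{L^1_y L^1_t \frak S_1} \les \|w_1\|_{L^\infty_y L^{\3,1}_{x_1}}^2 \|w_2\|_{L^\infty_y L^{\3,1}_{x_2}}^2 \|K\|_{L^\infty_y L^\infty_s \B(L^2)}.
\]
To estimate the integrand pointwise in $(y,t,s)$, I would decompose $w_1 K w_2$ as a sum of rank-one operators $\phi\otimes\bar\psi$ with $\|\phi\|_{L^2}\|\psi\|_{L^2}$ controlled by $\|w_1\|_{L^{\3,1}_{x_1}}\|w_2\|_{L^{\3,1}_{x_2}}\|K\|_{\B(L^2)}$ via the Lorentz--H\"older embeddings $L^{\3,1}_x\cdot L^{6,2}_x\hookrightarrow L^{2,1}_x\hookrightarrow L^2_x$, apply~(\ref{decay_est''}) to each rank-one source (inserting the free $x_1$-evolution of length $t-t_1$ and $x_2$-evolution of length $t-t_2$ at the endpoint), and then invoke Hardy--Littlewood--Sobolev on the two independent kernels $\langle t-s_1\rangle^{-\3/2}$ and $\langle t-s_2\rangle^{-\3/2}$ to pass from pointwise decay to $L^1_t$ integrability. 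The identification $\frak S_1 = L^2_{x_1}\otimes_1 L^2_{x_2}$ then lets us recombine the two $L^2$-level Strichartz bounds into the target trace-class bound.

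For the inhomogeneous estimate~(\ref{strichartz'}), the source $F(y,s,x_1,x_2) = \int_{s\geq s_2} e^{i(s-s_2)\Delta_{x_2}} G(y,s,s_2,x_1,x_2)\,ds_2$ is designed to fit exactly into the framework of~(\ref{decay_est''}): the inner free $x_2$-evolution is absorbed into the perturbed evolution at no cost in decay. Duhamel's formula then reduces the claim to the homogeneous-type bound above, plus an additional scalar Strichartz input in the $x_2$ variable, namely $\|\int e^{i(s-s_2)\Delta_{x_2}} G\,ds_2\|_{L^2_s L^{6,2}_{x_2}}\les \|G\|_{L^2_{s_2} L^{6/5,2}_{x_2}}$, which handles the $L^2_{s_2}L^{6/5,2}_{x_2}$ piece of the norm on $G$; the $L^1_{s_2}L^2_{x_2}$ piece plays the role of a rank-one $L^2$ factor directly.

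The main technical obstacle is coordinating the two HLS steps inside the trace-class norm $\frak S_1$, which is the \emph{projective} tensor product $L^2_{x_1}\otimes_1 L^2_{x_2}$ rather than the Hilbert--Schmidt $L^2\otimes L^2$. One must maintain rank-one factorizations throughout the argument, keep $L^2$-level bounds separately in each variable, and only recombine at the very end. The independence of the four parameters $t_1,t_2,s_1,s_2$ in~(\ref{decay_est''}) is precisely what allows the two HLS applications to run in parallel without a logarithmic (or worse) loss at the point where one would otherwise identify $t_1=t_2$.
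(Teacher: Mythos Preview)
There is a genuine gap in your $TT^*$ reduction. The map $T: f_0 \mapsto w_1 U(t) f_0 w_2$ has domain $L^1_y \frak S_1$, which is not a Hilbert space, so boundedness of $TT^*$ does not imply boundedness of $T$; the standard $TT^*$ machinery requires the pivot space to be Hilbert. More concretely, your displayed inequality would need $w_1 K w_2 \in \frak S_1$ with $\|w_1 K w_2\|_{\frak S_1} \les \|w_1\|_{L^{\3,1}} \|w_2\|_{L^{\3,1}} \|K\|_{\B(L^2)}$, and this is simply false: for $K = I$ the product is the multiplication operator by $w_1 w_2$, which is never trace-class. The Lorentz--H\"older embedding you quote is a pointwise function-product statement and does not transfer to Schatten classes. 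Separately, estimate~(\ref{decay_est''}) takes input in $(L^1_x \cap L^2_x)\otimes(L^1_x \cap L^2_x)$, not in $\frak S_1 = L^2_x \otimes_1 L^2_x$, so it cannot be applied as a black box to rank-one $L^2\otimes L^2$ data.

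The paper avoids this by not treating the perturbed evolution as a black box. It reopens the proof of Proposition~\ref{prop27} and works directly with the explicit Duhamel representation~(\ref{complicated}), in which $U(t)$ is written as free evolutions in $x_1,x_2$ sandwiching the kernel $\frak B \in \frak W_0$. For rank-one data $\phi\otimes\bar\psi$, the \emph{initial} free segments preserve tensor structure, and scalar endpoint Strichartz (Kato smoothing against $v_2$) places each factor in $L^2_s L^2_x$; this yields a bound of the shape $\|\cdot\|_{\frak S_1} \leq f(s_2)\, g(\sigma_2)$ with $f,g\in L^2$. The middle kernel $\frak B$, via its $\langle \tau-(\sigma_1+\sigma_2)\rangle^{-\3/2}$ structure (after Minkowski to move the $t$-norm inside the $y$-norm), acts boundedly from $L^2_{s_2}\otimes L^2_{\sigma_2}$ to $L^2_{s_1}\otimes L^2_{\sigma_1}$, so the tensor bound persists. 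The \emph{final} weights $w_1,w_2$ then enter through Lemma~\ref{uniform_lemma}, which gives $k_i(t):=\|w_i e^{-it\Delta} v_1\|_{\B(L^2)} \in L^1_t$; a Cauchy--Schwarz in $t$ on $k_1(t-s_2)k_2(t-\sigma_2)\tilde f(s_2)\tilde g(\sigma_2)$ closes the $L^1_t$ estimate. The decisive point is that rank-one control of $\frak S_1$ is maintained step by step through the explicit free-segment structure, rather than recovered after the fact by a duality that does not exist here.
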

More inhomogenous estimates are possible, but there are delicate endpoint issues.


\subsubsection{Setup}

The main difficulty is that both operators
$$
v_2(x_1, y) e^{it(-\Delta_{x_1}+\Delta_{x_2}+iA)}: L^p_y \frak S_1 \to L^p_y \frak S_1
$$
and
$$
e^{it(-\Delta_{x_1}+\Delta_{x_2}+iA)} v_1(x_1, y): L^p_y \frak S_1 \to L^p_y \frak S_1
$$
lack the requisite time decay properties: neither decays faster than $L^2$ in $t$.

Thus, the most one could prove directly by such decay estimates is that the evolution is bounded from $L^1_y \frak S_1$ to $L^2_t L^1_y \frak S_1$: for $V \in C_y (L^1_x \cap L^\infty_x)$,
$$
\|e^{it(-\Delta_{x_1}+\Delta_{x_2}+iA + V \otimes 1 - 1 \otimes V)} f\|_{L^2_t L^1_y \frak S_1} \les \|f\|_{L^1_y \frak S_1}.
$$

Likewise, the $t^{-\3}$ decay of the evolution cannot be proved in this manner, because, even if $v_2$ and $v_1$ had compact support, decay would be no faster than $\langle t \rangle^{-\3/2} L^2_t$. Then the best achievable result by direct estimates would be
$$
\|e^{it(-\Delta_{x_1}+\Delta_{x_2}+iA + V \otimes 1 - 1 \otimes V)} f\|_{\langle t \rangle^{-\3/2} L^2_t L^1_y (L^2_{x_1} + L^\infty_{x_1}) \otimes (L^2_{x_2} + L^\infty_{x_2})} \les \|f\|_{L^1_y (L^1_{x_1} \cap L^2_{x_1}) \otimes (L^1_{x_2} \cap L^2_{x_2})}.
$$
See (\ref{weak_est}) for a partial result in this direction.

The common deficiency in both cases is only using dispersion in only one variable and the unitarity of the evolution in the other variable. However, the equation is dispersive in both variables, which we can quantify by considering the structure of the operator $e^{it(-\Delta_{x_1}+\Delta_{x_2}+iA+V \otimes 1 - 1 \otimes V)}$ in more detail.

The main technical device is keeping track of the initial and final segments in $S_1 \ast S_2$:
$$\begin{aligned}{}
[S_1 \ast S_2](t) := &\int_{t>s_1>s_2>s_\3>0} (I+iT_{11})^{-1}(t-s_1) T_{12}(s_1-s_2) (I+iT_{22})^{-1}(s_2-s_\3) T_{21}(s_\3) \dd s_1 \dd s_2 \dd s_\3.
\end{aligned}$$
The computations have to be done in a suitable operator space. 
Here is our choice: let
\be\lb{U1}
[U_1 F](y_1, x_1, x_2) = \int_0^\infty \int_Y e^{is_1\Delta_{x_2}} v_1(x_2, y)  F(y_1, y, x_1, s_1, x_2) \dd y \dd s_1
\ee
and
\be\lb{U2}
[U_2 f](s_2, y_2, \tilde y, x_1, x_2) = v_2(x_2, \tilde y) \chi_{s_2 \geq 0} e^{is_2\Delta_{x_2}} f(y_2, x_1, x_2).
\ee
These are the initial and final segments we shall keep track of. $U_2$ introduces two extra variables, $s_2$ and $\tilde y$, and $U_1$ eliminates them. 

As proved below, each operator appearing in the computation can be written as $U_2 B U_1$, for some suitable choice of $B$.

The Banach spaces of operators we use in this proof have the following general structure:
\be\lb{syx}
SYX = \{T \mid \Big\|\big\|\|T\|_X\big\|_Y\Big\|_S < \infty\}.
\ee
The two outermost spaces $S$ and $Y$ will be \emph{Banach latices} of operator kernels, meaning that, whenever $T(s_1, s_2) \in S$ and $|\tilde T(s_1, s_2)| \leq |T(s_1, s_2)|$, then $\tilde T \in S$ as well.

In other words, $T \in SYX$ has a kernel $T(s_1, s_2)$ such that $T(s_1, s_2) \in YX$ for each $s_1, s_2 \geq 0$ and $\|T(s_1, s_2)\|_Y$ is a bounded kernel in the operator space $S$.

Likewise, $\tilde T \in YX$ means the following: $\tilde T$ has an integral kernel $\tilde T(y_1, y_2) \in X$ for each $y_1, y_2 \in Y$, such that $\|\tilde T(y_1, y_2)\|_X \in Y$.

For example, estimate (\ref{kernel_estimate}) from Proposition \ref{prop26} concerning $e^{it(-\Delta+iA)}$ can be restated as
$$
\chi_{t \geq 0} e^{it(-\Delta+iA+V)} \in \chi_{t \geq 0} \langle t \rangle^{-\3/2} L^\infty_t (L^\infty_{y_2} L^1_{y_1} \cap L^\infty_{y_1} L^1_{y_2}) \B(L^1_x \cap L^2_x, L^2_x + L^\infty_x).
$$

We shall use the following spaces of operators in the course of the proof:\\
1. For $s$: integral kernels in $L^\infty_s$, $\langle s \rangle^{-p} L^\infty_s$, $\langle s_1 + s_2 \rangle^{-p} L^\infty_{s_1, s_2}$, or $\mc M_{s_1, s_2}$.

We further distinguish the positive subspace, consisting of kernels supported on $\{(s_1, s_2) \in \R^2: s_1, s_2 \geq 0\}$. Positive support plays a part in the study of compactness and similar properties.

Assuming positive support, operators with kernels in $\chi_{s_1, s_2 \geq 0} \langle s_1 + s_2 \rangle^{-p} L^\infty_{s_1, s_2} \subset L^2_{s_1, s_2}$ are in the Hilbert--Schmidt class for $p>1$.

\noindent 2. For $y$: the spaces involved will be spaces of integral kernels acting on $Y$ or on $Y \times Y$, such as $L^\infty_{y_2} L^1_{y_1}$ and $L^\infty_{y_1} L^1_{y_2}$. For example,
$$
[K f](y_1) = \int_Y K(y_1, y_2) f(y_2) \dd y_2 
$$
is in $\B(L^1_y)$ if the corresponding integral kernel is in $L^\infty_{y_2} L^1_{y_1}$ and in $\B(L^\infty_y)$ if $K(y_1, y_2) \in L^\infty_{y_1} L^1_{y_2}$.

Likewise, $L^\infty_{y_2} L^1_{\tilde y, y_1, y} \subset \B(L^1_{y_2} L^\infty_{\tilde y}, L^1_{y_1} L^1_y)$ and $L^\infty_{y_1} L^1_{y, y_2, \tilde y} \subset \B(L^\infty_{y_2} L^\infty_{\tilde y}, L^\infty_{y_1} L^1_y)$.

\noindent 3. For $x$: $\B(L^2)$, $\B(L^2) \otimes \B(L^2)$, $\B(\frak S_1)$, $\C I$, etc.. Multiples of the identity operator appear in the computation.

The composition of operators from two such classes is also in such a class whenever the composition is well-defined. Indeed, consider two operators $T_1$ and $T_2$, such that $T_j \in S_j Y_j X_j$, $1 \leq j \leq 2$. Then
$$
T_1 T_2 \in (S_1 S_2) (Y_1 Y_2) (X_1 X_2),
$$
whenever the compositions of operators in the individual operator spaces $S_1$ and $S_2$, $Y_1$ and $Y_2$, and $X_1$ and $X_2$ make sense.

Due to endpoint Strichartz estimates, both the initial and the final segments are bounded as operators between the following spaces:
$$
\|U_1 F\|_{L^1_{y_1} \frak S_1} \les \|F\|_{L^1_{y_1} L^1_y (L^2_{x_1} \otimes L^2_{s_1} L^2_{x_2})}
$$
and
$$
\|U_2 f\|_{L^1_{y_2} L^\infty_{\tilde y} (L^2_{x_1} \otimes L^2_{s_2} L^2_{x_2})} \les \|f\|_{L^1_{y_2} \frak S_1}.
$$
These characterizations of $U_1$ and $U_2$ are not in the form (\ref{syx}), but here are two characterizations that are: with decay
\be\lb{bound1}
U_1(s_1) \in \chi_{s_1 \geq 0} \langle s_1 \rangle^{-\3/2} L^\infty_{s_1} L^\infty_{y_1, y} (\C I_{x_1} \otimes \B(L^2_{x_2}, L^2_{x_2}+L^\infty_{x_2}))
\ee
and
\be\lb{bound2}
U_2(s_2) \in \chi_{s_2 \geq 0} \langle s_2 \rangle^{-\3/2} L^\infty_{s_2} L^\infty_{y_2, \tilde y} (\C I_{x_1} \otimes \B(L^1_{x_2} \cap L^2_{x_2}, L^2_{x_2}))
\ee
and without decay
\be\lb{u1est}
U_1(s_1) \in \chi_{s_1 \geq 0} L^\infty_{s_1} L^\infty_{y_1, y} (\C I_{x_1} \otimes \B(L^2_{x_2})),
\ee
$$
U_2(s_2) \in \chi_{s_2 \geq 0} L^\infty_{s_2} L^\infty_{y_1, y} (\C I_{x_1} \otimes \B(L^2_{x_2})).
$$

Composed in the appropriate order,
$$
[U_2 U_1 F](y_2, \tilde y, x_1, s_1, x_2) = \chi_{s_1, s_2 \geq 0} \delta_{y_1 = y_2} v_2(x_2, \tilde y) e^{is_1\Delta_{x_2}} \int_0^\infty \int_Y e^{is_2\Delta_{x_2}} v_1(x_2, y) F(y_1, y, x_1, s_2, x_2) \dd y \dd s_2
$$
has extra decay properties that the individual components lack: it decays like $\langle s_1+s_2 \rangle^{-\3/2}$ and then the operator norm forms a bounded kernel from $L^1_{y_2} L^1_y$ to $L^1_{y_1} L^\infty_{\tilde y}$:
\be\lb{u2u1bound}
U_2 U_1 \in \chi_{s_1, s_2 \geq 0} \langle s_1+s_2 \rangle^{-\3/2} L^\infty_{s_1, s_2} (L^\infty_{y_2, y} L^1_{y_1} L^\infty_{\tilde y}) (\C I_{x_1} \otimes \B(L^2_{x_2})).
\ee

$S_1 \ast S_2$ is a sum of four terms:
$$
S_1 \ast S_2 = T_1 + T_2 + T_3 + T_4,
$$
where
$$
T_1 = T_{12} \ast T_{21},
$$
$$
T_2 = [(I+iT_{11})^{-1} - I] \ast T_{12} \ast T_{21},
$$
$$
T_3 = T_{12} \ast [(I+iT_{22})^{-1} - I] \ast T_{21},
$$
and
$$
T_4 = [(I+iT_{11})^{-1} - I] \ast T_{12} \ast [(I+iT_{22})^{-1} - I] \ast T_{21}.
$$


The first two terms $T_1$ and $T_2$ are singular, containing Dirac deltas, but have a specific form that is easy to analyze. The other two terms $T_\3$ and $T_4$ and every other expression appearing in the proof will be uniformly bounded and belong to one common Banach space, see Definition \ref{defw}.

Starting with $T_1$ as a simpler case, note that
$$
T_1(t) = [T_{12} \ast T_{21}](t) = U_1 B_1(t) U_2,
$$
where $B_1(t)$ is given by
\be\begin{aligned}[]\lb{def_b1}
[B_1(t) F](s_1, y_1, y, x_1, x_2) = \chi_{t \geq 0} \delta_{t-s_1}(s_2) \int_{t \geq s_2 \geq 0} &e^{-s_1 A}(y_1, y) \delta_{y = \tilde y} e^{-s_2 A}(\tilde y, y_2) [(v_2(x_1, y_1) e^{-it\Delta_{x_1}} v_1(x_1, y_2)) \otimes I_{x_2}] \\
&F(s_2, y_2, \tilde y, x_1, x_2) \dd \tilde y \dd y_2 \dd s_2.
\end{aligned}\ee

For each value of the $s$'s and $y$'s, this operator is in $\B(L^2_{x_1}) \otimes \C I_{x_2}$, since
$$
\sup_{y_1, y_2 \in Y} \|v_2(x_1, y_1) e^{-it\Delta_{x_1}} v_1(x_1, y_2)\|_{\B(L^2_{x_1})} < \infty.
$$
Its norm, considered as a function of the $y$'s, forms a kernel bounded from $L^1_{y_2} L^\infty_{\tilde y}$ to $L^1_{y_1} L^1_y$, as well as from $L^\infty_{y_2} L^\infty_{\tilde y}$ to $L^\infty_{y_1} L^1_y$. This is because
$$
\sup_{s_1, s_2 >0} \sup_{y_1 \in Y} \iint_{Y \times Y} |e^{-s_1 A}(y_1, y) e^{-s_2 A}(y, y_2)| \dd y \dd y_2 < \infty
$$
and
$$
\sup_{s_1, s_2 >0} \sup_{y_2 \in Y} \iint_{Y \times Y} |e^{-s_1 A}(y_1, y) e^{-s_2 A}(y, y_2)| \dd y \dd y_1 < \infty.
$$
In fact the expression is non-negative and the integrals always equal $1$, by properties of Markov chains.

Finally, as a function of $s$, $T_1$'s operator norm is dominated by a specific singular measure in $\mc M_{s_1, s_2}$ (the space of signed measures on $\R^2$):
$$
\chi_{t \geq 0} \chi_{s_1, s_2 \geq 0} \langle t \rangle^{-\3/2} \delta_{t-s_1}(s_2).
$$

We can abbreviate this whole characterization as
$$
B_1(t) \in \chi_{s_1, s_2 \geq 0} \mc M_{s_1, s_2} (L^\infty_{y_2} L^1_{\tilde y, y_1, y} \cap L^\infty_{y_1} L^1_{y, y_2, \tilde y}) (\B(L^2_{x_1}) \otimes \C I_{x_2}).
$$
The norm of $B_1(t)$ is of size $\langle t \rangle^{-\3/2}$:
$$
\|B_1(t)\|_{\mc M_{s_1, s_2} (L^\infty_{y_2} L^1_{\tilde y, y_1, y} \cap L^\infty_{y_1} L^1_{y, y_2, \tilde y}) (\B(L^2_{x_1}) \otimes \C I_{x_2})} \les \langle t \rangle^{-\3/2}.
$$

Next, consider the second term
$$
T_2 = [(I+iT_{11})^{-1} - I] \ast T_{12} \ast T_{21} = [v_2(x_1, y) e^{it(-\Delta_{x_1}+iA+V\otimes 1)} v_1(x_1, y)] \ast T_1.
$$
Note that for $t \geq 0$
$$
\big\|\|[v_2 e^{it(-\Delta+iA+V)} v_1](y_1, y_2)\|_{\B(L^2_x)}\big\|_{L^\infty_{y_2} L^1_{y_1} \cap L^\infty_{y_1} L^1_{y_2}} \les \langle t \rangle^{-\3/2},
$$
which in the notation (\ref{syx}) becomes
$$
\chi_{t \geq 0} [v_2 e^{it(-\Delta+iA+V)} v_1] \in \chi_{t \geq 0} \langle t \rangle^{-\3/2} L^\infty_t [L^\infty_{y_2} L^1_{y_1} \cap L^\infty_{y_1} L^1_{y_2}] \B(L^2_x).
$$

Thus $T_2(t)$ is also of the form $T_2(t) = U_1 B_2(t) U_2$, where $B_2(t)$ is given by
\be\begin{aligned}[]\lb{def_b2}
&[B_2(t) F](s_1, y_1, y, x_1, x_2) = \\
&\chi_{t \geq 0} \delta_{t-s_1}(s_2) \int_{t\geq s_2 \geq 0} [v_2 e^{is_1(-\Delta_{x_1}+iA+V)} v_1](y_1, y') (v_2(x_1, y') e^{-it\Delta_{x_1}} v_1(x_1, y_2)) \otimes I_{x_2}] \\
& e^{-(s_1-s_1') A}(y', y) \delta_{y = \tilde y} e^{-s_2 A}(\tilde y, y_2) F(s_2, y_2, \tilde y, x_1, x_2) \dd y' \dd s_1' \dd \tilde y \dd y_2 \dd s_2.
\end{aligned}\ee

By virtue of Proposition \ref{prop26}, we can describe $T_2$ in exactly the same way:
$$
T_2(t) = U_1 B_2(t) U_2,\ B_2(t) \in \chi_{s_1, s_2 \geq 0} \mc M_{s_1, s_2} (L^\infty_{y_2} L^1_{\tilde y, y_1, y} \cap L^\infty_{y_1} L^1_{y, y_2, \tilde y}) (\B(L^2_{x_1}) \otimes \C I_{x_2}).
$$
Its norm is also of size $\langle t \rangle^{-\3/2}$.

Likewise, $T_3(t)$ and $T_4(t)$ are both of the form $U_1 B(t) U_2$, where for example $B_3$ is given by
\be\begin{aligned}[]\lb{def_b3}
[B_3(t) F](s_1, y_1, y, x_1, x_2) = &\chi_{t \geq 0} \int_{\substack{t \geq s_1 + s_2\\
s_1, s_2\geq 0}} e^{-s_1 A}(y_1, y) v_2(x_2, y) e^{-i(t-(s_1+s_2))(-\Delta_{x_2}+iA+1\otimes V)} v_1(x_2, \tilde y) \\
&e^{-s_2 A}(\tilde y, y_2) [(v_2(x_1, y_1) e^{-it\Delta_{x_1}} v_1(x_1, y_2)) \otimes I_{x_2}] F(s_2, y_2, \tilde y, x_1, x_2) \dd \tilde y \dd y_2 \dd s_2.
\end{aligned}\ee

Both $B_3$ and $B_4$ are in the following space $X_t$:
\begin{definition}\lb{defx} Let
$$
X_t = \chi_{s_1, s_2 \geq 0} \langle t-(s_1 + s_2) \rangle^{-\3/2} L^\infty_{s_1, s_2} [L^\infty_{y_2} L^1_{\tilde y, y, y_2}] \B(\frak S_1)
$$
and
$$
\mc X_t = X_t \oplus \C (B_1(t) + B_2(t)).
$$
\end{definition}
Then $\|T_3(t)\|_{U_1 X_t U_2}+\|T_4(t)\|_{U_1 X_t U_2} \les \langle t \rangle^{-\3/2}$, due to the decay in the $x_1$ factor.

All $X_t$ norms, for $t \geq 0$, are comparable, albeit not uniformly; so the underlying Banach space is the same: for any $t_1, t_2 \geq 0$
\be\lb{norm_comp}
\|T\|_{X_{t_1}} \les \langle t_1-t_2 \rangle^{\3/2} \|T\|_{X_{t_2}}.
\ee
For $t \leq 0$, $X_t \subset X_0$ and $\|T\|_{X_0} \leq \langle t_1-t_2 \rangle^{\3/2} \|T\|_{X_t}$.

By contrast, the measures that enter the definition of $\mc X_t$ are mutually singular, so they cannot be compared in the same norm for different values of $t$. However, the singular part is absent from higher powers, because $\mc X_t U_2 U_1 \mc X_s \subset X_{t+s}$.

\begin{lemma} For any $s, t \geq 0$, $\mc X_t U_2 U_1 \mc X_s \subset X_{t+s}$.
\end{lemma}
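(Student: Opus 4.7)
The plan is to decompose $\mc X_t = X_t \oplus \C(B_1(t)+B_2(t))$ and $\mc X_s = X_s \oplus \C(B_1(s)+B_2(s))$, producing four products to verify: $X_t U_2 U_1 X_s$, $X_t U_2 U_1 (B_1+B_2)(s)$, $(B_1+B_2)(t) U_2 U_1 X_s$, and $(B_1+B_2)(t) U_2 U_1 (B_1+B_2)(s)$. For each one, one must show that the composition lies in the regular (non-singular) space $X_{t+s}$, so in particular that any Dirac in the $s$-variable is absorbed by integration against a smooth kernel. The key observation is that the bridging factor $U_2 U_1$ carries a smooth $L^\infty_{s_1,s_2}$ kernel by \eqref{u2u1bound}, so any singular measure composed with it through one of the intermediate $s$-slots becomes a regular function.

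First I would carry out the $s$-variable bookkeeping. Elements of $X_t$ have $L^\infty_{s_1,s_2}$ kernels with $\langle t-(s_1+s_2)\rangle^{-\3/2}$ decay, $U_2 U_1$ contributes $\langle s_1+s_2\rangle^{-\3/2}$ by \eqref{u2u1bound}, and $X_s$ contributes $\langle s-(s_1+s_2)\rangle^{-\3/2}$. In the regular-regular case, the composition amounts to an integral over two intermediate variables $a,b\ge 0$ of the form
\[
\int_{\R_+^2} \langle t-(s_1+a)\rangle^{-\3/2}\,\langle a+b\rangle^{-\3/2}\,\langle s-(b+s_2)\rangle^{-\3/2}\,\dd a\,\dd b,
\]
which I would bound by $\langle t+s-(s_1+s_2)\rangle^{-\3/2}$ using standard convolution estimates on $\R_+^2$ (since $\3\ge 3$ makes $\3/2>1$, so each single integral converges). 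In each case involving one singular factor, the Dirac $\delta_{s_2,\,t-s_1}$ inside $B_j(t)$ (or its analogue inside $B_j(s)$) fixes one of the intermediate variables, reducing the double integral to a single one of the same flavor. In the doubly-singular case both Diracs are pinned and the composition collapses to a pointwise evaluation of the $U_2U_1$ kernel at a fixed intermediate point, which is still $L^\infty_{s_1,s_2}$ with the required decay.

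For the $y$-structure, I would note that the $X_t$ kernels lie in $L^\infty_{y_2} L^1_{\tilde y,y,y_2}$, the $U_2 U_1$ kernel in $L^\infty_{y_2,y} L^1_{y_1} L^\infty_{\tilde y}$ by \eqref{u2u1bound}, and the $B_j$ kernels in an analogous $L^\infty$--$L^1$ class by the explicit formulas \eqref{def_b1}--\eqref{def_b2}. Standard integral-kernel composition in the intermediate $y$-variables preserves the target $L^\infty_{y_2} L^1_{\tilde y,y,y_2}$ class, with finiteness over the compact $Y$ guaranteed by condition C1. On the $x$-side, operators in $\B(\frak S_1)$ (or $\B(L^2_{x_1})\otimes\C I_{x_2}$) compose with $U_2 U_1 \in \C I_{x_1}\otimes\B(L^2_{x_2})$ to remain in $\B(\frak S_1)$ via $\frak S_1 = L^2_{x_1}\otimes_1 L^2_{x_2}$. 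The main obstacle I expect is the doubly-singular case $(B_1+B_2)(t) U_2 U_1 (B_1+B_2)(s)$: one must track both pinned-support conditions through the intermediate $U_2 U_1$ integration and verify that they combine with the $\langle a+b\rangle^{-\3/2}$ factor to yield $\langle t+s-(s_1+s_2)\rangle^{-\3/2}$, which I would reduce, via the explicit formulas \eqref{def_b1}--\eqref{def_b2}, to a direct algebraic check confirming that the surviving $(a,b)$-point lies at a distance compatible with the claimed decay.
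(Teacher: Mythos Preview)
Your proposal is correct and takes essentially the same approach as the paper: decompose $\mc X_t$ and $\mc X_s$ into their regular and singular parts, then verify each of the four products lands in $X_{t+s}$ by reducing to convolution estimates in the $s$-variables, with the Dirac factors in $B_1+B_2$ pinning intermediate variables. The paper's proof is terser---it writes out only the singular-regular and doubly-singular $s$-integrals explicitly (using the inequality $\int_0^\infty \langle a+t\rangle^{-p}\langle t-b\rangle^{-p}\,dt \lesssim \langle a+b\rangle^{-p}$) and dismisses the rest with ``same goes for the remaining combinations''---whereas you also spell out the regular-regular double integral and the $y$- and $x$-structure bookkeeping, which the paper leaves implicit.
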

\begin{proof} First, we prove that $(B_1(s) + B_2(s)) U_2 U_1 X_t \subset X_{t+s}$.

Note that for $p>1$
\be\lb{triangle}
\int_\R \langle t-a \rangle^{-p} \langle t-b \rangle^{-p} \dd t \les \langle a-b \rangle^{-p}.
\ee
Hence
$$
\int_0^\infty \langle a + t \rangle^{-p} \langle b - t \rangle^{-p} \dd t = \int_0^\infty \langle a + t \rangle^{-p} \langle t-b \rangle^{-p} \dd t \les \langle a+b \rangle^{-p}.
$$
Then the statement reduces to the following claim: for $s_1, s_4 \geq 0$
$$\begin{aligned}
&\int_{\substack{s_2, s_\3 \geq 0}} [\delta_{s-s_1}(s_2)] \langle s_2+s_\3 \rangle^{-\3/2} [\langle t- (s_\3 + s_4) \rangle^{-\3/2}] \dd s_2 \dd s_\3 =\\
&= \int_{\substack{s_\3, s-s_1 \geq 0}} \langle s-s_1+s_\3 \rangle^{-\3/2} \langle t-s_\3 - s_4 \rangle^{-\3/2} \dd s_\3 \dd s\\
&\les \langle s+t - s_1 - s_4 \rangle^{-\3/2}.
\end{aligned}$$

The statement that $(B_1(s) + B_2(s)) U_2 U_1 (B_1(t) + B_2(t)) \subset X_{t+s}$ reduces to the true statement
$$\begin{aligned}
&\int_{\substack{s_2, s_\3 \geq 0}} [\delta_{s-s_1}(s_2)] \langle s_2+s_\3 \rangle^{-\3/2} [\delta_{t-s_\3}(s_4)] \dd s_2 \dd s_\3 \les \langle t-s_1-s_4 \rangle^{-\3/2}.
\end{aligned}$$

Same goes for the remaining combinations.
\end{proof}

\begin{definition}\lb{defw} Let
$$
W = \{(B(t))_{t \in \R} \mid \|B(t)\|_{X_t} \les \langle t \rangle^{-\3/2}\}.
$$
\end{definition}

So $T_3, T_4 \in U_1 W U_2$. Moreover, even though $S_1 \ast S_2$ contains two singular terms, $T_1, T_2 \not \in U_1 W U_2$, higher powers of $S_1 \ast S_2$ contain only $U_1 W U_2$ terms.
\begin{corollary} Let $B_1, B_2$ be given by (\ref{def_b1}) and (\ref{def_b2}) and $B, \tilde B \in W$. Then for any $f, \tilde f \in L^\infty_t$
$$
\|[f (B_1+B_2) U_2] \ast [U_1 \tilde f (B_1 + B_2)]\|_W \les \|f\|_{L^\infty_t} \|\tilde f\|_{L^\infty_t}
$$
and likewise for all other combinations: $\|[f (B_1+B_2) U_2] \ast [U_1 B]\|_W \les \|f\|_{L^\infty_t} \|B\|_W$,
$$
\|B U_2 \ast [U_1 f (B_1+B_2)]\|_W \les \|f\|_{L^\infty_t} \|B\|_W,\ \|B U_2 \ast U_1 \tilde B\|_W \les \|B\|_W \|\tilde B\|_W.
$$
\end{corollary}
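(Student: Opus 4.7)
The plan is to reduce each of the four bounds to the pointwise composition lemma immediately above, combined with a single time-convolution inequality. Unrolling convolution in the external time variable, write $F_1 \ast F_2$ for any of the four pairings in the statement. Then
$$
(F_1 \ast F_2)(t) = \int_\R F_1(\sigma) \, F_2(t-\sigma) \dd \sigma,
$$
so checking membership of $F_1 \ast F_2$ in $W$ reduces to bounding $\|(F_1 \ast F_2)(t)\|_{X_t}$ by the product of the factor norms times $\langle t \rangle^{-\3/2}$.

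The first step is to apply the preceding lemma $\mc X_\sigma U_2 U_1 \mc X_\tau \subset X_{\sigma+\tau}$ with $\tau = t - \sigma$, pointwise in $\sigma$. Its proof, which ultimately relies on the kernel inequality $\int_\R \langle \sigma - a \rangle^{-\3/2} \langle (t-\sigma)-b \rangle^{-\3/2} \dd \sigma \les \langle t - a - b \rangle^{-\3/2}$ (namely (\ref{triangle})), in fact yields the quantitative estimate
$$
\|F_1(\sigma) \, F_2(t-\sigma)\|_{X_t} \les \|F_1(\sigma)\|_{\mc X_\sigma} \, \|F_2(t-\sigma)\|_{\mc X_{t-\sigma}},
$$
valid uniformly in $\sigma$.

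Next, bound each factor pointwise in $\sigma$. For $B \in W$, the definition of $W$ gives $\|B(\sigma)\|_{X_\sigma} \les \|B\|_W \langle \sigma \rangle^{-\3/2}$. For the singular pieces, the explicit formulas (\ref{def_b1}) and (\ref{def_b2}), together with the decay $\|v_2 e^{i\sigma(-\Delta + iA)} v_1\|_{[L^\infty_{y_2} L^1_{y_1}\cap L^\infty_{y_1} L^1_{y_2}]\B(L^2_x)} \les \langle \sigma \rangle^{-\3/2}$ (Lemma \ref{bdd}) and $\|v_2 e^{i\sigma(-\Delta + iA + V)} v_1\|_{[L^\infty_{y_2} L^1_{y_1}\cap L^\infty_{y_1} L^1_{y_2}]\B(L^2_x)} \les \langle \sigma \rangle^{-\3/2}$ (Proposition \ref{prop26}), together with the fact that the Markov kernel $e^{-\sigma A}$ has uniformly bounded $L^\infty_{y_2} L^1_{y_1} \cap L^\infty_{y_1} L^1_{y_2}$ norm, deliver $\|(B_1+B_2)(\sigma)\|_{\mc X_\sigma} \les \langle \sigma \rangle^{-\3/2}$. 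The scalar multipliers $f(\sigma), \tilde f(t-\sigma)$ merely contribute factors $\|f\|_{L^\infty_t}$ and $\|\tilde f\|_{L^\infty_t}$.

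Combining these pointwise-in-$\sigma$ estimates and integrating via (\ref{triangle}),
$$
\|(F_1 \ast F_2)(t)\|_{X_t} \les C_1 C_2 \int_\R \langle \sigma \rangle^{-\3/2} \langle t-\sigma \rangle^{-\3/2} \dd \sigma \les C_1 C_2 \langle t \rangle^{-\3/2},
$$
where $C_1, C_2$ denote the relevant factor norms ($\|f\|_{L^\infty_t}$, $\|\tilde f\|_{L^\infty_t}$, $\|B\|_W$, or $\|\tilde B\|_W$ as appropriate). Since $\3 \geq 3$, the exponent $\3/2 \geq 3/2 > 1$ ensures the $\sigma$-integral converges. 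This simultaneously produces all four claimed bounds. The only book-keeping subtlety, and the mild obstacle here, is the first case in which both convolved pieces are the singular measures $B_1+B_2$; however, the convolution of their Dirac-delta structure against each other was already computed within the proof of the previous lemma and gives exactly the same $\langle t \rangle^{-\3/2}$ decay, so no new analytic difficulty arises.
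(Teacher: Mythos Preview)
Your proof is correct and follows essentially the same approach as the paper: the paper's own proof is the single line ``This follows immediately from the listed properties of the $\mc X_t$ spaces,'' and what you have written is precisely the unpacking of that sentence via the quantitative form of the preceding lemma $\mc X_\sigma U_2 U_1 \mc X_{t-\sigma} \subset X_t$ together with the convolution inequality (\ref{triangle}). Your re-derivation of $\|(B_1+B_2)(\sigma)\|_{\mc X_\sigma}\les\langle\sigma\rangle^{-\3/2}$ is slightly redundant, since this decay was already recorded in the paragraphs preceding the corollary, but it does no harm.
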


\begin{proof} This follows immediately from the listed properties of the $\mc X_t$ spaces.
\end{proof}

Hence the only exceptional term is $B_1+B_2$, corresponding to
$$
T_1 + T_2 = (\one + iT_{11})^{-1} \ast T_{12} \ast T_{21}.
$$
Consider the following algebra $\W$ and its completion $\ov \W = \W \oplus \C I$:
\begin{definition}\lb{defwalg} Let $\W$ be the algebra defined by
$$
\W = [L^\infty_t (B_1+B_2)] \oplus W = \{(B(t))_{t \in \R} \mid \|B(t)\|_{\mc X_t} \les \langle t \rangle^{-\3/2}\}.
$$
with the algebra operation $(B, \tilde B) \mapsto B U_2 \ast U_1 \tilde B$.
\end{definition}
Then $W$ is a bilateral ideal in $\W$ and $\W / W$ is a nilpotent algebra, $(\W / W)^2 = 0$.

The definition of $\mc X_t$ has the advantage that it includes, in a sense that will be clear later, an extra $\langle t \rangle^{-\3/2}$ decay rate, so the $\langle t \rangle^{-\3/2}$ weight in Definition \ref{defwalg} corresponds to an overall decay rate of $\langle t \rangle^{-\3}$.

However, certain steps of the proof, especially computation (\ref{mainest}) in Wiener's Theorem, cannot be carried out in this setting, because, for each $t$, $T(t)$ will involve a combination of several $\mc X_s$ spaces for multiple values of $s$. This is not detrimental to the proof, if small values of $s$ are penalized by appropriate weights. To account for this, we define the following Banach spaces:

\begin{definition}\lb{defxtilde} For $\alpha \geq 0$, $\beta \in [0, 1]$, consider the space
$$
\frak X_{\alpha, \beta} = \Big\{\frak B = \int_0^\infty B(\tau) \dd \tau \mid \int_0^\infty (1 + \beta \tau)^{-\alpha} \|B(\tau)\|_{\mc X_\tau} \dd \tau < \infty\Big\},
$$
with the norm given by the infimum of the above integral over all valid decompositions (possibly singular):
$$
\|\frak B\|_{\frak X_{\alpha, \beta}} = \inf \Big\{\int_0^\infty (1+\beta\tau)^{-\alpha} \|B(\tau)\|_{\mc X_\tau} \dd \tau \mid \frak B = \int_0^\infty B(\tau) \dd \tau \Big\}.
$$
Let $\frak X_0 = \frak X_{0, 1}$ and also let
$$
\frak W_\beta = \{(\frak B(t))_{t \in \R} \mid \|\frak B(t)\|_{\frak X_{\3/2, \beta}} \les \langle t \rangle^{-\3/2} (1+\beta t)^{-\3/2},\ \|\frak B(t)\|_{\frak X_0} \les \langle t \rangle^{-\3/2}\},\ \ov{\frak W}_\beta = \frak W_\beta \oplus \C \one,
$$
with the appropriate norm:
$$
\|\frak B\|_{\frak W} = \sup_{t \in \R} \langle t \rangle^{\3/2} (1+\beta t)^{\3/2} \|\frak B(t)\|_{\frak X_{\3/2, \beta}} + \sup_{t \in \R} \langle t \rangle^{\3/2} \|\frak B(t)\|_{\frak X_0}.
$$
\end{definition}


Clearly $\frak X_{\alpha, \beta} \subset \frak X_{\tilde \alpha, \tilde \beta}$ whenever $\alpha \leq \tilde \alpha$, $\beta \geq \tilde \beta$. It is easy to check that for $\alpha \geq 0$, $\beta \in [0, 1]$,
$$
[\chi_{\tau \geq 0} (1+\beta\tau)^\alpha L^1_\tau] \ast [\chi_{\tau \geq 0} L^1_\tau] \subset [\chi_{\tau \geq 0} \langle \tau \rangle^\alpha L^1_\tau].
$$
Consequently, and since $\mc X_t U_2 U_1 \mc X_s \subset \mc X_{t+s}$,
$$
\frak X_{\alpha, \beta} U_2 U_1 \frak X_0 \subset \frak X_{\alpha, \beta}.
$$

Moreover, $\frak X_{\alpha, \beta}$ contains all $\mc X_t$ spaces, each represented through a Dirac measure in $\tau$, for $t \geq 0$:
$$
\|B\|_{\frak X_{\alpha, \beta}} \les (1+\beta t)^{-\alpha} \|B\|_{\mc X_t}.
$$
For $\3>2$, also note that
$$
\|f \ast g\|_{\langle t \rangle^{-\3/2} L^\infty_t} \les \|f\|_{\langle t \rangle^{-\3/2} L^\infty_t} \|g\|_{\langle t \rangle^{-\3/2} L^\infty_t}
$$
and, uniformly for $\beta \in [0, 1]$,
$$
\|f \ast g\|_{\langle t \rangle^{-\3/2} (1+\beta t)^{-\3/2} L^\infty_t} \les \|f\|_{\langle t \rangle^{-\3/2} (1+\beta t)^{-\3/2} L^\infty_t} \|g\|_{\langle t \rangle^{-\3/2} L^\infty_t} + \|f\|_{\langle t \rangle^{-\3/2} L^\infty_t} \|g\|_{\langle t \rangle^{-\3/2} (1+\beta t)^{-\3/2} L^\infty_t}.
$$
Therefore, for each $\beta \in [0, 1]$, $\W \subset \frak W_\beta$, $\frak W_0 \subset \frak W_\beta$, $\frak W_\beta$ is also a Banach algebra, i.e.\;$\frak W_\beta U_2 \ast U_1 \frak W_\beta \subset \frak W_\beta$, and furthermore $\frak W_\beta$ has the generalized Leibniz property (\ref{leib}) with respect to $\frak W_0$:
$$
\|\frak B\|_{\frak W_\beta} \leq \|\frak B\|_{\W},\ \|\frak B_1 U_2 \ast U_1 \frak B_2\|_{\frak W_\beta} \les \|\frak B_1\|_{\frak W_\beta} \|\frak B_2\|_{\frak W_0} + \|\frak B_1\|_{\frak W_0} \|\frak B_2\|_{\frak W_\beta}.
$$

For future reference, we also record the following useful property:
\begin{lemma}\lb{pro} The algebra $\chi_{s_1, s_2 \geq 0} \langle s_1 + s_2 \rangle^{-\3/2} L^\infty_{s_1, s_2}$ has property (\ref{proper}) as a subalgebra of $\B(L^1_s) \cap \B(L^\infty_s)$.
\end{lemma}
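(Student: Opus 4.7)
The plan is to realize $T_1 T T_2$ explicitly through its integral kernel and bound it with dual Schur-type estimates. For kernels $T_1, T_2 \in \widehat X = \chi_{s_1, s_2 \ge 0} \langle s_1+s_2\rangle^{-\3/2} L^\infty_{s_1, s_2}$ and $T \in \B(L^1_s) \cap \B(L^\infty_s)$, the composition has kernel
$$
K(s_1, s_2) = \int_0^\infty T_1(s_1, \sigma)\,[T\,T_2(\cdot, s_2)](\sigma)\,d\sigma,
$$
where $T$ is applied to $\sigma \mapsto T_2(\sigma, s_2)$. For each $s_2 \ge 0$, this function lies in $L^1_\sigma \cap L^\infty_\sigma$, thanks to $\sup_{\sigma \ge 0}\langle \sigma + s_2\rangle^{-\3/2} = \langle s_2\rangle^{-\3/2}$ and the elementary estimate
$$
\int_0^\infty \langle s+\sigma\rangle^{-\3/2}\,d\sigma \lesssim \langle s\rangle^{-(\3-2)/2},
$$
which is uniform in $s \ge 0$ since $\3 \ge 3 > 2$; the same holds for $T_1(s_1, \cdot)$. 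Thus the pairing defining $K$ is a well-defined scalar.

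I would then bound $K$ in two complementary ways. H\"older's inequality with the $L^1_\sigma \cdot L^\infty_\sigma$ split, combined with $T \in \B(L^\infty_s)$, yields
$$
|K(s_1, s_2)| \le \|T_1(s_1,\cdot)\|_{L^1_\sigma}\,\|T\|_{\B(L^\infty_s)}\,\|T_2(\cdot,s_2)\|_{L^\infty_\sigma} \lesssim \|T_1\|_{\widehat X}\|T\|_{\B(L^\infty_s)}\|T_2\|_{\widehat X}\langle s_1\rangle^{-(\3-2)/2}\langle s_2\rangle^{-\3/2},
$$
while the dual pairing with $T \in \B(L^1_s)$ gives
$$
|K(s_1, s_2)| \lesssim \|T_1\|_{\widehat X}\|T\|_{\B(L^1_s)}\|T_2\|_{\widehat X}\langle s_1\rangle^{-\3/2}\langle s_2\rangle^{-(\3-2)/2}.
$$

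The decisive step is to select the stronger bound according to the regime. When $s_1 \le s_2$ one has $\langle s_1+s_2\rangle \sim \langle s_2\rangle$, so the first bound delivers $|K(s_1, s_2)| \lesssim \langle s_1+s_2\rangle^{-\3/2}$ after discarding the factor $\langle s_1\rangle^{-(\3-2)/2} \le 1$; when $s_2 \le s_1$ the second bound does exactly the symmetric job. Since $K$ inherits positive support from $T_1$ and $T_2$, combining everything gives
$$
|K(s_1, s_2)| \lesssim \|T_1\|_{\widehat X}\|T_2\|_{\widehat X}\bigl(\|T\|_{\B(L^1_s)} + \|T\|_{\B(L^\infty_s)}\bigr)\chi_{s_1, s_2 \ge 0}\langle s_1+s_2\rangle^{-\3/2},
$$
which is precisely (\ref{proper}). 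I do not anticipate a serious obstacle; the only substantive point is orchestrating the two dual Schur bounds so that the full $\langle s_1+s_2\rangle^{-\3/2}$ decay is recovered in both regimes, and this is essentially forced by the $L^1 \cap L^\infty$ intersection structure of both the kernels and the ambient operator algebra.
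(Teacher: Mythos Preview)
Your proposal is correct and follows essentially the same approach as the paper: both arguments pair the $L^1_\sigma$ norm of one kernel against the $L^\infty_\sigma$ norm of the other (using $T \in \B(L^\infty_s)$ and $T \in \B(L^1_s)$ respectively), obtain the two one-sided bounds $\langle s_2\rangle^{-\3/2}$ and $\langle s_1\rangle^{-\3/2}$, and then observe that $\min(\langle s_1\rangle^{-\3/2}, \langle s_2\rangle^{-\3/2}) \lesssim \langle s_1+s_2\rangle^{-\3/2}$. The only cosmetic difference is that the paper immediately discards the extra $\langle s_1\rangle^{-(\3-2)/2}$ factor by bounding $\|\langle s_1+\sigma\rangle^{-\3/2}\|_{L^1_\sigma}$ uniformly, whereas you track it and then discard it at the end.
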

\begin{proof} The statement is equivalent to asking that for any $T \in \B(L^1_{s_2}) \cap \B(L^\infty_{s_2})$
$$
\langle \langle s_1 + s_2 \rangle^{-\3/2}, T \langle s_2 + s_3 \rangle^{-\3/2} \rangle \les \|T\|_{\B(L^1_{s_2}) \cap \B(L^\infty_{s_2})} \langle s_1 + s_3 \rangle^{-\3/2}.
$$
Indeed,
$$
\langle \langle s_1 + s_2 \rangle^{-\3/2}, T \langle s_2 + s_3 \rangle^{-\3/2} \rangle \les \|\langle s_1 + s_2 \rangle^{-\3/2}\|_{L^1_{s_2}} \|T\|_{\B(L^\infty_{s_2})} \|\langle s_2 + s_3 \rangle^{-\3/2}\|_{L^\infty_{s_2}} \les \langle s_3 \rangle^{-\3/2},
$$
and same after swapping $s_1$ and $s_3$. Finally,
$$
\min(\langle s_3 \rangle^{-\3/2}, \langle s_1 \rangle^{-\3/2}) \les \langle s_1 + s_3 \rangle^{-\3/2}.
$$
\end{proof}

\subsubsection{The Fourier transform} Next, we show that the Fourier transform of elements of $\W$ can also be handled within $\W$.

As an example, the Fourier transform of $T_1 = T_{12} \ast T_{21}$ belongs to $U_1 X_0 U_2$ and is given by $U_1 \widehat B_1(\lambda) U_2$, where
$$
\widehat B_1(\lambda) = \chi_{s_1, s_2 \geq 0}(s_1, s_2) e^{-s_1 A}(y_1, y) \delta_{y = \tilde y} e^{-s_2 A}(\tilde y, y_2) [(v_2(x_1, y_1) e^{i(s_1+s_2)(-\Delta_{x_1}-\lambda)} v_1(x_1, y_2)) \otimes I].
$$
Here $X_0$ is the Banach space defined by Definition \ref{defx}, after setting $t=0$. As stated above, all the $X_t$ norms are comparable, so the Fourier transform is also uniformly in $X_t$ for any given $t$.

\begin{lemma}\lb{fourier_transform_w} The Fourier transforms of $T_1$, $T_2$, and of elements $T \in U_1 W U_2$
$$
\widehat T(\lambda) = \int_0^\infty e^{-it\lambda} T(t) \dd t
$$
are of the form $\widehat T(\lambda) = U_1 \widehat B(\lambda) U_2$, where $\widehat B(\lambda)$ is in $X_0$:
$$
\sup_{\Im \lambda \leq 0} \|\widehat B_1(\lambda)\|_{X_0} = 
\sup_{\Im \lambda \leq 0} \|\widehat B_1(\lambda)\|_{\langle s_1 + s_2 \rangle^{-\3/2} L^\infty_{s_1, s_2} (L^\infty_{y_2} L^1_{\tilde y, y_1, y} \cap L^\infty_{y_1} L^1_{y, y_2, \tilde y}) \B(\frak S_1)} < \infty.
$$
Then $\displaystyle \sup_{\Im \lambda \leq 0} \|\widehat B(\lambda)\|_{X_0} < \infty$, for $B_1$ and $B_2$. For elements $T \in U_1 W U_2$
$$
\sup_{\Im \lambda \leq 0} \|\widehat B(\lambda)\|_{X_0} \les \|T\|_W.
$$
\end{lemma}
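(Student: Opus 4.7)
The plan is to handle the three cases separately and observe that uniformity in $\Im\lambda \le 0$ is automatic, since $|e^{-it\lambda}| \le 1$ whenever $t\ge 0$ and $\Im\lambda\le 0$. For $T_1$ and $T_2$, the kernels $B_1(t)$ and $B_2(t)$ in (\ref{def_b1})--(\ref{def_b2}) both carry the Dirac factor $\delta_{t-s_1}(s_2)$, so computing $\int_0^\infty e^{-it\lambda}B_j(t)\,\dd t$ collapses the $t$-integration to $t=s_1+s_2$ and yields the explicit formula displayed for $\widehat B_1(\lambda)$ (with an analogous formula for $\widehat B_2(\lambda)$ featuring the perturbed factor). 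The $X_0$ bound then splits into a $y$-part and an $x$-part. The $y$-part $e^{-s_1A}(y_1,y)\delta_{y=\tilde y}e^{-s_2A}(\tilde y,y_2)$ lies in both $L^\infty_{y_2}L^1_{\tilde y,y_1,y}$ and $L^\infty_{y_1}L^1_{y,y_2,\tilde y}$ uniformly in $s_1,s_2$ by the Markov property (each directional integral of $e^{-sA}$ equals $1$). The $x$-part for $B_1$ is $v_2(x_1,y_1)e^{i(s_1+s_2)(-\Delta_{x_1}-\lambda)}v_1(x_1,y_2)\otimes I_{x_2}$; on $\Im\lambda\le 0$ the scalar factor $|e^{-i(s_1+s_2)\lambda}|\le 1$, and Lemmas \ref{uniform_lemma}--\ref{bdd} give $\|v_2 e^{-it\Delta}v_1\|_{\B(L^2)}\les \langle t\rangle^{-\3/2}$, which supplies the $\langle s_1+s_2\rangle^{-\3/2}$ weight demanded by $X_0$. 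For $B_2$ the extra convolution factor $v_2 e^{it(-\Delta+iA+V)}v_1$ Fourier-transforms to $v_2 R^{iA}_V(\lambda)v_1$, which by Lemma \ref{invertibility} and (\ref{RiA}) is uniformly bounded in $[L^\infty_{y_1}L^1_{y_2}\cap L^\infty_{y_2}L^1_{y_1}]\B(L^2_x)$ on $\Im\lambda\le 0$, and composing this with the $B_1$-type piece preserves the $\langle s_1+s_2\rangle^{-\3/2}$ decay.

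For $T\in U_1 W U_2$ I would write $T=U_1 B U_2$ with $B\in W$; since $U_1,U_2$ are $t$-independent, the factorisation passes through the $t$-Fourier transform and gives $\widehat T(\lambda)=U_1\widehat B(\lambda)U_2$. The hypothesis $\|B(t)\|_{\mc X_t}\les \langle t\rangle^{-\3/2}\,\|B\|_W$ unpacks to the pointwise kernel bound
$$
\|B(t)(s_1,s_2)\|_{[L^\infty_{y_2}L^1_{\tilde y,y,y_2}]\B(\frak S_1)}\les \langle t\rangle^{-\3/2}\,\langle t-(s_1+s_2)\rangle^{-\3/2}\,\|B\|_W,\qquad s_1,s_2\ge 0.
$$
Since $|e^{-it\lambda}|\le 1$ on $\{t\ge 0,\Im\lambda\le 0\}$,
$$
\|\widehat B(\lambda)(s_1,s_2)\|\le\int_0^\infty\|B(t)(s_1,s_2)\|\,\dd t\les \|B\|_W\int_0^\infty\langle t\rangle^{-\3/2}\langle t-(s_1+s_2)\rangle^{-\3/2}\,\dd t\les \|B\|_W\,\langle s_1+s_2\rangle^{-\3/2},
$$
where the last step is exactly the convolution bound (\ref{triangle}), applicable because $\3\ge 3$ forces $\3/2>1$. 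This is precisely the $X_0$ bound, and the estimate $\sup_{\Im\lambda\le 0}\|\widehat B(\lambda)\|_{X_0}\les \|B\|_W\les \|T\|_{U_1 W U_2}$ is uniform by construction.

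The principal obstacle is organisational rather than analytic: one has to verify that once the $t$-Fourier transform collapses the kernel $B(t)(s_1,s_2)$, the weighted-in-$s$, Markov-kernel-in-$y$, and $\B(\frak S_1)$-in-$x$ norms all compose into the $X_0$ structure. The single analytic ingredient that does the real work is the convolution identity (\ref{triangle}): it is what converts the built-in product decay $\langle t\rangle^{-\3/2}\langle t-(s_1+s_2)\rangle^{-\3/2}$ of $W$ into the $\langle s_1+s_2\rangle^{-\3/2}$ weight defining $X_0$. Everything else reduces to invoking the already-established dispersive bounds — the free kernel estimate for $B_1$, Lemma \ref{invertibility} for $B_2$, and the $W$-membership hypothesis for the remaining case.
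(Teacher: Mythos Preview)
Your proof is correct and follows essentially the same approach as the paper's: Minkowski in $t$ together with the Dirac collapse for $T_1,T_2$ and the convolution bound (\ref{triangle}) for elements of $W$. Two minor remarks: the pointwise $\langle t\rangle^{-\3/2}$ bound on $v_2 e^{-it\Delta}v_1$ comes directly from $v_1,v_2\in C_y(L^2_x\cap L^\infty_x)$ (Lemmas \ref{uniform_lemma}--\ref{bdd} only give $L^1_t$ control), and for $T_2$ the paper does not pass through the resolvent estimate (\ref{RiA}) but simply reuses the singular-measure structure of $B_2(t)$ already recorded in the setup, so that the same identity $\int_0^\infty \langle t\rangle^{-\3/2}\delta_{t-s_1}(s_2)\,\dd t\les\langle s_1+s_2\rangle^{-\3/2}$ handles $T_1$ and $T_2$ at once.
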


\begin{proof} The conclusion follows by Minkowski's inequality.
For $T_1$ and $T_2$, it is due to the fact that
$$
\int_0^\infty \langle t \rangle^{-\3/2} \delta_{t-s_2}(s_1) \dd t \les \langle s_1 + s_2 \rangle^{-\3/2}.
$$
For $T_3$, $T_4$, and more generally for elements of $U_1 W U_2$, it is due to the fact that, see (\ref{triangle}),
$$
\int_0^\infty \langle t \rangle^{-\3/2} \langle t-(s_1+s_2) \rangle^{-\3/2} \dd t \les \langle s_1 + s_2 \rangle^{-\3/2}.
$$
\end{proof}

Lemma \ref{fourier_transform_w} has a partial converse: every operator in $X_0$, times a sufficiently rapidly decaying function, is in $W$. This requires more than $\3/2$ powers of decay.
\begin{lemma} If $\chi:\R \to \C$ is such that $\chi(t) \les \langle t \rangle^{-\3}$ and $\tilde T \in X_0$, then $\chi \tilde T \in W$.
\end{lemma}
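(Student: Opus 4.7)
The plan is a one-line unwinding of the norms defining $W$ and $X_t$, followed by an elementary weight estimate. Interpret $\chi \tilde T$ as the $t$-indexed family $t \mapsto \chi(t)\,\tilde T$, where $\tilde T \in X_0$ is fixed. Since $\tilde T$ is independent of $t$ and the scalar $\chi(t)$ factors out of the $X_t$-norm, membership in $W$ (Definition \ref{defw}) amounts to showing
$$
\|\chi(t)\,\tilde T\|_{X_t} = |\chi(t)| \sup_{s_1, s_2 \geq 0} \langle t - (s_1+s_2) \rangle^{\3/2} \|\tilde T(s_1, s_2)\|_{[L^\infty_{y_2} L^1_{\tilde y, y, y_2}] \B(\frak S_1)} \les \langle t \rangle^{-\3/2}, \qquad t \in \R.
$$
By Definition \ref{defx}, the hypothesis $\tilde T \in X_0$ is precisely $\|\tilde T(s_1, s_2)\|_{[L^\infty_{y_2} L^1_{\tilde y, y, y_2}] \B(\frak S_1)} \les \langle s_1 + s_2 \rangle^{-\3/2}$ for $s_1, s_2 \geq 0$, so the whole matter reduces to the weight inequality
$$
\sup_{u \geq 0} \langle t - u \rangle^{\3/2} \langle u \rangle^{-\3/2} \les \langle t \rangle^{\3/2}, \qquad t \in \R,
$$
with $u = s_1 + s_2$.

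The weight inequality follows from the elementary estimate $\langle t - u \rangle \les \langle t \rangle \langle u \rangle$, valid for all $t, u \in \R$ (a direct consequence of $2|tu| \leq 1 + t^2 u^2$, since then $1 + (t-u)^2 \leq 2 + t^2 + u^2 + t^2 u^2 \leq 2\langle t \rangle^2 \langle u \rangle^2$). Combining the two displays with $|\chi(t)| \les \langle t \rangle^{-\3}$ gives
$$
\|\chi(t)\,\tilde T\|_{X_t} \les \langle t \rangle^{-\3} \cdot \langle t \rangle^{\3/2} = \langle t \rangle^{-\3/2},
$$
which is the $W$-membership condition.

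There is no substantive obstacle; the proof is pure bookkeeping. The one conceptual point worth flagging is that shifting the centre of the weight in the $X_{\,\cdot\,}$-norm from $s_1 + s_2$ to $t$ incurs a worst-case loss of $\langle t \rangle^{\3/2}$ (saturated near $s_1 = s_2 = 0$), which is exactly why the hypothesis on $\chi$ must provide $\langle t \rangle^{-\3}$ decay rather than merely the $\langle t \rangle^{-\3/2}$ decay that $W$ itself demands.
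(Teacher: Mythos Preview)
Your proof is correct and follows essentially the same approach as the paper. The paper's proof records the equivalent inequality $\langle t \rangle^{-\3/2} \langle s_1 + s_2 \rangle^{-\3/2} \les \langle t - (s_1+s_2) \rangle^{-\3/2}$ and then notes that the remaining $\3/2$ powers of decay from $\chi$ give the $W$-bound; your version unwinds this into the equivalent Peetre-type estimate $\langle t-u \rangle \les \langle t \rangle \langle u \rangle$ and is somewhat more explicit, but the argument is the same.
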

\begin{proof}
Note that
$$
\langle t \rangle^{-\3/2} \langle s_1 + s_2 \rangle^{-\3/2} \les \langle t - (s_1 + s_2) \rangle^{-\3/2}.
$$
Then, on top of that, another $\3/2$ powers of decay ensure that $\chi \tilde T \in W$.
\end{proof}

Together, these two statements characterize the Fourier transform on $\W$.

The same statements for ${\frak W_\beta}$ and ${\frak X_0}$, that the Fourier transform of an element of $\frak W_\beta$ is in ${\frak X_0}$ and that an element of ${\frak X_0}$, times a sufficiently rapidly decaying function of $t$, is in ${\frak W_\beta}$, obviously follow from Definition \ref{defxtilde}.

So the Fourier transform of $\widehat{S_1 \ast S_2}(\lambda)$ is of the form
$$
\widehat{S_1 \ast S_2}(\lambda) = U_1 B_\lambda U_2,
$$
where $B_\lambda \in X_0$, uniformly for $\Im \lambda \leq 0$.

We still need to prove appropriate bounds for
$$\begin{aligned}
(I+\widehat {S^2}(\lambda))^{-1} &= \begin{pmatrix}
(I + \widehat{S_1 \ast S_2}(\lambda))^{-1} & 0 \\
0 & (I + \widehat{S_2 \ast S_1}(\lambda))^{-1}
\end{pmatrix} \\
&= \begin{pmatrix}
(I + \widehat S_1(\lambda)\widehat S_2(\lambda))^{-1} & 0 \\
0 & (I + \widehat S_2(\lambda)\widehat S_1(\lambda))^{-1}
\end{pmatrix}.
\end{aligned}$$

This will be accomplished through a succession of lemmas below, leading to Lemma \ref{bdd_inverse}.

We shall invert $I+\widehat{S_1 \ast S_2}(\lambda)$ using Fredholm's alternative Lemma \ref{fredholm} and prove that its inverse is also contained in $U_1 X_0 U_2 \oplus \C I$.

For each $\Im \lambda \leq 0$, we have proved that $\widehat{S_1 \ast S_2}(\lambda) = U_1 B_\lambda U_2 \in U_1 X_0 U_2$. As a rule, operators in this space are not compact. Hence, proving the existence of the inverse can be complicated.

Moreover, we would like to prove the existence of an inverse of a specific form: one that has $U_1$ as its initial segment, has $U_2$ as its final segment, and can be approximated by finite-rank operators in the middle, in the specified norm.



The initial and final segments $U_1$ and $U_2$ cannot be approximated by finite-rank operators and are not compact, this being one of the reasons why they need a special treatment. Nor is $B_\lambda$ or $U_1 B_\lambda U_2$ compact, for that matter.

As a rule, $e^{-it\Delta}$ becomes compact and norm-continuous as a function of $t$ if it has localizing weights at both ends, but is not compact if one or both weights are missing.

The issue with $S_1 \ast S_2$ and with its Fourier transform (and analogously with $S_2 \ast S_1$) is that only the $x_1$ factor has weights at both ends, while the $x_2$ factor does not.

Going into more detail, the terms $\widehat B_3$ and $\widehat B_4$ are completely continuous in $X_0$, but $\widehat B_1$ and $\widehat B_2$ are not, because they contain the identity $I_{x_2}$.

Nevertheless, $\widehat{S_1 \ast S_2}$ is conjugate to an operator with the right properties and this turns out to be sufficient. We handle the lack of compactness as follows.

Consider Banach spaces $X$, $Y$, and $Z$ and an operator of the form
$$
L_1BL_2:X \to X,
$$
where $L_1: Z \to X$ and $L_2:X \to Y$ are fixed initial and final segments, while $B:Y \to Z$ can be any operator in some fixed operator space.

\begin{lemma}
The following formula is true: if $I+BL_2L_1:Z \to Z$ is invertible, then
\be\lb{inverse_formula}
(I+L_1BL_2)^{-1}=I+L_1CL_2,
\ee
where
$$
C=-(I+BL_2L_1)^{-1}B.
$$
\end{lemma}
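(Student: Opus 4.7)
The statement is a purely algebraic identity, so the plan is simply to verify it by direct multiplication. Substituting $C = -(I+BL_2L_1)^{-1}B$ into $(I+L_1BL_2)(I+L_1CL_2)$, I would expand and group:
$$
(I+L_1BL_2)(I+L_1CL_2) = I + L_1BL_2 + L_1CL_2 + L_1BL_2L_1CL_2 = I + L_1\bigl(B + (I+BL_2L_1)C\bigr)L_2,
$$
and by the defining relation $(I+BL_2L_1)C = -B$ the bracket vanishes, giving $I$.

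For the opposite product $(I+L_1CL_2)(I+L_1BL_2)$, the analogous computation yields
$$
I + L_1\bigl(C + B + CL_2L_1 B\bigr)L_2,
$$
so the claim reduces to showing $C(I+L_2L_1B) = -B$, equivalently $(I+BL_2L_1)^{-1}B = B(I+L_2L_1B)^{-1}$. This is the standard ``push-through'' identity, which I would justify by the elementary equality $(I+BL_2L_1)B = B + BL_2L_1B = B(I+L_2L_1B)$; multiplying on the left by $(I+BL_2L_1)^{-1}$ and on the right by $(I+L_2L_1B)^{-1}$ gives the desired commuting relation. Of course, to invoke this one must know that $I+L_2L_1B$ is also invertible, which follows from the Weinstein--Aronszajn-type observation that whenever $I+BL_2L_1$ has a bounded inverse on $Z$, the operator $I - L_2L_1(I+BL_2L_1)^{-1}B$ is a bounded two-sided inverse of $I+L_2L_1B$ on $Y$, as one checks by the same kind of direct expansion.

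The main (minor) obstacle is not any deep estimate but simply being careful that both products (left and right) equal $I$, since $L_1$ and $L_2$ act between different spaces and we cannot appeal to automatic two-sidedness of inverses in a Banach-algebra setting. Once the push-through identity is in hand, formula (\ref{inverse_formula}) follows immediately, and the same calculation shows that the inverse has the announced structure: the initial segment $L_1$ and final segment $L_2$ survive unchanged, while all the operator content is concentrated in the middle factor $C$.
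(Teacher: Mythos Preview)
Your proof is correct and follows the same direct-multiplication approach as the paper, which simply expands $(I+L_1CL_2)(I+L_1BL_2)$ and observes that the middle factor $-(I+BL_2L_1)^{-1} + I - (I+BL_2L_1)^{-1}BL_2L_1$ collapses to zero. You are actually more careful than the paper in verifying both products; your only detour is unnecessary: the identity $C(I+L_2L_1B)=-B$ follows immediately from $(I+BL_2L_1)B = B(I+L_2L_1B)$ by left-multiplying with $(I+BL_2L_1)^{-1}$, so there is no need to establish invertibility of $I+L_2L_1B$ or invoke the push-through identity in its two-sided form.
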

\begin{proof} A direct computation shows that
$$\begin{aligned}
&(I-L_1(I+BL_2L_1)^{-1}BL_2)(I+L_1BL_2) \\
&= I - L_1(I+BL_2L_1)^{-1}BL_2 + L_1BL_2 - L_1(I+BL_2L_1)^{-1}BL_2 L_1BL_2 \\
&= I + L_1[-(I+BL_2L_1)^{-1} + I - (I+BL_2L_1)^{-1} BL_2L_1] BL_2 \\
&= I.
\end{aligned}$$
\end{proof}

This formula works together with Fredholm's alternative, Lemma \ref{fredholm}. Keeping the same notation, one has the following:

\begin{lemma} Suppose that $B L_2 L_1$ is completely continuous in $W \subset \B(Z)$ and that the equation
$$
g = - L_1 B L_2 g
$$
has no nonzero solution. Then $I + L_1 B L_2$ is invertible and $(I + L_1 B L_2)^{-1} \in \B(X)$.
\end{lemma}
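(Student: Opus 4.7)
The plan is to combine the explicit inversion formula (\ref{inverse_formula}) derived in the preceding lemma with Fredholm's alternative applied on the space $Z$ rather than $X$. Formula (\ref{inverse_formula}) reduces inverting $I+L_1BL_2$ on $X$ to inverting $I+BL_2L_1$ on $Z$, so the whole task is to show that the latter operator is invertible in $\B(Z)$ under our hypotheses.

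To do this, I would invoke Fredholm's alternative (Lemma \ref{fredholm}) applied to the operator $-BL_2L_1$, which is completely continuous in $W\subset\B(Z)$ by assumption. Fredholm's alternative gives a dichotomy: either $I+BL_2L_1$ has a bounded inverse on $Z$, or there exists a nonzero $f\in Z$ with $f=-BL_2L_1\,f$. The goal is then to exclude the second possibility using the nonexistence hypothesis on $X$.

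The bridge between the two alternative equations is the substitution $g:=L_1 f\in X$. Applying $L_1$ to $f=-BL_2L_1 f$ yields
\[
g = L_1 f = -L_1 B L_2 L_1 f = -L_1 B L_2\, g,
\]
so $g$ solves the equation on $X$ whose only solution is zero. Hence $g=L_1 f=0$, and plugging this back into $f=-BL_2L_1 f=-BL_2\, g$ gives $f=0$, contradicting $f\ne 0$. Thus the first branch of Fredholm's alternative must hold: $(I+BL_2L_1)^{-1}\in\B(Z)$.

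The only real subtlety is the passage $g\mapsto f$, i.e.\ checking that a nontrivial $f$ in the Fredholm kernel on $Z$ really produces a nontrivial $g$ on $X$; but this is immediate from the equation $f=-BL_2L_1 f$ itself, which forces $f$ to be recovered from $g=L_1 f$ via $f=-BL_2 g$. With $(I+BL_2L_1)^{-1}\in\B(Z)$ in hand, formula (\ref{inverse_formula}) exhibits the inverse of $I+L_1BL_2$ explicitly as $I-L_1(I+BL_2L_1)^{-1}B L_2$, which is manifestly bounded on $X$ since $L_1$, $L_2$, $B$ are bounded between the appropriate spaces, completing the proof.
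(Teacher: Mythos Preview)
Your proof is correct and follows essentially the same route as the paper: apply Fredholm's alternative on $Z$ to $I+BL_2L_1$, rule out a nontrivial kernel element $f$ by passing to $g=L_1f$ on $X$ and using $f=-BL_2g$ to recover $f=0$, then invoke formula~(\ref{inverse_formula}) to conclude. The steps and the handling of the subtlety ($g=0\Rightarrow f=0$) match the paper's argument exactly.
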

\begin{proof}
Indeed, suppose that $B L_2 L_1$ is completely continuous, meaning that it can be approximated by finite-rank operators. Then the non-invertibility of $I+BL_2L_1$ would imply the existence of a nonzero solution to the equation
$$
f = - B L_2 L_1 f.
$$
But then, applying $L_1$ to this equation on the left and setting $L_1 f = g$, we would get a solution of the equation
$$
g = - L_1 B L_2 g.
$$
If this equation has no nonzero solution, then $g=0$, which implies that $f = -B L_2 g$ must be zero.

This contradiction proves that $I+BL_2L_1$ is invertible, so $(I+L_1BL_2)^{-1}$ exists and has the same initial and final segments by (\ref{inverse_formula}). The conclusion follows.
\end{proof}

So the problem reduces to inverting $I+B_\lambda U_2 U_1$. By previous computations, this perturbation of the identity, which we shall prove is completely continuous, belongs to the following space $\widehat X$:
$$
B_\lambda U_2 U_1 \in X_0 U_2 U_1 \subset \widehat X,
$$
where
\begin{definition}
\be\lb{Xhat}
\widehat X := \chi_{s_1, s_2 \geq 0} \langle s_1+s_2 \rangle^{-\3/2} L^\infty_{s_1, s_2} [L^\infty_{y_1} L^1_y L^1_{y_2} L^\infty_{\tilde y}] \B(\frak S_1).
\ee
\end{definition}

The operator $B_\lambda$ is given by the sum
$$
B_\lambda = \widehat B_1(\lambda) + \widehat B_2(\lambda) + \widehat B_3(\lambda) + \widehat B_4(\lambda).
$$

In particular, the simplest term, see (\ref{def_b1}), is
$$
\widehat B_1(\lambda) = \chi_{s_1, s_2 \geq 0} e^{-s_1 A}(y_1, y) \delta_{y = \tilde y} e^{-s_2 A}(\tilde y, y_2) [(v_2(x_1, y_1) e^{i(s_1+s_2)(-\Delta_{x_1}-\lambda)} v_1(x_1, y_2)) \otimes I].
$$

Note that $\widehat B_1(\lambda) = e^{-i(s_1+s_2)\lambda} \widehat B_1(0)$. Hence its $X_0$ norm cannot go to zero as $\lambda$ goes to infinity. However, the $\widehat X$ norm of $\widehat B_1(\lambda) U_2 U_1$ does:
$$
\lim_{\lambda \to \infty} \|\widehat B_1(\lambda) U_2 U_1\|_{\widehat X} = 0.
$$
This follows from the finite-rank approximation below, see Lemma \ref{completely_cont}.

A more complicated term such as $\widehat B_3(\lambda)$, see (\ref{def_b3}), has the form (using $s_1$ and $s_3$ as variables)
$$\begin{aligned}[]
\widehat B_3(\lambda) = &\int \chi_{s_1, s_2, s_3 \geq 0} e^{-s_1 A}(y_1, y) v_2(x_2, y) e^{-is_2(-\Delta_{x_2}+iA+1\otimes V)} v_1(x_2, \tilde y) \\
&e^{-s_3 A}(\tilde y, y_2) [(v_2(x_1, y_1) e^{i(s_1+s_2+s_3)(-\Delta_{x_1}-\lambda)} v_1(x_1, y_2)) \otimes I_{x_2}] \dd s_2.
\end{aligned}$$

Next, we show that $B_\lambda U_2 U_1$ is completely continuous in $\widehat X$, so we can apply Fredholm's alternative, Lemma \ref{fredholm}.

%

\begin{lemma}\lb{completely_cont} Assume that the infinitesimal generator $A$ satisfies properties C1-C5 and that the parameter space $Y$ is compact. Then the operator $B_\lambda U_2 U_1$ can be approximated by finite-rank operators in the $\widehat X$ norm: for any $\Im \lambda \leq 0$ and $\epsilon>0$, there exist $N$ and $f_n, g_n, h_n, \tilde g_n, \tilde h_n, k_n, \tilde k_n, \ell_n, \tilde \ell_n$, $1 \leq n \leq N$, such that
\be\lb{compl_cont}
\Big\| B_\lambda U_2 U_1 - \sum_{n=1}^N f_n(s_1, s_2) g_n(y_1) h_n(y) \tilde g_n(y_2) \tilde h_n(\tilde y) [k_n(x_1)\otimes\tilde k_n(x_1)] [\ell_n(x_2)\otimes\tilde \ell_n(x_2)] \Big\|_{\widehat X} < \epsilon.
\ee
Moreover,
\be\lb{goesto0}
\lim_{\lambda \to \infty} \|B_\lambda U_2 U_1\|_{\widehat X} = 0.
\ee
\end{lemma}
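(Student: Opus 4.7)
The plan is to decompose $B_\lambda U_2 U_1 = \sum_{j=1}^{4} \widehat B_j(\lambda)\, U_2 U_1$ following the four contributions introduced at the end of the preceding subsection, and to handle each summand by the same three-stage reduction used in Lemma \ref{lemma_compact}. The key structural observation is that composing with $U_2 U_1$ on the right inserts the factor $v_2(x_2,\tilde y)\, e^{is_1\Delta_{x_2}}\cdots e^{is_2\Delta_{x_2}}\, v_1(x_2,y)$ in the $x_2$ variable; this cures the lack of compactness caused by the bare identity $I_{x_2}$ present in $\widehat B_1,\widehat B_2$, and, as in (\ref{u2u1bound}), yields a second factor of $\langle s_1+s_2\rangle^{-\3/2}$ decay on top of the $x_1$-dispersive factor already carried by each $\widehat B_j$.

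Fix $\epsilon>0$. The first step is to truncate to $\{s_1,s_2\geq 0,\ s_1+s_2\leq R\}$: because $\widehat X$ weights by $\langle s_1+s_2\rangle^{-\3/2}$ while the actual kernel of $B_\lambda U_2 U_1$ decays like $\langle s_1+s_2\rangle^{-\3}$ (one factor from $x_1$, one from $x_2$), the tail contributes at most $O(R^{-\3/2})$ to the $\widehat X$-norm, uniformly in $\lambda$ with $\Im\lambda\leq 0$. For the summands $j=2,3,4$, the additional perturbed propagator $v_2\, e^{it(-\Delta+V+iA)}\, v_1$ supplies its own $\langle t\rangle^{-\3/2}$ decay via Lemma \ref{compl_cont_1var}, which fits cleanly into the same bookkeeping.

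On the compact region the approximation proceeds in three layers. Using C1, C2, and C4, each kernel $e^{-s_jA}(y,y')$ is norm-continuous in $s_j$ on $[\delta, R]$ and is replaced by a step function in $s_j$ whose values are finite tensor sums $\sum g_n(y)\tilde g_n(y')$ of bounded, compactly supported functions (the initial sliver $s_j\in[0,\delta]$ contributes only $O(\delta)$ in $\widehat X$). Next, after truncating the $v_j$ to compactly supported surrogates (possible with arbitrarily small error because $v_j\in\v$), the $x_1$-propagator $v_2(\cdot,y_1)\, e^{i(s_1+s_2)(-\Delta-\lambda)}\, v_1(\cdot,y_2)$ is Hilbert--Schmidt on $L^2_{x_1}$, jointly norm-continuous in $(s_1+s_2)$ and in the $y$'s, and hence approximable by finite-rank operators uniformly on the discretized parameter set. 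The identical argument applies on the $x_2$ side to the weighted propagator extracted from $U_2 U_1$, composed when relevant with the perturbed $x_2$-propagator in $\widehat B_3,\widehat B_4$ (which is completely continuous in the requisite norm by Lemma \ref{compl_cont_1var}). Assembling these three layers produces the finite-rank decomposition (\ref{compl_cont}).

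For (\ref{goesto0}), I would view $t\mapsto B_j(t)\, U_2 U_1$ as an element of $L^1_t\widehat X$; this holds because $T_j = U_1 B_j U_2\in W$ has the integrable $\langle t\rangle^{-\3/2}$ decay (recall $\3\geq 3$). Its Fourier transform in $t$ equals $\widehat B_j(\lambda)\, U_2 U_1$, so the operator-valued Riemann--Lebesgue lemma yields the desired norm convergence to zero as $|\lambda|\to\infty$. The main obstacle throughout is the delicate bookkeeping in the mixed norm $L^\infty_{y_1}L^1_y L^1_{y_2}L^\infty_{\tilde y}\B(\frak S_1)$ that defines $\widehat X$: every composition of $v$-weights, semigroup kernels $e^{-tA}$, and free or perturbed Schr\"odinger evolutions must respect the prescribed order of the $y$-integrations (using in particular the Markov-chain property to control $L^1_y$-integrals against $e^{-tA}$), so that complete continuity is genuinely established in this precise topology rather than merely in a weaker one.
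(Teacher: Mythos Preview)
Your first step---truncating to $\{s_1+s_2\leq R\}$ on the grounds that the kernel of $B_\lambda U_2 U_1$ decays like $\langle s_1+s_2\rangle^{-\3}$---does not hold. When you compose $B_\lambda$ with $U_2U_1$, the $x_1$-dispersive factor from $B_\lambda$ contributes $\langle s_1+s_3\rangle^{-\3/2}$ and the $x_2$-factor from $U_2U_1$ contributes $\langle s_3+s_2\rangle^{-\3/2}$, where $s_3$ is the internal variable being integrated out. The resulting integral $\int_0^\infty \langle s_1+s_3\rangle^{-\3/2}\langle s_3+s_2\rangle^{-\3/2}\,ds_3$ is only $O(\langle s_1+s_2\rangle^{-\3/2})$ (take $s_1=0$, $s_2$ large: the main contribution sits near $s_3=0$ and is of size $\langle s_2\rangle^{-\3/2}$), so the tail in the $\widehat X$ norm does \emph{not} vanish as $R\to\infty$. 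The paper flags this as ``the main issue'' and circumvents it by writing the kernel as a composition of two factors and approximating each factor separately in the weaker $\frak L_1$ norm (where truncation near zero is harmless and the tail at infinity is handled via the explicit leading asymptotic $(4\pi it)^{-\3/2}$ of $e^{-it\Delta}$, as in Lemma~\ref{lemma_compact}); the key algebraic fact is $\frak L_1 * \frak L_1 \subset \langle t\rangle^{-\3/2}L^\infty_t$, so the composition of two $\frak L_1$-approximated factors lands back in the correct weighted space.

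Your Riemann--Lebesgue argument for (\ref{goesto0}) has a related gap: $T_1,T_2\notin U_1WU_2$ (they lie in the singular part $U_1(\W\setminus W)U_2$ and carry Dirac measures in $s$), so the claim ``$T_j\in W$'' is false for $j=1,2$, and $t\mapsto B_j(t)U_2U_1$ is not in $L^1_t\widehat X$ for these indices. The paper instead deduces (\ref{goesto0}) from the finite-rank approximation itself: once each factor is replaced by simple functions in the $\frak L_1$ sense (characteristic functions of bounded intervals, or exactly $\langle\cdot\rangle^{-\3/2}$ tails), one checks case by case that the oscillatory convolution $\int e^{-i\lambda(s_1+s_3)}K_1(s_1,s_3)K_2(s_3,s_2)\,ds_3$ is bounded by $\langle s_1+s_2\rangle^{-\3/2}$ with a constant that tends to zero as $|\lambda|\to\infty$.
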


\begin{proof}
The main issue in obtaining this finite-rank approximation is that if $f \in \langle t \rangle^{-\3/2} L^\infty_t$, it does not necessarily follow that
$$
\lim_{R \to \infty} \|\chi_{t \geq R} f\|_{\langle t \rangle^{-\3/2} L^\infty_t} = 0.
$$
In fact, the decay rate at infinity would have to be $o(t^{-\3/2})$ instead of $O(t^{-\3/2})$ for this to happen. However, we can circumvent this issue by using a different approximation near infinity, as in the proof of Lemma \ref{lemma_compact}.

The overall kernel is a sum of terms, each of which is the composition of at least two factors (coming from $B_\lambda$ and from $U_2U_1$), either in the $x_1$ and $y$ variables or in the $x_2$ and $y$ variables.

We can use the weaker ${\frak L_1}$ norm, defined by (\ref{eser}), instead of the weighted $L^\infty$ norm for each factor, and yet their composition will still be in $\langle s_1 + s_2 \rangle^{-\3/2} L^\infty_{s_1, s_2}$ because $\frak L_1 \ast \frak L_1 \subset \langle t \rangle^{-\3/2} L^\infty_t$.

Next, we find finite-rank approximations for these factors, which are of the following form, involving only one out of the two space variables:
$$
K_1(t) = v_2(x, y_1) e^{-it\Delta} v_1(x, y_2)
$$
and
$$
K_2(t) = v_2(x, y_1) e^{-it\Delta} e^{-tA}(y_1, y_2) v_1(x, y_2).
$$
We try to approximate $K_2$ in the following norm:
\be\lb{weaker_norm}
({\frak L_1})_t (L^\infty_{y_2} L^1_{y_1} \cap L^\infty_{y_1} L^1_{y_2}) \B(L^2_x)
\ee
and $K_1$ in
\be\lb{even_weaker_norm}
({\frak L_1})_t (L^\infty_{y_1} L^\infty_{y_2}) \B(L^2_x).
\ee

This suffices to handle the $\widehat {T_1} = \widehat{T_{12} \ast T_{21}}$ term of $\widehat{S_1 \ast S_2}$. For the other terms, we also need to consider factors of the form
$$
v_2(x, y) e^{it(-\Delta+V+iA)} v_1(x, y),
$$
which can also be approximated in the norm (\ref{weaker_norm}) by finite-rank operators, see Lemma \ref{compl_cont_1var}.

$K_2$ being approximable by simple functions in the norm (\ref{weaker_norm}) is the content of Lemma \ref{lemma_compact}.

As for $K_1$, it is completely continuous in (\ref{even_weaker_norm}) because we are assuming that $Y$ is compact. Indeed, since $v_1(x, y)$ and $v_2(x, y)$ are continuous in $y$, when $Y$ is compact they must be uniformly continuous.

Thus, there exists a finite open cover of $Y$ on which we can approximate them by constant functions of $y$:
$$
v_1(x, y) \sim \sum_{n=1}^N \chi_n(y) v_1(x, y_n).
$$
For fixed values of $y$, one can approximate $v_1$ and $v_2$ with bounded functions of compact support in $x$. Henceforth, the proof proceeds as for Lemma \ref{lemma_compact}.

Finally, the composition of these simple factors is still simple and the convolution of two ${\frak L_1}$ functions is in $\langle t \rangle^{-\3/2} L^\infty_t$. This proves (\ref{compl_cont}).

As $\lambda$ goes to infinity, the norm of the overall kernel goes to zero, because each term is the composition of at least two factors, each of which can be approximated by simple functions.

With no loss of generality, after approximating again, each simple function in $s$ will be either the characteristic function of a bounded interval or exactly of the form $\langle s_1 + s_2 \rangle^{-\3/2}$.

Let $K_1(s_1, s_2) \in \chi_{s_1, s_2 \geq 0} ({\frak L_1})_{s_1+s_2}$ and $K_2(s_2, s_3) \in \chi_{s_2, s_3 \geq 0} ({\frak L_1})_{s_2+s_3}$ be these simple functions. Checking each possible case, it follows that
$$
\int_0^\infty e^{-\lambda(s_1+s_2)} K_1(s_1, s_2) K_2(s_2, s_3) \dd s_2 \les \langle s_1 + s_3 \rangle^{-\3/2},
$$
with a constant that goes to zero when $\lambda \to \infty$.
\end{proof}


We shall next prove that the Kato--Birman operator $I + \widehat {S_1 \ast S_2}(\lambda)$ is invertible.

By Lemma \ref{fourier_transform_w} it follows that
$$
\widehat {S_1 \ast S_2}(\lambda) = U_1 B_\lambda U_2 \in U_1 X_0 U_2.
$$
Consequently, by (\ref{bound1}) and (\ref{bound2}),
$$
U_1 B_\lambda U_2 \in \B(L^1_y[L^2_{x_1} \otimes (L^1_{x_2} \cap L^2_{x_2})], L^1_y[L^2_{x_1} \otimes (L^2_{x_2} + L^\infty_{x_2})]).
$$
In order to show it is a bounded operator on $L^1_y \frak S_1$, we rewrite $\widehat {S_1 \ast S_2}(\lambda)$ using the following identity derived from \cite{pillet}, p.\;269:
\begin{lemma}\lb{pillet_computation}
\begin{multline*}
S_1 \ast S_2 = (\one + iT_{11})^{-1} \ast T_{12} \ast (\one + i T_{22})^{-1} \ast T_{21} = \\
= (\one + iT_{11})^{-1} \ast (\one+iv_2(x_1, y)e^{it(-\Delta_{x_1}+\Delta_{x_2}+iA-1\otimes V)}v_1(x, y)) - \one\\
= (\one - iv_2(x_1, y)e^{it(-\Delta_{x_1}+\Delta_{x_2}+iA+V\otimes 1)}v_1(x, y)) \ast (\one + iv_2(x_1, y)e^{it(-\Delta_{x_1}+\Delta_{x_2}+iA-1\otimes V)}v_1(x, y)) - \one.
\end{multline*}
Hence for every $\lambda$ such that $\Im \lambda \leq 0$,
\begin{multline*}
\widehat {S_1 \ast S_2}(\lambda) = \widehat S_1(\lambda) \widehat S_2(\lambda) = (I + i\widehat T_{11}(\lambda-i0))^{-1} i\widehat T_{12}(\lambda-i0) (I + i\widehat T_{22}(\lambda-i0))^{-1} i\widehat T_{21}(\lambda-i0) = \\
= I - (I + \widehat T_{11}(\lambda-i0))^{-1}(I+v_2(x_1, y)R^{iA}_{-1\otimes V}(\lambda-i0)v_1(x, y)) \\
= I - (I - v_2(x_1, y) R^{iA}_{V \otimes 1} v_1(x_1, y))(I+v_2(x_1, y)R^{iA}_{-1\otimes V}(\lambda-i0)v_1(x, y)).
\end{multline*}
Consequently,
\be\lb{weak_est}
S_1 \ast S_2 \in \langle t \rangle^{-\3/2} [L^\infty_{y_2} L^1_{y_1} \cap L^\infty_{y_1} L^1_{y_2}] [\B(L^2) \otimes \B(L^2)]
\ee
and
$$
\widehat {S_1 \ast S_2}(\lambda) \in \B(L^p_y \frak S_1).
$$
\end{lemma}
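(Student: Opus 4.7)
The plan is to establish the lemma in three stages: the chain of algebraic identities, the time-decay bound (\ref{weak_est}), and the $L^p_y\frak S_1$ boundedness.

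For the first stage, I would work on the Fourier side, where the algebra is cleanest. The symmetric resolvent identity (Lemma \ref{invertibility}) applied in each diagonal channel yields
$$(I+i\widehat T_{11}(\lambda))^{-1} = I - (v_2\otimes 1)R^{iA}_{V\otimes 1}(\lambda)(v_1\otimes 1),\qquad (I+i\widehat T_{22}(\lambda))^{-1} = I + (1\otimes v_2)R^{iA}_{-1\otimes V}(\lambda)(1\otimes v_1).$$
The central computation is to collapse $(I+i\widehat T_{22}(\lambda))^{-1}\,i\widehat T_{21}$ using the second resolvent identity $R^{iA}_{-1\otimes V}(1\otimes V)R^{iA}=R^{iA}_{-1\otimes V}-R^{iA}$; this reduces the product to $-(1\otimes v_2)R^{iA}_{-1\otimes V}(v_1\otimes 1)$, in which the off-diagonal weight structure has disappeared. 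A second application of the same identity after multiplication on the left by $i\widehat T_{12}$ gives
$$i\widehat T_{12}(I+i\widehat T_{22})^{-1}i\widehat T_{21} = i\widehat T_{11} - (v_2\otimes 1)R^{iA}_{-1\otimes V}(\lambda)(v_1\otimes 1).$$
Taking the inverse Fourier transform and telescoping via $-iT_{11}=\one-(\one+iT_{11})$ after convolving by $(\one+iT_{11})^{-1}$ produces the middle equality; the third line then follows by substituting the time-domain expression for $(\one+iT_{11})^{-1}$ provided by the same symmetric resolvent identity.

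For the time-decay bound, observe that each factor in the product representation has its potential concentrated in a single space variable. Since $V\otimes 1$ commutes with $\Delta_{x_2}$ and $-1\otimes V$ commutes with $\Delta_{x_1}$, the Liouville-type evolutions factor cleanly: the perturbed Schr\"odinger evolution in the variable carrying the potential decays at rate $\langle t\rangle^{-\3/2}$ by Lemma \ref{compl_cont_1var}, in the norm $[L^\infty_{y_2}L^1_{y_1}\cap L^\infty_{y_1}L^1_{y_2}]\B(L^2)$, while the free Schr\"odinger evolution in the other variable is unitary on $L^2$. Hence each factor belongs to $\langle t\rangle^{-\3/2}L^\infty_t[L^\infty_{y_2}L^1_{y_1}\cap L^\infty_{y_1}L^1_{y_2}][\B(L^2)\otimes\B(L^2)]$; the $y$-kernel class and the operator tensor class are both closed under composition, and $\langle t\rangle^{-\3/2}\ast\langle t\rangle^{-\3/2}\les\langle t\rangle^{-\3/2}$ for $\3\geq 3$, so the convolution of the two factors lies in the same space, and subtracting $\one$ produces (\ref{weak_est}).

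For the $L^p_y\frak S_1$ bound, take the Fourier transform of the final factored identity,
$$\widehat{S_1\ast S_2}(\lambda) = I - \bigl(I-(v_2\otimes 1)R^{iA}_{V\otimes 1}(\lambda)(v_1\otimes 1)\bigr)\bigl(I+(v_2\otimes 1)R^{iA}_{-1\otimes V}(\lambda)(v_1\otimes 1)\bigr).$$
After Fourier reduction in $x_2$, the first sandwiched factor becomes a multiplier in $\xi_2$ whose fibre is the scalar sandwiched resolvent $v_2 R^{iA}_{V}(\lambda+\xi_2^2)v_1$ at a spectral parameter still in the lower half-plane; by Lemma \ref{invertibility} this is uniformly bounded in $[L^\infty_{y_1}L^1_{y_2}\cap L^\infty_{y_2}L^1_{y_1}]\B(L^2)$, and an analogous reduction in $x_1$ handles the second factor. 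Because pure-$x_1$ operators act on trace-class kernels by left multiplication with norm controlled by the $\B(L^2)$ norm, the bound promotes from $\B(L^2_{x_1}\otimes L^2_{x_2})$ to $\B(\frak S_1)$, and the $y$-kernel bound yields the $\B(L^p_y\frak S_1)$ conclusion. The main obstacle is the algebraic collapse in the first stage: the blocks $T_{12}$ and $T_{21}$ carry weights from different channels, and only the precise signs chosen in (\ref{more_notations}) together with two applications of the second resolvent identity cause the mixed cross-terms to telescope into an expression whose weights lie entirely in $x_1$, with the $x_2$ channel entering exclusively through the potential $-1\otimes V$ inside the perturbed evolution; once this identity is in hand, the analytic consequences reduce to scalar dispersive estimates already proved.
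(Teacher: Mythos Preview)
Your algebraic derivation (stage one) is correct and is precisely the Fourier-side version of the paper's computation; the paper works with Duhamel's identity on the time side but remarks explicitly that the resolvent computation is the Fourier transform of the same thing.

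There is, however, a genuine gap in your treatment of the second factor
\[
\one + i\,v_2(x_1,y)\,e^{it(-\Delta_{x_1}+\Delta_{x_2}+iA-1\otimes V)}\,v_1(x_1,y)
\]
in both stages two and three. In this factor the weights $v_1,v_2$ sit in $x_1$, while the potential $-1\otimes V$ sits in $x_2$. Your stage-two claim that ``the perturbed Schr\"odinger evolution in the variable carrying the potential decays at rate $\langle t\rangle^{-d/2}$ by Lemma~\ref{compl_cont_1var}'' cannot apply here: Lemma~\ref{compl_cont_1var} requires weights at both ends, and there are none in $x_2$. The decay actually comes from the \emph{free} $x_1$-evolution sandwiched between $v_2(x_1,\cdot)$ and $v_1(x_1,\cdot)$, which is the elementary bound (\ref{est1}). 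What remains is to show that the perturbed $(x_2,y)$-evolution $e^{it(\Delta_{x_2}+iA-V(x_2,y))}$, which carries \emph{no} weights, is uniformly bounded in $[L^\infty_{y_2}L^1_{y_1}\cap L^\infty_{y_1}L^1_{y_2}]\,\B(L^2_{x_2})$. This is not free unitarity (the generator $\Delta_{x_2}+iA-V$ is not self-adjoint), and it does not follow from the dispersive lemmas already proved. The paper obtains it via the Feynman--Kac representation (\ref{est2}): the kernel in $y$ factors as the Markov transition density $e^{-tA}(y_1,y_2)$ times a conditional expectation of a genuinely unitary random evolution.

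The same mismatch breaks your stage-three argument: ``an analogous reduction in $x_1$'' for the second factor does not yield a scalar sandwiched resolvent, because the weights $v_1(x_1,y),v_2(x_1,y)$ are multiplication operators in the very variable you are Fourier-transforming and become convolutions rather than fiberwise multipliers. (Separately, uniform fiberwise $\B(L^2_{x_1})$ bounds do not in general promote to $\B(\frak S_1)$; the $\frak S_1$ bound follows instead from the tensor-product-for-each-$t$ structure in $\B(L^2)\otimes\B(L^2)$ that (\ref{weak_est}) records, once stage two is repaired.)
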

In both cases, the second factor has weights involving only $x_1$, but attached to a perturbation of the free evolution/resolvent involving only $x_2$. This factor is of a different type and not directly related to those that have appeared before in the proof.

\begin{observation} Estimate (\ref{weak_est}) is actually much easier to prove than the one we want, (\ref{point}), but this approach does not lead to the full rate of decay, which is $\langle t \rangle^{-\3}$.
\end{observation}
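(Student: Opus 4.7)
The plan is to derive \ref{weak_est} directly from the algebraic identity in Lemma \ref{pillet_computation} combined with the scalar dispersive bounds already in hand, and then to explain the structural reason why this shortcut cannot reach $\langle t \rangle^{-\3}$. The key is that in Lemma \ref{pillet_computation} the convolution has been reduced to a product of just two factors, each of which is separable in the $x_2$ variable.

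First I would exploit the fact that in each bracketed factor the $x_2$-Laplacian commutes with every other term of the Hamiltonian: $V\otimes 1$ involves only $x_1$ and $y$, $A$ involves only $y$, and $-\Delta_{x_1}$ involves only $x_1$. Hence the evolution splits as a tensor product,
$$
e^{it(-\Delta_{x_1}+\Delta_{x_2}+iA+V\otimes 1)} = e^{it\Delta_{x_2}} \cdot e^{it(-\Delta_{x_1}+iA+V)},
$$
and analogously with $-1\otimes V$ in place of $V\otimes 1$. Since the weights $v_1(x_1,y), v_2(x_1,y)$ only touch the $x_1, y$ slot, the whole bracketed operator becomes $[v_2 e^{it(-\Delta_{x_1}+iA+V)} v_1]\otimes e^{it\Delta_{x_2}}$, reducing the question to a scalar problem already covered by Lemma \ref{compl_cont_1var}. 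Lemma \ref{compl_cont_1var} supplies for each scalar factor a pointwise-in-$t$ bound of size $\langle t \rangle^{-\3/2}$ in $[L^\infty_{y_2}L^1_{y_1}\cap L^\infty_{y_1}L^1_{y_2}]\B(L^2_{x_1})$, while $e^{it\Delta_{x_2}}$ is an isometry on $L^2_{x_2}$. Tensoring, each factor in Lemma \ref{pillet_computation} belongs to $\langle t\rangle^{-\3/2}L^\infty_t[L^\infty_{y_2}L^1_{y_1}\cap L^\infty_{y_1}L^1_{y_2}][\B(L^2_{x_1})\otimes \B(L^2_{x_2})]$.

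Second, this space is a Banach algebra under convolution in $t$ and composition in $y,x$: the convolution identity $\langle \cdot\rangle^{-\3/2}\ast\langle\cdot\rangle^{-\3/2}\les\langle\cdot\rangle^{-\3/2}$ holds since $\3\geq 3$ gives $\3/2>1$, and the kernel class $[L^\infty_{y_2}L^1_{y_1}\cap L^\infty_{y_1}L^1_{y_2}]$ is closed under composition. Multiplying out the two factors in Lemma \ref{pillet_computation} and absorbing the stray $\one$ yields \ref{weak_est}. No appeal to the initial/final segments $U_1, U_2$, to the operator space $\mc X_t$ of Definition \ref{defw}, or to Wiener's Theorem \ref{thm:Wiener} is required; only the scalar result of Proposition \ref{prop26}/Lemma \ref{compl_cont_1var} is used, which is why \ref{weak_est} is genuinely easier than \ref{point}.

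Finally, to see that this route cannot reach $\langle t\rangle^{-\3}$, observe that the $x_2$ factor in each bracketed piece is precisely $e^{it\Delta_{x_2}}$, unflanked by any localizing weights; as an element of $\B(L^2_{x_2})$ it is unitary and contributes no temporal decay whatsoever. All $t$-decay is therefore harvested from the scalar $x_1$ piece, whose best rate is $\langle t \rangle^{-\3/2}$, and convolution of two such factors cannot beat that rate. The genuine $\langle t\rangle^{-\3}$ of \ref{point} is a product-type bound requiring dispersion in both $x_1$ and $x_2$ simultaneously, which forces the $x_2$-evolution to be sandwiched between $L^{\3,1}_{x_2}$-type weights at both ends. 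That is exactly the role played by $U_1$ of \ref{U1} and $U_2$ of \ref{U2} in the main argument, and explains why the clean splitting of Lemma \ref{pillet_computation} on its own cannot produce the optimal rate. The main obstacle in upgrading the approach is thus not analytic difficulty but a structural deficit: Lemma \ref{pillet_computation} collapses the four-factor convolution into two factors at the cost of losing the symmetric weighting in $x_2$ that is necessary to activate $x_2$-dispersion.
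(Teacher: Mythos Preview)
Your overall strategy and your explanation of why this route caps out at $\langle t\rangle^{-\3/2}$ are correct and match the paper's reasoning. There is, however, a genuine slip in the treatment of the \emph{second} bracketed factor in Lemma~\ref{pillet_computation}.

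For the first factor, your splitting is fine: $\Delta_{x_2}$ commutes with $-\Delta_{x_1}+iA+V\otimes 1$, so
\[
v_2(x_1,y)\,e^{it(-\Delta_{x_1}+\Delta_{x_2}+iA+V\otimes 1)}\,v_1(x_1,y)
=\bigl[v_2\,e^{it(-\Delta_{x_1}+iA+V)}\,v_1\bigr]\otimes e^{it\Delta_{x_2}},
\]
and Lemma~\ref{compl_cont_1var} applies. But for the second factor the Hamiltonian is $-\Delta_{x_1}+\Delta_{x_2}+iA-1\otimes V$, and here $\Delta_{x_2}$ does \emph{not} commute with $1\otimes V=V(x_2,y)$. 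So the ``analogous'' split you invoke fails, and Lemma~\ref{compl_cont_1var} is not directly applicable: the weights $v_1(x_1,y),\,v_2(x_1,y)$ sit in $x_1$, while the potential sits in $x_2$, so there is no scalar perturbed evolution in $x_1$ to appeal to.

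The paper fixes this by splitting off the variable that \emph{does} commute, namely $-\Delta_{x_1}$: one writes the second factor as the product of $v_2(x_1,y)\,e^{-it\Delta_{x_1}}\,v_1(x_1,y)$, which gives the $\langle t\rangle^{-\3/2}$ decay via~(\ref{est1}), and $e^{it(\Delta_{x_2}+iA-1\otimes V)}$, which is only shown to be \emph{bounded} in the $[L^\infty_{y_2}L^1_{y_1}\cap L^\infty_{y_1}L^1_{y_2}]\,\B(L^2_{x_2})$ norm via the probabilistic identity~(\ref{est2}). Note this is not a tensor product (the $y$-variable is shared), but the kernel estimate still goes through. Once you make this correction, your argument and the paper's coincide, and your discussion of why the $x_2$-dispersion is wasted---because the $x_2$-evolution is never flanked by localizing weights---remains exactly the right explanation for the $\langle t\rangle^{-\3/2}$ ceiling.
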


\begin{proof}
By Duhamel's identity and Proposition (\ref{prop26}),
$$
(\one + iT_{22})^{-1} = \one - iv_2(x_2, y)e^{it(-\Delta_{x_1}+\Delta_{x_2}+iA-1\otimes V)}v_1(x_2, y).
$$
Hence we get that
$$\begin{aligned}
&iT_{12} \ast (\one + iT_{22})^{-1} \ast iT_{21} = \\
&= iv_2(x_1, y) e^{it(-\Delta_{x_1}+\Delta_{x_2}+iA)} v_1(x_2, y) \big(\one - iv_2(x_2, y)e^{it(-\Delta_{x_1}+\Delta_{x_2}+iA-1\otimes V)} v_1(x_2, y)\big) \\
&iv_2(x_2, y) e^{it(-\Delta_{x_1}+\Delta_{x_2}+iA)} v_1(x_1, y)\\
&= iv_2(x_1, y) (e^{it(-\Delta_{x_1}+\Delta_{x_2}+iA)} - e^{it(-\Delta_{x_1}+\Delta_{x_2}+iA-1\otimes V)}) v_1(x_1, y) \\
&= iT_{11} - iv_2(x_1, y) e^{it(-\Delta_{x_1}+\Delta_{x_2}+iA-1\otimes V)} v_1(x_1, y).
\end{aligned}$$
Again, this follows by Duhamel's formula, which guarantees that the third row equals the second row.

Applying $(\one + iT_{11})^{-1}$ to the left, we obtain
$$\begin{aligned}
iS_1 \ast iS_2 &= (\one + iT_{11})^{-1} \ast iT_{11} - (\one + iT_{11})^{-1} \ast iv_2(x_1, y) e^{it(-\Delta_{x_1}+\Delta_{x_2}+iA-1\otimes V)} v_1(x_1, y) \\
&= \one - (\one + iT_{11})^{-1} - (\one + iT_{11})^{-1} \ast iv_2(x_1, y) e^{it(-\Delta_{x_1}+\Delta_{x_2}+iA-1\otimes V)} v_1(x_1, y),
\end{aligned}$$
which gives the desired result.

The first factor is bounded due to Proposition \ref{prop26}, as is the second factor because
\be\lb{est1}
v_1(x_1, y) e^{-it\Delta_{x_1}} v_2(x_1, y) \in \langle t \rangle^{-\3/2} L^\infty_t L^\infty_{y_1, y_2} \B(L^2)
\ee
and
$$
e^{it(\Delta_{x_2}+iA-1 \otimes V)} \in L^\infty_t [L^\infty_{y_2} L^1_{y_1} \cap L^\infty_{y_1} L^1_{y_2}] \B(L^2).
$$
The second inequality admits a simple probabilistic proof. Indeed, $e^{it(\Delta_{x_2}+iA-1 \otimes V)}$ has a kernel given by
\be\lb{est2}\begin{aligned}
[e^{it(\Delta_{x_2}+iA-1 \otimes V)}(y_1, y_2)] \psi_0 = &\E(\psi_\omega(t) \mid \psi_\omega(0) = \psi_0,\ \omega(0)=y_2,\ \omega(t) = y_1) \cdot\\
\cdot &\P(\omega(0)=y_2 \mid \omega(t) = y_1).
\end{aligned}\ee
But the second factor is exactly
$$
\P(\omega(0)=y_2 \mid \omega(t) = y_1) = e^{-tA}(y_1, y_2) \in [L^\infty_{y_2} L^1_{y_1} \cap L^\infty_{y_1} L^1_{y_2}],
$$
while the first factor is uniformly $L^2$-bounded, because the evolution is always unitary.

The computation for the resolvent is simply the Fourier transformed version of the one above, with the resolvent identity and symmetric resolvent identity replacing Duhamel's formula.
\end{proof}

As an aside, this formula for $\widehat{S_1 \ast S_2}(\lambda)$ still evidences its lack of compactness. Indeed, while the product of both factors
$$
[v_2(x_1, y) R^{iA}_{V \otimes 1} v_1(x_1, y)][I+v_2(x_1, y)R^{iA}_{-1\otimes V}(\lambda-i0)v_1(x, y)]
$$
is compact, each of them by itself is not compact, and they both appear in the formula outside their product as well. However, now both factors are $L^p_y \frak S_1$-bounded for $1 \leq p \leq 2$, so equation (\ref{impossible}) below makes sense.

We now prove the spectral condition, analogous to Lemma \ref{invertibility}, that implies the absence of bound states in the lower half-plane for the Liouville-type averaged equation (\ref{average_liouville}).

\begin{lemma}\lb{invertibility_liouville} Suppose that $V \in C_y (L^1_x \cap L^\infty_x)$ is real-valued, $\Im \lambda \leq 0$, and $f \in L^p_y \frak S_1$, $1 \leq p \leq 2$, is a solution to the equation
\be\lb{impossible}\begin{aligned}
f &= i\widehat S_1(\lambda) i \widehat S_2(\lambda) f = (I + i\widehat T_{11}(\lambda-i0))^{-1} i\widehat T_{12}(\lambda-i0) (I + i\widehat T_{22}(\lambda-i0))^{-1} i\widehat T_{21}(\lambda-i0) f.
\end{aligned}\ee
Then $f=0$.
\end{lemma}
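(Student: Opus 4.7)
The plan is to mirror the Agmon-style argument of Lemma~\ref{invertibility}, lifted to the matrix Kato--Birman operator governing the averaged Liouville equation. First, I would set
$$
\tilde g := -(I+i\widehat T_{22}(\lambda))^{-1}\,i\widehat T_{21}(\lambda) f,
$$
which lies in $L^p_y\mathfrak S_1$ since $I+i\widehat T_{22}$ is invertible by the argument of Lemma~\ref{invertibility} (applied with the scalar potential $-V(x_2,y)$ acting on $x_2$). A direct manipulation of (\ref{impossible}) then gives $(I+V_2 R^{iA}(\lambda) V_1)\Psi = 0$ for $\Psi:=(f,\tilde g)^T$. Setting $\Phi := R^{iA}(\lambda)V_1\Psi$, one obtains $\Psi=-V_2\Phi$, whence $V_1\Psi=(V(x_2,y)-V(x_1,y))\Phi$, and $\Phi$ solves the homogeneous averaged Liouville equation
$$
\bigl(-\Delta_{x_1}+\Delta_{x_2}+iA_y+V(x_1,y)-V(x_2,y)-\lambda\bigr)\Phi = 0.
$$

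Second, I would bootstrap $\Psi$ from $L^p_y\mathfrak S_1$ up to $L^2_y\mathfrak S_1$ exactly as in the scalar case: splitting the time-side representation of $R^{iA}(\lambda)$ into the small piece coming from $\eta(t/\epsilon)T(t)$ and the $y$-smoothing piece coming from $(1-\eta(t/\epsilon))T(t)$ (which benefits from the $L^p$-improving action of $e^{-tA}$ for $t\ge\epsilon$ supplied by C3), finitely many iterations raise the $y$-integrability to $p=2$. Once $\Psi\in L^2_y\mathfrak S_1$, I would pair $\Psi=-V_2R^{iA}(\lambda)V_1\Psi$ with $D\Psi$, using the real, self-adjoint matrix $D:=\operatorname{diag}(\sgn V(x_1,y),-\sgn V(x_2,y))$, which was designed so that $V_2^T D=V_1$. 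The pairing $\langle\Psi,D\Psi\rangle=\int(\sgn V(x_1,y)|f|^2-\sgn V(x_2,y)|\tilde g|^2)\,dx_1\,dx_2\,dy$ is real-valued, so the identity $\langle\Psi,D\Psi\rangle=-\langle R^{iA}(\lambda-i0)V_1\Psi,V_1\Psi\rangle$ forces
$$
\Im\langle R^{iA}(\lambda-i0)V_1\Psi,V_1\Psi\rangle=0.
$$

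Third, decomposing $A=P_0\oplus P_{>0}$ as in (\ref{spectral_projection}), the $P_{>0}$ summand contributes a coercive positive quantity (since $AP_{\ge\epsilon}\ge\epsilon P_{\ge\epsilon}$ for every $\epsilon>0$) and the $P_0$ summand reduces to the free Liouville spectral density applied to the ground-state component; both being nonnegative and summing to zero, each must vanish. Coercivity yields $P_{>0}\Phi=0$, so $\Phi(x_1,x_2,y)=G(x_1,x_2)h(y)$ for some $G\in L^2_{x_1,x_2}$. Substituting $\Phi=Gh$ and dividing by $h(y)>0$ on $\supp h$ yields
$$
\bigl(-\Delta_{x_1}+\Delta_{x_2}+V(x_1,y)-V(x_2,y)-\lambda\bigr)G(x_1,x_2)=0 \quad\text{for a.e.\ } y\in\supp h.
$$
Evaluating at the two points $y_1,y_2\in\supp h$ supplied by the second clause of Assumption~\ref{nontrivial} and subtracting forces $G\equiv 0$ a.e.\ on $\mathcal O'\times\R^\3$.

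The main obstacle is propagating this zero set to all of $\R^{2\3}$: the operator $-\Delta_{x_1}+\Delta_{x_2}$ is not elliptic in $(x_1,x_2)$, so classical unique continuation does not apply directly. I would circumvent this by fixing $x_2$ and reading the identity as the elliptic equation
$$
\bigl(-\Delta_{x_1}+V(x_1,y)-V(x_2,y)-\lambda\bigr)G(\cdot,x_2) = -\Delta_{x_2} G(\cdot,x_2)\quad\text{in } x_1,
$$
whose potential is bounded and whose right-hand side also vanishes on $\mathcal O'$ (since $G\equiv 0$ on $\mathcal O'\times\R^\3$, so does $\Delta_{x_2} G$). Strong unique continuation \cite{kochtataru} then forces $G(\cdot,x_2)\equiv 0$ for a.e.\ $x_2$, whence $\Phi=0$, $\Psi=-V_2\Phi=0$, and in particular $f=0$.
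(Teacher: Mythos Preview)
Your overall strategy---working directly with the $2\times 2$ matrix Kato--Birman operator $I+V_2R^{iA}(\lambda)V_1$ rather than invoking Pillet's algebraic identity (Lemma~\ref{pillet_computation}) to collapse everything to the single equation $f=-v_2(x_1,y)R^{iA}_{-1\otimes V}(\lambda)v_1(x_1,y)f$---is a legitimate alternative route. The paper's reduction has the advantage that the pairing step uses only the scalar sign $\sgn V(x_1,y)$ and the bootstrap in $y$ exploits the probabilistic representation (\ref{est2}) of $e^{it(\Delta_{x_2}+iA-1\otimes V)}$, but your matrix pairing with $D=\operatorname{diag}(\sgn V(x_1,y),-\sgn V(x_2,y))$ and the free-resolvent smoothing splitting achieve the same goals. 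Both routes land on the same conclusion: $\Phi=G(x_1,x_2)h(y)$ with $G$ a distributional solution of $(-\Delta_{x_1}+\Delta_{x_2}+V(x_1,y)-V(x_2,y)-\lambda)G=0$ for a.e.\ $y\in\supp h$, vanishing on $\mathcal O'\times\R^\3$.

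The genuine gap is your unique continuation step. Fixing $x_2$ and rewriting the equation as
\[
\bigl(-\Delta_{x_1}+V(x_1,y)-V(x_2,y)-\lambda\bigr)G(\cdot,x_2)=-\Delta_{x_2}G(\cdot,x_2)
\]
gives an \emph{inhomogeneous} elliptic equation in $x_1$. Elliptic unique continuation (Carleman, Koch--Tataru) applies to homogeneous equations $-\Delta u+Wu=0$, or more generally to differential inequalities $|\Delta u|\le C|u|$; it says nothing about $-\Delta u+Wu=F$ with a source $F$ that is not pointwise controlled by $|u|$. That $F=-\Delta_{x_2}G$ happens to vanish on $\mathcal O'$ only makes the equation consistent there; outside $\mathcal O'$ there is no mechanism forcing $u$ to stay zero. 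The operator $-\Delta_{x_1}+\Delta_{x_2}$ is ultrahyperbolic, and unique continuation for it with a bounded potential is a separate, nontrivial result---the paper does not attempt an ad hoc argument but invokes Lemma~5.6 of \cite{pillet} at exactly this point. You should either cite that result or supply a genuine unique continuation argument for the ultrahyperbolic operator; the slice-by-slice reduction to the elliptic case does not work.
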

\begin{proof} 
By Lemma \ref{pillet_computation}, equation (\ref{impossible}) becomes
\be\lb{aaa}\begin{aligned}
0 = (I + \widehat T_{11}(\lambda-i0))^{-1} (I + v_2(x_1, y) R_{-1\otimes V}^{iA}(\lambda-i0) v_1(x_1, y)) f.
\end{aligned}\ee
Applying $I + i\widehat T_{11}(\lambda-i0)$ to both sides of (\ref{aaa}), we obtain
$$
f = -v_2(x_1, y) R_{-1\otimes V}^{iA}(\lambda-i0) v_1(x_1, y) f.
$$

If $f \in L^2_y \frak S_1 \subset L^2_{y, x_1, x_2}$, the pairing
$$
\langle \sgn V f, f \rangle = \langle v_1(x_1, y) f, R_{-1\otimes V}^{iA}(\lambda-i0) v_1(x_1, y) f \rangle
$$
is finite and real-valued. Then for some $c>0$
$$\begin{aligned}
0&=-\Im \langle R_{-1\otimes V}^{iA}(\lambda-i0) v_1(x_1, y) f, v_1(x_1, y) f \rangle \\
&= c \langle E_\lambda(-\Delta_{x_2}+V(x_2, y)) v_1(x_1, y) f, v_1(x_1, y) f \rangle +\\
&+ \langle A P_{>0} R_{-1 \otimes V}^{iA}(\lambda-i0) v_1(x_1, y) f, P_{>0} R_{-1 \otimes V}^{iA}(\lambda) v_1(x_1, y) f \rangle.
\end{aligned}$$
See (\ref{spectral_projection}) for the proof. Here $E_\lambda(-\Delta_{x_2}+V(x_2, y))$ is the spectral density of $-\Delta_{x_2}+V(x_2, y)$.

Then let
$$
g=R_{-1\otimes V}^{iA}(\lambda-i0) v_1(x_1, y) f.
$$

Since for any $\epsilon>0$ $P_{\geq \epsilon} A$ is a strictly positive operator, we obtain that $P_{\geq \epsilon} g = 0$, so $P_{>0}g=0$ and $g$ must be of the form $g=g(x_1, x_2)h(y)$, where $h$ is the ground state of $A$. Next, $g$ is a distributional solution to the equation
$$
(-\Delta_{x_1} + \Delta_{x_2} + V(x_1, y) - V(x_2, y) - \lambda) g(x_1, x_2) h(y) = 0.
$$
By the nontriviality Assumption \ref{nontrivial} it follows that there exists some open set $\mathcal O'$ such that $g(x_1, x_2)=0$ for all $(x_1, x_2) \in \R^\3 \times \mathcal O'$. By unique continuation (see Lemma 5.6 of \cite{pillet}) it follows that $g=0$. Therefore $f=0$ and the proof is concluded in this case.

If $f \in L^1_y \frak S_1$, then, as in the proof of Lemma \ref{invertibility}, we need to bootstrap from $L^1_y$ to $L^2_y$. It suffices to prove that there exists some $\epsilon>0$ such that, whenever $1/p - 1/{\tilde p} = \epsilon$, $f \in L^p_y \frak S_1$ implies $f \in L^{\tilde p}_y \frak S_1$.

We represent $R^{iA}_{-1\otimes V}(\lambda-i0)$ as the Fourier transform of
$$
i\chi_{t>0}(t) e^{it(-\Delta_{x_1}+\Delta_{x_2}+iA-1\otimes V)},
$$
which we decompose into two components,
$$
i\chi_{t>0}(t) \eta(t/\epsilon) e^{it(-\Delta_{x_1}+\Delta_{x_2}+iA-1\otimes V)} + i\chi_{t>0}(t) (1-\eta(t/\epsilon)) e^{it(-\Delta_{x_1}+\Delta_{x_2}+iA-1\otimes V)}.
$$
The second component is smoothing in the $y$ variable, taking $L^p$ to $L^{\tilde p}$, due to condition C3 on $e^{-tA}$, together with (\ref{est1})  and (\ref{est2}):
$$
\|v_2(x_1, y) i\chi_{t>0}(t) (1-\eta(t/\epsilon)) e^{it(-\Delta_{x_1}+\Delta_{x_2}+iA-1\otimes V)} v_1(x_1, y) f\|_{L^1_t L^{\tilde p}_y \frak S_1} \les C(\epsilon) \|f\|_{L^p_y \frak S_1},
$$
so the same is true for its Fourier transform:
$$
\sup_{\Im \lambda \leq 0}\|v_2(x_1, y) (i\chi_{t>0}(t) (1-\eta(t/\epsilon)) e^{it(-\Delta_{x_1}+\Delta_{x_2}+iA-1 \otimes V)})^\wedge(\lambda) v_1(x_1, y)\|_{\B(L^p_y \frak S_1, L^{\tilde p}_y \frak S_1)} \leq C(\epsilon).
$$
The first piece is not smoothing, but it and its Fourier transform become small in norm as $\epsilon \to 0$:
$$\begin{aligned}
\lim_{\epsilon \to 0} \sup_{\Im \lambda \leq 0} \|v_2(x_1, y) (i\chi_{t>0}(t) \eta(t/\epsilon) e^{it(-\Delta_{x_1}+\Delta_{x_2}+iA-1 \otimes V)})^\wedge(\lambda) v_1(x_1, y)\|_{\B(L^{\tilde p}_y \frak S_1)} = 0.
\end{aligned}$$
This is again a consequence of (\ref{est1}) and (\ref{est2}).

Consequently, for small $\epsilon>0$
$$\begin{aligned}
f = -(I - (v_2 i\chi_{t>0}(t) \eta(t/\epsilon) e^{it(-\Delta_{x_1}+\Delta_{x_2}+iA_y)})^\wedge(\lambda) v_1)^{-1} \\
(i\chi_{t>0}(t) (1-\eta(t/\epsilon)) v_2 e^{it(-\Delta_{x_1}+\Delta_{x_2}+iA_y)} v_1)^\wedge(\lambda) f,
\end{aligned}$$
where the inverse is given by a geometric series for small $\epsilon$.

After a finite number of such bootstrapping steps, $f \in L^2_y \frak S_1$ and we can continue as above.
\end{proof}

We can now prove the invertibility of $I+\widehat{S_1 \ast S_2}(\lambda)$ in the desired setting, $U_1 X_0 U_2$. Recall that $X_0 \subset \frak X_0$ by the discussion after Definition \ref{defxtilde}.
\begin{lemma}\lb{bdd_inverse} For $\Im \lambda \leq 0$, $I+\widehat{S_1 \ast S_2}(\lambda)$ is invertible and $(I+\widehat{S_1 \ast S_2}(\lambda))^{-1} - I \in U_1 X_0 U_2$ is norm-continuous and its norm goes to zero as $\lambda$ goes to infinity. Therefore
$$
\sup_{\Im \lambda \leq 0} \|(I+\widehat{S_1 \ast S_2}(\lambda))^{-1}-I\|_{U_1 X_0 U_2} < \infty.
$$
\end{lemma}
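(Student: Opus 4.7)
The plan is to apply the conjugation identity (\ref{inverse_formula}) with $L_1 = U_1$, $L_2 = U_2$, and $B = B_\lambda$, reducing the invertibility of $I + U_1 B_\lambda U_2 = I + \widehat{S_1\ast S_2}(\lambda)$ on $L^1_y \frak S_1$ to the invertibility of $I + B_\lambda U_2 U_1$ in the operator algebra $\widehat X$ defined by (\ref{Xhat}). Once the latter is established, the formula $(I + U_1 B_\lambda U_2)^{-1} - I = -U_1(I + B_\lambda U_2 U_1)^{-1} B_\lambda U_2$ produces an inverse of the advertised form in $U_1 X_0 U_2$ (the trailing $B_\lambda$ puts one $X_0$-factor in the middle), and continuity / vanishing at infinity for the inverse will be inherited from the corresponding properties of $B_\lambda U_2 U_1$ together with those of the Neumann-type expansion for $(I + B_\lambda U_2 U_1)^{-1}$.

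First I would verify that $B_\lambda U_2 U_1 \in \widehat X$ and that $\lambda \mapsto B_\lambda U_2 U_1$ is norm-continuous in $\widehat X$ for $\Im\lambda \leq 0$; this follows from the explicit formulas for $\widehat B_1,\dots,\widehat B_4$ given after Definition \ref{defx} together with the $U_2 U_1$ bound (\ref{u2u1bound}), the same approximation arguments that prove Lemma \ref{completely_cont}, and dominated convergence in the $\lambda$-parameter. By Lemma \ref{completely_cont}, $B_\lambda U_2 U_1$ is completely continuous in $\widehat X$, so Fredholm's alternative (Lemma \ref{fredholm}) applies: either $(I + B_\lambda U_2 U_1)^{-1}$ exists in $I + \widehat X$, or there is a nonzero $h \in L^1_{y_1} L^\infty_{\tilde y}(\ldots)$ with $h = -B_\lambda U_2 U_1 h$.

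Next I would rule out the second alternative. Setting $f := U_1 h$, the equation $h = -B_\lambda U_2 U_1 h$ yields, upon applying $U_1$ on the left, that $f = -U_1 B_\lambda U_2 f = -\widehat{S_1\ast S_2}(\lambda) f$, which is exactly (\ref{impossible}). The mapping properties of $U_1$ place $f$ in a space in which, after the bootstrap from $L^1_y \frak S_1$ to $L^2_y \frak S_1$ described in the proof of Lemma \ref{invertibility_liouville}, the spectral identity used there forces $f = 0$, and hence $h = -B_\lambda U_2 h = 0$ as well. Thus $I + B_\lambda U_2 U_1$ is invertible for every $\Im \lambda \leq 0$, and by Fredholm's alternative together with the subalgebra property of $\widehat X$ inside $\B(L^1_{y_1} L^\infty_{\tilde y} \ldots)$, $(I + B_\lambda U_2 U_1)^{-1} - I \in \widehat X$.

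Finally, plugging back into (\ref{inverse_formula}) gives $(I + \widehat{S_1\ast S_2}(\lambda))^{-1} - I = -U_1 (I + B_\lambda U_2 U_1)^{-1} B_\lambda U_2$, whose middle factor lies in $X_0$ uniformly for $\Im\lambda \leq 0$; norm-continuity of $\lambda \mapsto (I+\widehat{S_1\ast S_2}(\lambda))^{-1}$ in $I + U_1 X_0 U_2$ follows from norm-continuity of $\lambda \mapsto B_\lambda U_2 U_1$ in $\widehat X$ and the standard perturbative formula for the Fredholm inverse. The decay as $\lambda\to\infty$ follows from (\ref{goesto0}): since $\|B_\lambda U_2 U_1\|_{\widehat X} \to 0$, for $|\lambda|$ large the inverse is given by a convergent Neumann series $I - B_\lambda U_2 U_1 + (B_\lambda U_2 U_1)^2 - \cdots$, whose non-identity part tends to zero in $\widehat X$; multiplying back by $B_\lambda U_2$ and using the $X_0$-boundedness of $B_\lambda$ gives the claimed vanishing, hence the supremum bound. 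The main technical obstacle is checking that the Fredholm kernel element $h$ produces, via $f = U_1 h$, an element $f$ that is genuinely in $L^p_y \frak S_1$ for some $p \in [1,2]$ so that Lemma \ref{invertibility_liouville} applies; this uses the trace-class mapping bound $U_1 : L^1_{y_1} L^1_y (L^2_{x_1} \otimes L^2_{s_1} L^2_{x_2}) \to L^1_{y_1} \frak S_1$ together with the fact that $h = -B_\lambda U_2 U_1 h$ automatically improves the spatial integrability of $h$ through the weights hidden in $B_\lambda$.
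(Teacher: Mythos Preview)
Your overall strategy matches the paper's: apply the conjugation identity (\ref{inverse_formula}) with $L_1=U_1$, $L_2=U_2$, $B=B_\lambda$; use Lemma \ref{completely_cont} and Fredholm's alternative to invert $I+B_\lambda U_2 U_1$; rule out the kernel by pushing any solution through $U_1$ and invoking Lemma \ref{invertibility_liouville}; then read off the form of the inverse and the vanishing at infinity from (\ref{goesto0}). That skeleton is correct.

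The genuine gap is your claim that ``by Fredholm's alternative together with the subalgebra property of $\widehat X$ inside $\B(L^1_{y_1} L^\infty_{\tilde y} \ldots)$'' one gets $(I+B_\lambda U_2 U_1)^{-1}-I\in\widehat X$. The issue is the $s$-variable, which you suppress. To use the last clause of Lemma \ref{fredholm} you need property (\ref{proper}) for $\widehat X$ inside some ambient $\B(Z)$, and the natural choice $Z=\langle s\rangle^{-\3/2}L^\infty_s L^\infty_{y_1}L^1_y\frak S_1$ \emph{fails}: the paper records this explicitly as the false statement (\ref{false}), because $\|\langle s_1+s_2\rangle^{-\3/2}\|_{\langle s_1\rangle^{-\3/2}L^\infty_{s_1}}$ has no decay in $s_2$. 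The paper's fix is to run the Fredholm alternative a second time in $L^1_s L^\infty_{y_1}L^1_y\frak S_1$ (and $L^\infty_s$), then use Lemma \ref{pro} to show that $\chi_{s_1,s_2\geq 0}\langle s_1+s_2\rangle^{-\3/2}L^\infty_{s_1,s_2}$ has property (\ref{proper}) inside $\B(L^1_s)\cap\B(L^\infty_s)$; combining this with the analogous check in the $y$-kernel variables yields (\ref{proper}) for $\widehat X$ in the right ambient space and hence $(I+B_\lambda U_2U_1)^{-1}-I\in\widehat X$. Without this step your argument does not place the inverse in $\widehat X$, and the subsequent inclusion $\widehat X X_0\subset X_0$ (which you implicitly use to land in $U_1 X_0 U_2$) is unavailable.

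A minor point: in the kernel argument you wrote $h=-B_\lambda U_2 h$ where you meant $h=-B_\lambda U_2 U_1 h$; and the ``main technical obstacle'' you flag (that $f=U_1 h$ lands in $L^1_y\frak S_1$) is handled directly by (\ref{u1est}) and is not where the difficulty lies.
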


\begin{proof}
By virtue of its complete continuity in $\widehat X$, see (\ref{Xhat}), the operator $B_\lambda U_2 U_1$ is bounded and compact on $\langle s \rangle^{-\3/2} L^\infty_s L^\infty_{y_1} L^1_y \frak S_1$:
$$
\widehat X \subset \B(\langle s \rangle^{-\3/2} L^\infty_s L^\infty_{y_1} L^1_y \frak S_1).
$$

Next, we examine $(I + B_\lambda U_2 U_1)^{-1}$ in these settings. By Fredholm's alternative, Lemma \ref{fredholm}, either $I + B_\lambda U_2 U_1$ is invertible or else the equation
\be\lb{EQF}
F = - B_\lambda U_2 U_1 F
\ee
must have a nonzero solution $F \ne 0$ in $\langle s \rangle^{-\3/2} L^\infty_s L^1_{y_1} L^1_y \frak S_1$.
Then $U_1 F$ will be in $L^1_{y_1} \frak S_1$ by (\ref{u1est}) and solve the equation
$$
U_1 F = - (U_1 B_\lambda U_2) U_1 F.
$$
Since $U_1 B_\lambda U_2 = \widehat{S_1 \ast S_2}(\lambda)$, by Lemma \ref{invertibility_liouville} this implies that $U_1 F = 0$. But then by (\ref{EQF}) $F=0$, which is a contradiction, so $I + B_\lambda U_2 U_1$ must be invertible in $\B(\langle s \rangle^{-\3/2} L^\infty_s L^1_{y_1} L^1_y \frak S_1)$:
$$
(I + B_\lambda U_2 U_1)^{-1} \in \B(\langle s \rangle^{-\3/2} L^\infty_s L^\infty_{y_1} L^1_y \frak S_1).
$$

We next need to show that the inverse is in $\widehat X$. This will follow if the desired subalgebra (eventually $\widehat X$, but we have to start with others) has property (\ref{proper}). First, note that the following statement is false:
\be\lb{false}
[\chi_{s_1, s_2 \geq 0} \langle s_1 + s_2 \rangle^{-\3/2} L^\infty_{s_1, s_2}] \B(\langle s \rangle^{-\3/2} L^\infty_s) [\chi_{s_1, s_2 \geq 0}  \langle s_1 + s_2 \rangle^{-\3/2} L^\infty_{s_1, s_2}] \subset \chi_{s_1, s_2 \geq 0} \langle s_1 + s_2 \rangle^{-\3/2} L^\infty_{s_1, s_2},
\ee
because $\|\langle s_1 + s_2\rangle^{-\3/2}\|_{\langle s_1 \rangle^{-\3/2} L^\infty_{s_1}}$ has no decay as $s_2 \to \infty$; it is always $1$.

Instead, one can show that
$$
(\ref{false}) \subset \B(L^\infty_s),
$$
which implies that
$$
(I + B_\lambda U_2 U_1)^{-1} \in \B(L^\infty_s L^1_{y_1} L^1_y \frak S_1).
$$

However, it is easy to check that replacing $\B(\langle s \rangle^{-\3/2} L^\infty_s)$ by $\B(L^1_s) \cap \B(L^\infty_s)$ works, i.e.\;the following statement is true, as per Lemma \ref{pro}:
\be\lb{true}
[\chi_{s_1, s_2 \geq 0} \langle s_1 + s_2 \rangle^{-\3/2} L^\infty_{s_1, s_2}] [\B(L^1_s) \cap \B(L^\infty_s)] [\chi_{s_1, s_2 \geq 0}  \langle s_1 + s_2 \rangle^{-\3/2} L^\infty_{s_1, s_2}] \subset \chi_{s_1, s_2 \geq 0} \langle s_1 + s_2 \rangle^{-\3/2} L^\infty_{s_1, s_2}.
\ee

More generally, an integral kernel in $\langle s_1 + s_2 \rangle^{-\3/2} L^\infty_s$ is bounded on $L^p_s$, $1 \leq p \leq \infty$, as well as on $\langle s \rangle^\alpha L^{p, \infty}_s$, $1<p\leq\infty$, $|\alpha| \leq \3/2-1/p$.

Hence, in particular, $B_\lambda U_2 U_1$ also acts completely continuously on $L^1_s L^\infty_{y_1} L^1_y \frak S_1$. Then, using the Fredholm alternative as above leads to
$$
(I + B_\lambda U_2 U_1)^{-1} \in \B(L^1_s L^\infty_{y_1} L^1_y \frak S_1).
$$

Consequently, due to (\ref{true}),
$$
(I + B_\lambda U_2 U_1)^{-1} - I \in \chi_{s_1, s_2 \geq 0} \langle s_1+s_2 \rangle^{-\3/2} L^\infty_{s_1, s_2} \B(L^\infty_{y_1} L^1_y \frak S_1).
$$

Note that the operator subalgebra of kernels $L^\infty_{y_1} L^1_y L^1_{y_2} L^\infty_{\tilde y} \subset \B(L^\infty_{y_1} L^1_y)$ also has property (\ref{proper}):
$$
[L^\infty_{y_1} L^1_y L^1_{y_2} L^\infty_{\tilde y}] \B(L^\infty_{y_1} L^1_y) [L^\infty_{y_1} L^1_y L^1_{y_2} L^\infty_{\tilde y}] \subset L^\infty_{y_1} L^1_y L^1_{y_2} L^\infty_{\tilde y}.
$$

Together, these two facts imply that $\widehat X$ has property (\ref{proper}) as a subalgebra of $\B(L^1_s L^1_{y_1} L^1_y \frak S_1) \cap \B(L^\infty_s L^1_{y_1} L^1_y \frak S_1)$, so
$$
(I + B_\lambda U_2 U_1)^{-1} - I \in \chi_{s_1, s_2 \geq 0} \langle s_1+s_2 \rangle^{-\3/2} L^\infty_{s_1, s_2} [L^\infty_{y_1} L^1_y L^1_{y_2} L^\infty_{\tilde y}] \B(\frak S_1) = \widehat X.
$$


So the inverse is also in $\widehat X$, as per Fredholm's alternative, Lemma \ref{fredholm}, and is completely continuous in there.

By formula (\ref{inverse_formula}),
$$
(I + \widehat{S_1 \ast S_2}(\lambda))^{-1} - I = - U_1 (I + B_\lambda U_2 U_1)^{-1} B_\lambda U_2 \in U_1 \widehat X X_0 U_2.
$$
This implies the desired conclusion, because $\widehat X X_0 \subset X_0$.

Finally, the norm $\|(I + \widehat{S_1 \ast S_2}(\lambda))^{-1} - I\|_{X_0}$ goes to zero as $\lambda \to \infty$ because the same is true for $\|B_\lambda U_2 U_1\|_{\widehat X}$, as per (\ref{goesto0}).
\end{proof}




\subsubsection{Completing the proof} Continuity under translation (\ref{translation_high}) can be examined in the same context. The expression $S_1 \ast S_2$ is not continuous under translation, because the terms $T_1(t)$ and $T_2(t)$ are given by the singular measure $\delta_{t-s_1}(s_2)$ for each $t$, so are not comparable for different values of $t$.

However, it suffices to prove the continuity under translation of the higher powers, which are in $W \subset \W$, hence non-singular.

If $T \in W$, then for each $t \geq 0$
$$
T(t) \in \langle t - (s_1+s_2) \rangle^{-\3/2} L^\infty_{s_1, s_2},
$$
but these norms are comparable for different values of $t$, see (\ref{norm_comp}), and the underlying space is the same. The singular part $\W \setminus W$ is not comparable across different values of $t$, but it is absent from higher powers.

Also recall that $\W \subset {\frak W}$ by the discussion following Definition \ref{defxtilde}.

\begin{lemma}\lb{translat} For $N \geq 4$
$$
\lim_{\epsilon \to 0} \|(S_1 \ast S_2)^N(t+\epsilon) - (S_1 \ast S_2)^N(t)\|_{U_1WU_2} = 0.
$$
\end{lemma}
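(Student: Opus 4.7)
The plan rests on two facts already established in the preceding development. First, the nilpotency $(\W/W)^2 = 0$ of Definition \ref{defwalg} guarantees that $(S_1 \ast S_2)^{*N} \in U_1 W U_2$ as soon as $N \geq 2$, so for $N \geq 4$ I only need to verify translation continuity of an element of $W$. Second, I will invoke the finite-rank approximations of Lemmas \ref{lemma_compact} and \ref{compl_cont_1var}, which express every basic factor $v_2 e^{it(-\Delta+iA)}v_1$ and $v_2 e^{it(-\Delta+V+iA)}v_1$, as well as their $x_2$-variable analogues, as a tensor sum $\sum_{n=1}^{N_0} f_n(t)g_n(y_1)\tilde g_n(y_2)[h_n(x)\otimes \tilde h_n(x)]$ up to error $< \epsilon$ in the weaker $(\frak L_1)_t [L^\infty_{y_2}L^1_{y_1} \cap L^\infty_{y_1}L^1_{y_2}]\B(L^2_x)$ norm, where the time profiles $f_n$ may be taken to be indicators of bounded intervals or tails $\chi_{t \geq R}\langle t\rangle^{-\3/2}$.

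I would then expand $(T_1+T_2+T_3+T_4)^{*N}$ into its $4^N$ convolution products of the form $U_1\,B_{i_1}(U_2U_1) B_{i_2} \cdots (U_2U_1)B_{i_N}\,U_2$, and substitute the tensor-sum approximations into each $B_{i_j}$. The absorption rule $\mathcal{X}_s(U_2U_1)\mathcal{X}_t \subset X_{s+t}$ removes every Dirac-delta singularity carried by the $B_1, B_2$ factors, and what remains, modulo small $(\frak L_1)_t$-norm remainders, is a finite sum of terms whose time dependence is a convolution $f_{n_1}\ast\cdots\ast f_{n_N}$ of simple profiles, multiplied by a fixed operator uniformly bounded in the relevant $X$-type norm. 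The analytic heart is then the chain $\frak L_1 \ast \frak L_1 \subset \langle t\rangle^{-\3/2}L^\infty_t$, combined with the fact that convolving at least two simple functions already yields a function continuous in $t$, and a further pair of convolutions upgrades this to one that is translation-continuous in the weighted sup norm $\langle t\rangle^{-\3/2}L^\infty_t$. With $N \geq 4$ this upgrade is available for every surviving time profile, so
$$
\lim_{\delta \to 0}\,\|f_{n_1}\ast\cdots\ast f_{n_N}(\cdot+\delta) - f_{n_1}\ast\cdots\ast f_{n_N}(\cdot)\|_{\langle t\rangle^{-\3/2}L^\infty_t} = 0,
$$
and summing finitely many such terms against uniformly $X$-bounded spatial operators gives the translation continuity of the finite-rank approximation of $(S_1 \ast S_2)^{*N}$ in $U_1 W U_2$. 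The remainders from approximating each $B_{i_j}$ are controlled by the same convolution estimate and contribute $O(\epsilon)$ to the $U_1 W U_2$ norm of the whole product; choosing $\epsilon$ small before sending $\delta \to 0$ concludes the argument.

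The main obstacle, and the explanation for the threshold $N \geq 4$ rather than $N=2$, is the two-stage role played by the convolutions: one composition $P\ast P$ or $P\ast Q$ is needed merely to move out of the singular subspace $\W \setminus W$ into $W$, while two further convolutions are needed to upgrade the pointwise boundedness supplied by $\frak L_1 \ast \frak L_1 \subset \langle t\rangle^{-\3/2}L^\infty_t$ to genuine continuity in that same weighted sup norm. Since this upgrade must survive in every term of the $4^N$-fold expansion --- including the most singular strings that begin and end with $P$ --- four factors of $S_1 \ast S_2$ are required, and the bookkeeping that tracks which factors supply the nilpotency and which supply the smoothing is the delicate part of the proof.
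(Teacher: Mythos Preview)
Your approach is correct and shares the essential structural insight with the paper's proof --- namely that the singular terms $B_1+B_2$ need one level of convolution to land in $W$, and then a second level to achieve translation continuity, which accounts for the threshold $N\geq 4$. However, the paper argues more directly and avoids the finite-rank detour. Rather than invoking the approximations of Lemmas \ref{lemma_compact} and \ref{compl_cont_1var} and reducing to scalar convolutions $f_{n_1}\ast\cdots\ast f_{n_N}$ of simple time profiles, the paper shows translation continuity of $\chi_{>\epsilon}B_3$ (and similarly $B_4$) by observing that, for bounded compactly supported $v_1,v_2$, the operator $v_2e^{-it\Delta}v_1$ is norm-continuous on bounded $t$-intervals and satisfies the explicit asymptotic
$$
v_2 e^{-it\Delta} v_1 = (4\pi i t)^{-\3/2} v_2\otimes v_1 + O(t^{-(\3+2)/2})
$$
for large $t$; the leading term is manifestly translation-continuous in $\langle t\rangle^{-\3/2}L^\infty_t$ and the remainder is negligible. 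The standard squaring trick then removes the cutoff at zero, and the paper simply asserts that $B_1+B_2$ ``behaves similarly after being squared or multiplied by $B_3$ or $B_4$'', giving $N\geq 4$. Your route via finite-rank approximation would also work, but it is more elaborate and requires the bookkeeping you anticipate; in particular, your framing of the $N\geq 4$ threshold in terms of needing ``two further convolutions'' to upgrade $\frak L_1\ast\frak L_1\subset\langle t\rangle^{-\3/2}L^\infty_t$ to translation continuity slightly obscures the mechanism --- two simple profiles already convolve to something translation-continuous in the weighted norm, and the extra two powers are needed only because the singular $B_1,B_2$ first consume one convolution just to enter $W$ and become ``$B_3$-like'' (i.e., translation-continuous after a small cutoff), after which a second squaring removes that cutoff.
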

\begin{proof} The terms $T_3$ and $T_4$ are continuous under translation, after cutting away an $\epsilon$-length interval near zero, with no need to raise them to higher powers. Indeed, $T_\3$ is given by
$$\begin{aligned}
T_3(t) &= T_{12} \ast [(\one + i T_{22})^{-1} - \one] \ast T_{21} = U_1 B_3(t) U_2,
\end{aligned}$$
where
$$\begin{aligned}
B_3(t)(s_1, s_2) = &e^{-s_1 A}(y_1, y) e^{-s_2 A}(\tilde y, y_2) \\
&[(v_2(x_1, y_1) e^{-it\Delta_{x_1}} v_1(x_1, y_2)) \otimes (v_2(x_2, y) e^{i(t-s_1-s_2)(\Delta_{x_2}+iA)}(y, \tilde y) v_1(x_2, \tilde y))].
\end{aligned}$$

With no loss of generality, it suffices to prove continuity under translation in the case when the potentials $v_1$ and $v_2$ are bounded and compactly supported in $x$.

Then $v_2 e^{-it\Delta} v_1 \in \B(L^2_x)$ is a norm-continuous function of $t$, except near $t=0$. As $t \to \infty$,
$$
v_2 e^{-it\Delta} v_1 = v_2 (4\pi i t)^{-\3/2} v_1 + O(t^{-(\3+2)/2}).
$$
We can neglect the lower-order term for large $t$ and the leading-order term is clearly continuous under translation. For $t$ within a bounded interval, norm continuity must be uniform. This suffices to prove the continuity under translation of $\chi_{> \epsilon} v_2 e^{-it\Delta} v_1$.

Likewise, the continuity under translation of $\chi_{>\epsilon} (\one + i T_{22})^{-1} - \one$ follows from the continuity under translation of the original expression $\chi_{>\epsilon}(t) T_{22}(t)$.

We have thus proved that $\chi_{>\epsilon}(t) B_3(t)$ is continuous under translation. Therefore, higher powers of $B_3$ are continuous without the cutoff.

Same goes for $B_4$. The other term, $B_1+B_2$, behaves similarly after being squared or multiplied by $B_3$ or $B_4$, hence $N \geq 4$ is sufficient.
\end{proof}

We next prove the main trace-class dispersive estimates for the dissipative Liouville-type equation (\ref{average_liouville}), stated in Proposition \ref{prop27}.

\begin{proof}[Proof of Proposition \ref{prop27}]
Recall the main setup:
$$\begin{aligned}
\one + iT &= \bpm \one + iT_{11} & 0 \\ 0 & \one + iT_{22} \epm \bpm \one & i(\one + iT_{11})^{-1} \ast T_{12} \\ i(\one + iT_{22})^{-1} \ast T_{21} & \one \epm \\
& = (\one + iT_{diag}) \ast (\one + i(\one + iT_{diag})^{-1} \ast S_0) \\
&:= (\one + iT_{diag}) \ast (\one + iS),
\end{aligned}$$
where we let
$$\begin{aligned}
S := (\one + iT_{diag})^{-1} \ast S_0 = \bpm 0 & (\one + i T_{11})^{-1} \ast T_{12} \\ (\one + iT_{22})^{-1} \ast T_{21} & 0 \epm.
\end{aligned}$$
Then
\be\lb{Tinverse}\begin{aligned}
(\one + iT)^{-1} = (\one + iS)^{-1} \ast (\one + iT_{diag})^{-1} = (\one+S^2)^{-1} \ast (\one-iS) \ast (\one+iT_{diag})^{-1}.
\end{aligned}\ee
Below we explicitly estimate the last two factors and control the first one by Wiener's theorem.

Let $S_1$ and $S_2$ be the two nonzero (off-diagonal) components of the matrix $S$:
\be\lb{s1s2}
S_1 := (\one + iT_{11})^{-1} \ast T_{12},\ S_2 := (\one + iT_{22})^{-1} \ast T_{21}.
\ee
Therefore
\be\lb{S2}
S^2 = \bpm S_1 \ast S_2 & 0 \\ 0 & S_2 \ast S_1 \epm \text{ and } \widehat S^2(\lambda) = \bpm \widehat S_1(\lambda) \widehat S_2(\lambda) & 0 \\ 0 & \widehat S_2(\lambda) \widehat S_1(\lambda) \epm,
\ee
where $S^2$ is the convolution of $S$ with itself.

The terms $S_1$ and $S_2$ lack two desirable properties: $L^1_y \frak S_1$ norm decay in $t$ and compactness of their Fourier transform for $\lambda \in \R$. Thus we study their square instead, which has both properties.

We can use Wiener's Theorem \ref{thm:Wiener} together with Proposition \ref{subalg} to invert $\one+S^2$ in $U_1 \ov {\frak W}_\beta U_2$, where $\ov{\frak W}_\beta$ is defined in Definition \ref{defxtilde}. Indeed, due to Lemma \ref{translat}, $\one+S^2$ has property (\ref{translation_high}) in $U_1 \ov {\frak W}_\beta U_2$, and the Fourier transform of $\one+S^2$ is invertible in $\frak X_0$ by Lemma \ref{bdd_inverse}. We obtain that
$$
(\one + S_1 \ast S_2)^{-1} - \one \in U_1 \ov {\frak W}_\beta U_2,
$$
with a norm independent of $\beta \in [0, 1]$, because the spaces $\frak W_\beta$ have property (\ref{leib}) with respect to $\frak W_0$.

Duhamel's identity for equation (\ref{average_liouville}) is equivalent to
\be\lb{duhamel_l}\begin{aligned}
&\chi_{t>0} e^{it(-\Delta_{x_1}+\Delta_{x_2}+iA+ V\otimes 1-1\otimes V)} =\chi_{t>0} e^{it(-\Delta_{x_1}+\Delta_{x_2}+iA)} + \\ &+i\int_{t>s_1>s_2>0}T_{0V}(t-s_1) (\one+iT)^{-1}(s_1-s_2) T_{V0}(s_2) \dd s_1 \dd s_2,
\end{aligned}\ee
where
$$
T_{V0}(t) = \bpm T_{10}(t) \\ T_{20}(t) \epm = \chi_{t>0} V_2 e^{it(-\Delta_{x_1}+\Delta_{x_2}+iA)}
$$
and
$$
T_{0V}(t) = \bpm T_{01}(t) & T_{02}(t) \epm = \chi_{t>0} e^{it(-\Delta_{x_1}+\Delta_{x_2}+iA)} V_1.
$$

The only nontrivial factor is $(\one+iT)^{-1}$, given by (\ref{Tinverse}). First consider the inverse of the diagonal component:
$$
(\one+iT_{diag})^{-1} = \bpm (\one + i T_{11})^{-1} & 0 \\ 0 & (\one + iT_{22})^{-1} \epm.
$$
The inverse of the first component has the explicit form
$$\begin{aligned}
(\one+iT_{11})^{-1} &= [\one + i \chi_{t \geq 0} v_2(x_1, y) e^{it(-\Delta_{x_1} + \Delta_{x_2}+ iA)} v_1(x_1, y)]^{-1} \\
&= \one - i \chi_{t \geq 0} v_2(x_1, y) [e^{it(-\Delta_{x_1}+ iA+V \otimes 1)} \otimes e^{it\Delta_{x_2}}] v_1(x_1, y)
\end{aligned}$$
and likewise for $(\one+iT_{22})^{-1}$, by symmetry.

Using the notation (\ref{syx}), by estimate (\ref{kernel_estimate}) from Proposition \ref{prop26}, it follows that
$$
(\one+iT_{11})^{-1}-\one \in \langle t \rangle^{-\3/2} L^\infty_t (L^\infty_{y_1} L^1_{y_2} \cap L^\infty_{y_2} L^1_{y_1}) [\B(L^2) \otimes \C e^{it\Delta}].
$$
Consequently,
$$
(\one+iT_{11})^{-1} U_1 \ov {\frak W}_\beta \subset U_1 \ov {\frak W}_\beta,\ \ov {\frak W}_\beta U_2 (\one+iT_{11})^{-1} \subset \ov {\frak W}_\beta U_2,
$$
where the free Schr\"{o}dinger evolution in the $x_2$ variable just gets appended to $U_1$ or $U_2$, and likewise, see (\ref{U1tilde}),
$$
(\one+iT_{22})^{-1} \tilde U_1 \ov {\frak W}_\beta \subset \tilde U_1 \ov {\frak W}_\beta,\ \ov {\frak W}_\beta \tilde U_2 (\one+iT_{22})^{-1} \subset \ov {\frak W}_\beta \tilde U_2.
$$

The expression $S$ consists of two off-diagonal entries, $S_1$ and $S_2$. The first, $S_1$, is of the same type as $T_{12}$ and belongs, among other operator spaces, to
$$
S_1(t) \in \langle t \rangle^{-\3} (L^\infty_{y_2} L^1_{y_1} \cap L^\infty_{y_1} L^1_{y_2}) [\B(L^1_{x_1} \cap L^2_{x_1}, L^2_{x_1}) \otimes \B(L^2_{x_2}, L^2_{x_2}+L^\infty_{x_2})].
$$
More precisely, $S_1$ is of the form $S_1=U_1 B \tilde U_2$, where $B \in \ov \W$ and $\tilde U_i$ is $U_i$ with the variables $x_1$ and $x_2$ swapped:
\be\lb{U1tilde}
[\tilde U_1 F](y_1, x_1, x_2) = \int_0^\infty \int_Y e^{-is_1\Delta_{x_1}} v_1(x_1, y)  F(y_1, y, x_1, s_1, x_2) \dd y \dd s_1
\ee
and
$$
[\tilde U_2 f](s_2, y_2, \tilde y, x_1, x_2) = v_2(x_1, \tilde y) e^{-is_2\Delta_{x_1}} f(y_2, x_1, x_2).
$$
Thus $S_1$ has a final segment in the $x_1$ variable and an initial segment in the $x_2$ variable, swapping these two variables.

The second entry $S_2$ behaves like a mirror image of $S_1$, being of the same type as $T_{21}$, and has the form $S_2=\tilde U_1 B U_2$, $B \in \ov \W$.

Just like for $U_1$ and $U_2$, see (\ref{u2u1bound}), the composition $\tilde U_2 \tilde U_1$ has better decay properties than the individual factors.


In conclusion, a typical term coming from $(\one + iT)^{-1}-\one$ in the Duhamel formula (\ref{duhamel_l}) has the same structure as $S_1 \ast S_2$ or $S_2 \ast S_1$, if on the diagonal; as $S_1$, in the upper-right corner; and as $S_2$, in the lower-left corner. In brief,
$$
(\one + iT)^{-1}-\one \in \begin{pmatrix} U_1 \\ \tilde U_1 \end{pmatrix} \ov {\frak W}_\beta \begin{pmatrix} U_2 & \tilde U_2 \end{pmatrix}
$$

Considering this and the explicit form of the other two factors in (\ref{duhamel_l}), $T_{V0}$ and $T_{0V}$ (which have dissipative components), we obtain the following expression for the whole Duhamel term:
\be\begin{aligned}\lb{complicated}
&\Big[\int_{\substack{s_1, s_2 \geq 0 \\ s_1+s_2 \leq t}} T_{0V}(s_1) [(\one+iT)^{-1}-\one](t-s_1-s_2) T_{V0}(s_2) \dd s_1 \dd s_2 \Big] F(\tilde y_1, x_1, x_2) = \\
&=\int_{\substack{s_1, s_2 \geq 0 \\ s_1+s_2 \leq t}} \int_{\sigma_1, \sigma_2 \geq 0} \int_{Y^5} e^{-s_1A}(\tilde y_1, y_1) e^{-is_1\Delta_{x_1}+i(s_1+\sigma_1) \Delta_{x_2}} v_1(x_1, y_1) v_1(x_2, y)
\frak B_{t - s_1 - s_2}(\sigma_1, \sigma_2, y_1, y, \tilde y, y_2) \\
&v_2(x_2, \tilde y) v_2(x_1, y_2) e^{-is_2\Delta_{x_1}+i(s_2+\sigma_2) \Delta_{x_2}} e^{-s_2A}(y_2, \tilde y_2) F(\tilde y_2, x_1, x_2) \dd y_1 \dd y \dd \tilde y \dd y_2 \dd \tilde y_2 \dd s_1 \dd \sigma_1 \dd s_2 \dd \sigma_2,
\end{aligned}\ee
where $\frak B \in \ov {\frak W}_\beta$, plus three other analogous terms coming from the other matrix entries.

We focus on estimating the main term. For each choice of $s$'s and $y$'s, the expression is bounded in the correct norm in the $x$ variables:
$$\begin{aligned}
&\| (\ref{complicated}) F\|_{(L^2 + L^\infty) \otimes (L^2 + L^\infty)} \les \\
&\les \langle t-s_1-s_2 \rangle^{-\3} \langle s_1\rangle^{-\3/2} \langle s_1 + \sigma_1 \rangle^{-\3/2} \langle s_2 \rangle^{-\3/2} \langle s_2 + \sigma_2 \rangle^{-\3/2} \|V\|_{C_y (L^1_x \cap L^\infty_x)}^2 \|\frak B_{t-s_1-s_2}\| \|F\|_{(L^1 \cap L^2) \otimes (L^1 \cap L^2)},
\end{aligned}$$

For each choice of $s$'s, the $y$ integrals are appropriately bounded because
$$
e^{-s_1 A}(\tilde y_1, y_1) \in L^\infty_{y_1} L^1_{\tilde y_1},\ \frak B_{t-s_1-s_2} \in L^\infty_{y_2} L^1_{\tilde y} L^1_{y_1} L^1_y,\ e^{-s_2 A}(y_2, \tilde y_2) \in L^\infty_{\tilde y_2} L^1_{y_2},\ F \in L^1_{\tilde y_2},
$$
and the integral
$$
\int_{Y^5} e^{-s_1 A} \frak B_{t-s_1-s_2} e^{-s_2 A} F \dd y_1 \dd y \dd \tilde y \dd y_2 \dd \tilde y_2
$$
is bounded by the norms above.



Finally, we are left with computing the following integral in $s$:
\be\lb{final_expression}
\int \frak B_{t-s_1-s_2} \langle s_1\rangle^{-\3/2} \langle s_1 + \sigma_1 \rangle^{-\3/2} \langle s_2 \rangle^{-\3/2} \langle s_2 + \sigma_2 \rangle^{-\3/2} \|V\|_{C_y (L^1_x \cap L^\infty_x)}^2 \dd s_1 \dd s_2 \dd \sigma_1 \dd \sigma_2.
\ee

Since $\frak B_{t-s_1-s_2} \in {\frak X_{\3/2, \beta}}$, see Definition \ref{defxtilde}, for each value of $t-s_1-s_2$ we can write it as
$$
\frak B_{t-s_1-s_2} = \int_0^\infty B^{t-s_1-s_2}_\tau \dd\tau,\ B^{t-s_1-s_2}_\tau \in \mc X_\tau,
$$
where by the Definition \ref{defxtilde} of ${\frak X}_{\3/2, \beta}$
\be\lb{estb}
\int_0^\infty (1+\beta\tau)^{-\3/2} \|B^{t-s_1-s_2}_\tau\|_{\mc X_\tau} \dd \tau \les \|\frak B_{t-s_1-s_2}\|_{{\frak X}} \les \langle t-s_1-s_2 \rangle^{-\3/2} (1+\beta(t-s_1-s_2))^{-\3/2} \|\frak B\|_{\frak W_\beta}
\ee
and from Definition \ref{defx} of $\mc X_\tau$, since $B^{t-s_1-s_2}_\tau \in \mc X_\tau$, we get an extra $\langle \tau-\sigma_1 - \sigma_2\rangle^{-\3/2}$ factor.

Consequently, replacing $\beta$ by $1/\beta$, for all $\beta \geq 1$
$$
\int_0^\infty (\beta + \tau)^{-\3/2} \|B^{t-s_1-s_2}_\tau\|_{\mc X_\tau} \dd \tau \les \|\frak B_{t-s_1-s_2}\|_{{\frak X}_\beta} \les \langle t-s_1-s_2 \rangle^{-\3/2} (\beta + t-s_1-s_2)^{-\3/2} \|\frak B\|_{{\frak W}_{1/\beta}},
$$
where in fact $\|\frak B\|_{\frak W_{1/\beta}}$ is uniformly bounded independently of $\beta \geq 1$.

Setting $\beta = s_1 + s_2 + 1$, this leads to
\be\lb{final_aux}
\int_0^\infty \langle \tau + s_1 + s_2 \rangle^{-\3/2} \|B^{t-s_1-s_2}_\tau\|_{\mc X_\tau} \dd \tau \les \langle t-s_1-s_2 \rangle^{-\3/2} \langle t \rangle^{-\3/2} \|\frak B\|_{{\frak W}_{1/\beta}}.
\ee

Returning to the integral (\ref{final_expression}), after substituting $\frak B_{t-s_1-s_2} \in \frak X_{\3/2, \beta}$ by a combination of $B^{t-s_1-s_2}_\tau \in \mc X_\tau$ in accordance with Definition \ref{defxtilde}, it is of size
\be\lb{sint}
\int \|B_\tau^{t-s_1-s_2}\|_{\mc X_\tau} \langle \tau-\sigma_1-\sigma_2 \rangle^{-\3/2} \langle s_1 \rangle^{-\3/2} \langle s_2 \rangle^{-\3/2} \langle s_1+\sigma_1 \rangle^{-\3/2} \langle s_2+\sigma_2 \rangle^{-\3/2} \dd \sigma_1 \dd \sigma_2 \dd s_1 \dd s_2 \dd \tau.
\ee
Here
$$
\int \langle \tau-\sigma_1-\sigma_2 \rangle^{-\3/2} \langle s_1+\sigma_1 \rangle^{-\3/2} \langle s_2+\sigma_2 \rangle^{-\3/2} \dd \sigma_1 \dd \sigma_2 \les \langle \tau + s_1 + s_2 \rangle^{-\3/2}.
$$
Integrating in $\tau$ and using inequality (\ref{final_aux}), this is no more than
$$
(\ref{sint}) \les \int \langle t-s_1-s_2 \rangle^{-\3/2} \langle t \rangle^{-\3/2} \langle s_1 \rangle^{-\3/2} \langle s_2 \rangle^{-\3/2} \dd s_1 \dd s_2 \les \langle t \rangle^{-\3}.
$$

The more singular terms supported at $\delta_{t-s_1-s_2-\sigma_1}(\sigma_2)$, which are the contribution of $\mc X_\tau \setminus X_\tau$, see Definition (\ref{defx}), obey the same bound.


The same computation works with initial and/or final segments of uneven length. In particular, if composed with $e^{-it_1\Delta_{x_1}} e^{it_2\Delta_{x_2}}$ on the right, then (\ref{complicated}) will be bounded in norm by
$$
\int \|B_\tau^{t-s_1-s_2}\|_{\mc X_\tau} \langle \tau-\sigma_1-\sigma_2 \rangle^{-\3/2} \langle s_1 \rangle^{-\3/2} \langle s_2 + t_1 \rangle^{-\3/2} \langle s_1+\sigma_1 \rangle^{-\3/2} \langle s_2+\sigma_2 + t_2 \rangle^{-\3/2} \dd \sigma_1 \dd \sigma_2 \dd s_1 \dd s_2 \dd \tau,
$$
which in turn is now bounded by
$$
\les \langle t+t_1 \rangle^{-\3/2} \langle t+t_2 \rangle^{-\3/2}.
$$
This implies (\ref{decay_est'}) and (\ref{decay_est''}) is proved similarly.
\end{proof}

\begin{proof}[Proof of Corollary \ref{cor_strichartz}] The proof uses Duhamel's formula (\ref{duhamel_v}) in combination with the pointwise decay estimate (\ref{decay_est''}).

Recall formula (\ref{complicated}):
$$\begin{aligned}
&\int_{\substack{s_1, s_2 \geq 0 \\ s_1+s_2 \leq t}} \int_{\sigma_1, \sigma_2 \geq 0} \int_{Y^5} e^{-s_1A}(\tilde y_1, y_1) e^{-is_1\Delta_{x_1}+i(s_1+\sigma_1) \Delta_{x_2}} v_1(x_1, y_1) v_1(x_2, y)
\frak B_{t - s_1 - s_2}(\sigma_1, \sigma_2, y_1, y, \tilde y, y_2) \\
&v_2(x_2, \tilde y) v_2(x_1, y_2) e^{-is_2\Delta_{x_1}+i(s_2+\sigma_2) \Delta_{x_2}} e^{-s_2A}(y_2, \tilde y_2) F(\tilde y_2, x_1, x_2) \dd y_1 \dd y \dd \tilde y \dd y_2 \dd \tilde y_2 \dd s_1 \dd \sigma_1 \dd s_2 \dd \sigma_2.
\end{aligned}$$

The initial and final segments can be bounded using endpoint Strichartz estimates: for each $y$ configuration,
$$
\|v_2(x_2, \tilde y) v_2(x_1, y_2) e^{-is_2\Delta_{x_1}+i(s_2+\sigma_2) \Delta_{x_2}} F(x_1, x_2)\|_{L^2_{s_2, x_1} \otimes L^2_{\sigma_2, x_2}} \les \|F\|_{\frak S_1}.
$$
In order to use it below, we weaken the $L^2_{s_2, x_1} \otimes L^2_{\sigma_2, x_2}$ norm as follows: for each $s_2$ and $\sigma_2$, $L^2_{s_2, x_1} \times L^2_{\sigma_2, x_2} \subset \frak S_1$ in just the space variables, and
each individual tensor has a bound of the form
\be\lb{tensorp}
\|f_1(s_2, x_1) f_2(\sigma_2, x_2)\|_{\frak S_1} \leq f(s_2) g(\sigma_2),\ f, g \in L^2.
\ee

Strichartz estimates do not provide uniform decay in $s_1$ and $s_2$, so we need to put the $t$ norms on the inside and estimate them before the $y$ norms. This is possible because, by Minkowski's inequality,
$$
X_t = \chi_{s_1, s_2 \geq 0} \langle t-(s_1 + s_2) \rangle^{-\3/2} L^\infty_{s_1, s_2} [L^\infty_{y_2} L^1_{\tilde y, y, y_2}] \B(\frak S_1) \subset [L^\infty_{y_2} L^1_{\tilde y, y, y_2}] \chi_{s_1, s_2 \geq 0} \langle t-(s_1 + s_2) \rangle^{-\3/2} L^\infty_{s,_1, s_2} \B(\frak S_1).
$$
Consequently,
$$
\frak B_{t-s_1-s_2}(\sigma_1, \sigma_2, y_1, y, \tilde y, y_2) = \int_0^\infty B^{t-s_1-s_2}_\tau \dd \tau
$$
and
$$
B^{t-s_1-s_2}_\tau \in [L^\infty_{y_2} L^1_{\tilde y, y, y_2}] \chi_{s_1, s_2 \geq 0} \langle \tau-(s_1 + s_2) \rangle^{-\3/2} L^\infty_{s,_1, s_2} \B(\frak S_1).
$$
Estimating $\frak B$ in $\frak W_0$, see Definition \ref{defxtilde}, the overall norm of $\frak B_{t-s_1-s_2}$ is of size $\langle t-s_1-s_2 \rangle^{-\3/2}$.

For each $t$ and $\tau$, integration against $\langle t - (s_1+s_2) \rangle^{-\3/2} \langle \tau - (\sigma_1 + \sigma_2) \rangle^{-\3/2}$ is bounded from $L^2_{s_2} \otimes L^2_{\sigma_2}$ to $L^2_{s_1} \otimes L^2_{\sigma_1}$.

Since the kernel $\frak B \in \frak W_0$ is integrable over $\tau$, it acts in the same manner as each individual expression. Thus, for each fixed $y$ configuration, the image of each tensor (\ref{tensorp}) under $\frak B$ is still a combination of $h(s_2, \sigma_2, x_1, x_2)$, where
$$
\|h(s_2, \sigma_2, x_1, x_2)\|_{\frak S_1} \leq \tilde f(s_2) \tilde g(\sigma_2),\ \tilde f, \tilde g \in L^2.
$$

Finally, to estimate the last factor, for each $y$ configuration we need to analyze the following expression:
\be\lb{bilin}
\int_{t > s_1 > s_2} w_1(x_1, \tilde y_1) w_2(x_2, \tilde y_1) e^{-i(t-s_2)\Delta_{x_1}} e^{i(t-\sigma_2)\Delta_{x_2}} v_1(x_1, y_1) v_1(x_2, y) h(s_2, \sigma_2, x_1, x_2) \dd s_1 \dd s_2.
\ee

By Lemma \ref{uniform_lemma}, the $L^2$ operator norms of $w_1(x_1) e^{-i(t-s_2)\Delta_{x_1}} v_1(x_1, y_1)$ and $w_2(x_2) e^{i(t-\sigma_2)\Delta_{x_2}} v_1(x_2, y)$ are $L^1$ functions of $t-s_2$, respectively $t-\sigma_2$; denote them $k_1(t-s_2)$ and $k_2(t-\sigma_2)$.

Then, for each $s_2$, $\sigma_2$, and $t$, the expression (\ref{bilin}) is bounded in the trace class by
$$
k_1(t-s_2) k_2(t-\sigma_2) \tilde f(s_2) \tilde g(\sigma_2).
$$
Since $k_1$, $k_2 \in L^1$ and $\tilde f, \tilde g \in L^2$, this expression is integrable in $s_2$, $\sigma_2$, and $t$. Summing over all the tensors in the tensor product, and then integrating in $y$, we obtain the desired conclusion (\ref{strichartz}).

The other estimate (\ref{strichartz'}) follows in the same manner.

\end{proof}

\subsection{Proofs of the main results}
Finally, we are ready to give the proof of the main linear result, concerning dispersive estimates for a Schr\"{o}dinger equation with time-dependent potential.
\begin{proposition}\lb{main_linear} Let $\psi_\omega$ be a solution of (\ref{random_lin}). Supposing that $V \in C_y (L^1_x \cap L^\infty_x)$ satisfies Assumption \ref{nontrivial},
\be\lb{random_decay}
\|\psi_\omega\|_{\langle t \rangle^{-\3/2} L^\infty_t L^2_\omega (L^2_x + L^\infty_x)} \les \|\psi_0\|_{L^2_\omega (L^1_x \cap L^2_x)} + \|\Psi_\omega\|_{\langle t \rangle^{-\3/2} L^\infty_t L^2_\omega (L^1_x \cap L^2_x)}
\ee
and
\be\lb{random_strichartz}
\|\psi_\omega\|_{L^2_t L^2_\omega L^{6, 2}_x} \les \|\psi_0\|_{L^2_\omega L^2_x} + \|\Psi_\omega\|_{L^2_t L^2_\omega L^{6/5, 2}_x}.
\ee
\end{proposition}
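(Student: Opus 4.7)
The plan is to deduce both estimates from the trace-class dispersive bounds of Proposition \ref{prop27} and Corollary \ref{cor_strichartz} applied to the averaged density matrix
\[
f(x_1, x_2, y, t) := \E\big\{\psi_\omega(x_1, t)\,\ov{\psi_\omega}(x_2, t) \mid X_t = y\big\}
\]
defined in (\ref{def_avg}), and then to convert these bounds back into estimates on $\psi_\omega$ itself via the identity $\E|\psi_\omega(x,t)|^2 = \int_Y f(x,x,y,t)\,\dd y$ and the Feynman--Kac-type formula (\ref{feynman'}). The initial-data bounds $\|f_0\|_{L^1_y \frak S_1} = \|\psi_0\|_{L^2_y L^2_x}^2$ and $\|f_0\|_{L^1_y (L^1_{x_1} \cap L^2_{x_1}) \otimes (L^1_{x_2} \cap L^2_{x_2})} \leq \|\psi_0\|_{L^2_\omega(L^1_x \cap L^2_x)}^2$ feed directly into these estimates.

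For the Strichartz bound (\ref{random_strichartz}), Duhamel's formula combined with standard free Strichartz estimates on $\R^\3$ reduces matters to controlling $\E\|V_\omega \psi_\omega\|_{L^2_t L^{6/5,2}_x}^2$ and $\E\|\Psi_\omega\|_{L^2_t L^{6/5,2}_x}^2$; the latter is already in the desired form. Writing $v_1, v_2$ as in (\ref{more_notations}) and applying H\"older in Lorentz spaces gives
\[
\|V_\omega\psi_\omega\|_{L^{6/5,2}_x} \les \|v_1(\cdot,\omega(t))\|_{L^{\3,2}_x}\,\|v_2(\cdot,\omega(t))\psi_\omega\|_{L^2_x},
\]
with the first factor uniformly bounded because $V \in C_y(L^1_x \cap L^\infty_x)$ implies $|V|^{1/2} \in L^\infty_y L^{\3,1}_x$. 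The Feynman--Kac identity (\ref{feynman'}) then converts
\[
\E \int_0^\infty \!\int_{\R^\3} |v_2(x,\omega(t))|^2 |\psi_\omega(x,t)|^2 \dd x \dd t = \int_{Y \times \R^{\3+1}} |V(x,y)|\, f(x,x,y,t)\,\dd x \dd y \dd t,
\]
which is dominated by $\|v_1(x_1)\, f\, v_2(x_2)\|_{L^1_y L^1_t \frak S_1}$; Corollary \ref{cor_strichartz} with $w_1 = w_2 = |V|^{1/2}$ bounds this in terms of $\|f_0\|_{L^1_y \frak S_1}$ plus a source contribution.

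For the pointwise decay (\ref{random_decay}), I would apply the trace-class pointwise bound (\ref{point}) to $f$. The elementary inequality $\|\phi\|_{L^2+L^\infty}^2 \les \||\phi|^2\|_{L^1+L^\infty}$, together with Minkowski's inequality and the singular-value decomposition of the positive semi-definite kernel $f(\cdot,\cdot,y,t)$, gives
\[
\E\|\psi_\omega(t)\|_{L^2+L^\infty}^2 \les \|\E|\psi_\omega(t)|^2\|_{L^1_x+L^\infty_x} \les \|f(t)\|_{L^1_y (L^2_{x_1}+L^\infty_{x_1}) \otimes (L^2_{x_2}+L^\infty_{x_2})},
\]
where the second step uses that for positive semi-definite $K = \sum\lambda_i u_i \otimes \ov{u_i}$ one has $\|K(x,x)\|_{L^1_x+L^\infty_x} \leq \sum \lambda_i \||u_i|^2\|_{L^1_x+L^\infty_x} \les \sum\lambda_i \|u_i\|_{L^2+L^\infty}^2$. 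Combining with (\ref{point}) yields the homogeneous part of (\ref{random_decay}), while the $\Psi_\omega$ contribution enters through (\ref{decay_est'}).

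The main obstacle is that the source $F = \E\{\Psi_\omega \otimes \ov{\psi_\omega} - \psi_\omega \otimes \ov{\Psi_\omega} \mid X_t = y\}$ of the averaged Liouville equation is bilinear in $(\Psi_\omega, \psi_\omega)$, making the reduction self-referential. To close the loop, I would first establish the homogeneous case ($\Psi_\omega = 0$), and then iterate Duhamel in one tensor factor to rewrite the $\Psi_\omega$ contribution to $F$ in the precise form $F(s,y,x_1,x_2) = \int_{s \geq s_2} e^{i(s-s_2)\Delta_{x_2}} G\,\dd s_2$ demanded by Corollary \ref{cor_strichartz} (respectively by (\ref{decay_est'}) for the pointwise bound), with $G$ expressed in terms of $\Psi_\omega$ alone. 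The adaptedness of $\Psi_\omega$ to $(\mc F_t)_{t \geq 0}$ is essential here, since it allows the conditional expectation defining $F$ to commute past free evolutions; the Strichartz and pointwise inequalities then close after a standard bootstrap.
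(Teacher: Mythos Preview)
Your Strichartz argument and your handling of the inhomogeneous term are essentially the paper's approach: Duhamel for $\psi_\omega$, H\"older to factor out $|V|^{1/2}$, then the Feynman--Kac identity (\ref{feynman'}) to convert $\E\||V_\omega|^{1/2}\psi_\omega\|_{L^2_{t,x}}^2$ into a trace-class quantity controlled by Corollary~\ref{cor_strichartz}. (One small inaccuracy: the $G$ you obtain after Duhamel-expanding the $\psi_\omega$ factor of $F$ is \emph{not} in terms of $\Psi_\omega$ alone; it contains the term $V_\omega\psi_\omega\otimes\ov\Psi_\omega$, and that is precisely what makes the final step a bootstrap rather than a direct estimate.)

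The pointwise-decay argument, however, has a genuine gap. The inequality
\[
\E\|\psi_\omega(t)\|_{L^2+L^\infty}^2 \ \les\ \big\|\E|\psi_\omega(t)|^2\big\|_{L^1_x+L^\infty_x}
\]
is false. Since $\|\phi\|_{L^2+L^\infty}^2 \sim \sup_{|E|\le 1}\int_E|\phi|^2$, the claim amounts to $\E\sup_E\le\sup_E\E$, which goes the wrong way. Concretely, take $\psi_\omega=\chi_{[n,n+1]}$ with $n$ uniform in $\{1,\dots,N\}$: then $\E\|\psi_\omega\|_{L^2+L^\infty}^2\sim 1$ while $\E|\psi_\omega|^2=N^{-1}\chi_{[1,N+1]}$ has $L^1+L^\infty$ norm $\sim N^{-1}$. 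So one cannot read the $L^2_\omega(L^2_x+L^\infty_x)$ norm of $\psi_\omega$ directly off the averaged density matrix $f$.

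The paper avoids this by running the \emph{same} Duhamel bootstrap you use for Strichartz. From (\ref{point}) and the Feynman--Kac identity (\ref{feynman}) one gets
\[
\big\||V_\omega|^{1/2}\psi_\omega(t)\big\|_{L^2_\omega L^2_x}^2 \ \les\ \|V\|_{L^\infty_y(L^1_x\cap L^\infty_x)}\,\|f(t)\|_{L^1_y(L^2_{x_1}+L^\infty_{x_1})\otimes(L^2_{x_2}+L^\infty_{x_2})} \ \les\ \langle t\rangle^{-\3},
\]
and then, pathwise,
\[
\|\psi_\omega(t)\|_{L^2_x+L^\infty_x}\ \les\ \langle t\rangle^{-\3/2}\|\psi_0\|_{L^1_x\cap L^2_x}+\int_0^t\langle t-s\rangle^{-\3/2}\|V\|^{1/2}\,\big\||V_\omega|^{1/2}\psi_\omega(s)\big\|_{L^2_x}\,\dd s.
\]
Taking $L^2_\omega$ via Minkowski and inserting the $\langle s\rangle^{-\3/2}$ bound on $\||V_\omega|^{1/2}\psi_\omega(s)\|_{L^2_{\omega,x}}$ gives (\ref{random_decay}). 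In short: use $f$ only to control the \emph{weighted} quantity $|V_\omega|^{1/2}\psi_\omega$ (where positivity makes the averaging honest), never to control $\|\psi_\omega\|_{L^2+L^\infty}$ directly.
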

\begin{proof} [Proof of Theorem \ref{main_result}]
Recall that $\psi_\omega$ is the solution of the equation
$$
i\partial_t \psi_{\omega} - \Delta \psi_{\omega} + V \psi_{\omega} = \Psi_{\omega},\ \psi_\omega(0, x) = \psi_0(x, \omega),\ \omega(0) = X_0.
$$
Here $X_0: \Omega \to Y$ is a random variable that gives the initial distribution of the Markov process driving the random potential. Its distribution $\mu_0(A) = \set P(\omega_0 \in A)$ is a probability measure on $Y$.

Define
$$
f(x_1, x_2, y, t)=\E(\psi_\omega(x_1, t) \ov\psi_\omega(x_2, t): \omega(t) = X_t = y),
$$
which fulfills in turn the averaged Liouville-type equation (\ref{average_liouville})
$$
i \partial_t f - \Delta_{x_1} f + \Delta_{x_2} f + i A f + (V \otimes 1 - 1 \otimes V) f = F,
$$
where
$$
f(0, y, x_1, x_2) = f_0(y, x_1, x_2) = \E(\psi_0(x_1, \omega) \ov{\psi_0(x_2, \omega)} : \omega(0) = \omega_0 = y)
$$
and
$$
F(t, y, x_1, x_2)=\E(\Psi_\omega(x_1) \ov \psi_\omega(x_2) - \psi_\omega(x_1) \otimes \ov \Psi_\omega(x_2): \omega(t)=X_t = y).
$$

Because the equation is linear, we can treat the contributions of the initial data $\psi_{\omega 0}$ and of the source term $\Psi_\omega$ separately.

First, consider the case of uniform decay estimates, for $\psi_{0} \in L^1 \cap L^\infty$.

The random initial data of the random equation have the following contribution in (\ref{average_liouville}) after averaging. If $\psi_0(\omega) \in L^\infty_\omega (L^1_x \cap L^2_x)$ and $\omega_0 \in \mathcal M_\omega$, then
$$\begin{aligned}
\|f_0\|_{L^1_y (L^1_{x_1} \cap L^2_{x_1}) \otimes (L^1_{x_2} \cap L^2_{x_2})} &= \E \|\psi_0(x_1, \omega) \ov{\psi_0(x_2, \omega)}\|_{(L^1_{x_1} \cap L^2_{x_1}) \otimes (L^1_{x_2} \cap L^2_{x_2})}\\
&= \|\psi_0(x_1, \omega) \ov{\psi_0(x_2, \omega)}\|_{L^1\omega (L^1_{x_1} \cap L^2_{x_1}) \otimes (L^1_{x_2} \cap L^2_{x_2})} \\
&= \|\psi_0(x, \omega)\|_{L^2_\omega(\mu_0) (L^1_x \cap L^2_x)}^2.
\end{aligned}$$


By Pillet's Feynman--Kac formula (\ref{feynman}), for each $t \geq 0$
\be\lb{feynman_applied}\begin{aligned}
\||V_\omega(x, t)|^{1/2} \psi_\omega(x, t)\|_{L^2_\omega L^2_x}^2 &= \E \int |V_\omega(x, t)| |\psi_\omega(x, t)|^2 \dd x \\
&= \int_{Y \times \R^\3} |V(x, y)| f(x, x, y, t) \dd x \dd y \\
&\leq \|V\|_{L^\infty_y (L^1_x \cap L^\infty_x)} \|f(t)\|_{L^1_y  (L^2_{x_1} + L^\infty_{x_1}) \otimes (L^2_{x_2} + L^\infty_{x_2})}.
\end{aligned}\ee
By the dispersive estimate (\ref{point}), in the absence of a source term,
$$
\|f(t)\|_{L^1_y (L^2_{x_1} + L^\infty_{x_1}) \otimes (L^2_{x_2} + L^\infty_{x_2})} \les \langle t \rangle^{-\3} \|f_0\|_{L^1_y (L^1_{x_1} \cap L^2_{x_1}) \otimes (L^1_{x_2} \cap L^2_{x_2})}.
$$

But for each path $\omega \in \Omega$ the left-hand side of (\ref{feynman_applied}) controls the pointwise decay:
$$\begin{aligned}
\|\psi_\omega(t)\|_{L^2_x + L^\infty_x} &\les \langle t \rangle^{-\3/2} \|\psi_0(x, \omega)\|_{L^1_x \cap L^2_x} + \int_0^t \langle t-s \rangle^{-\3/2} \|V_\omega(x, s) \psi_\omega(s)\|_{L^1_x \cap L^2_x} \dd s \\
& \les \langle t \rangle^{-\3/2} \|\psi_0(x, \omega)\|_{L^1_x \cap L^2_x} + \int_0^t \langle t-s \rangle^{-\3/2} \|V\|_{L^\infty_y (L^1_x \cap L^\infty_x)}^{1/2} \||V_\omega(x, s)|^{1/2} \psi_\omega(s)\|_{L^2_x} \dd s.
\end{aligned}$$
Indeed, by Minkowski's inequality
$$\begin{aligned}
\bigg\|\int_0^t \langle t-s \rangle^{-\3/2} \||V_\omega(x, s)|^{1/2} \psi_\omega(s)\|_{L^2_x} \dd s\bigg\|_{L^2_\omega} &\leq \int_0^t \langle t-s \rangle^{-\3/2} \||V_\omega(x, s)|^{1/2} \psi_\omega(s)\|_{L^2_{\omega, x}} \dd s \\
&\les \bigg(\int_0^t \langle t-s \rangle^{-\3/2} \langle s \rangle^{-\3/2} \dd s\bigg) \|V\|_{L^\infty_y (L^1_x \cap L^\infty_x)}^{1/2} \|f_0\|_{L^1_y (L^1_{x_1} \cap L^2_{x_1}) \otimes (L^1_{x_2} \cap L^2_{x_2})}^{1/2} \\
&\les \langle t \rangle^{-\3/2} \|V\|_{L^\infty_y (L^1_x \cap L^\infty_x)}^{1/2} \|\psi_0(x, \omega)\|_{L^2_\omega (L^1_x \cap L^2_x)}.
\end{aligned}$$
We conclude that
$$
\|\psi_\omega(t)\|_{L^2_\omega (L^2_x + L^\infty_x)} \les \langle t \rangle^{-\3/2} \|\psi_0(x, \omega)\|_{L^2_\omega (L^1_x \cap L^2_x)}.
$$


Next, consider the inhomogenous term $F$ in (\ref{average_liouville}), which is the conditional expectation of the following tensor product:
\be\lb{inhomogenous_term}\begin{aligned}
F(t) = \E\big\{\Psi_{\omega}(t) \otimes \ov \psi_{\omega}(t) - \psi_{\omega} \otimes \ov \Psi_{\omega} : \omega(t) = y\big\}.
\end{aligned}\ee
This is the sum of two terms, $F(t) = F_1(t) + F_2(t)$, where
$$
F_1(t) = \E\big\{\Psi_{\omega}(t) \otimes \ov \psi_{\omega}(t) : \omega(t) = y\big\},\ F_2(t) = -\E\big\{\psi_{\omega}(t) \otimes \ov \Psi_{\omega}(t) : \omega(t) = y\big\}.
$$

Here $\psi$ only belongs to Strichartz spaces, never to dual Strichartz spaces, so we cannot use estimates such as (\ref{point}). The inhomogenous term has to be handled in a special manner, by means of (\ref{decay_est'}).

We write $\psi_{\omega}$ in (\ref{inhomogenous_term}) using Duhamel's formula:
$$
\psi_{\omega} = i \int_0^t e^{-i(t-s)\Delta} (V_\omega(s) \psi_{\omega} + \Psi_{\omega}) \dd s.
$$
Note that
$$\begin{aligned}
\|V_\omega(s) \psi_{\omega}(s) + \Psi_{\omega}(s)\|_{L^2_{\omega} (L^1_x \cap L^2_x)} \les \|\psi_{\omega}(s)\|_{L^2_\omega (L^2_x + L^\infty_x)} + \|\Psi_{\omega}(s)\|_{L^2_\omega (L^1_x \cap L^2_x)}.
\end{aligned}$$
By performing this expansion within the averaged Liouville-type equation (\ref{average_liouville}) for $f$, we get that the inhomogenous term $F=\E((\ref{inhomogenous_term}):X_t=y)$ consists of
$$
F_2(t, y_1) = \int_0^t e^{i(t-s)\Delta_{x_2}} G_2(t, s, y_1, x_1, x_2) \dd y \dd s,
$$
where
$$
G_2(t, s) = i\E([V_\omega(s) \psi_{\omega}(s) + \Psi_\omega(s)] \otimes \ov {\Psi_{\omega}}(t) \mid \omega(t) = y_1).
$$


Due to properties of Markov chains, the expression will be in $L^1_{y_1}$:
$$\begin{aligned}
\|G_2(t, s)\|_{L^1_{y_1} (L^1_{x_1} \cap L^2_{x_1}) \otimes (L^1_{x_2} \cap L^2_{x_2})} &\les \E \|[V_\omega(s) \psi_{\omega}(s) + \Psi_\omega(s)] \otimes \ov {\Psi_{\omega}}(t)\|_{(L^1_{x_1} \cap L^2_{x_1}) \otimes (L^1_{x_2} \cap L^2_{x_2})} \\
&\les \|V_\omega(s) \psi_{\omega}(s) + \Psi_\omega(s)\|_{L^2_\omega (L^1_x \cap L^2_x)} \|\Psi_\omega(t)\|_{L^2_\omega (L^1_x \cap L^2_x)} \\
&\les (\|\psi_{\omega}(s)\|_{L^2_\omega (L^2_x + L^\infty_x)} + \|\Psi_{\omega}(s)\|_{L^2_\omega (L^1_x \cap L^2_x)}) \|\Psi_\omega(t)\|_{L^2_\omega (L^1_x \cap L^2_x)}.
\end{aligned}
$$
$F_1$ can be treated in the same way, swapping $x_1$ and $x_2$.

By (\ref{decay_est'}), then,
$$\begin{aligned}
\|\psi_\omega(t)\|_{L^2_\omega (L^2_x+L^\infty_x)}^2 \les \iint_{0 \leq s_1, s_2 \leq t} &\langle t-s_1 \rangle^{-\3/2} \langle t-s_2 \rangle^{-\3/2} \|\Psi_\omega(s_1)\|_{L^2_\omega (L^1_x \cap L^2_x)} \\
& (\|\Psi_\omega(s_2)\|_{L^2_\omega (L^1_x \cap L^2_x)} + \|\psi_\omega(s_2)\|_{L^2_\omega (L^1_x \cap L^2_x)}) \dd s_1 \dd s_2.
\end{aligned}$$
Then
$$
\|\psi_\omega\|_{\langle t \rangle^{-\3/2} L^\infty_t L^2_\omega (L^2_x+L^\infty_x)}^2 \les \|\Psi_\omega\|_{\langle s \rangle^{-\3/2} L^\infty_s L^2_\omega (L^1_x \cap L^2_x)}^2 + \|\Psi_\omega\|_{\langle s \rangle^{-\3/2} L^\infty_s L^2_\omega (L^1_x \cap L^2_x)} \|\psi_\omega\|_{\langle s \rangle^{-\3/2} L^\infty_s L^2_\omega (L^2_x+L^\infty_x)}
$$
and the conclusion (\ref{random_decay}) follows.

For the Strichartz estimate (\ref{random_strichartz}), the computation is similar, but with powers of decay replaced by $L^p$ bounds, employing estimate (\ref{strichartz'}). By the Feynman--Kac formula (\ref{feynman}),
$$\begin{aligned}
\||V_\omega(x, t)|^{1/2} \psi_\omega(x, t)\|_{L^2_t L^2_\omega L^2_x}^2 &= \E \iint |V_\omega(x, t)| |\psi_\omega(x, t)|^2 \dd x \dd t\\
&= \iint_{\R \times Y \times \R^\3} |V(x, y)| f(x, x, y, t) \dd x \dd y \dd t\\
&\leq \||V|^{1/2} f(t) |V|^{1/2}\|_{L^1_y L^1_t \frak S_1}.
\end{aligned}$$

By virtue of Corollary \ref{cor_strichartz}, the computation now reads
$$\begin{aligned}
\|\psi_\omega(t)\|_{L^2_t L^2_\omega L^{6, 2}_x}^2 &\les \|G_2\|_{L^1_y [L^2_t L^{6/5, 2}_{x_1} \otimes L^2_s L^{6/5, 2}_{x_2}]} \\
&\les \|\Psi_\omega\|_{L^2_s L^2_\omega L^{6/5, 2}_x} (\|\Psi_\omega\|_{L^2_s L^2_\omega L^{6/5, 2}_x} + \|\psi_\omega\|_{L^2_s L^2_\omega L^{6, 2}_x}),
\end{aligned}$$
implying the conclusion (\ref{random_strichartz}).

Concerning the boundedness of the average energy, note that by the definition (\ref{def_avg}) and by the same Feynman--Kac formula (\ref{feynman}) it follows that for each $t \geq 0$ and $h:Y \to \C$
$$
\E\big\{h(\omega(t))\|\dl\psi_\omega(t)\|_{L^2_x}^2 + \langle \psi_\omega(t), V_\omega(t) \psi_\omega(t) \rangle\big\} = \int_Y h(y) \tr [(-\Delta_{x_1}+V(x_1, y)) f(x_1, x_2, y, t)] \dd y.
$$
Taking a $t$ derivative and using the equation (\ref{average_liouville}) for $f$, we see that
\be\lb{deriv}\begin{aligned}
&\frac \partial {\partial t} (-\Delta_{x_1}+V(x_1, y)) f(x_1, x_2, y, t) = \\
&= (-\Delta_{x_1}+V(x_1, y))(i(-\Delta_{x_1}+V(x_1, y)+\Delta_{x_2}-V(x_2, y))-A)f(x_1, x_2, y, t).
\end{aligned}\ee

Consider a Hilbert space $L^2$ of square integrable functions over some measure space. For a trace-class operator $K \in \frak S_1(L^2)$ of kernel $K(x_1, x_2)$, its singular-value decomposition implies that
$$
\tr K = \int K(x, x) \dd x.
$$
Furthermore, given $f, g \in L^2$ and a possibly unbounded operator $T$,
$$
\tr [Tf \otimes \ov g] = \int (T f) \ov g = \int f (T^* \ov g) = \tr [f \otimes T^* \ov g].
$$
This equality holds as long as both sides are finite.

By using the singular-value decomposition, this also generalizes to any trace-class operator $K$, relating to a well-known identity:
$$
\tr [T K] = \tr [K T].
$$

Consequently, for each $t \geq 0$ and $y \in Y$, when $V$ is real-valued and $f$ is sufficiently smooth
$$\begin{aligned}
\tr [(-\Delta_{x_1}+V(x_1, y))^2 f(x_1, x_2, y, t)] &= \tr [(-\Delta_{x_1}+V(x_1, y))(-\Delta_{x_2}+V(x_2, y))f(x_1, x_2, y, t)] \\
&= \tr [(-\Delta_{x_2}+V(x_2, y))^2 f(x_1, x_2, y, t)].
\end{aligned}$$

So the two main terms in (\ref{deriv}) cancel and we are left with
\be\lb{derivative_identity}\begin{aligned}
\frac \partial {\partial t} \tr [(-\Delta_{x_1}+V(x_1, y)) f(x_1, x_2, y, t)] &= -\tr [(-\Delta_{x_1}+V(x_1, y)) A f(x_1, x_2, y, t)] \\
&= A \tr [\nabla_{x_1} \nabla_{x_2} f(x_1, x_2, y, t)] + \tr [V(x_1, y) A f(x_1, x_2, y, t)].
\end{aligned}\ee

This identity holds pointwise for each $y \in Y$.

Next, integrating in $y$ and using the self-adjointness of $A$, assuming that $A1=0$ ($A$ applied to the constant function is $0$), we obtain that
$$
-\int_Y (-\Delta_{x_1}+V(x_1, y)) A f(x_1, x_2, y, t) \dd y = \int_Y [A V(x_1, y)] f(x_1, x_2, y, t) \dd y
$$
and same after taking the trace, so
$$\begin{aligned}
\frac \partial {\partial t} \int_Y \tr [(-\Delta_{x_1}+V(x_1, y)) f(x_1, x_2, y, t)] &= \int_Y [A V(x_1, y)] \tr [f(x_1, x_2, y, t)] \dd y.
\end{aligned}$$

More generally, when integrating the identity (\ref{derivative_identity}) against a weight $h(y)$ instead of $1$
$$
-\int_Y h(y) (-\Delta_{x_1}+V(x_1, y)) A f(x_1, x_2, y, t) \dd y = \int_Y (-Ah) \Delta_{x_1} f + A [h(y) V(x_1, y)] f(x_1, x_2, y, t) \dd y.
$$
Taking the trace this leads to
\be\lb{energy}
\frac \partial {\partial t} \int_Y h(y) \tr [(-\Delta_{x_1}+V(x_1, y)) f(x_1, x_2, y, t)] \dd y = \int_Y (Ah) \tr(-\Delta_{x_1} f) + \tr (A [h V] f) \dd y.
\ee
The first term is
$$
\int_Y (Ah) \tr(-\Delta_{x_1} f) \dd y = \int_Y (Ah)(y) \E\big\{\|\nabla \psi_\omega(t)\|_{L^2_x}^2 \mid \omega(t) = y\big\} \dd y.
$$
If $h$ is a ground state of $A$, i.e.\;$A h = 0$, or more generally if $A h \leq 0$ for all $y$, then this term is non-positive, so for a ground state $h$
$$
\frac \partial {\partial t} \int_Y h(y) \tr [(-\Delta_{x_1}+V(x_1, y)) f(x_1, x_2, y, t)] \dd y = \int_Y \tr (A [h V] f) \dd y.
$$
Note that
$$
\int_Y \tr[A (h(y)V(x_1, y)) f(x_1, x_2, y, t)] dy \leq \|A (hV)\|_{L^\infty_y L^{\3/2, \infty}_x} \|f(t)\|_{L^1_y (L^{6, 2}_{x_1} \otimes L^{6, 2}_{x_2})}.
$$

So for $0 \leq t_1 \leq t_2$ and $Ah=0$
$$
\big|\E \big\{ h(\omega(t_2)) E[\psi_\omega](t_2) \big\} - \E \big\{ h(\omega(t_1)) E[\psi_\omega](t_1) \big\}\big| \les \|f\|_{L^1_t([t_1, t_2]) L^1_y (L^{6, 2}_{x_1} \otimes L^{6, 2}_{x_2})}.
$$

The boundedness of this weighted average of energy now follows from Corollary \ref{cor_strichartz}. Furthermore, this inequality implies that the average energy at time $t$ has a limit as $t$ goes to infinity.

More generally, if $Ah \leq 0$, one can show that the limit superior is the same as the limit inferior, so the same conclusion holds.

When $A(hV) \in L^\infty_y (L^1_x \cap L^\infty_x)$ and $\psi_0 \in L^2_\omega (L^1_x \cap L^2_x)$, Proposition \ref{prop27} implies that the average energy at time $t$ approaches its limit at a rate of $O(t^{-1/2})$.
\end{proof}

Finally, we restate and prove the main nonlinear result in terms of small initial data.
\begin{theorem}[Well-posedness for small initial data] Consider $\chi \in L^{\3/2, \infty}$ and a random potential of the form (\ref{def_v}). Then the equation
\be\lb{random_nonlin'}
i \partial_t \psi_\omega - \Delta \psi_\omega + V_\omega \psi_\omega + (\chi \ast |\psi_\omega|^2) \psi_\omega = 0,\ \psi_\omega(x, 0) = \psi_0(x, \omega)
\ee
is well-posed for small initial data. Namely, there exists some $\epsilon_0$ such that, if
$$
\|\psi_0\|_{L^\infty_\omega L^2_x} < \epsilon_0,
$$
then the solution $\psi_\omega$ exists with probability one and
$$
\|\psi_\omega\|_{L^\infty_\omega L^\infty_t L^2_x} < \epsilon_0,\ \E \|\psi_\omega\|_{L^2_t L^{6, 2}_ x}^2 \les \|\psi_0\|_{L^2_\omega L^2_x}^2 < \epsilon_0^2.
$$
Moreover, average energy is uniformly bounded if it is finite at $t=0$, same as in the linear case.
\end{theorem}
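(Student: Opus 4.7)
The plan is to run a Banach fixed-point scheme driven by Theorem \ref{main_result}, treating the Hartree nonlinearity as an adapted source in the linear random Schr\"odinger equation (\ref{random_lin}). Define $\Phi(\psi_\omega)$ to be the solution of
\[
i\partial_t \Phi - \Delta \Phi + V_\omega \Phi = -(\chi \ast |\psi_\omega|^2)\psi_\omega,\qquad \Phi(0,x) = \psi_0(x,\omega(0)),
\]
and work in the complete metric space
\[
X = \bigl\{\psi_\omega \text{ adapted to } (\mc F_t)_{t\ge 0} \,:\, \|\psi_\omega\|_{L^\infty_\omega L^\infty_t L^2_x} \leq 2\epsilon_0,\ \E\|\psi_\omega\|_{L^2_t L^{6,2}_x}^2 \leq C_0\|\psi_0\|_{L^2_\omega L^2_x}^2\bigr\},
\]
with metric $d(\psi,\tilde\psi)^2 = \|\psi-\tilde\psi\|_{L^\infty_\omega L^\infty_t L^2_x}^2 + \E\|\psi-\tilde\psi\|_{L^2_t L^{6,2}_x}^2$. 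The Strichartz bound from Theorem \ref{main_result} controls $\E\|\Phi(\psi_\omega)\|_{L^2_t L^{6,2}_x}^2$ in terms of $\|\psi_0\|_{L^2_\omega L^2_x}^2$ plus $\E\|(\chi\ast|\psi_\omega|^2)\psi_\omega\|_{L^2_t L^{6/5,2}_x}^2$, while the dual endpoint bound $L^2_t L^{6/5,2}_x \to L^\infty_t L^2_x$, which transfers pathwise from $e^{-it\Delta}$ to the perturbed evolution via Duhamel and Proposition \ref{prop26}, controls $\|\Phi(\psi_\omega)\|_{L^\infty_\omega L^\infty_t L^2_x}$ by $\|\psi_0\|_{L^\infty_\omega L^2_x}$ plus the same source norm.

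The crucial nonlinear ingredient is a Hartree trilinear bound in Lorentz spaces: for each fixed $(\omega,t)$, Young's convolution inequality yields
\[
\|\chi\ast|\psi_\omega(t)|^2\|_{L^{3/2,\infty}_x} \les \|\chi\|_{L^{3/2,\infty}}\,\|\psi_\omega(t)\|_{L^2_x}^2,
\]
and then H\"older in Lorentz spaces, $L^{3/2,\infty}\cdot L^{6,2}\subset L^{6/5,2}$, gives
\[
\|(\chi\ast|\psi_\omega(t)|^2)\psi_\omega(t)\|_{L^{6/5,2}_x} \les \|\chi\|_{L^{3/2,\infty}}\,\|\psi_\omega(t)\|_{L^2_x}^2\,\|\psi_\omega(t)\|_{L^{6,2}_x}.
\]
Squaring, integrating in $t$, using $\|\psi_\omega\|_{L^\infty_t L^2_x} \leq 2\epsilon_0$ pathwise on $\Omega$, and then taking $\E$ produces
\[
\E\|(\chi\ast|\psi_\omega|^2)\psi_\omega\|_{L^2_t L^{6/5,2}_x}^2 \les \|\chi\|_{L^{3/2,\infty}}^2\,\epsilon_0^4\,\E\|\psi_\omega\|_{L^2_t L^{6,2}_x}^2,
\]
which closes the Strichartz bound once $\epsilon_0$ is small enough that the resulting constant sits below $1/2$. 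The same inequality applied to the algebraic identity
\[
(\chi\ast|\psi_1|^2)\psi_1 - (\chi\ast|\psi_2|^2)\psi_2 = (\chi\ast|\psi_1|^2)(\psi_1-\psi_2) + \bigl(\chi\ast[(\psi_1-\psi_2)\ov\psi_1 + \psi_2\ov{(\psi_1-\psi_2)}]\bigr)\psi_2
\]
yields the contraction $d(\Phi\psi_1,\Phi\psi_2) \leq \tfrac12 d(\psi_1,\psi_2)$, so Banach's theorem produces a unique adapted mild solution $\psi_\omega \in X$ on $[0,\infty)$ with probability one, depending Lipschitz-continuously on $\psi_0$.

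A posteriori, the coefficient multiplying $\psi_\omega$ in the equation, $V_\omega + \epsilon(\chi\ast|\psi_\omega|^2)$, is real-valued, so $\|\psi_\omega(t)\|_{L^2_x}$ is conserved pathwise; this confirms $\|\psi_\omega\|_{L^\infty_\omega L^\infty_t L^2_x} = \|\psi_0\|_{L^\infty_\omega L^2_x} < \epsilon_0$ and rules out blow-up, yielding global existence. For the average energy, one redoes the computation (\ref{derivative_identity})--(\ref{energy}) with the Hartree term present. The extra contribution to $\partial_t\int_Y h(y)\tr[(-\Delta_{x_1}+V(x_1,y))f(t)]\dd y$ is the trace against $f$ of the commutator of $-\Delta_{x_1}+V(x_1,y)$ with $\epsilon(\chi\ast\rho_f)\otimes 1 - \epsilon\,1\otimes(\chi\ast\rho_f)$, where $\rho_f(x,y,t)=f(x,x,y,t)$; using the Hermitian symmetry of $f(x_1,x_2)$ and the evenness of $\chi$, this reorganizes as the time derivative of the nonnegative interaction energy $\tfrac\epsilon 4\iint\chi(x-y)|\psi_\omega(x)|^2|\psi_\omega(y)|^2\dd x\dd y$, which can be absorbed into the left-hand side. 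What remains is the same $\int_Y \tr(A[hV]f)\dd y$ as in the linear case, integrable in $t$ by Corollary \ref{cor_strichartz} under $A[hV]\in L^\infty_y L^{3/2,\infty}_x$, and the limit as $t\to\infty$ exists by the same argument.

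The main technical obstacle is the verification of the Hartree-term cancellation in the weighted energy derivative: it is the nonlinear analogue of the identity $\tr[(-\Delta_{x_1}+V)^2 f] = \tr[(-\Delta_{x_2}+V)^2 f]$ used in the linear proof, and it relies jointly on the Hermitian symmetry of the density-matrix kernel $f$ and on the evenness of $\chi$ to symmetrize the cross-terms. Once this identity is established, the remaining estimates reduce to those already developed for Theorem \ref{main_result}.
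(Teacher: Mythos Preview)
There is a genuine gap in the control of $\|\Phi(\psi_\omega)\|_{L^\infty_\omega L^\infty_t L^2_x}$. Your claim that the dual endpoint bound $L^2_t L^{6/5,2}_x \to L^\infty_t L^2_x$ ``transfers pathwise'' to the perturbed evolution via Proposition \ref{prop26} is not justified: Proposition \ref{prop26} concerns the dissipative averaged equation $i\partial_t g - \Delta g + iAg + Vg = G$ on $\R^\3 \times Y$, not the pathwise equation $i\partial_t\Phi - \Delta\Phi + V_\omega\Phi = \Psi_\omega$ for fixed $\omega$. For a fixed path, $-\Delta + V(\cdot,\omega(t))$ may well have bound states at various times (this is precisely what Assumption \ref{nontrivial} and the averaging are designed to cure), so pathwise Strichartz estimates for the perturbed propagator need not hold. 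The paper only supplies $L^2_\omega$-averaged Strichartz bounds (Theorem \ref{main_result}, Proposition \ref{main_linear}), never $L^\infty_\omega$ ones. If you try instead to Duhamel around $e^{-it\Delta}$, the linear potential term $\|V_\omega\Phi\|_{L^2_t L^{6/5,2}_x} \les \|V\|_{L^\infty_y L^{\3/2}_x}\|\Phi\|_{L^2_t L^{6,2}_x}$ appears on the right, and $\|\Phi\|_{L^2_t L^{6,2}_x}$ is only controlled in $L^2_\omega$; so you cannot conclude that $\Phi$ maps $X$ into $X$.

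The paper circumvents this by a different iteration: it defines $\psi_{\omega,n}$ as the solution of
\[
i\partial_t\psi_{\omega,n} - \Delta\psi_{\omega,n} + V_\omega\psi_{\omega,n} + (\chi\ast|\psi_{\omega,n-1}|^2)\,\psi_{\omega,n} = 0,
\]
so that the current iterate sits inside a Schr\"odinger evolution with the real-valued potential $V_\omega + \chi\ast|\psi_{\omega,n-1}|^2$. This evolution is unitary along every path, hence $\|\psi_{\omega,n}(t)\|_{L^2_x} = \|\psi_0(\cdot,\omega(0))\|_{L^2_x}$ exactly, giving the pathwise $L^\infty_\omega L^\infty_t L^2_x$ control for free, with no Strichartz input. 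Only then is the averaged Strichartz estimate from Proposition \ref{main_linear} invoked, with adapted source $\Psi_{\omega,n} = -(\chi\ast|\psi_{\omega,n-1}|^2)\psi_{\omega,n}$, to close the $L^2_\omega L^2_t L^{6,2}_x$ bound and the contraction. Your energy argument is on the right track and matches the paper's once the iteration is repaired.
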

Strichartz bounds imply scattering and the almost sure existence of the wave operator $W_\omega = \lim_{t \to \infty} e^{it\Delta} \psi_\omega(t)$ for small initial data.

The solution map is locally Lipschitz continuous from $L^2_x$ to $L^2_\omega L^2_t L^{6, 2}_x$. We get local-in-time well-posedness for large initial data as well.
\begin{proof} The proof is conducted by means of a standard contraction scheme. Let $\psi_{\omega 0} \equiv 0$ and for $n \geq 1$ define iteratively $\psi_{\omega n}$ to be the solution to the equation
$$
i \partial_t \psi_{\omega n} - \Delta \psi_{\omega n} + V_\omega \psi_{\omega n} + (\chi \ast |\psi_{\omega n-1}|^2) \psi_{\omega n} = 0,\ \psi_{\omega n}(x, 0) = \psi_0(x, \omega).
$$
Assuming $\|\psi_{\omega n-1}\|_{L^\infty_{\omega, t} L^2_x} < \infty$, the solution $\psi_{\omega n}$ exists on the interval $[0, \infty)$ for almost every $\omega$. Due to the $L^2$ norm conservation, $\psi_{\omega n}$ a priori belongs to $L^\infty_t L^2_x$ with probability one and
$$
\|\psi_{\omega n}\|_{L^\infty_\omega L^\infty_t L^2_x} = \|\psi_{\omega n}(0)\|_{L^\infty_\omega L^2_x} \leq \epsilon.
$$

By our previous linear estimates from Proposition \ref{main_linear},
$$
\|\psi_{\omega n}\|_{L^2_\omega L^2_t L^{6, 2}_x} \les \|\psi_0\|_{L^2_\omega L^2_x} + \|\Psi_{\omega n}\|_{L^2_\omega L^2_t L^{6/5, 2}_x},
$$
where $\Psi_{\omega n} = -(\chi \ast |\psi_{\omega n-1}|^2) \psi_{\omega n}$ is adapted.

Then the norm of the source term $\Psi_{\omega n}$ is bounded by
$$
\|\Psi_{\omega n}\|_{L^2_\omega L^2_t L^{6/5, 2}_x} \les \|\psi_{\omega n-1}\|_{L^\infty_\omega L^\infty_t L^2_x}^2 \|\psi_{\omega n}\|_{L^2_\omega L^2_t L^{6, 2}_x} \les \epsilon_0^2 \|\psi_{\omega n}\|_{L^2_\omega L^2_t L^{6, 2}_x},
$$
where the computations work because
$$
[\chi \ast (L^2 L^2)] L^{6, 2} \subset L^{6/5, 2}.
$$
This is guaranteed by $\chi \in L^{\3/2, \infty}$, so we need inverse-square decay.

When $\epsilon_0$ is sufficiently small this implies by induction that
$$
\|\psi_{\omega n}\|_{L^2_\omega L^2_t L^{6, 2}_x} \les \|\psi_0\|_{L^2_\omega L^2_x}
$$
for all $n \geq 1$.


Likewise, consider the equation of the difference of $\psi_{\omega n}$ and $\psi_{\omega n-1}$:
$$
i\partial_t (\psi_{\omega n}-\psi_{\omega n-1}) - \Delta (\psi_{\omega n}-\psi_{\omega n-1}) + V (\psi_{\omega n}-\psi_{\omega n-1}) = \Psi_{\omega n}-\Psi_{\omega n-1},
$$
where
$$
\Psi_{\omega n}-\Psi_{\omega n-1} = -(\chi \ast |\psi_{\omega n-1}|^2) \psi_{\omega n} + (\chi \ast |\psi_{\omega n-2}|^2) \psi_{\omega n-1}.
$$
Pointwise,
$$\begin{aligned}
-(\chi \ast |\psi_{\omega n-1}|^2) \psi_{\omega n} + (\chi \ast |\psi_{\omega n-2}|^2) \psi_{\omega n-1} &\les (\chi \ast |\psi_{\omega n-1}|^2) |\psi_{\omega n} - \psi_{\omega n-1}| + \\
&+ (\chi \ast [(|\psi_{\omega n-1}| + |\psi_{\omega n-2}|) |\psi_{\omega n-1}-\psi_{\omega n-2}|]) \psi_{\omega n-1}.
\end{aligned}$$
Then we estimate each term and get that
$$
\|(\chi \ast |\psi_{\omega n-1}|^2) |\psi_{\omega n} - \psi_{\omega n-1}|\|_{L^2_\omega L^2_t L^{6/5, 2}_x} \les \epsilon_0^2 \|\psi_{\omega n} - \psi_{\omega n-1}\|_{L^2_\omega L^2_t L^{6, 2}_x}
$$
and
$$
\|(\chi \ast [(|\psi_{\omega n-1}| + |\psi_{\omega n-2}|) |\psi_{\omega n-1}-\psi_{\omega n-2}|]) \psi_{\omega n-1}|\|_{L^2_\omega L^2_t L^{6/5, 2}_x} \les \epsilon_0^2 \|\psi_{\omega n-1} - \psi_{\omega n-2}\|_{L^2_\omega L^2_t L^{6, 2}_x}
$$

The important fact is that
$$
[\chi \ast (L^2 L^2)] L^{6, 2} \subset L^{6/5, 2},\ [\chi \ast (L^2 L^{6, 2})] L^2 \subset L^{6/5, 2}.
$$
This is guaranteed by $\chi \in L^{\3/2, \infty}$, so we need inverse-square decay.

Consequently, when $\epsilon_0$ is sufficiently small, the mapping is a contraction. Differences $\psi_{\omega n} - \psi_{\omega n-1}$ are dominated by an exponentially decreasing sequence, so the sequence $\psi_{\omega n}$ converges in $L^2_\omega L^2_t L^{6, 2}_x$.

The limit is an ensemble of solutions $(\psi_\omega)_{\omega \in \Omega}$ with the desired properties.


Concerning the boundedness of the average energy, define the averaged density matrix
$$
f(x_1, x_2, y, t) = \E(\psi_\omega(x_1, t) \ov \psi_\omega(x_2, t) \mid \omega(t) = y).
$$
By a computation similar to (\ref{energy}) we obtain that
$$
\frac \partial {\partial t} \E E[\psi_\omega](t) = \int_Y \tr(A [h(y) V(x_1, y)] f(x_1, x_2, y, t)) \dd y,
$$
from which the boundedness of the average energy follows by Proposition \ref{prop28}.
\end{proof}

\section*{Acknowledgments}
M.B.\;is partially supported by the NSF grants DMS-1128155 and DMS-1700293, an AMS--Simons Foundation Travel Grant, and a Simons Foundation Collaboration Grant. In addition, M.B.\;gratefully acknowledges the support of UC Berkeley, the University of Chicago, and the University at Albany SUNY, as well as useful conversations with Wilhelm Schlag, Viorel Barbu, and Rupert Frank.


A.S.\;is partially supported by the NSF grants DMS-1201394 and DMS-1600749.

The initial stages of this work were accomplished and its goal and its scope were decided in collaboration with J\"{u}rg Fr\"{o}hlich. Both authors are grateful for his substantial contribution and wish to thank him for his continued support, comments, and contributions during the writing of this paper.


\end{document}